\newtheorem{example}{Example}
\newtheorem{theorem}{Theorem}
\newtheorem{teo}{Theorem}
\newtheorem{cor}{Corollary}
\newtheorem{prikl}{Example}
\newtheorem{definition}{Definition}
\newtheorem{thm}{Theorem}[section]
\newtheorem{lem}{Lemma}[section]
\newtheorem{nasl}{Corrolary}[section]
\theoremstyle{Remark}
\newtheorem{zauv}{Remark}[section]
\theoremstyle{definition}
\newtheorem{ozn}{Definition}[section]
\newtheorem{exm}{Example}
\newcommand{\lp}[1]{\left( \begin{array}{#1} }
\newcommand{\rp}{\end{array} \right)}
\newcommand{\be}{\begin{equation}}
\newcommand{\ee}{\end{equation}}
\begin{document}

\runningheads{Minimax-robust estimation problems for stationary stochastic sequences}{Mikhail Moklyachuk}

\title{Minimax-robust estimation problems for stationary stochastic sequences}

\author{ Mikhail Moklyachuk \affil{1}$^,$ }

\address{
\affilnum{1}Department of Probability Theory, Statistics and Actuarial
Mathematics, Taras Shevchenko National University of Kyiv, Kyiv, Ukraine
}

\corraddr{Mikhail Moklyachuk (Email: Moklyachuk@gmail.com). Department of Probability Theory, Statistics and Actuarial
Mathematics, Taras Shevchenko National University of Kyiv, Volodymyrs'ka 64 Str., Kyiv 01601, Ukraine.}

\begin{abstract}
This survey provides an overview of optimal estimation of linear functionals which depend on the unknown values of a stationary stochastic sequence. Based on observations of the sequence without noise as well as observations of the sequence with a stationary noise, estimates could be obtained. Formulas for calculating the spectral characteristics and the mean-square errors of the optimal estimates of functionals are derived in the case of spectral certainty, where spectral densities of the sequences are exactly known. In the case of spectral uncertainty, where spectral densities of the sequences are not known exactly while sets of admissible spectral densities are given, the minimax-robust method of estimation is applied. Formulas that determine the least favourable spectral densities and the minimax spectral characteristics of estimates are presented for some special classes of admissible spectral densities.
\end{abstract}

\keywords{Stochastic sequence, minimax-robust estimate, mean square error, least favourable spectral
density, minimax-robust spectral characteristic}

\maketitle
\noindent{\bf AMS 2010 subject classifications.} Primary: 60G10, 60G25, 60G35, Secondary: 62M20, 62P20,
93E10, 93E11

\vskip 8pt%
\noindent{\bf DOI:} 10.19139/soic.v3i4.173

\tableofcontents

\section{Introduction}

Theory of estimation of the unknown values of stationary stochastic processes based on a set of observations of the processes plays an important role in many practical applications. The development of the theory started from the classical works of Kolmogorov \cite{Kolmogorov} and Wiener \cite{Wiener}, in which they presented methods of solution of the extrapolation and interpolation problems for stationary processes.
The interpolation problem considered by Kolmogorov means estimation of the missed values of stochastic sequences. The prediction problem consists in estimation the future values of the process based on observations of the process in the past. The third classical problem is filtering of stochastic processes which consists in estimation the original values of the signal process from observations of the process with noise. All these problems for stationary sequences and processes are clearly described in books by Gikhman and Skorokhod \cite{Gihman}, Hannan \cite{Hannan},
Rozanov \cite{Rozanov}, Yaglom \cite{Yaglom1,Yaglom2}.

One of the fields of practical applications of the stationary and related stochastic sequences is economical modelling and financial time series. Most simple examples of stationary linear models are moving average (MA) sequences, autoregressive (AR) and autoregressive-moving average (ARMA) sequences, state space models, all of which refer to stationary sequences with rational spectral function without unit AR-roots.  The main results concerning the model description, parameter estimation, forecasting and further investigations are described in the classical book by Box, Jenkins and Reinsel \cite{Box:Jenkins}.
Most of results which have appeared since that time were based on the assumption that the spectral structure of the stationary sequence is known.

Traditional methods of solution of linear extrapolation,
interpolation and filtering problems for stationary stochastic sequences may be employed under the condition that
spectral densities of the sequences are known exactly
(see, for example, selected works of Kolmogorov \cite{Kolmogorov}, survey article by Kailath \cite{Kailath},
 books by Rozanov \cite{Rozanov}, Wiener \cite{Wiener}, Yaglom \cite{Yaglom1,Yaglom2}).
In practice, however,
complete information on the spectral densities is impossible in most cases.
To solve the problem the parametric or nonparametric estimates
of the unknown spectral densities are found or these densities are selected
by other reasoning. Then the classical estimation method is applied
provided that the estimated or selected densities are the true one. This
procedure can result in a significant increasing of the value of the error as
Vastola and Poor~\cite{Vastola} have demonstrated with the help of
some examples. This is a reason to search estimates which are optimal
for all densities from a certain class of the admissible spectral densities.
These estimates are called minimax since they minimize the maximal value of
the error. Many investigators have been interested
in minimax extrapolation, interpolation and filtering problems for
stationary stochastic sequences. See, for example, the papers by Hosoya \cite{Hosoya} and Taniguchi \cite{Taniguchi1981}.
A survey of results in minimax (robust) methods of data
processing can be found in the paper by Kassam and Poor \cite{Kassam}.
The paper by Grenander~\cite{Grenander} should be marked as the first one where the
minimax approach to extrapolation problem for stationary processes was
proposed.
Franke~\cite{Franke1984,Franke1985},  Franke and Poor \cite{FrankePoor}
investigated the minimax extrapolation and
filtering problems for stationary sequences with the help of convex optimization methods. This approach makes it possible to find equations that
determine the
least favourable spectral densities for various classes of admissible densities.

In the works by   Moklyachuk \cite{Moklyachuk:1981a} -- \cite{Moklyachuk:2008} problems of extrapolation, interpolation and filtering
for stationary processes and sequences were studied.
The corresponding problems for vector-valued stationary sequences and processes were investigated by Moklyachuk and Masyutka~\cite{Moklyachuk:2006} -- \cite{Moklyachuk:2012}.  In the articles by Dubovets'ka and Moklyachuk \cite{Dubovetska1} - \cite{Dubovetska8}  and in the book  by Golichenko and Moklyachuk \cite{Golichenko} the minimax estimation problems were investigated for another generalization of stationary processes --  periodically correlated stochastic sequences and processes.
Luz and Moklyachuk \cite{Luz1} -- \cite{Luz8}, \cite{Luz6}  investigated the classical and minimax extrapolation, interpolation and filtering problems for sequences and processes with $n$th stationary increments.

In the proposed paper we deal with the problem of estimation of the functional $A\xi=\sum_{k=0}^{\infty}a(k)\xi(k)$ which depends on the unknown values
of a stationary stochastic sequence $\xi(k)$ based on observations of the sequence $\xi(k)+\eta(k)$, where $ \eta(k)$ is an uncorrelated with the sequence $\xi(k)$
stationary stochastic sequence.
Formulas for calculating the mean-square errors and the
spectral characteristics of the optimal estimate of the functional
are derived in the case of spectral certainty, where the spectral
densities of the sequences $\xi(n)$ and $\eta(n)$ are exactly known.
The estimation problem is solved in
the case of spectral uncertainty, where the spectral densities of
sequences are not exactly known, but, instead, a set of admissible spectral densities is
given. The minimax-robust estimation method is applied in this case. Formulas that determine the least favourable spectral
densities and the minimax-robust spectral characteristic of the
optimal linear estimates of the functional $A\xi$ are derived in the case
of spectral uncertainty for some concrete classes of admissible spectral
densities.

The paper is organized as follows.
The spectral representations of stationary stochastic sequences and their correlation functions are described in Section 2.
Factorizations of spectral densities with relations to one-sided moving average representations and applications to the problem of prediction of stationary sequences are discussed in this section.
In section 3 the problem of the mean square optimal linear estimation of the functionals
which depend on the unknown `future' values of a stationary stochastic sequence from a class of stationary stochastic sequences is investigated.
Estimates are based on observations of the sequence in the `past'.
Following the Grenander~\cite{Grenander} approach to investigation the problem
of optimal linear estimation of the linear functional we consider the problem as a two-person zero-sum game. It is show that this game has equilibrium
point.
In Section~4 the extrapolation problem for functionals of stationary sequences is investigated in the case of spectral certainty, where the spectral density of the stationary stochastic sequence  is exactly known, as well as in the case of spectral uncertainty, where the spectral density of the sequence is not exactly known, but a class of admissible spectral densities is given.
The classical Hilbert space projection method and the minimax-robust procedure to extrapolation of the functional are applied.
The corresponding results of investigation of the extrapolation problem for functionals of stationary sequences from observations with noise are presented in Section 5.
In Section 6 and Section 7 results of investigation of the interpolation  problem for functionals of stationary sequences from observations without noise and with noise are presented.
The problem is investigated in the case of spectral certainty as well as in the case of spectral uncertainty.
Formulas are proposed for calculation the value of the mean square errors and the spectral characteristics of the interpolation in the case of spectral certainty.
Relations which determine the least favourable spectral densities and the minimax spectral characteristics are proposed in the case of spectral uncertainty for some special classes of spectral densities.

\section{Stationary stochastic sequences. Spectral representation}\label{sss:spectral}

\def\cov{{\text{cov}}}

In this section properties of stationary stochastic sequences illustrated with the help of some examples are presented.
The spectral representation of stationary stochastic sequences and their correlation functions are described.
Basic properties of linear transformations of stationary stochastic sequences are proposed.
The Wold representation of stationary stochastic sequences is studied with application to the problem of prediction of stationary sequences.

\subsection{Definitions. Examples}

Let
$H=L_{2}(\Omega,{\mathcal F}, P)$ be the space of all complex valued random variables $\xi=\xi(\omega),\omega\in\Omega,$ such that ${E}|\xi|^{2} < \infty $.
Covariation of the random variables $\xi,\eta\in H$ is defined as the quantity
\begin{equation}
\label{eq3} \text{cov}\,(\xi ,\eta ) = E(\xi - E\xi )\overline{(\eta - E\eta )} .
\end{equation}

\begin{ozn} A sequence of complex valued random variables
$\xi (n)$, $n \in \mathbb Z$, with values in the space
 $H=L_{2}(\Omega,{\mathcal F}, P)$ is called {\it wide sense stationary} if for all $n\in\mathbb  Z$ and $k\in \mathbb Z$ the following relations hold true
\[
E\xi(n) = E\xi(0)=C=\text{const},
\]
\begin{equation}
\label{eq5}\cov(\xi(k + n), \xi (k)) =
\cov(\xi(n),\xi(0)).
\end{equation}
\end{ozn}

In the following we will assume that $E\xi(n) = 0$, $n\in\mathbb  Z$. This assumption gives us a possibility
to consider the covariation as the inner product in the Hilbert space  $H=L_{2}(\Omega,{\mathcal F}, P)$ and use the powerful methods and results of the Hilbert space theory.

\begin{ozn} The function
\begin{equation}
\label{eq6} R(n) = \cov(\xi(n),\xi(0)), \,\,\,n \in \mathbb Z,
\end{equation}
is called the {\it correlation function} of the wide sense stationary
sequence $\xi(n)$.
\end{ozn}

\begin{example}
White noise sequence. The white noise sequence $\varepsilon = \{\varepsilon(n),n \in \mathbb Z\}$ is a sequence of orthogonal random variables such that
 $E\varepsilon (n)
= 0,E\varepsilon (i) \overline {\varepsilon (j)} = \delta _{ij} $, where
$\delta_{ij}$ is the Kroneker symbol. This sequence is stationary with the correlation function
\begin{equation}
\label{eq16} R(n) = \int\limits_{ - \pi }^\pi {e^{i\lambda
n}dF(\lambda )} ,
\end{equation}
\begin{equation}
\label{eq17} F(\lambda ) = \frac{1}{2\pi }\int\limits_{ - \pi
}^\lambda f(v)dv,\quad f(v ) = 1,\quad - \pi  < v < \pi .
\end{equation}

\noindent The white noise sequence has the absolutely continuous spectral function $F(\lambda)$ with the constant spectral density function
 $f(v ) = 1 $.
\end{example}

\begin{example} Moving average sequence. Let $\varepsilon = \{\varepsilon(n),n \in \mathbb Z\}$ be a white noise sequence. Consider the sequence
\begin{equation}
\label{eq18} \xi (n) = \sum\limits_{k = - \infty }^\infty {a(k)
\varepsilon (n - k) ,}
\end{equation}

\noindent where $a(k)$ are complex valued numbers such that $\sum_{k =
- \infty }^\infty {\left| {a(k)} \right|^2 < \infty } $. By the Parseval equality
\[
\cov(\xi (n + m) ,\xi(m) ) = \cov(\xi(n) ,\xi(0) ) =
\sum\limits_{k = - \infty }^\infty {a(n + k) \overline {a(k)} }.
\]
For this reason $\xi  = \{\xi (n),n \in \mathbb Z\}$   is a stationary sequence, which is called (two-sided) moving average (MA) sequence generated by
the white noise sequence $\varepsilon =\{\varepsilon (n),n \in \mathbb Z\}$.

\noindent In the case where  $a(k)=0$ for all $k=-1,-2,\dots$, the sequence
\begin{equation}
\label{eq19} \xi (n) = \sum\limits_{k = 0}^\infty a(k) \varepsilon(n - k),
\end{equation}
is called one-sided moving average (MA) sequence generated by
the white noise sequence $\varepsilon =\{\varepsilon (n),n \in \mathbb Z\}$.

\noindent In the case where $a(k)=0$ for all $k=-1,-2,\dots$ and all $k=p+1,p+2,\dots$, the sequence
\begin{equation}
\label{eq20} \xi (n) = a(0) \varepsilon (n) + a(1) \varepsilon(n -1) + \cdots + a(p) \varepsilon(n - p),
\end{equation}
is called moving average $MA(p)$ sequence of order $p$  generated by
the white noise sequence $\varepsilon =\{\varepsilon (n),n \in \mathbb Z\}$.

\noindent The correlation function $R(n)$ of the moving average sequence $MA(p)$  is of the form
\[
R(n) = \frac{1}{2\pi }\int\limits_{ - \pi }^\pi {e^{i\lambda
n}f(\lambda )d\lambda },
\]
\noindent with the spectral density function
\begin{equation}
\label{eq21} f(\lambda ) = \left| {P(e^{ - i\lambda })} \right|^2,
\end{equation}
where
\[
P(z) = a(0) + a(1) z +\cdots+ a(p) z^p.
\]
is a polynomial of order $p$.
\end{example}

\begin{example}
 The autoregressive sequence $AR(q)$. Let $\varepsilon =\{\varepsilon (n),n \in \mathbb Z\}$ be a white noise sequence. A sequence
 $\xi=\{\xi (n),n \in \mathbb Z\}$ is called the autoregressive sequence of order $q$ if it satisfies the equation
\begin{equation}
\label{eq22} \xi(n) + b(1) \xi(n - 1) + \cdots + b(q) \xi (n - q)= \varepsilon(n).
\end{equation}
In the case where all zeros of the polynomial
\begin{equation}
\label{eq25} Q(z)=1+ b_1 z + \cdots + b_q z^q
\end{equation}

\noindent are outside the unit disk $D=\{z:|z|<1\}$, equation
(\ref{eq22}) has a stationary solution which can be represented as one-sided moving average sequence.
In this case the correlation function $R(n)$ is of the form
\begin{equation}
\label{eq26} R(n) = \frac{1}{2\pi }\int\limits_{ - \pi }^\pi
{e^{i\lambda n}f(\lambda )d\lambda },
\end{equation}
\[f(\lambda ) =
\frac{1}{\left| {Q(e^{ - i\lambda })} \right|^2}.
\]
\end{example}

\begin{example}
 The autoregressive and moving average $ARMA(p,q)$ sequence. Suppose that in the right-hand side of the equation (\ref{eq22}) instead of
$\varepsilon (n)$ we put the sequence $a_0 \varepsilon (n) + a(1)\varepsilon(n - 1) +\cdots+ a(p) \varepsilon(n - p) $. We get the so called
 autoregressive and moving average $ARMA(p,q)$ sequence of order $(p,q)$
\begin{equation}
\label{eq30} \xi (n) + b(1) \xi(n - 1) + \cdots + b(q) \xi(n - q)= a(0) \varepsilon (n) + a(1) \varepsilon (n - 1) + \cdots + a(p)\varepsilon (n - p).
\end{equation}
Under the same conditions on zeros of the polynomial
$Q(z)$ as in the previous example, equation (\ref{eq30})  has a stationary solution $\xi = \{\xi(n),n \in \mathbb Z\}$.
The correlation function $R(n)$ of the sequence $\xi = \{\xi(n),n \in \mathbb Z\}$ is of the form
\[R(n) =  \frac{1}{2\pi } \int\limits_{ - \pi }^\pi {e^{i\lambda
n}f(\lambda )d\lambda },
\]
\begin{equation}
\label{eq31}
 f(\lambda ) = \left|
{\frac{P(e^{ - i\lambda })}{Q(e^{ - i\lambda })}} \right|^2.
\end{equation}
\end{example}

\subsection{Spectral representation of stationary sequences}

\begin{teo} Herglotz theorem.
 Let  $R(n)$  be the covariance function of a (wide sense) stationary
stochastic sequence $\xi=\{\xi(n), n \in \mathbb Z\}$  with zero mean value. Then on the space
 $\left([-\pi,\pi),{\mathcal B}([-\pi,\pi))\right)$ there is
a finite measure $F=F(B)$,
$B\in {\cal B}([-\pi,\pi))$, such that for every $n\in \mathbb Z$
\begin{equation}
\label{eq32} R(n) = \int\limits_{ - \pi }^\pi {e^{i\lambda
n}F(d\lambda ),}
\end{equation}

\noindent where $\int\limits_{ - \pi }^\pi {e^{i\lambda
n}F(d\lambda )} $ is the Lebesgue-Stieltjes integral.
\end{teo}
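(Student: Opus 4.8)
The plan is to run the classical Cesàro-averaging (Fejér) argument, obtaining $F$ as a weak limit of absolutely continuous measures whose densities are non-negative by construction, so that the only property of $R(n)$ actually used is its non-negative definiteness.

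First I would record that non-negative definiteness: for any $N\in\mathbb N$ and complex numbers $z_0,\dots,z_{N-1}$, stationarity together with $\cov(\xi(j),\xi(k))=R(j-k)$ and $E\xi(n)=0$ gives
\[
\sum_{j,k=0}^{N-1} R(j-k)\,z_j\overline{z_k}
= E\Bigl|\sum_{j=0}^{N-1} z_j\,\xi(j)\Bigr|^2 \ge 0 .
\]
Taking $z_j=e^{-i\lambda j}$ and grouping the pairs $(j,k)$ by the value of $n=j-k$ (there are $N-|n|$ of them when $|n|\le N-1$) shows that
\[
f_N(\lambda)=\frac{1}{2\pi}\sum_{|n|\le N-1}\Bigl(1-\frac{|n|}{N}\Bigr)R(n)e^{-i\lambda n}
=\frac{1}{2\pi N}\,E\Bigl|\sum_{j=0}^{N-1}e^{-i\lambda j}\xi(j)\Bigr|^2
\]
is non-negative on $[-\pi,\pi)$, so $F_N(d\lambda):=f_N(\lambda)\,d\lambda$ is a finite non-negative measure.

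Next I would compute the trigonometric moments of $F_N$ by term-by-term integration, using $\int_{-\pi}^{\pi}e^{i\lambda m}\,d\lambda=2\pi\delta_{m0}$: for every $m\in\mathbb Z$,
\[
\int_{-\pi}^{\pi}e^{i\lambda m}\,F_N(d\lambda)=
\begin{cases}
\bigl(1-|m|/N\bigr)R(m), & |m|\le N-1,\\[2pt]
0, & |m|\ge N.
\end{cases}
\]
In particular $F_N([-\pi,\pi))=R(0)$ for every $N$, so the family $\{F_N\}$ is uniformly bounded in total variation. By Helly's selection theorem (equivalently, weak-$*$ compactness of bounded measures on the compact interval, or Prokhorov's theorem after dividing by $R(0)$) some subsequence $F_{N_k}$ converges weakly to a finite non-negative measure $F$ on $[-\pi,\pi]$; moving any atom at $\pi$ to $-\pi$ changes no integral of $e^{i\lambda m}$, since these functions agree at $\pm\pi$, so we may take $F$ carried by $[-\pi,\pi)$. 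Since $\lambda\mapsto e^{i\lambda m}$ is bounded and continuous, weak convergence gives $\int e^{i\lambda m}F_{N_k}(d\lambda)\to\int e^{i\lambda m}F(d\lambda)$, while the left side equals $(1-|m|/N_k)R(m)\to R(m)$; hence $R(m)=\int_{-\pi}^{\pi}e^{i\lambda m}F(d\lambda)$ for every $m$.

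The substantive steps are the non-negativity of $f_N$ — which is exactly where the stationarity of $\xi$ enters, via the quadratic form above — and the extraction of a weak limit, for which the uniform bound $F_N([-\pi,\pi))=R(0)$ is precisely what makes Helly's theorem applicable; the rest is bookkeeping with finite trigonometric sums. Uniqueness of $F$ is not asserted in the statement, but would follow from the fact that a finite measure on $[-\pi,\pi)$ is determined by its trigonometric moments.
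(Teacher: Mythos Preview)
Your argument is correct: this is the classical Fej\'er--Ces\`aro averaging proof of the Herglotz theorem, and every step (non-negative definiteness of $R$, non-negativity of the averaged kernel $f_N$, the moment computation, the uniform mass bound $F_N([-\pi,\pi))=R(0)$, and the Helly/weak-$*$ limit) is handled properly, including the endpoint adjustment at $\pm\pi$.

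There is nothing to compare against, however: the paper states the Herglotz theorem without proof and immediately passes to a remark defining the spectral measure, relying on the standard references (Gikhman--Skorokhod, Rozanov, Yaglom) cited in the section's conclusions. Your proof is exactly the one found in those texts, so it is entirely in keeping with the paper's background material.
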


\begin{zauv}
The measure $F = F(B)$ in the representation (\ref{eq32}) is called the spectral
measure, and the function $F(\lambda ) = F([-\pi,\lambda ])$ is called the spectral function of the stationary
sequence with the covariance function $R(n)$.
\end{zauv}

\begin{zauv}
In the case where $\sum_{n=-\infty}^{\infty}\vert R(n)\vert<\infty$ the spectral function $F(\lambda)$ has the spectral density
$f(\lambda)$, which is determined by the formula
\[
f(\lambda)=\sum_{n=-\infty}^{\infty}e^{-in\lambda}R(n).
\]
\end{zauv}

The following theorem provides the corresponding spectral representation
of the stationary stochastic sequence $\xi=\{\xi(n), n \in \mathbb Z\}$ itself.

\begin{teo} Let $\xi=\{\xi(n), n \in \mathbb Z\}$ be a stationary  stochastic  sequence with zero mean value
$E\xi(n) = 0$, $n \in\mathbb Z$. There is an orthogonal stochastic measure $Z=Z(\Delta)$, $\Delta \in {\cal B}([-\pi,\pi))$, such that for every
$n \in \mathbb Z$ the following spectral representation is valid
\begin{equation}
\label{eq48} \xi(n) = \int\limits_{ - \pi }^\pi {e^{i\lambda
n}Z(d\lambda )\left( { = \int\limits_{\left[ { - \pi ,\pi }
\right)} {e^{i\lambda n}Z(d\lambda )} } \right)} .
\end{equation}
Moreover, $E\left| {Z(\Delta )} \right|^2 = F(\Delta )$.
\end{teo}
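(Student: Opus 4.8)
The plan is to obtain the orthogonal stochastic measure $Z$ as an image, under a suitable isometry, of the spectral measure $F$ from the Herglotz theorem. First I would set up the two Hilbert spaces in play. On one side is the closed linear span $H_\xi \subseteq L_2(\Omega,\mathcal F,P)$ of the random variables $\{\xi(n): n\in\mathbb Z\}$, with the inner product $\langle\cdot,\cdot\rangle$ coming from $\cov$. On the other side is $L_2([-\pi,\pi),\mathcal B([-\pi,\pi)),F)$, the space of square-integrable functions with respect to the spectral measure $F$ supplied by the Herglotz theorem. The key computation is that for the trigonometric functions $e_n(\lambda)=e^{i\lambda n}$ one has
\[
\langle e_m, e_n\rangle_{L_2(F)} = \int_{-\pi}^{\pi} e^{i\lambda(m-n)}F(d\lambda) = R(m-n) = \cov(\xi(m),\xi(n)) = \langle \xi(m),\xi(n)\rangle_{H_\xi},
\]
so the correspondence $e_n \mapsto \xi(n)$ preserves inner products on the linear span of the exponentials.

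Next I would invoke the fact that the trigonometric polynomials are dense in $L_2(F)$ for any finite Borel measure $F$ on $[-\pi,\pi)$ (this is the Weierstrass/Stone–Weierstrass argument together with density of continuous functions in $L_2$ of a finite measure). Since the map $e_n\mapsto\xi(n)$ is a densely defined linear isometry between $L_2(F)$ and $H_\xi$ with dense range, it extends uniquely to a unitary operator $T: L_2(F)\to H_\xi$. I then \emph{define} the candidate measure by $Z(\Delta) := T(\mathbf 1_\Delta)$ for $\Delta\in\mathcal B([-\pi,\pi))$, where $\mathbf 1_\Delta$ is the indicator function. Because $\langle \mathbf 1_{\Delta_1}, \mathbf 1_{\Delta_2}\rangle_{L_2(F)} = F(\Delta_1\cap\Delta_2)$ and $T$ is unitary, we get $E[Z(\Delta_1)\overline{Z(\Delta_2)}] = F(\Delta_1\cap\Delta_2)$; in particular $E|Z(\Delta)|^2 = F(\Delta)$, which is the last assertion, and orthogonality of increments ($E[Z(\Delta_1)\overline{Z(\Delta_2)}]=0$ when $\Delta_1\cap\Delta_2=\varnothing$) follows. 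Finite additivity of $\Delta\mapsto Z(\Delta)$ is immediate from linearity of $T$ applied to $\mathbf 1_{\Delta_1\cup\Delta_2}=\mathbf 1_{\Delta_1}+\mathbf 1_{\Delta_2}$ for disjoint sets, and countable additivity (in the $L_2$ sense) follows from the fact that $\mathbf 1_{\bigcup_{k}\Delta_k}=\sum_k\mathbf 1_{\Delta_k}$ converges in $L_2(F)$ by dominated convergence, so its image converges in $H_\xi$.

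It remains to identify $\xi(n)$ with the stochastic integral $\int_{-\pi}^{\pi}e^{i\lambda n}Z(d\lambda)$. Here I would recall that the stochastic integral of a simple function $\sum_j c_j\mathbf 1_{\Delta_j}$ against $Z$ is by definition $\sum_j c_j Z(\Delta_j) = T\big(\sum_j c_j\mathbf 1_{\Delta_j}\big)$, and this identification extends by isometry/continuity to all of $L_2(F)$, i.e. the stochastic integral against $Z$ coincides with the operator $T$. Applying this to the function $e_n(\lambda)=e^{i\lambda n}$ gives $\int_{-\pi}^{\pi}e^{i\lambda n}Z(d\lambda) = T(e_n) = \xi(n)$, which is \eqref{eq48}. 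The main obstacle, and the step deserving the most care, is the density of trigonometric polynomials in $L_2(F)$ for a general finite measure $F$ (and, relatedly, making sure the stochastic integral is developed far enough that the identification ``integral against $Z$ $=$ the unitary $T$'' is legitimate on all of $L_2(F)$, not just on simple functions); everything else is a routine transfer of Hilbert-space structure through the isometry.
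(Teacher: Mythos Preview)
Your argument is correct and is the standard construction of the spectral representation via the Kolmogorov isometry between $L_2(F)$ and $H_\xi$. Note, however, that the paper does not actually supply a proof of this theorem: it is stated as background in Section~\ref{sss:spectral} and the reader is referred to the textbooks of Gikhman--Skorokhod, Rozanov, and Yaglom for details. Your write-up is precisely the proof one finds in those references, so there is nothing to compare beyond observing that you have filled in what the paper leaves to the literature.
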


Let $\xi=\{\xi(n), n \in \mathbb Z\}$ be a stationary  stochastic  sequence with the spectral representation (\ref{eq48}).
Let $H (\xi )$ be the closed in the mean-square sense linear manifold spanned by the random variables $\xi(n)$, $n\in \mathbb Z$.
Let $\eta \in H(\xi )$. The following theorem describes the structure of such
random variables.

\begin{teo} For any $\eta  \in H(\xi )$ there is a function $\varphi  \in L^2(F)$ such that
\begin{equation}
\label{eq60} \eta = \int\limits_{ - \pi }^\pi {\varphi (\lambda
)Z(d\lambda )}.
\end{equation}
\end{teo}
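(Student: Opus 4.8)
The plan is to realize the claimed correspondence $\eta\leftrightarrow\varphi$ as the inverse of a Hilbert-space isometry between the spectral domain $L^2(F)$ and the time domain $H(\xi)$, constructed first on a dense subspace of trigonometric polynomials and then extended by continuity. First I would let $\mathcal{P}$ denote the linear manifold of trigonometric polynomials $\varphi(\lambda)=\sum_{k}c_k e^{ik\lambda}$ (finite sums) and define $T\varphi=\sum_k c_k\,\xi(k)=\sum_k c_k\int_{-\pi}^{\pi}e^{ik\lambda}Z(d\lambda)=\int_{-\pi}^{\pi}\varphi(\lambda)Z(d\lambda)$, using the spectral representation (\ref{eq48}). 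Since $E\xi(n)=0$ and $\xi$ is stationary, $E\xi(j)\overline{\xi(k)}=R(j-k)$, and the Herglotz representation (\ref{eq32}) gives
\[
E|T\varphi|^2=\sum_{j,k}c_j\overline{c_k}\,R(j-k)=\sum_{j,k}c_j\overline{c_k}\int_{-\pi}^{\pi}e^{i(j-k)\lambda}F(d\lambda)=\int_{-\pi}^{\pi}|\varphi(\lambda)|^2F(d\lambda)=\|\varphi\|_{L^2(F)}^2 .
\]
Thus $T:\mathcal{P}\to H(\xi)$ is linear and isometric. (Equivalently, the same identity follows by expanding the stochastic integral and using the orthogonality of $Z$ together with $E|Z(\Delta)|^2=F(\Delta)$.)

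Next I would invoke density: because $F$ is a finite Borel measure on $[-\pi,\pi)$, the continuous functions are dense in $L^2(F)$, and by the Stone--Weierstrass theorem the trigonometric polynomials are uniformly dense among the continuous periodic functions, hence $\mathcal{P}$ is dense in $L^2(F)$. Therefore the isometry $T$ extends uniquely to a linear isometry $T:L^2(F)\to H(\xi)$: for $\varphi\in L^2(F)$ choose $\varphi_n\in\mathcal{P}$ with $\varphi_n\to\varphi$ and set $\int_{-\pi}^{\pi}\varphi(\lambda)Z(d\lambda):=T\varphi=\lim_n T\varphi_n$ in mean square, the limit being independent of the approximating sequence by the isometric identity above.

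Finally, for surjectivity I would observe that, $L^2(F)$ being complete and $T$ isometric, the range $T\bigl(L^2(F)\bigr)$ is a closed subspace of $H(\xi)$; it contains every $\xi(n)=T(e^{in\lambda})$, hence contains the closed mean-square linear span of $\{\xi(n):n\in\mathbb{Z}\}$, which is exactly $H(\xi)$. Consequently $T$ maps $L^2(F)$ onto $H(\xi)$, so for any given $\eta\in H(\xi)$ there is $\varphi\in L^2(F)$ with $\eta=T\varphi=\int_{-\pi}^{\pi}\varphi(\lambda)Z(d\lambda)$, which is the assertion. The step requiring the most care is the density of $\mathcal{P}$ in $L^2(F)$ for an arbitrary finite spectral measure (and the attendant well-definedness and isometry of the stochastic integral for general $L^2(F)$ integrands); once that is in place, surjectivity is a soft closed-range argument and the rest is bookkeeping.
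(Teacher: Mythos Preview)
Your argument is correct and is exactly the standard Kolmogorov isomorphism proof: build the isometry $T$ on trigonometric polynomials via the spectral representation and the Herglotz formula, extend to all of $L^2(F)$ by density, and conclude surjectivity from the fact that the closed range contains every $\xi(n)$. The paper itself does not supply a proof of this theorem; it is stated as a known background result (with references to Gikhman--Skorokhod, Rozanov, Yaglom), so there is nothing to compare against, but your write-up is precisely the classical proof one finds in those sources. The only point worth a small remark is the density step: since $F$ lives on $[-\pi,\pi)$ identified with the circle, Stone--Weierstrass gives uniform density of trigonometric polynomials in $C(\mathbb{T})$, and $C(\mathbb{T})$ is dense in $L^2(F)$ because $F$ is finite---you have this right, just be sure to phrase it on the torus rather than on the interval to avoid the endpoint issue.
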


\begin{zauv} Let $H_0 (\xi )$ and $L^2_0 (F)$ be the corresponding closed linear manifolds
spanned by the variables $\{\xi (n), n=0,-1,-2,\dots\}$ and by the functions $\{e_n=e^{in\lambda}, n=0,-1,-2,\dots\}$.
Then for $\eta  \in H_0 (\xi )$
there is a function $\varphi \in L^2_0 (F)$  such that
\[\eta = \int_{ - \pi
}^\pi {\varphi (\lambda )Z(d\lambda )}.
\]
\end{zauv}

\subsection{Linear filters of stationary sequences}

\noindent Consider a special but important class of linear transformations that are defined
by means of the so called linear filters. Suppose that, at instant of time
$m$, a system (filter) receives as input a signal $x(m)$, and that the output of the
system is, at instant of time $n$, the signal $h(n - m)x(m) $, where $h = h(m)$, $m \in \mathbb Z$,  is a complex valued function called the {\it impulse response function} of the filter.
The total signal $y(n)$ obtained from the input $x(m)$, $m \in \mathbb Z$, can be represented in the form
\begin{equation}
\label{eq64} y(n) = \sum\limits_{m = - \infty }^\infty {h(n - m)x(m)
}.
\end{equation}

\noindent For physically realizable systems values of the output at instant of time $n$
are determined only by the "past" values of the signal, i.e. the values $x(m)$ at instants $m \le n$.
It is therefore natural to call a filter with the impulse response function $h = h(m)$
physically realizable (causal filter), if $h(m) = 0$ for all $m < 0$, in other words if
\begin{equation}
\label{eq65}
y(n) = \sum\limits_{m =-\infty}^n h(n - m)x(m) =
\sum\limits_{m = 0}^\infty h(m)x(n - m).
\end{equation}

\noindent An important spectral characteristic of a filter with the impulse response function $h = h(m)$
is its Fourier transform
\begin{equation}
\label{eq66}
\varphi(\lambda)=\sum\limits_{m=-\infty}^\infty{e^{-im\lambda}h(m)},
\end{equation}

\noindent which is called the {\it spectral characteristic} of the filter.

\noindent Let us now describe conditions under which the series in (\ref{eq64}) and (\ref{eq66}) convergence. Let us suppose that the input
is a stationary  stochastic  sequence $\xi = \{\xi (n), n \in \mathbb Z\}$ with the covariance function
$R(n)$ and spectral decomposition (\ref{eq48}). If the following condition holds true
\begin{equation}
\label{eq67} \sum\limits_{k,l = - \infty }^\infty {h(k)R(k -
l)\overline {h(l)} < \infty},
\end{equation}

\noindent then the series $\sum_{m = - \infty }^\infty {h(n - m)\xi(m) } $ converges in the mean-square sense and therefore there
is a stationary sequence $\eta = \{\eta (n), n \in \mathbb Z\}$ such that
\begin{equation}
\label{eq68} \eta (n) = \sum\limits_{m = - \infty }^\infty {h(n -
m)\xi(m) } = \sum\limits_{m = - \infty }^\infty {h(m)\xi ({n - m})
}.
\end{equation}

\noindent In terms of the spectral measure condition (\ref{eq67}) is equivalent to the condition that the spectral characteristic of the filter
$\varphi (\lambda ) \in L^2(F)$, that is
\begin{equation}
\label{eq69} \int\limits_{ - \pi }^\pi {\left| {\varphi (\lambda
)} \right|^2F(d\lambda ) < \infty}.
\end{equation}

\noindent Under the condition (\ref{eq67}) or condition (\ref{eq69}) from (\ref{eq68})
and (\ref{eq48}) we find the spectral representation of the sequence
$\eta = \{\eta (n), n \in \mathbb Z\}$:
\begin{equation}
\label{eq70} \eta(n) = \int\limits_{ - \pi }^\pi {e^{i\lambda
n}\varphi (\lambda )Z(d\lambda )} .
\end{equation}

\noindent The correlation function  $R_\eta (n)$ of the sequence $\eta = \{\eta (n), n \in \mathbb Z\}$
is determined by the formula
\begin{equation}
\label{eq71} R_\eta (n)=\int\limits_{ - \pi }^\pi {e^{i\lambda
n}\left| {\varphi (\lambda )} \right|^2F(d\lambda )} .
\end{equation}

\noindent In particular, if the input to a filter with the spectral characteristic $\varphi = \varphi (\lambda )$
is taken to be white noise $\varepsilon =\{\varepsilon(n)\}, n \in \mathbb Z\}$, then the output will be a moving average stationary sequence
\begin{equation}
\label{eq72} \eta(n) = \sum\limits_{m = - \infty }^\infty
{h(m)\varepsilon (n - m) }
\end{equation}

\noindent with the spectral density
\[
f_\eta (\lambda ) = \left| {\varphi (\lambda )} \right|^2.
\]

\noindent The following theorem shows that every stationary  stochastic
sequence with a spectral density can be represented as a moving average sequence.

\begin{teo}
Let $\eta = \{\eta (n), n \in \mathbb Z\}$ be a stationary  stochastic sequence with the spectral density $f_\eta (\lambda )$.
Then (possibly on the extended probability space) we
can find a white noise sequence $\varepsilon = \{\varepsilon(n), n \in \mathbb Z\}$ and a filter such that the
representation (\ref{eq72}) is valid.
\end{teo}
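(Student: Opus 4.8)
The plan is to run everything through the spectral representation of $\eta$ guaranteed by the theorem on spectral representation of stationary sequences, writing $\eta(n)=\int_{-\pi}^{\pi}e^{i\lambda n}Z_\eta(d\lambda)$ with an orthogonal stochastic measure $Z_\eta$ satisfying $E|Z_\eta(\Delta)|^2=F_\eta(\Delta)$, and using the normalization $F_\eta(d\lambda)=\tfrac{1}{2\pi}f_\eta(\lambda)\,d\lambda$ that follows from the definition of the spectral density. The goal is to produce a measurable spectral characteristic $\varphi$ with $|\varphi(\lambda)|^2=f_\eta(\lambda)$ a.e., a white noise sequence $\varepsilon$ whose spectral measure $Z_\varepsilon$ satisfies $E|Z_\varepsilon(\Delta)|^2=\tfrac{1}{2\pi}\mathrm{Leb}(\Delta)$, and to take $h(m)$ to be the Fourier coefficients of $\varphi$; by the discussion of linear filters, formulas (\ref{eq70})--(\ref{eq72}), this gives exactly the representation (\ref{eq72}).

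First I would dispose of the nondegenerate case $f_\eta(\lambda)>0$ for almost every $\lambda$. Set $\varphi(\lambda)=\sqrt{f_\eta(\lambda)}$ (any measurable square root works); then $\varphi\in L^2([-\pi,\pi))$ because $\int_{-\pi}^{\pi}|\varphi(\lambda)|^2\,d\lambda=2\pi R_\eta(0)<\infty$. Define $Z_\varepsilon(\Delta)=\int_{\Delta}\varphi(\lambda)^{-1}Z_\eta(d\lambda)$. One verifies that $Z_\varepsilon$ is again an orthogonal stochastic measure, with $E|Z_\varepsilon(\Delta)|^2=\int_{\Delta}|\varphi(\lambda)|^{-2}F_\eta(d\lambda)=\tfrac{1}{2\pi}\mathrm{Leb}(\Delta)$, so $\varepsilon(n)=\int_{-\pi}^{\pi}e^{i\lambda n}Z_\varepsilon(d\lambda)$ is white noise. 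Multiplying back, $\int_{-\pi}^{\pi}e^{i\lambda n}\varphi(\lambda)Z_\varepsilon(d\lambda)=\int_{-\pi}^{\pi}e^{i\lambda n}Z_\eta(d\lambda)=\eta(n)$. Expanding $\varphi$ in its $L^2$ Fourier series $\varphi(\lambda)=\sum_{m}h(m)e^{-im\lambda}$ and applying (\ref{eq69})--(\ref{eq72}) with $\varepsilon$ as input identifies the last integral with $\eta(n)=\sum_{m}h(m)\varepsilon(n-m)$, the mean-square convergence of the series being exactly the condition $\varphi\in L^2(F_\varepsilon)$.

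The degenerate case, where $N=\{\lambda:f_\eta(\lambda)=0\}$ has positive Lebesgue measure, is the main obstacle, and it is precisely where the extension of the probability space is used. On $N$ the measure $Z_\eta$ vanishes, since $E|Z_\eta(N)|^2=F_\eta(N)=0$, so one cannot integrate $\varphi^{-1}$ there. I would enlarge $(\Omega,\mathcal F,P)$ to carry an orthogonal stochastic measure $Z'$ with $E|Z'(\Delta)|^2=\tfrac{1}{2\pi}\mathrm{Leb}(\Delta)$ that is orthogonal to $Z_\eta$, and put
\[
Z_\varepsilon(\Delta)=\int_{\Delta\setminus N}\varphi(\lambda)^{-1}Z_\eta(d\lambda)+\int_{\Delta\cap N}Z'(d\lambda),\qquad \varphi(\lambda)=\sqrt{f_\eta(\lambda)}.
\]
Because $\varphi\equiv 0$ on $N$, the $Z'$-contribution drops out after multiplication by $\varphi$, so still $\int_{-\pi}^{\pi}e^{i\lambda n}\varphi(\lambda)Z_\varepsilon(d\lambda)=\int_{[-\pi,\pi)\setminus N}e^{i\lambda n}Z_\eta(d\lambda)=\eta(n)$; on the other hand the two summands have orthogonal increments and together give $E|Z_\varepsilon(\Delta)|^2=\tfrac{1}{2\pi}\mathrm{Leb}(\Delta)$, so $\varepsilon(n)=\int_{-\pi}^{\pi}e^{i\lambda n}Z_\varepsilon(d\lambda)$ is white noise and the argument closes as before.

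The steps that need genuine care — and that I would write out — are: that $Z_\varepsilon$ is countably additive and has orthogonal increments in both constructions; that the stochastic integral against $\varphi^{-1}$ is well defined, which is exactly where $f_\eta>0$ a.e. (resp.\ the surgery on $N$) is needed; and that $\varphi\in L^2$, which supplies the filter $h$ and the mean-square convergence of (\ref{eq72}). I would also note that the filter obtained this way is in general two-sided; producing a causal $h$, i.e.\ a one-sided (Wold-type) moving-average representation, requires in addition the Kolmogorov--Szeg\H{o} condition $\int_{-\pi}^{\pi}\log f_\eta(\lambda)\,d\lambda>-\infty$ and the choice of $\varphi$ as the associated outer function, which the present statement does not claim.
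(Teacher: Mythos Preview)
Your proof is correct and is essentially the same as the paper's: both construct the white-noise spectral measure by integrating $\varphi^{-1}$ against $Z_\eta$ on the set where $f_\eta>0$ and patching in an independent orthogonal measure on the set where $f_\eta=0$. The paper merely packages your two cases into one formula via the pseudoinverse $\varphi^{\oplus}$, writing $\bar Z(\Delta)=\int_\Delta \varphi^{\oplus}\,dZ_\eta+\int_\Delta(1-\varphi^{\oplus}\varphi)\,d\tilde Z$, which reduces to your $\int_{\Delta\setminus N}\varphi^{-1}\,dZ_\eta+\int_{\Delta\cap N}d\tilde Z$ since $1-\varphi^{\oplus}\varphi=\mathbb{1}_N$.
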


\begin{proof}
 For a given (nonnegative) spectral density function $f_\eta (\lambda )$ we can find a function ${\varphi (\lambda )}$
such that
\[
f_\eta (\lambda ) =\left| {\varphi (\lambda )} \right|^2.
\]
\noindent Since
\[\int_{-\pi}^{\pi}f_\eta (\lambda)d\lambda<\infty,\]
we have ${\varphi (\lambda )}\in L^2(\lambda)$, where $\lambda$ is the Lebesgue measure on $[-\pi,\pi]$.

\noindent Hence the function ${\varphi(\lambda )}$ can be represented as a
Fourier series (\ref{eq66}) with
\[
h(m)=\frac{1}{2\pi}\int_{-\pi}^{\pi}e^{im\lambda}{\varphi (\lambda
)}d\lambda,
\]

\noindent  where convergence is understood in the sense that
\[
\int_{-\pi}^{\pi}\left\vert{\varphi (\lambda )}-\sum_{|m|\leq n}
e^{-im\lambda}{h (m)}\right\vert^2d\lambda\to0,\quad n\to\infty.
\]
Let
\[
\eta(n)=\int_{-\pi}^{\pi}e^{in\lambda}{Z(d\lambda )},\quad
n\in\mathbb Z.
\]

\noindent In addition to the random measure $Z=Z(\Delta)$ we introduce another (independent) orthogonal stochastic measure $\tilde{Z}=\tilde{Z}(\Delta)$ with
$M\vert\tilde{Z}(a,b]\vert^2=\frac{b-a}{2\pi}$ and construct the measure
\[
\bar{Z}(\Delta)=\int\limits_{\Delta}{\varphi^{ \oplus} (\lambda
)}{Z(d\lambda )}+ \int\limits_{\Delta}[1-{\varphi^{\oplus}(\lambda
)}{\varphi (\lambda )}]{\tilde{Z}(d\lambda )},
\]
where $a^{ \oplus}$ is the pseudoinverse operation
\[
a^{ \oplus}= \left\{ {\begin{array}{rl} a^{-1},& a\not=0,
\\
0,&a=0. \\
\end{array}} \right.
\]

\noindent The stochastic measure $\bar{Z}=\bar{Z}(\Delta)$ is a measure with orthogonal values, and
for every  $\Delta=(a,b]$ we have
\[
E\vert\bar{Z}(\Delta)\vert^2=\frac{1}{2\pi
}\int\limits_{\Delta}\vert{\varphi^{ \oplus} (\lambda
)}\vert^2\vert{\varphi(\lambda)}\vert^2d\lambda+ \frac{1}{2\pi
}\int\limits_{\Delta}\vert1-{\varphi^{\oplus}(\lambda
)}{\varphi(\lambda)}\vert^2d\lambda=\frac{\vert\Delta\vert}{2\pi},
\]

\noindent where $\vert\Delta\vert=b-a$. Therefore the stationary sequence
\[
\varepsilon(n)=\int_{-\pi}^{\pi}e^{in\lambda}{\bar{Z}(d\lambda
)},\quad n\in\mathbb Z,
\]
is a white noise sequence.
Note that
\begin{equation}
\label{eq73}
\int_{-\pi}^{\pi}e^{in\lambda}{\varphi(\lambda)}{\bar{Z}(d\lambda)}
= \int_{-\pi}^{\pi}e^{in\lambda}{Z}(d\lambda)=\eta(n),
\end{equation}

\noindent and, from the other hand,
\[
\int_{-\pi}^{\pi}e^{in\lambda}{\varphi(\lambda)}{\bar{Z}(d\lambda)}=
\int_{-\pi}^{\pi}e^{in\lambda}\left(
\sum\limits_{m=-\infty}^{\infty}{e^{-im\lambda}h(m)}\right)
{\bar{Z}(d\lambda)}=
\]
\[
=\sum\limits_{m=-\infty}^{\infty}h(m)\int_{-\pi}^{\pi}e^{i(n-m)\lambda}
{\bar{Z}(d\lambda)}=\sum\limits_{m=-\infty}^{\infty}h(m)\varepsilon(n-m).
\]

\noindent This equality, together with (\ref{eq73}), establishes the representation (\ref{eq72}).
This completes the proof of the theorem.
\end{proof}

\begin{nasl}
Let the spectral density function $f_\eta (\lambda ) > 0$ (almost everywhere with respect to the Lebesgue measure) and let the following factorization of the function $f_\eta (\lambda )>0$ holds true
\begin{equation}
\label{eq74-1} f_\eta (\lambda ) = \left| {\varphi (\lambda )}
\right|^2,
\end{equation}
\begin{equation}
\label{eq74-2} \varphi (\lambda ) = \sum\limits_{k = 0}^\infty
{e^{ - i\lambda k}h(k)}, \quad \sum\limits_{k = 0}^\infty {\left|
{h(k)} \right|^2 < \infty }.
\end{equation}

\noindent Then the sequence $\eta=\{\eta(n), n \in \mathbb Z\}$ admits the representation as one-sided moving average sequence
\[
\eta(n) = \sum\limits_{m = 0}^\infty {h(m)\varepsilon(n - m) } .
\]
\noindent
In particular, if $P(z) = a(0) + a(1) z + \cdots + a(p) z^p$  is a polynomial that has
no zeros on $\{z:\left| z \right| =1\}$, then the sequence $\eta = \{\eta (n), n \in \mathbb Z\}$ with the spectral density
\[
f_\eta (\lambda ) =  \left| {P(e^{ - i\lambda })} \right|^2
\]
\noindent can be represented in the form
\[
\eta(n) = a(0) \varepsilon (n) + a(1) \varepsilon (n - 1) + \cdots +a(p) \varepsilon(n - p),
\]
\noindent where  $\varepsilon = \{\varepsilon(n), n \in \mathbb Z\}$ is a white noise sequence.
\end{nasl}

\begin{nasl} Let $\xi=\{\xi(n), n \in \mathbb Z\}$ be a sequence with rational spectral density
\begin{equation}
\label{eq74} f (\lambda ) = \left|\frac{ {P(e^{ - i\lambda
})} } { {Q(e^{ - i\lambda })}} \right|^2,
\end{equation}

\noindent where polynomials $P(z) = a(0) + a(1) z + \cdots + a(p) z^p$ and $Q(z)=1+b(1) z + \cdots + b(q) z^q$ have no zeros on
$\{z:\left| z \right| = 1\}$. Then there is a white noise sequence
 $\varepsilon = \{\varepsilon(n), n \in \mathbb Z\}$ such that the following equation holds true
\begin{equation}
\label{eq75} \xi(n)+b(1)\xi(n-1)+\cdots+b(q)\xi(n-q) = a(0)\varepsilon(n) + a(1) \varepsilon(n - 1) + \cdots + a(p)\varepsilon(n - p).
\end{equation}

\noindent Conversely, every stationary stochastic sequence $\xi=\{\xi(n), n \in \mathbb Z\}$ that satisfies this equation
with a white noise $\varepsilon = \{\varepsilon(n), n \in \mathbb Z\}$  and polynomial $Q(z)=1+b(1) z + \cdots + b(q) z^q$  with no zeros on
$\{z:\left| z \right| = 1\}$ has the spectral density (\ref{eq74}).
\end{nasl}

Let us describe conditions under which a function $f(\lambda)$ admits the representation (\ref{eq74-1}), (\ref{eq74-2}).

\noindent First, denote by
$H_2$ the set of all analytic in the unit disk $D=\{z:|z|<1\}$ functions $f(z)$ such that
\[|f(z)|^2=\lim_{r\uparrow1}\int_{-\pi}^{\pi}|f(re^{i\theta})|^2d\theta<\infty.
\]
For the function \[f(z)=\sum_{n=0}^{\infty}a_nz^n\]
we have
\[f(re^{i\theta})=\sum_{n=0}^{\infty}a_nr^ne^{in\theta}.\]
It means that $a_nr^n$ are the Fourier coefficients of the function $f(re^{i\theta})$.
From the Parseval equality it follows that
\[
\int_{-\pi}^{\pi}|f(re^{i\theta})|^2d\theta
=2\pi\sum_{n=0}^{\infty}|a_n|^2r^{2n}.
\]
We can conclude that $f(z)\in H_2$ if and only if
\[
\sum_{n=0}^{\infty}|a_n|^2<\infty.
\]
For any function  $f(z)\in H_2$ we can determine a series
\[f(e^{i\theta})=\sum_{n=0}^{\infty}a_ne^{in\theta},\]
which converges in $L_2[-\pi,\pi]$. The function $f(z), |z|<1$, restores from the function $f(e^{i\theta})$ with the help of the Poisson formula
\[
f(re^{i\theta})=
\frac{1}{2\pi}\int_{-\pi}^{\pi}f(e^{iu})P(r,\theta,u)du,
\]
where
\[
P(r,\theta,u)=\frac{1-r^2}{1-2r\cos(\theta-u)+r^2}=
\sum_{n=-\infty}^{\infty}r^{|n|}e^{in(\theta-u)}.
\]

\begin{teo} Let $f (\lambda)$ be a nonnegative and integrable with respect to the Lebesgue measure on
$[-\pi,\pi)$ function. In order that a function $g(z)\in H_2$ exists such that
\[
f (\lambda)=|g(e^{i\lambda})|^2,
\]
it is necessary and sufficient that the following condition holds
\begin{equation}
\label{eq74-3} \int_{-\pi}^{\pi} |\ln{f(\lambda)}|d\lambda<\infty.
\end{equation}
\end{teo}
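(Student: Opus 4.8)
The plan is to prove this classical Szegő–Kolmogorov-type factorization theorem in two directions, treating necessity and sufficiency separately. Throughout I would work with the logarithm of $f$ and exploit the correspondence, already developed in the excerpt, between functions in $H_2$ and their boundary values, together with the Poisson representation.

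First, for the necessity: suppose $f(\lambda) = |g(e^{i\lambda})|^2$ for some $g \in H_2$. If $g \equiv 0$ then $f = 0$ a.e. and the stated inequality is about $\int |\ln 0|$, which is the degenerate case one excludes (or interprets the theorem as applying to $f$ not a.e.\ zero); so assume $g \not\equiv 0$. The key fact is Jensen's inequality for the subharmonic (more precisely, for $\ln|g|$ superharmonic-from-below) behaviour of $\ln|g(z)|$: since $g$ is analytic and not identically zero in $D$, $\ln|g(z)|$ is subharmonic, hence for each $r<1$
\[
\ln|g(0)| \le \frac{1}{2\pi}\int_{-\pi}^{\pi} \ln|g(re^{i\theta})|\,d\theta,
\]
with the convention that the right side is bounded below once $g(0)\neq 0$ (if $g(0)=0$ one divides out the zero $z^k$ first, changing neither $H_2$ membership nor $|g(e^{i\lambda})|$). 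Letting $r\uparrow 1$ and using that $\int_{-\pi}^{\pi}\ln^{+}|g(re^{i\theta})|^2\,d\theta \le \int_{-\pi}^{\pi}|g(re^{i\theta})|^2\,d\theta$ stays bounded by the $H_2$ norm (via $\ln^+ x \le x$ and the Parseval computation quoted above), I get that $\int \ln^+ f(\lambda)\,d\lambda < \infty$; combined with the lower bound from Jensen's inequality this forces $\int \ln^{-} f(\lambda)\,d\lambda < \infty$ as well, and hence $\int |\ln f(\lambda)|\,d\lambda < \infty$.

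Second, for the sufficiency: assume $\int_{-\pi}^{\pi}|\ln f(\lambda)|\,d\lambda < \infty$. The idea is to build $g$ explicitly as the outer function with modulus $\sqrt{f}$ on the boundary. Define
\[
g(z) = \exp\!\left( \frac{1}{4\pi}\int_{-\pi}^{\pi} \frac{e^{iu}+z}{e^{iu}-z}\,\ln f(u)\,du \right),\qquad |z|<1.
\]
Because $\ln f$ is integrable, the Herglotz/Poisson kernel integral defines an analytic function on $D$ whose real part is the Poisson integral of $\tfrac12\ln f$; hence $|g(z)| = \exp\bigl(\tfrac{1}{2}(P[\ln f])(z)\bigr)$. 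One then checks: (i) as $z = re^{i\theta}\to e^{i\theta}$ nontangentially, $P[\ln f]\to \ln f$ a.e.\ (Fatou's theorem on Poisson integrals of $L^1$ functions), so the nontangential boundary values satisfy $|g(e^{i\theta})|^2 = f(\theta)$ a.e.; (ii) $g \in H_2$, which follows from Jensen's inequality applied to the convex exponential: $|g(re^{i\theta})|^2 = \exp(P[\ln f](re^{i\theta})) \le P[e^{\ln f}](re^{i\theta}) = P[f](re^{i\theta})$, and integrating in $\theta$ the Poisson integral is norm-nonincreasing, so $\int_{-\pi}^{\pi}|g(re^{i\theta})|^2 d\theta \le \int_{-\pi}^{\pi} f(u)\,du < \infty$ uniformly in $r$, giving the $H_2$ bound. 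This produces the required $g$.

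The main obstacle is the sufficiency direction, and within it the delicate point is establishing that the boundary values of the explicitly constructed outer function actually recover $f$ almost everywhere — i.e.\ the interchange of the limit $r\uparrow 1$ with the exponential and the a.e.\ convergence of the Poisson integral of the merely $L^1$ function $\ln f$. This is exactly Fatou's theorem on radial limits of Poisson integrals, and invoking it (together with the Jensen-inequality domination that keeps everything inside $H_2$) is the crux; the necessity direction is comparatively soft, resting only on subharmonicity of $\ln|g|$ and the elementary bound $\ln^+ x \le x$.
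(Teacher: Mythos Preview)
Your sufficiency argument is essentially identical to the paper's: construct the outer function as $g=e^{\varphi/2}$ with $\varphi$ the analytic completion of the Poisson integral of $\ln f$, bound the $H_2$ norm via Jensen's inequality applied to the convex exponential, and recover the boundary modulus $|g(e^{i\theta})|^2=f(\theta)$ a.e.\ by Fatou's theorem. The paper in fact omits the necessity direction entirely, so your argument there (subharmonicity of $\ln|g|$ together with $\ln^{+}x\le x$ to control $\int\ln^{+}f$) is a correct addition rather than a deviation.
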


\begin{proof}
Let condition (\ref{eq74-3}) holds true. Then the function
\[
v(r,\theta)=
\frac{1}{2\pi}\int_{-\pi}^{\pi}\ln{f(\lambda)}P(r,\theta,\lambda)d\lambda
\]
is harmonic in the unit disk $D=\{z:|z|<1\}$. From the Jensen inequality it follows that
\[
v(r,\theta)\leq\ln\left\{
\frac{1}{2\pi}\int_{-\pi}^{\pi}{f(\lambda)}P(r,\theta,\lambda)d\lambda\right\}.
\]
Denote by $\varphi(z)$ the analytic in the unit disk $D=\{z:|z|<1\}$ function with the real part $v(r,\theta)$. Consider the function $g(z)=e^{\varphi(z)/2}$. For this function
\[
|g(re^{i\theta})|^2= e^{\text{Re}\varphi(z)}=e^{v(r,\theta)}\leq
\frac{1}{2\pi}\int_{-\pi}^{\pi}{f(\lambda)}P(r,\theta,\lambda)d\lambda,
\]
\[
\int_{-\pi}^{\pi}|g(re^{i\theta})|^2d\theta\leq
\int_{-\pi}^{\pi}f(\lambda)d\lambda.
\]
So, the function $g(z)\in H_2$ and
\[
\lim_{r\uparrow1}|g(re^{i\theta})|^2=
e^{\lim_{r\uparrow1}v(r,\theta)}=f(\theta)
\]
almost everywhere.
\end{proof}

\begin{nasl} In order that a stochastic sequence $\eta= \{\eta (n ), n \in \mathbb Z\}$
 admits the representation as one-sided moving average sequence
\[
\eta(n) = \sum\limits_{m = 0}^\infty {a(m)\varepsilon(n - m) },\quad \sum_{m=0}^{\infty}|a(m)|^2<\infty.
\]
\noindent where  $\varepsilon = \{\varepsilon (n ), n \in \mathbb Z\}$ is a white noise sequence,
it is necessary and sufficient that the sequence $\eta= \{\eta (n ), n \in \mathbb Z\}$
has an absolutely continuous spectral measure and its spectral density $f(\lambda)$ satisfies the condition
\begin{equation}
\label{eq74-4} \int_{-\pi}^{\pi} \ln{f(\lambda)}d\lambda>-\infty.
\end{equation}
\end{nasl}

Determined in the theorem analytical in the unit disk $D=\{z:|z|<1\}$ function
$\varphi(z)$
with the real part $v(r,\theta)$ has the boundary value
$\ln{f(\lambda)}$. By the Schwartz formula we have
\begin{equation}
\label{eq74-5} \varphi(z)=  \frac{1}{2\pi}\int_{-\pi}^{\pi}
\ln{f(\lambda)}\frac{e^{i\lambda}+z}{e^{i\lambda}-z}d\lambda.
\end{equation}
It follows from the decomposition of the function $g(z)=\exp{\{\frac{\varphi(z)}{2}\}}$ into a power series $g(z)=\sum_{n = 0}^\infty b_nz^n$ that the coefficients $a_n=\sqrt{2\pi}b_n$.
 From the other hand, the function $g(z)$ can be represented in a different form. Since
\[
\frac{e^{i\lambda}+z}{e^{i\lambda}-z}=1+
\frac{2ze^{-i\lambda}}{1-ze^{-i\lambda}}=1+2\sum\limits_{k =
1}^\infty z^ke^{-ik\lambda},
\]
we have
\[
\overline{g(z)}= \exp\left\{ \frac{1}{4\pi}\int_{-\pi}^{\pi}
\ln{f(\lambda)}d\lambda+ \frac{1}{2\pi}\sum\limits_{k = 1}^\infty
d_k\overline{z}^k \right\},
\]
\[ d_k=\int_{-\pi}^{\pi} e^{ik\lambda}\ln{f(\lambda)}d\lambda.
\]
If we introduce the notation
\[
P=\exp\left\{ \frac{1}{4\pi}\int_{-\pi}^{\pi}
\ln{f(\lambda)}d\lambda\right\},
\]
\[
\exp\left\{ \frac{1}{2\pi}\sum\limits_{k = 1}^\infty d_k{z}^k
\right\}=\sum\limits_{n = 0}^\infty c_n{z}^n,\quad c_0=1,
\]
we will have
\[
\overline{g(z)}=P\sum\limits_{n = 0}^\infty c_n\overline{z}^n.
\]
So \[a_n=\sqrt{2\pi}Pc_n.\]

\subsection{Wold expansion of stationary sequences}

In contrast to the representation (\ref{eq48}) which gives an expansion of a
stationary sequence in the frequency domain, the Wold \cite{Wold1938,Wold1938} expansion operates
in the time domain. The main point of this expansion is that a stationary
sequence $\xi(n)$ can be represented as a sum of two stationary
sequences, one of which is completely predictable (in the sense that its
values are completely determined by its "past"), whereas the second does
not have this property.

First we give some definitions. Let
 $H(\xi)$ and $H_{n}(\xi)$
be closed linear manifolds, spanned  by all values of the stationary sequence $\{\xi(k),k\in\mathbb Z\}$ and values  $\{\xi(k), k=n,n-1,n-2,\dots\}$ respectively.
Let
\[
S(\xi ) = {\mathop { \cap} \limits_{n}} H_{n} (\xi ).
\]

\noindent For every element $\eta  \in H(\xi )$ denote by
\[
\hat {\eta}_{n} = \text{Proj}\, (\eta \vert H_{n} (\xi ))
\]

\noindent the projection of the element $\eta $ on the subspace $H_{n} (\xi )$.
Denote also
\[
\hat {\eta} _{S} = \text{Proj}\,(\eta \vert S(\xi )),
\]

\noindent Every element $\eta\in H(\xi )$ can be represented in the form
\[
\eta=\hat {\eta} _{S}+(\eta - \hat {\eta} _{S}),
\]

\noindent where
$\eta-\hat {\eta} _{S}\bot\hat {\eta} _{S}$.
Therefore the space  $H(\xi )$ can be represented as the orthogonal sum
\[
H(\xi ) = S(\xi ) \oplus R(\xi ),
\]

\noindent where $S(\xi )$ consists of the elements $\hat {\eta} _{S}$ with $\eta  \in H(\xi )$, and $R(\xi )$ consists of the elements of the form
 $\eta - \hat {\eta} _{S}$.

\begin{ozn}
A stationary sequence $\xi (n)$ is called
{\it regular}, if
\[
H(\xi ) = R(\xi ),
\]
\noindent and {\it singular}, if
\[
H(\xi ) = S(\xi ).
\]
\end{ozn}

\begin{zauv}
Singular sequence is also called {\it deterministic}, and regular
sequence is called {\it purely} or {\it completely nondeterministic}. If
$S(\xi )$ is a proper subspace of the space $H(\xi )$, then the sequence $\xi (n)$ is called {\it nondeterministic}.
\end{zauv}

\begin{teo}
Every wide sense stationary random sequence
$\xi (n)$ has a unique decomposition
\begin{equation}
\label{eq119} \xi (n) = \xi^r (n) + \xi^s (n),
\end{equation}
\noindent where $\xi^r (n)$ is regular sequence and $\xi^s (n) $ is singular sequence. The sequences $\xi^r (n)$ and nd $\xi^s (n) $ are orthogonal.
\end{teo}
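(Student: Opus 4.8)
The plan is to produce the decomposition by hand from the remote‑past subspace $S(\xi)=\bigcap_{n}H_n(\xi)$, using the orthogonal splitting $H(\xi)=S(\xi)\oplus R(\xi)$ that is already in place, and then to verify the three requirements in turn; the genuine difficulty is concentrated in uniqueness. I would set
\[
\xi^s(n):=\text{Proj}\,(\xi(n)\,\vert\,S(\xi)),\qquad \xi^r(n):=\xi(n)-\xi^s(n)=\text{Proj}\,(\xi(n)\,\vert\,R(\xi)),
\]
so that $\xi^s(n)\in S(\xi)$ and $\xi^r(n)\in R(\xi)$ for every $n$. Orthogonality of the two sequences at all lags is then immediate, since $\xi^r(n)\in R(\xi)\perp S(\xi)\ni\xi^s(m)$, hence $\langle\xi^r(n),\xi^s(m)\rangle=0$ for all $n,m$.

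Stationarity I would obtain from the shift operator $U$ on $H(\xi)$ defined by $U\xi(n)=\xi(n+1)$. It is unitary, since $\langle\xi(n+1),\xi(m+1)\rangle=R(n-m)=\langle\xi(n),\xi(m)\rangle$, and it carries $H_n(\xi)$ onto $H_{n+1}(\xi)$, so $US(\xi)=S(\xi)$ and $UR(\xi)=R(\xi)$; therefore $U$ commutes with the projections onto $S(\xi)$ and $R(\xi)$, which gives $\xi^s(n)=U^n\xi^s(0)$ and $\xi^r(n)=U^n\xi^r(0)$ and makes the covariances $\langle\xi^s(n+k),\xi^s(k)\rangle$, $\langle\xi^r(n+k),\xi^r(k)\rangle$ and $\langle\xi^r(n+k),\xi^s(k)\rangle$ independent of $k$ — the last one identically $0$. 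Next, $\xi^s$ is singular: from $\xi^s(k)\in S(\xi)$ one has $H_n(\xi^s)\subseteq S(\xi)$; conversely, if $h\in S(\xi)\subseteq H_n(\xi)$ then $h=\lim_j\sum_i c^{(j)}_i\xi(i)$ with all $i\le n$, and applying the bounded linear projection onto $S(\xi)$, which fixes $h$, gives $h=\lim_j\sum_i c^{(j)}_i\xi^s(i)\in H_n(\xi^s)$; so $H_n(\xi^s)=S(\xi)$ for every $n$, whence $S(\xi^s)=\bigcap_n H_n(\xi^s)=S(\xi)=H(\xi^s)$. And $\xi^r$ is regular: $\xi^r(k)\in R(\xi)$ gives $H_n(\xi^r)\subseteq R(\xi)$, while for $k\le n$, $\xi^r(k)=\xi(k)-\xi^s(k)\in H_n(\xi)$ because $S(\xi)\subseteq H_n(\xi)$, so $H_n(\xi^r)\subseteq H_n(\xi)$ and hence $S(\xi^r)\subseteq R(\xi)\cap S(\xi)=\{0\}$.

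For uniqueness, let $\xi(n)=\eta^r(n)+\eta^s(n)$ be any decomposition with $\eta^r$ regular, $\eta^s$ singular, the two orthogonal at all lags, and $H(\eta^r),H(\eta^s)\subseteq H(\xi)$; then $H(\xi)=H(\eta^r)\oplus H(\eta^s)$ and $\eta^s(n)=\text{Proj}\,(\xi(n)\,\vert\,H(\eta^s))$, so it suffices to show $H(\eta^s)=S(\xi)$. One inclusion follows from a distance estimate: for $i\le m$ and scalars $c_i$, orthogonality of the $\eta^r$‑ and $\eta^s$‑parts yields, by the Pythagorean identity, $\|\eta^r(n)-\sum_i c_i\xi(i)\|^2=\|\eta^r(n)-\sum_i c_i\eta^r(i)\|^2+\|\sum_i c_i\eta^s(i)\|^2$, hence $\operatorname{dist}(\eta^r(n),H_m(\xi))\ge\operatorname{dist}(\eta^r(n),H_m(\eta^r))$; letting $m\to-\infty$ and using $\bigcap_m H_m(\eta^r)=\{0\}$ (regularity) together with the fact that orthogonal projections onto a decreasing chain of closed subspaces converge to the projection onto the intersection, one gets $\operatorname{dist}(\eta^r(n),H_m(\xi))\to\|\eta^r(n)\|$, i.e. $\text{Proj}\,(\eta^r(n)\,\vert\,S(\xi))=0$; thus $H(\eta^r)\subseteq R(\xi)$ and so $H(\eta^s)=H(\xi)\ominus H(\eta^r)\supseteq S(\xi)$. \textbf{The main obstacle is the reverse inclusion} $H(\eta^s)\subseteq S(\xi)$ — that an orthogonal singular summand of $\xi$ is forced to lie in the remote past of $\xi$; this is not a purely Hilbert‑space fact. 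I would close it using the spectral/factorization results of the previous subsection: a nonzero regular stationary sequence has an absolutely continuous, a.e. strictly positive spectral density $f$ (equivalently $\int_{-\pi}^{\pi}\ln f(\lambda)\,d\lambda>-\infty$), while the orthogonality of $H(\eta^r)$ and $H(\eta^s)$ makes the spectral measures of $\eta^r$ and $\eta^s$ mutually singular; a.e. positivity of $f_{\eta^r}$ then identifies the spectral measure of $\eta^r$ with that of the Wold regular part $\xi^r$ (the degenerate case $\eta^r=0$, which forces $\xi$ singular, being handled directly), whence $H(\eta^r)=H(\xi^r)=R(\xi)$ and therefore $H(\eta^s)=S(\xi)$. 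Granting this, $\eta^s(n)=\text{Proj}\,(\xi(n)\,\vert\,S(\xi))=\xi^s(n)$ and $\eta^r(n)=\xi^r(n)$, which is the asserted uniqueness.
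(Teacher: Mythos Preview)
Your construction of $\xi^s,\xi^r$ by projection onto $S(\xi)$ and the verification of regularity, singularity and orthogonality are exactly the paper's proof; you add the (standard) stationarity check via the shift $U$, which the paper omits.

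Where you genuinely go beyond the paper is uniqueness: the paper \emph{states} uniqueness but its proof establishes only existence and orthogonality. Your spectral route is a legitimate way to close this. The key identity you use---that under the spectral isomorphism $H(\xi)\cong L^2(F_\xi)$ the conditions $\phi^r+\phi^s=1$ and $\phi^r\overline{\phi^s}=0$ (from $\eta^r+\eta^s=\xi$ and $H(\eta^r)\perp H(\eta^s)$) force $\phi^r=\mathbf 1_A$, $\phi^s=\mathbf 1_{A^c}$---is correct and indeed yields mutual singularity of $F_{\eta^r}$ and $F_{\eta^s}$. One step, however, is compressed to the point of looking like a gap: from ``$F_{\eta^r}=F_{\xi^r}$'' you write ``whence $H(\eta^r)=H(\xi^r)$''. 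Equality of spectral measures does \emph{not} by itself give equality of closed spans (two independent copies of the same regular sequence have identical spectral measures but orthogonal spans). What actually does the work is stronger: your indicator calculation shows that both $\eta^r(0)$ and $\xi^r(0)$ correspond to indicator functions $\mathbf 1_A,\mathbf 1_{A'}$ in $L^2(F_\xi)$, with $A,A'$ of full Lebesgue measure and $F_\xi^{\mathrm{sing}}(A)=F_\xi^{\mathrm{sing}}(A')=0$; hence $F_\xi(A\triangle A')=0$, so $\mathbf 1_A=\mathbf 1_{A'}$ in $L^2(F_\xi)$ and therefore $\eta^r(n)=\xi^r(n)$ as elements of $H(\xi)$ for every $n$. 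Stating it this way removes the apparent leap.
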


\begin{proof}
Define
\[
\xi^s (n) = \text{Proj}\,(\xi (n) \vert S(\xi )),\quad
\xi^r (n)= \xi (n)-\xi^s (n).
\]

\noindent Since $\xi ^{r}(n) \bot S(\xi )$ for every $n$, we have $S(\xi ^{r}) \bot S(\xi )$.
On the other hand, $S(\xi ^{r})\subseteq S(\xi )$ and therefore $S(\xi ^{r})$ is trivial (contains only random sequences that
coincide almost surely with zero). Consequently $\xi ^{r}(n)$ is regular.

\noindent Moreover, $H_{n} (\xi ) \subseteq H_{n} (\xi ^{s}) \oplus H_{n} (\xi ^{r})$ and $H_{n} (\xi ^{s}) \subseteq H_{n} (\xi )$,
$H_{n} (\xi ^{r}) \subseteq H_{n} (\xi )$.
Therefore $H_{n} (\xi ) =H_{n} (\xi ^{s}) \oplus H_{n} (\xi ^{r})$, and hence
for every $n $
\begin{equation}
\label{eq120} S(\xi ) \subseteq H_{n} (\xi ^{s}) \oplus H_{n} (\xi
^{r}).
\end{equation}

\noindent Since $\xi _{n}^{r} \bot S(\xi )$, it follows from (\ref{eq120})  that
\[
S(\xi ) \subseteq H_{n} (\xi ^{s}),
\]

\noindent and therefore $S(\xi ) \subseteq S(\xi ^{s}) \subseteq H(\xi ^{s})$. But $\xi _{n}^{s} \in S(\xi )$, hence $H(\xi ^{s})\subseteq S(\xi )$, and
consequently
\[
S(\xi ) = S(\xi ^{s}) = H(\xi ^{s}),
\]
which means that $\xi ^{s}(n)$ is singular.

\noindent  The orthogonality of $\xi ^{s}(n)$ and $\xi ^{r}(n)$ follows in an obvious way from $\xi^{s}(n) \in S(\xi )$ and $\xi^{r}(n) \bot S(\xi )$.
\end{proof}

\begin{ozn}
Let $\xi (n)$ be a nondegenerate stationary sequence. A random sequence $\varepsilon(n)$ is called the {\it  innovation sequence} for $\xi(n) $,
if the following conditions holds true:

\noindent 1) $\varepsilon(n)$ consists of pairwise orthogonal random variables with $E\varepsilon(n) =
0$, $E{\left| {\varepsilon(n)} \right|}^{2} = 1$;

\noindent 2) $H_{n} (\xi ) = H_{n} (\varepsilon )$ for all
$n \in \mathbb Z$.
\end{ozn}

\begin{zauv}
 The reason for the term ``innovation'' is that $\varepsilon(n+1)$ provides new "information", not contained in $H_n(\xi )$,
  that is needed for forming $H_{n+1}(\xi )$.
\end{zauv}

The following fundamental theorem establishes a connection between
one-sided moving average sequences and regular sequences.

\begin{teo}\label{teo121}
The necessary and sufficient condition for a nondegenerate stationary sequence
$\xi(n) $ to be regular is that there exist an innovation sequence $\varepsilon = \{\varepsilon (n)\}$ and a sequence
 of complex numbers $\{a(n), n \ge 0\}$, with ${\sum\limits_{n = 0}^{\infty} {{\left|{a(n)} \right|}^{2}}}  < \infty,$ such that
\begin{equation}
\label{eq121} \xi (n) = {\sum\limits_{k = 0}^{\infty}  {a(k)
\varepsilon (n - k)}}.
\end{equation}
\end{teo}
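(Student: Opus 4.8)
The plan is to treat the two implications separately; sufficiency is short, and essentially all the content sits in the necessity direction. For sufficiency, suppose \eqref{eq121} holds with $\varepsilon$ an innovation sequence for $\xi$ (so the series defining $\xi(n)$ converges in $L^2$ by $\sum|a(k)|^2<\infty$). By the very definition of an innovation sequence one already has $H_n(\xi)=H_n(\varepsilon)$ for every $n$, so it suffices to check $S(\xi)=\bigcap_n H_n(\varepsilon)=\{0\}$: since $\{\varepsilon(m):m\le n\}$ is an orthonormal system with closed linear span $H_n(\varepsilon)$, any vector lying in $H_n(\varepsilon)$ for all $n$ is orthogonal to every $\varepsilon(m)$ and, being also in $H(\varepsilon)$, must vanish. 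Hence $\xi$ is regular. (In fact the innovation hypothesis is stronger than needed here: $H_n(\xi)\subseteq H_n(\varepsilon)$ already forces $S(\xi)=\{0\}$, so any one-sided moving average of an orthonormal sequence is regular.)

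For necessity, assume $\xi$ is regular, i.e. $S(\xi)=\bigcap_n H_n(\xi)=\{0\}$. I would first introduce the one-step prediction errors $\delta(n)=\xi(n)-\text{Proj}\,(\xi(n)\vert H_{n-1}(\xi))$ and set $\sigma^2=E|\delta(n)|^2$, which is independent of $n$ by stationarity. One checks $\sigma^2>0$: if $\sigma^2=0$ then $\xi(n)\in H_{n-1}(\xi)$ for every $n$, forcing $H_n(\xi)=H_{n-1}(\xi)$ for all $n$ and hence $S(\xi)=H(\xi)$; regularity would then give $H(\xi)=\{0\}$, contradicting nondegeneracy. Put $\varepsilon(n)=\sigma^{-1}\delta(n)$, so $E|\varepsilon(n)|^2=1$; since $\varepsilon(n)\perp H_{n-1}(\xi)$ and $\varepsilon(m)\in H_m(\xi)\subseteq H_{n-1}(\xi)$ for $m<n$, the $\varepsilon(n)$ are pairwise orthogonal, establishing condition 1) of the definition of an innovation sequence. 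Note also that $\varepsilon(n)$ spans the one-dimensional space $E_n:=H_n(\xi)\ominus H_{n-1}(\xi)$ (one-dimensional because $\xi(n)$ is scalar and $\sigma^2>0$).

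The decisive—and hardest—step will be the identity $H_n(\xi)=H_n(\varepsilon)$. The inclusion $\supseteq$ is clear since $\varepsilon(k)\in H_k(\xi)\subseteq H_n(\xi)$ for $k\le n$. For $\subseteq$ I would invoke the standard Hilbert-space fact about a decreasing chain of closed subspaces $M_0\supseteq M_1\supseteq\cdots$ with intersection $M_\infty$: with $x_k=\text{Proj}\,(x\vert M_k)$ one has $x=x_N+\sum_{k=1}^N(x_{k-1}-x_k)$, the summands $x_{k-1}-x_k\in M_{k-1}\ominus M_k$ being mutually orthogonal, and $x_N\to\text{Proj}\,(x\vert M_\infty)$ as $N\to\infty$; hence $x-\text{Proj}\,(x\vert M_\infty)=\sum_{k\ge1}(x_{k-1}-x_k)$. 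Applying this with $M_k=H_{n-k}(\xi)$, so that $M_{k-1}\ominus M_k=E_{n-k+1}$ and $M_\infty=S(\xi)=\{0\}$, every $x\in H_n(\xi)$ equals $\sum_{k\ge1}(x_{k-1}-x_k)\in\overline{\bigoplus_{k\ge1}E_{n-k+1}}=H_n(\varepsilon)$. Thus $H_n(\xi)=H_n(\varepsilon)$ and $\varepsilon$ is an innovation sequence for $\xi$. This is exactly where regularity is used and where the difficulty of the theorem is concentrated; everything else is formal.

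Finally I would read off \eqref{eq121}. Since $\xi(n)\in H_n(\xi)=H_n(\varepsilon)$ and $\{\varepsilon(n-k):k\ge0\}$ is an orthonormal basis of $H_n(\varepsilon)$, expansion gives $\xi(n)=\sum_{k=0}^\infty a_n(k)\varepsilon(n-k)$ with $a_n(k)=E[\xi(n)\overline{\varepsilon(n-k)}]$ and $\sum_{k\ge0}|a_n(k)|^2=\|\xi(n)\|^2=R(0)<\infty$. To see that $a_n(k)=a(k)$ does not depend on $n$, I would use the unitary shift $U$ on $H(\xi)$ determined by $U\xi(n)=\xi(n+1)$: it maps $H_{n-1}(\xi)$ onto $H_n(\xi)$ and commutes with orthogonal projection, so $U\delta(n)=\delta(n+1)$ and $U\varepsilon(n)=\varepsilon(n+1)$; consequently $a_n(k)=\langle\xi(n),\varepsilon(n-k)\rangle=\langle U^n\xi(0),U^n\varepsilon(-k)\rangle=\langle\xi(0),\varepsilon(-k)\rangle$. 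This yields \eqref{eq121} and completes the proof.
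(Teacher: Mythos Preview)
Your proof is correct and follows essentially the same route as the paper: construct $\varepsilon(n)$ as the normalized one-step innovation, use the orthogonal decomposition $H_n(\xi)=H_{n-k}(\xi)\oplus\bigoplus_{j=0}^{k-1}E_{n-j}$ together with regularity to see that the tail projection $\text{Proj}(\xi(n)\vert H_{n-k}(\xi))\to 0$, and read off the expansion. Your packaging of the tail-projection argument as the ``decreasing chain'' fact is exactly the content of the paper's telescoping-sum computation for $\hat\xi_{-k}$. One point where you are actually more careful than the paper: you use the unitary shift $U$ to show $a_n(k)$ is independent of $n$, whereas the paper simply writes $a(j)=E\xi(n)\overline{\varepsilon(n-j)}$ without comment; since the paper's $\varepsilon(n)$ is chosen only as ``a'' unit vector in $B_n(\xi)$ (with an undetermined phase), your shift argument is the clean way to make this step rigorous.
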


\begin{proof}
Necessity. Represent $H_{n} (\xi )$ in the form
\[
H_{n} (\xi ) = H_{n - 1} (\xi ) \oplus B_{n}(\xi),
\]

\noindent where $B_{n}(\xi)$ is the space of random variables of the form
$\beta \cdot \xi (n)$, where $\beta$ is a complex number.
Since $H_{n} (\xi )$ is generated by elements from $H_{n - 1} (\xi)$ and elements of the form $\beta \cdot \xi (n) $, the dimension (dim)
of the space $B_{n}(\xi)$ is either zero or one. But the space
$H_{n} (\xi )$ cannot coincide with $H_{n - 1} (\xi )$ for any  value of $n$.
In fact, if for some $n$ the space $B_{n}(\xi)$ is trivial, then by stationarity $B_{n}(\xi)$ is trivial for all
$n$, and therefore $H(\xi ) = S(\xi)$, contradicting the assumption that the sequence $\xi(n)$ is regular.
Thus the space $B_{n}(\xi)$ has the dimension $1$. Let $\eta(n) $ be a nonzero element of $B_{n}(\xi)$. Take
\[
\varepsilon_{n} = {\frac{{\eta (n)}} {{{\left\| {\eta (n)}
\right\|}}}},
\]

\noindent where ${\left\| {\eta (n)}  \right\|}^{2} = E{\left| {\eta(n)}  \right|}^{2}
> 0$. For fixed $n$ and $k \ge 0$ consider the decomposition
\[
H_{n} (\xi ) = H_{n - k} (\xi ) \oplus B_{n - k + 1}(\xi)
\oplus\cdots\oplus B_{n}(\xi).
\]

\noindent The elements $\varepsilon (n-k),\dots,\varepsilon(n) $
is an orthogonal basis in $B_{n - k + 1}(\xi)\oplus\cdots\oplus B_{n}(\xi)$ and
\begin{equation}
\label{eq122} \xi(n)=\sum\limits_{j = 0}^{k-1}a(j)\varepsilon(n-j)+\text{Proj}\,({\xi}(n)\vert H_{n-k}(\xi)),\quad a(j) = E\xi(n) \overline{\varepsilon(n - j)}.
\end{equation}

\noindent By the Bessel inequality
\[
{\sum\limits_{j = 0}^{\infty}  {{\left| {a(j)}  \right|}^{2}}}
\le {\left\| {\xi (n)}  \right\|}^{2} < \infty .
\]

\noindent The series
\[
{\sum_{j = 0}^{\infty} {a(j) \varepsilon(n -j)}}
\]
converges in the mean square, and then, by (\ref{eq122}),
relation (\ref{eq121}) will be established as soon as we show that
\[
\text{Proj}\,({\xi}(n)\vert H_{n-k}(\xi)){\buildrel {L^{2}} \over
\longrightarrow} 0, \quad k \to \infty.\]

\noindent Consider the case $n = 0$. Denote
$\hat{\xi} _{i}=
\text{Proj}\,({\xi}(0)\vert H_{i}(\xi))$.
 Since
\[
\hat {\xi} _{ - k} = \hat {\xi} _{0} + {\sum\limits_{i = 1}^{k}
{[\hat {\xi }_{ - i} - \hat {\xi} _{ - i + 1} ]}} ,
\]

\noindent and the terms that appear in this sum are orthogonal, we have for every $k \ge 0$
\[
{\sum\limits_{i = 1}^{k} {{\left\| {\hat {\xi} _{ - i} - \hat
{\xi} _{ - i + 1}}  \right\|}^{2}}}  = {\left\| {{\sum\limits_{i =1}^{k} {(\hat {\xi} _{ - i} - \hat {\xi} _{ - i + 1} )}}}
\right\|}^{2} =
\]
\[
={\left\| {\hat {\xi} _{ - k} - \hat {\xi} _{0}}  \right\|}^{2}
\le 4{\left\| {\xi (0)}  \right\|}^{2} < \infty .
\]

\noindent Therefore the limit
${\mathop {\lim }\limits_{k \to \infty}}  \hat {\xi} _{ - k} $ exists (in the mean square sense).
For each $k$ the value $\hat {\xi }_{ - k} \in H_{ - k} (\xi )$, and therefore the limit in question must belong to the space ${\mathop {
\cap} \limits_{k \ge 0}} H_{ - k} (\xi ) = S(\xi )$. But, by assumption, $S(\xi )$ is trivial, and therefore
\[
\text{Proj}\,({\xi}(n)\vert H_{n-k}(\xi)){\buildrel {L^{2}} \over
\longrightarrow} 0, \quad k \to \infty.\]

Sufficiency. Let the nondegenerate stationary sequence $\xi(n)$ have a representation (\ref{eq121}),
where $\varepsilon = \{\varepsilon(n)\}$ is an orthonormal system (not necessarily satisfying the condition $H_{n} (\xi ) = H_{n} (\varepsilon )$, $n \in\mathbb Z)$.
Then $H_{n} (\xi ) \subseteq H_{n} (\varepsilon )$ and therefore
$S(\xi ) = {\mathop { \cap} \limits_{k}} H_{k} (\xi )
\subseteq H_{n} (\varepsilon )$
for every $n$. But $\varepsilon _{n + 1} \bot H_{n} (\varepsilon )$, and therefore $\varepsilon _{n + 1} \bot S(\xi )$
and at the same time $\varepsilon =\{\varepsilon(n)\}$ is a basis in $H(\xi )$. lt follows that $S(\xi )$ is trivial,
and consequently $\xi(n)$ is regular.
This completes the proof of the theorem.
\end{proof}

\begin{zauv}
It follows from the proof of the theorem \ref{teo121} that a nondegenerate sequence $\xi(n)$ is regular
if and only if it admits a representation as one-sided moving average
\begin{equation}
\label{eq123} \xi (n) = {\sum\limits_{k = 0}^{\infty}
{\tilde{a}_{k} \tilde{\varepsilon}_{n - k}}}
\end{equation}
where $\tilde{\varepsilon}=\{\tilde{\varepsilon}(n)\}$ is an orthonormal system which (it is important to emphasize
this !) does not necessarily satisfy the condition $H_n(\xi)=H_n(\tilde{\varepsilon})$, $n\in\mathbb Z$. In this
sense the conclusion of the theorem \ref{teo121} says more, and specifically that for a
regular stationary sequence $\xi(n)$  there exist a sequence of numbers $a=\{a(n)\}$ and an orthonormal system of random variables $\varepsilon=\{{\varepsilon}(n)\}$,
such that not only (\ref{eq123}), but also (\ref{eq121}), is satisfied, with $H_n(\xi)=H_n({\varepsilon})$, $n\in\mathbb Z$.
\end{zauv}

\begin{teo}
Wold expansion. A nondegenerate stationary sequence $\xi = \{\xi (n)\}$ can be represented in the form
\begin{equation}
\label{eq124} \xi (n) = \xi^s(n)+{\sum\limits_{k = 0}^{\infty}
{a(k) \varepsilon(n - k)} },
\end{equation}

\noindent where
\[{\sum_{k = 0}^{\infty}  {{\left| {a(k)}
\right|}^{2}}}  < \infty,\]
and $\varepsilon = \{\varepsilon(n)\}$ is the innovation sequence (for $\xi^{r}(n))$.
\end{teo}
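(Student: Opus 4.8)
The plan is to obtain the Wold expansion as an immediate consequence of the two preceding theorems: the orthogonal decomposition $\xi(n)=\xi^r(n)+\xi^s(n)$ into a regular and a singular part (equation (\ref{eq119})), and the characterization of regular sequences as one-sided moving averages driven by their innovation sequence (Theorem \ref{teo121}). The whole argument is essentially a matter of assembling these two results, so the write-up should be short.

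First I would apply the decomposition theorem to split $\xi(n)=\xi^r(n)+\xi^s(n)$, with $\xi^r$ regular, $\xi^s$ singular, and $H(\xi^r)\perp H(\xi^s)=S(\xi)$. Next I would treat the regular summand $\xi^r$. If $\xi^r$ is trivial — i.e.\ if $\xi$ itself is singular — then $\xi(n)=\xi^s(n)$ and (\ref{eq124}) holds with $a(k)\equiv 0$, so nothing further is needed. Otherwise $\xi^r$ is a nondegenerate regular stationary sequence, and Theorem \ref{teo121} supplies an innovation sequence $\varepsilon=\{\varepsilon(n)\}$ for $\xi^r$ (orthonormal, zero mean, with $H_n(\varepsilon)=H_n(\xi^r)$ for all $n$) together with numbers $\{a(k),\,k\ge 0\}$ satisfying $\sum_{k=0}^{\infty}|a(k)|^2<\infty$ and $\xi^r(n)=\sum_{k=0}^{\infty}a(k)\varepsilon(n-k)$, the series converging in mean square.

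Substituting this into the decomposition yields $\xi(n)=\xi^s(n)+\sum_{k=0}^{\infty}a(k)\varepsilon(n-k)$, which is precisely (\ref{eq124}), and the summability $\sum_{k=0}^{\infty}|a(k)|^2<\infty$ is inherited directly from Theorem \ref{teo121}. I would also record the orthogonality implicit in the statement: since each $\varepsilon(n-k)\in H(\xi^r)$ and $H(\xi^r)\perp H(\xi^s)$, every term of the moving-average series is orthogonal to the singular component $\xi^s(m)$, $m\in\mathbb Z$.

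The only point requiring care — the ``main obstacle'', such as it is — is the bookkeeping around which sequence the innovations belong to: Theorem \ref{teo121} must be invoked for $\xi^r$, not for $\xi$, and one must verify that $\xi^r$ is genuinely nondegenerate whenever it is nonzero, so that the theorem applies and the innovation sequence exists. Both facts come directly out of the proof of the decomposition theorem, where it was shown that $\xi^r$ is regular and that $S(\xi^r)$ is trivial; hence, once the degenerate case $\xi^r=0$ is dispatched separately, the remainder of the argument is purely routine.
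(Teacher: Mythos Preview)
Your proposal is correct and matches the paper's approach: the paper states the Wold expansion immediately after the decomposition theorem (\ref{eq119}) and Theorem~\ref{teo121} without giving a separate proof, precisely because it is meant to follow by combining those two results exactly as you describe. Your handling of the degenerate case $\xi^r=0$ and the check that Theorem~\ref{teo121} must be applied to $\xi^r$ rather than $\xi$ are the only points requiring any care, and you have addressed both.
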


 The significance of the concepts of regular and singular sequences becomes clear if we consider the following (linear)
extrapolation problem, for whose solution the Wold expansion (\ref{eq124}) is
especially useful.

Let $H_{0} (\xi )$ be the closed linear manifold spanned by values of the stationary sequence $\{\xi(k), k=0,-1,-2,\dots\}$.
 Consider the problem of constructing an optimal (least-squares) linear estimator $\hat {\xi}(n) $ of the value $\xi (n) $ of the sequence at point $n>0$ based on observations of the ``past'' $\{\xi(k), k=0,-1,-2,\dots\}$.
The estimate $\hat {\xi}(n) $ is a projection of $\xi (n) $ on the manifold $H_{0} (\xi )$:
\begin{equation}
\label{eq125} \hat {\xi} (n) = \text{Proj}\,({\xi}(n)\vert H_{0}(\xi)).
\end{equation}

Since the sequences $\xi ^{s}(n)$ and $\xi ^{r}(n)$ are orthogonal and
$H_{0} (\xi ) \subseteq H_{0} (\xi ^{s}) \oplus H_{0} (\xi ^{r})$, we obtain, by using
(\ref{eq124})

\[
\begin{array}{rl}
 \hat {\xi} (n) = &\text{Proj}\,(\xi^{s}(n) + \xi^{r}(n) \vert H_{0} (\xi ))
= \text{Proj}\,(\xi^{s}(n) \vert H_{0} (\xi )) + \text{Proj}\,(\xi^{r}(n) \vert H_{0} (\xi )) = \\
 = &\text{Proj}\,(\xi^{s}(n) \vert H_{0} (\xi^{s}) \oplus H_{0} (\xi ^{r})) +
\text{Proj}\,(\xi _{n}^{r} \vert H_{0} (\xi ^{s}) \oplus H_{0} (\xi ^{r})) = \\
 = &\text{Proj}\,(\xi^{s}(n) \vert H_{0} (\xi ^{s})) + \text{Proj}\,(\xi^{r}(n)
\vert H_{0} (\xi ^{r})) = \\
 = &\xi^{s}(n) + \text{Proj}\,({\sum\limits_{k = 0}^{\infty}  {a(k) \varepsilon(n - k)}}  \vert H_{0} (\xi ^{r})). \\
 \end{array}
\]

In (\ref{eq124}) the sequence $\varepsilon =\{\varepsilon (n)\}$ is an innovation sequence for $\xi ^{r} = \{\xi^{r}(n)\}$ and $H_{0} (\xi ^{r}) = H_{0} (\varepsilon )$.
 Therefore
\begin{equation}
\label{eq126} \hat {\xi} (n) = \xi^{s}(n) + \text{Proj}\,\left({\sum\limits_{k = 0}^{\infty} {a(k) \varepsilon (n - k)}}
\vert H_{0} (\varepsilon )\right) = \xi^{s}(n) + {\sum\limits_{k =
n}^{\infty}  {a(k) \varepsilon (n - k)}}
\end{equation}

\noindent and the mean-square error of prediction $\xi (n) $
based on observations $\{\xi(k), k=0,-1,-2,\dots\}$ equals
\begin{equation}
\label{eq127} \sigma _{n}^{2} = E{\left| {\xi (n) - \hat {\xi}
(n)}  \right|}^{2} = {\sum\limits_{k = 0}^{n - 1} {{\left|
{a(k)}  \right|}^{2}}}.
\end{equation}

\noindent Two important conclusions follows from the presented results.

1) If the sequence $\xi(n)$  is singular, then for every $n \ge 1$ the error of the extrapolation
$\sigma _{n}^{2} $ is zero; in other words, we can predict $\xi (n) $ without error from its "past"
$\{\xi(k), k=0,-1,-2,\dots\}$.

2) If the sequence $\xi(n)$  is regular, then $\sigma _{n}^{2} \le
\sigma _{n + 1}^{2} $ and
\begin{equation}
\label{eq128} {\mathop {\lim} \limits_{n \to \infty}}  \sigma
_{n}^{2} = {\sum\limits_{k = 0}^{\infty}  {{\left| {a(k)}
\right|}^{2}}}.
\end{equation}

\noindent Since
\[
{\sum\limits_{k = 0}^{\infty}  {{\left| {a(k)}  \right|}^{2}}}  =
E{\left| {\xi(n)}  \right|}^{2},
\]

\noindent it follows from (\ref{eq128}) and (\ref{eq127}) that if $n$
increases, the prediction of $\xi (n) $ based on observations $\{\xi(k), k=0,-1,-2,\dots\}$ becomes
trivial (reducing simply to $E\xi (n) = 0)$.

\subsection{Conclusions}

In this section we describe properties of stationary stochastic sequence illustrated with the help of some examples.
The spectral representation of stationary stochastic sequences and their correlation functions are described.
Properties of linear transformations of stationary stochastic sequences are described.
The Wold representation of stationary stochastic sequences is analysed. Application to the problem of prediction of stationary sequences is described.

For the detailed exposition of results of the theory of stationary stochastic sequences see books by Gikhman and Skorokhod \cite{Gihman}, Hannan\cite{Hannan},
Rozanov\cite{Rozanov}, Yaglom \cite{Yaglom1,Yaglom2}.

 \section{Estimates for functionals of stationary sequences}\label{est1}

In this section we deal with the problem of the mean square optimal linear estimation of the functionals
 \[A_ {N}  { \xi} = \sum_ {j = 0} ^ {N}  {a} (j) { \xi} (j),  \]
 \[A { \xi} = \sum_ {j = 0} ^ { \infty}  {a} (j)  { \xi} (j) \]
 which depend on the unknown values of a stationary stochastic sequence $ { \xi} (j)$ from the class $ \Xi $
of stationary stochastic sequences satisfying the conditions
\begin {equation} \label {GrindEQ__3_1_}
E  { \xi} (j)=  {0}, \quad E | { \xi} (j) |^{2}  \le P.
\end {equation}
Estimates are based on results of observations of the sequence $ { \xi} (j) $ at points of time $ j =-1,-2,\dots$.

Following the Ulf~Grenander~\cite{Grenander} approach to investigation the problem
of optimal linear estimation of the functional $A_ {N}  { \xi} $ (as well estimation of the functional $A{\xi}$) we consider the problem as a two-person zero-sum game in which the first
player chooses a stationary stochastic sequence $ \xi (j) $ from the class $ \Xi $ of
stationary stochastic sequences such
that the value of the mean square error of estimate of the
functional attains its maximum. The second player is looking for
an estimate of the linear functional which minimizes the value of the
mean square error. It is show that this game has equilibrium
point.
The maximum error gives a moving average stationary sequence which is
least favourable in the given class of stationary sequences.
 The greatest value
of the error and the least favourable sequence are determined by the
largest eigenvalue and the corresponding eigenvector of the
operator determined by the coefficients $ a (j) $ which determine the functionals.

\subsection{The maximum value of the mean-square error of estimate of the functional $A_{N} {\xi}$}

In this subsection the maximum value of the mean-square error of the optimal linear estimate of the functional $A_ {N} { \xi} $ which depend on the unknown values of a stationary stochastic sequence $ { \xi} (j)$ from the class $ \Xi $ is found.

\noindent Let $ \Delta ( \xi , \hat{A}_{N} )=E \left|A_{N} { \xi}- \hat{A}_{N} { \xi} \right|^{2}$ denotes the mean-square  error of an estimate
$ \hat{A}_{N} { \xi}$ of the functional $A_{N} { \xi}$. Denote by $ \Lambda $ the class of all linear estimates of the functional $A_{N}{ \xi}$.
\medskip

\begin{theorem} \label{theo.1.1}
The function $ \Delta ( \xi , \hat{A}_{N} )$ has a saddle point on the set $ \Xi \times \Lambda $. The following relation holds true
 \[ \mathop{ \min} \limits_{ \hat{A}_{N} \in \Lambda} \mathop{ \max} \limits_{ \xi \in \Xi} \Delta ( \xi , \hat{A}_{N} )= \mathop{ \max} \limits_{ \xi \in \Xi} \mathop{ \min} \limits_{ \hat{A}_{N} \in \Lambda} \Delta ( \xi , \hat{A}_{N} )=P \nu_{N}^{2} . \]
\noindent The least favourable in the class $ \Xi $ of
stationary stochastic sequences satisfying conditions \eqref{GrindEQ__3_1_} for the optimal linear estimation of the functional
$A_{N} { \xi}$  is one-sided moving average
sequence of order $N$, which is determined by the formula
 \[ { \xi}(j)= \sum_{u=j-N}^{j} \varphi (j-u) { \varepsilon}(u). \]
Here $ \nu_{N}^{2} $ is the largest eigenvalue of the selfadjoint
compact operator $Q_{N}$ in the space ${\mathbb C}^{(N+1)}$,
determined by the matrix $Q_{N} = \left \{Q_{N} (p,q) \right \}_{p,q=0}^{N}$ that is constructed with the elements
 \[Q_{N} (p,q) = \sum_{u=0}^{ \min (N-p,N-q)} {a}(p+u)\, \overline{a}(q+u), \quad p,q=0,1,\dots,N,\]
$ { \varepsilon}(u) $ is a  stationary stochastic sequence with orthogonal values:
\[E { \varepsilon}(i) \overline{ \varepsilon}(j)= \delta_{ij}, \]
where  $ \delta_{ij}$ is the Kronecker symbol;
$\{ \varphi (u), u= {0,\dots,N}\}$ is determined by the eigenvector that corresponds
to $ \nu_{N}^{2} $, and the condition $ \left \| { \xi}(j) \right \| ^{2} =P.$
\end{theorem}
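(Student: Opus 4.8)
The plan is to collapse the two-person game onto a finite-dimensional spectral problem in three steps: compute the smallest possible mean-square error for an arbitrary admissible sequence, maximise it over the class $\Xi$, and then exhibit one estimate attaining the outer minimum; the common value will be $P\nu_N^{2}$, and the pair consisting of the least favourable moving average sequence and its optimal predictor will be the saddle point. First I would fix $\xi\in\Xi$ and use that $\min_{\hat A_N\in\Lambda}\Delta(\xi,\hat A_N)=\|A_N\xi-\mathrm{Proj}\,(A_N\xi\mid H_{-1}(\xi))\|^{2}$, where $H_{-1}(\xi)$ is the closed span of $\{\xi(k):k\le-1\}$. Using the Wold expansion $\xi(j)=\xi^{s}(j)+\sum_{k\ge0}\varphi_{\xi}(k)\varepsilon(j-k)$ with $\varepsilon$ the innovation sequence of $\xi^{r}$, one has $H_{-1}(\xi)=S(\xi)\oplus H_{-1}(\varepsilon)$ exactly as in the proof of the Wold theorem above; writing $A_N\xi=\sum_{j=0}^{N}a(j)\xi(j)$ and regrouping by innovations, the components $A_N\xi^{s}\in S(\xi)$ and the $\varepsilon(m)$ with $m\le-1$ already lie in $H_{-1}(\xi)$, while $\varepsilon(0),\dots,\varepsilon(N)$ are orthogonal to it, whence
\[
\min_{\hat A_N\in\Lambda}\Delta(\xi,\hat A_N)=\sum_{m=0}^{N}\Bigl|\sum_{u=0}^{N-m}a(m+u)\varphi_{\xi}(u)\Bigr|^{2}=\langle Q_N\overline{\varphi_{\xi}},\overline{\varphi_{\xi}}\rangle ,
\]
the last equality resting on the elementary identity $\sum_{m}a(m+v)\overline{a(m+w)}=Q_N(v,w)$. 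Since $\|\varphi_{\xi}\|^{2}\le\sum_{k\ge0}|\varphi_{\xi}(k)|^{2}=E|\xi^{r}(0)|^{2}\le E|\xi(0)|^{2}\le P$, the inner minimum is governed by the quadratic form of $Q_N$ on a ball of radius $\sqrt P$.

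I would then carry out the outer maximisation. The form $\langle Q_N\psi,\psi\rangle=\sum_{u}\bigl|\sum_{p}a(p+u)\overline{\psi(p)}\bigr|^{2}$ shows that $Q_N$ is selfadjoint and nonnegative, so $\langle Q_N\psi,\psi\rangle\le\nu_N^{2}\|\psi\|^{2}$ with equality exactly when $\psi$ is an eigenvector of $Q_N$ for $\nu_N^{2}$; together with $\|\varphi_{\xi}\|^{2}\le P$ this gives $\min_{\hat A_N}\Delta(\xi,\hat A_N)\le P\nu_N^{2}$ for every $\xi\in\Xi$. To see this is sharp, pick $\varphi$ so that $\overline{\varphi}$ is an eigenvector of $Q_N$ for $\nu_N^{2}$ normalised by $\|\varphi\|^{2}=P$, and set $\xi^{*}(j)=\sum_{u=j-N}^{j}\varphi(j-u)\varepsilon(u)$: this is a stationary moving average of order $N$ with $E|\xi^{*}(j)|^{2}=\|\varphi\|^{2}=P$, hence $\xi^{*}\in\Xi$, and since the past of $\xi^{*}$ is contained in the past of its generating noise one gets $\min_{\hat A_N}\Delta(\xi^{*},\hat A_N)\ge\langle Q_N\overline{\varphi},\overline{\varphi}\rangle=P\nu_N^{2}$; combined with the universal upper bound, equality holds. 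Therefore $\max_{\xi\in\Xi}\min_{\hat A_N\in\Lambda}\Delta(\xi,\hat A_N)=P\nu_N^{2}$, attained at the least favourable sequence $\xi^{*}$, which is precisely the moving average described in the statement.

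It remains to produce $\hat A_N^{0}\in\Lambda$ with $\Delta(\xi,\hat A_N^{0})\le P\nu_N^{2}$ for all $\xi\in\Xi$: together with the universally valid inequality $\max_\xi\min_{\hat A_N}\Delta\le\min_{\hat A_N}\max_\xi\Delta$ and the previous step, this forces $\min_{\hat A_N}\max_\xi\Delta=\max_\xi\min_{\hat A_N}\Delta=P\nu_N^{2}$ and makes $(\xi^{*},\hat A_N^{0})$ a saddle point, since $\Delta(\xi^{*},\hat A_N)\ge\Delta(\xi^{*},\hat A_N^{0})=P\nu_N^{2}\ge\Delta(\xi,\hat A_N^{0})$ for all $\xi$ and $\hat A_N$. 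The natural candidate is the estimate of $A_N\xi^{*}$ optimal for the least favourable sequence, $\hat A_N^{0}\xi=\int_{-\pi}^{\pi}H^{0}(\lambda)Z_{\xi}(d\lambda)$ with $H^{0}$ supported on negative Fourier modes; the crucial point is that its error transfer function $h^{0}(\lambda)=A_N(e^{i\lambda})-H^{0}(\lambda)$ is equalised, $|h^{0}(\lambda)|^{2}=\nu_N^{2}$ for almost every $\lambda$, for then $\Delta(\xi,\hat A_N^{0})=\int_{-\pi}^{\pi}|h^{0}(\lambda)|^{2}F_{\xi}(d\lambda)=\nu_N^{2}\int_{-\pi}^{\pi}F_{\xi}(d\lambda)\le P\nu_N^{2}$ uniformly over $\xi\in\Xi$.

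Establishing this equalisation is the main obstacle, and where the real analytic work lies. One would factorise $f_{\xi^{*}}(\lambda)=|\Phi(e^{i\lambda})|^{2}$ with $\Phi(z)=\sum_{u=0}^{N}\varphi(u)z^{u}$, use the Wiener--Hopf orthogonality relation $h^{0}f_{\xi^{*}}\in H_{2}$ to write $h^{0}$ as an explicit rational function, and then invoke the eigenvector identity $Q_N\overline{\varphi}=\nu_N^{2}\overline{\varphi}$ to verify that this function has constant modulus $\nu_N$ on the unit circle. Equivalently, one may recognise $\nu_N^{2}=\|Q_N\|$ as the squared norm of the finite-rank Hankel operator with symbol $A_N(e^{i\lambda})$ and read off $\hat A_N^{0}$ together with its equalised error from the Adamyan--Arov--Krein description of the best $H^{\infty}$ approximant. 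By comparison the projection computations of the first two steps are routine; this last step carries the analytic content.
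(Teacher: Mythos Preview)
Your proposal is correct and follows essentially the same architecture as the paper: compute the inner minimum via the Wold/innovation expansion to obtain the quadratic form $\langle Q_N\varphi,\varphi\rangle$, maximise over $\|\varphi\|^2\le P$ to get the lower bound $P\nu_N^{2}$ and the least favourable moving average, and then bound the outer minimax from above by exhibiting an estimate whose error transfer function is controlled uniformly in $\xi$. The first two steps match the paper's ``lower bound'' section almost line for line.

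The one substantive difference is in how the upper bound is closed. The paper does not build the equalised filter directly; instead it bounds $\max_{\xi\in\Xi}\Delta(\xi,\hat A_N)\le P\max_{\lambda}|A_N(e^{i\lambda})-C(e^{i\lambda})|^{2}$ for any $C$ with only negative Fourier modes, and then quotes as a black box the Grenander--Szeg\H{o} result that $\min_{C}\max_{\lambda}|A_N-C|^{2}$ equals the largest eigenvalue of the Hermitian matrix $G_N(p,q)=\sum_{u\le\min(p,q)}a(N-p+u)\overline{a(N-q+u)}$, and observes $G_N(N-p,N-q)=Q_N(p,q)$, so this eigenvalue is again $\nu_N^{2}$. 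Your equalisation argument and your AAK/Nehari remark are precisely how one \emph{proves} that Grenander--Szeg\H{o} fact, so the two routes coincide at the level of ideas; the paper simply imports the result rather than deriving the constant-modulus property of the extremal $h^{0}$. Your formulation makes the analytic content of the upper bound more transparent, while the paper's citation keeps the proof shorter.
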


 \textit{Proof.} Lower bound. Denote by $ \Xi_{R} $ the class of all regular  stationary sequences which satisfy conditions \eqref{GrindEQ__3_1_}. Since $ \Xi_{R} \subset \Xi $, we have
 \begin{equation} \label{GrindEQ__3_2_}
 \mathop{ \max} \limits_{ \xi \in \Xi} \mathop{ \min} \limits_{ \hat{A}_{N} \in \Lambda} \Delta ( \xi , \hat{A}_{N} ) \ge \mathop{ \max} \limits_{ \xi \in \Xi_{R}} \mathop{ \min} \limits_{ \hat{A}_{N} \in \Xi_{R}} \Delta ( \xi , \hat{A}_{N} ).
 \end{equation}
A regular stationary sequence \label{regposl} ${\xi}(j)$ admits the
canonical representation as an  one-sided moving average sequence
 \begin{equation} \label{GrindEQ__3_3_}
{ \xi}(j)= \sum_{u=- \infty}^{j} \varphi (j-u) { \varepsilon}(u) ,
 \end{equation}
where ${ \varepsilon}(u) $
is a standard stationary stochastic sequence with
orthogonal values, $ \{\varphi (u): u=0, 1, \ldots \}$ are coefficients of the canonical
representation \eqref{GrindEQ__3_3_}.  The
sequence $ { \xi}(j) \in \Xi_{R} $ is determined if there are
determined coefficients $ \left \{  \varphi (u): u=0, 1, \ldots \right \}$ such that
\[
 E| { \xi}(j) | ^{2} =   \left| \sum_{u=-
\infty}^{j}  \varphi (j-u) \varepsilon (u) \right|^{2}=
\]
 \[ =  \sum_{u,v=- \infty}^{j}  \varphi (j-u) \overline{ \varphi (j-v)}E \varepsilon(u) \overline{ \varepsilon(v)}= \]
 \begin{equation} \label{GrindEQ__3_4_}
 =  \sum_{u=- \infty}^{j}  \left| \varphi (j-u) \right|^{2}= \sum_{u=0}^{ \infty}   \left| \varphi (u) \right|^{2}\le P.
 \end{equation}
The value of the mean-square error $E \left|A_{N} { \xi}- \hat{A}_{N} { \xi} \right|^{2} $ attains its minimum if we choose an estimate $ \hat{A}_{N} { \xi}$ of the form
 \[ \hat{A}_{N} { \xi}= \sum_{j=0}^{N} {a}(j) \hat{ \xi}(j), \]
where $ \hat{ { \xi}}(j)$ is the optimal estimate of the value
of $ { \xi}(j)$ based on observations of the sequence $ {
\xi}(p)$ at points $p=-1,-2,\dots$. Taking into consideration the canonical representation
\eqref{GrindEQ__3_3_} of the regular sequence and the form of the
optimal estimates of its values
 \begin{equation} \label{GrindEQ__3_5_}
 \hat{ { \xi}}(j)= \sum_{u=- \infty}^{-1} \varphi (j-u){\varepsilon}(u),
 \end{equation}
we can write
 \[ \mathop{ \min} \limits_{ \hat{A}_{N} \in \Lambda} E \left|A_{N} { \xi}- \hat{A}_{N} { \xi} \right|^{2} = \]
 \[= \sum_{i,j=0}^{N} a (i) \overline{a(j)} \sum_{u=0}^{i}  \sum_{v=0}^{j} \;  \varphi (i-u) \overline{ \varphi (j-v)}\,E \varepsilon (u) \overline{ \varepsilon (v)} \]
 \[= \sum_{i,j=0}^{N} \; a (i) \overline{a (j)} \; \sum_{u=0}^{ \min (i,j)}  \varphi (i-u) \overline{ \varphi (j-u)}  = \]
 \begin{equation} \label{GrindEQ__3_6_}
= \sum_{i,j=0}^{N} a(i) \overline{a(j)}R(i,j),
 \end{equation}
where
 \[R(i,j)= \sum_{u=0}^{ \min (i,j)}  \varphi (i-u) \overline{ \varphi (j-u)}.
 \]
The change of variables $p=i-u, \; q=j-u$, gives us a possibility to write \eqref{GrindEQ__3_6_} in a different form
 \begin{equation} \label{GrindEQ__3_7_}
 \mathop{ \min} \limits_{ \hat{A}_{N} \in \Lambda} E \left|A_{N} { \xi}- \hat{A}_{N} { \xi} \right|^{2} = \sum_{p,q=0}^{N}  \varphi (p) \overline{ \varphi (q)} \; Q_N (p,q),
 \end{equation}

\noindent where
 \begin{equation} \label{GrindEQ__3_8_}
Q_N (p,q)= \sum_{u=0}^{ \min (N-p,N-q)}a(p+u) \overline{a(q+u)}  .
 \end{equation}

 \noindent Denote by $Q_{N}$ the operator in the space ${\mathbb C}^{(N+1)}$,
determined by the matrix
$ \left \{Q_{N} (p,q) \right \}_{p,q=0}^{N} $. The operator $Q_{N} $ is
selfadjoint (its matrix is Hermitian) bounded operator. It can be
represented in the form $Q_{N} =A_{N} \cdot A_{N}^{*}$, where the
operator $A_{N}$ is determined by the matrix  $ \left \{A_{N} (p,q) \right \}_{p,q=0}^{N} $ with
 \[
 A_{N} (p,q)= \left \{
 \begin{array}{cc} {a}(p+q),& \; p+q \le N, \\ {0},& \; p+q>N.
 \end{array}
 \right.
 \]

 \noindent For this reason the operator $Q_{N} $ has nonnegative eigenvalues \cite{Akhieser}, \cite{Riesz}.
 It follows from \eqref{GrindEQ__3_7_}, that $ \varphi (p)=0$ for $p>N$. Denote by
 \[
 \vec{{\varphi}}=\{\tilde{ \varphi}(0), \tilde{ \varphi}(1),\dots,\tilde{ \varphi}(N)\},\quad
 \tilde{ \varphi}(p)=P^{-1/2} \varphi (p). \]

 \noindent Condition \eqref{GrindEQ__3_4_} with this notation is of
the form
 \begin{equation} \label{GrindEQ__3_9_}
 \left \| \vec{ \varphi} \right \|^{2}= \sum_{p=0}^{N} | \tilde{ \varphi}(p) |^{2} \le 1 ,
 \end{equation}
where $ \left \| \vec{ \varphi} \right \| $ is the norm in the space
${\mathbb C}^{(N+1)}$. Taking into consideration
\eqref{GrindEQ__3_7_} and
 \eqref{GrindEQ__3_9_}, we can write
 \[ \mathop{ \min} \limits_{ \hat{A}_{N} \in \Lambda} \Delta ( \xi , \hat{A}_{N} )=P \left \langle Q_{N} \vec{ \varphi}, \vec{ \varphi} \right \rangle , \]
where $ \left \langle \cdot , \cdot \right \rangle $ is the inner
product in the space ${\mathbb C}^{(N+1)}$.

Taking into account \eqref{GrindEQ__3_2_}, we  get the following
bound from below for the maximum value of the error \cite{Dunford}, \cite{Gould},
 \cite{Riesz}
 \begin{equation} \label{GrindEQ__3_10_}
 \mathop{ \max} \limits_{ \xi \in \Xi} \mathop{ \min} \limits_{ \hat{A}_{N} \in \Lambda} \Delta ( \xi , \hat{A}_{N} ) \ge P \mathop{ \max} \limits_{ \left \| \vec{ \varphi} \right \| \le 1} \left \langle Q_{N} \vec{ \varphi}, \vec{ \varphi} \right \rangle =P \nu_{N}^{2} ,
 \end{equation}
where $ \nu_{N}^{2} $ is the greatest eigenvalue of the operator
$Q_{N}$.

Upper bound. To find the upper bound of the minimax values of the error we use the inequality
 \begin{equation} \label{GrindEQ__3_11_}
 \mathop{ \min} \limits_{ \hat{A}_{N} \in \Lambda} \mathop{ \max} \limits_{ \xi \in \Xi} \Delta ( \xi , \hat{A}_{N} ) \le \mathop{ \min} \limits_{ \hat{A}_{N} \in \Lambda_{1}} \mathop{ \max} \limits_{ \xi \in \Xi} \Delta ( \xi , \hat{A}_{N} ),
 \end{equation}
where $ \Lambda_{1}$ is the class of all linear estimates of the functional $A_{N} { \xi}$, which have the form
 \begin{equation} \label{GrindEQ__3_12_}
 \hat{A}_{N} { \xi}= \sum_{j=- \infty}^{-1} {c}(j) { \xi}(j).
 \end{equation}
Here $ {c}(j)$ are complex-valued coefficients such that
 \[ \sum_{j=- \infty}^{-1} | {c}(j) |  ^{2} < \infty . \]
Taking into consideration the spectral representations of the
stationary stochastic sequence $\xi(j)$ and the correlation
function of the sequence $\xi(j)$ , we can write
 \[ \Delta ( \xi , \hat{A}_{N} )=E \left|A_{N} { \xi}- \hat{A}_{N} { \xi} \right|^{2} =E \left| \sum_{j=0}^{N} {a}(j){ \xi}(j)- \sum_{j=- \infty}^{-1} {c}(j){ \xi}(j) \right|^{2} = \]
 \[= \int_{- \pi}^{ \pi} |A_{N} (e^{i \lambda} )-C(e^{i \lambda} ) |^{2} F_{\xi}(d \lambda ), \]
where
 \[A_{N} (e^{i \lambda} )= \sum_{j=0}^{N} {a}(j)e^{ij \lambda}, \quad C(e^{i \lambda} )= \sum_{j=- \infty}^{-1} {c}(j)e^{ij \lambda}  . \]
Here $F_{\xi}(d \lambda )$ is the spectral measure \label{spektrmira}
of the stationary stochastic sequence.
Restriction \eqref{GrindEQ__3_1_} is equivalent to the restriction
 \begin{equation} \label{GrindEQ__3_14_}
 \mathop{\max} \limits_{ \xi \in \Xi}\int_{- \pi}^{ \pi}  \, F_{\xi}(d \lambda ) \le P.
 \end{equation}
So we can  write
 \[ \mathop{ \max} \limits_{ \xi \in \Xi} \Delta ( \xi , \hat{A}_{N} )=
\mathop{ \max} \limits_{ \xi \in \Xi} \int_{- \pi}^{ \pi} |A_{N} (e^{i \lambda} )-C(e^{i \lambda} ) |^{2} F_{\xi}(d \lambda) \le \]
  \[
  \le
 \left(\mathop{ \max} \limits_{ \lambda \in [- \pi , \pi ]}
|A_{N} (e^{i \lambda} )-C(e^{i \lambda} ) |^{2}\right)
\mathop{\max} \limits_{ \xi \in \Xi} \int_{- \pi}^{ \pi}  F_{\xi}(d
\lambda )
\]
 \[
  \le
 P \mathop{ \max} \limits_{ \lambda \in [- \pi , \pi ]}
|A_{N} (e^{i \lambda} )-C(e^{i \lambda} ) |^{2}.
\]

\noindent
To calculate
 \[ \mathop{ \max} \limits_{ \lambda \in [- \pi , \pi ]} | A_{N} (e^{i \lambda} )-C(e^{i \lambda} ) | ^{2} \]
consider the class of all  power series
 \[ {f}(z)= \sum_{n=0}^{ \infty} { \alpha}(n)z^{n}  , \]
which are regular in the region $ \left|z \right|<1$ and have fixed first
$N+1$ coefficients
\[ { \alpha}(n)={d}(n),\;n=0,1,\dots,N.
\]
Denote by $ \rho_{N}^{2} $ the greatest eigenvalue of the matrix
 \[H= \left \{H(p,q) \right \}_{p,q=0}^{N}, \]
 \[ H(p,q)= \sum_{j=0}^{ \min (p,q)} {d}(p-j) \overline{d}(q-j), \quad
p,q= 0,1,\dots,N . \]
It follows from the properties of the power series, that \cite{GrenanderS}
 \[ \mathop{ \min} \limits_{ \left \{ { \alpha}(n):n \ge N+1 \right \}} \mathop{ \max} \limits_{ \left|z \right|=1} \left | {f}(z) \right | ^{2} = \rho_{N}^{2} . \]
Since in our case
 \[ {d}(p)= {a}(N-p), \; p= 0,1,\dots,N,
  \]
we have to determine the greatest eigenvalue of the
matrix
 \[G_{N}= \left \{G_{N}(p,q) \right \}_{p,q=0}^{N},
  \]
 \[ G_{N}(p,q)= \sum_{u=0}^{ \min
(p,q)} {a}(N-p+u) \overline{a}(N-q+u). \]

\noindent Denote this greatest eigenvalue by $ \omega_{N}^{2} $. With this
notations we have
 \[ \mathop{ \min} \limits_{ \hat{A}_{N} \in \Lambda_{1}} \mathop{ \max} \limits_{ \xi \in \Xi} \Delta ( \xi , \hat{A}_{N} ) \le P \omega_{N}^{2}. \]
Taking into account \eqref{GrindEQ__3_11_}, we get
 \begin{equation} \label{GrindEQ__3_15_}
 \mathop{ \min} \limits_{ \hat{A}_{N} \in \Lambda} \mathop{ \max} \limits_{ \xi \in \Xi} \Delta ( \xi , \hat{A}_{N} ) \le P \omega_{N}^{2}.
 \end{equation}
Note, that
 \[G_{N} (N-p,N-q)=Q_{N} (p,q). \]
For this reason $ \omega_{N}^{2} = \nu_{N}^{2} $. Comparing \eqref{GrindEQ__3_10_} and \eqref{GrindEQ__3_15_}, we get
 \begin{equation} \label{GrindEQ__3_16_}
 \mathop{ \min} \limits_{ \hat{A}_{N} \in \Lambda} \mathop{ \max} \limits_{ \xi \in \Xi} \Delta ( \xi , \hat{A}_{N} ) \le \mathop{ \max} \limits_{ \xi \in \Xi} \mathop{ \min} \limits_{ \hat{A}_{N} \in \Lambda} \Delta ( \xi , \hat{A}_{N} ).
 \end{equation}
Since the opposite inequality always holds true, only equality is possible in \eqref{GrindEQ__3_16_}.
Proof is complete.

From the proof of the theorem a construction of the optimal minimax estimate follows.

 \begin{cor}
The optimal minimax estimate $ \hat{A}_{N} { \xi}$ of the functional $A_{N} { \xi}$ is of the form

 \[ \hat{A}_{N} { \xi}= \sum_{j=0}^{N} {a}(j) \left( \sum_{u=j-N}^{-1} \varphi(j-u) { \varepsilon}(u) \right), \]
where $ {\varepsilon}(u) $
is a  stationary sequence with orthogonal values, the sequence $
\{\varphi (u),u=0,1,\dots,N\}$ is
uniquely determined by coordinates of the eigenvector of the
operator $Q_{N} $ that corresponds to the greatest eigenvalue $
\nu_{N}^{2} $ and condition $ E\left | { \xi}(j) \right | ^{2}
=P$.
 \end{cor}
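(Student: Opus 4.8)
The plan is to read the optimal estimate off the equilibrium already established in the proof of Theorem~\ref{theo.1.1}. Recall from that proof that the game has value $P\nu_N^{2}$, that the least favourable element of $\Xi$ is the one-sided moving average sequence
\[
\xi^{*}(j)=\sum_{u=j-N}^{j}\varphi(j-u)\varepsilon(u)=\sum_{k=0}^{N}\varphi(k)\varepsilon(j-k),
\]
whose coefficients $\varphi(0),\dots,\varphi(N)$ are the coordinates of the eigenvector of $Q_N$ associated with its largest eigenvalue $\nu_N^{2}$, scaled so that $\sum_{k=0}^{N}|\varphi(k)|^{2}=\|\xi^{*}(j)\|^{2}=P$, and that this sequence attains $\min_{\hat A_N\in\Lambda}\Delta(\xi^{*},\hat A_N)=P\langle Q_N\vec{\varphi},\vec{\varphi}\rangle=P\nu_N^{2}=\max_{\xi\in\Xi}\min_{\hat A_N\in\Lambda}\Delta(\xi,\hat A_N)$. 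The two remaining tasks are: (i) to compute the estimate $\hat A_N^{*}$ realizing this inner minimum; (ii) to verify, via the saddle-point inequalities, that $\hat A_N^{*}$ is a minimax estimate.

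For step (i) I would use that the moving average sequence $\xi^{*}$ is regular, so $H_{-1}(\xi^{*})=H_{-1}(\varepsilon)$ and the optimal linear estimate of $\xi^{*}(j)$, $j\ge 0$, from the observations $\{\xi^{*}(p):p\le-1\}$ is the projection of $\xi^{*}(j)$ onto $H_{-1}(\varepsilon)$, obtained by deleting from $\sum_{k=0}^{N}\varphi(k)\varepsilon(j-k)$ all innovations indexed by $j-k\ge0$, i.e.\ keeping only $k\ge j+1$:
\[
\hat\xi^{*}(j)=\sum_{u=j-N}^{-1}\varphi(j-u)\varepsilon(u)
\]
(an empty sum, hence $0$, when $j=N$). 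Since $A_N\xi^{*}$ is a finite linear combination, its projection onto $H_{-1}(\xi^{*})$ is $\hat A_N^{*}\xi=\sum_{j=0}^{N}a(j)\hat\xi^{*}(j)=\sum_{j=0}^{N}a(j)\sum_{u=j-N}^{-1}\varphi(j-u)\varepsilon(u)$; regrouping by $u$ writes this as $\sum_{u=-N}^{-1}c(u)\varepsilon(u)$, a linear estimate built from the past observations, hence an element of $\Lambda$. This is exactly the expression in the statement, with $\{\varphi(u)\}$ pinned down by the eigenvector of $Q_N$ together with the normalization $E|\xi(j)|^{2}=P$.

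For step (ii) I would argue that Theorem~\ref{theo.1.1} guarantees a saddle point $(\xi_0,\hat A_0)$, so $\max_{\xi}\Delta(\xi,\hat A_0)=P\nu_N^{2}$; then $\Delta(\xi^{*},\hat A_0)\le P\nu_N^{2}=\min_{\hat A_N}\Delta(\xi^{*},\hat A_N)\le\Delta(\xi^{*},\hat A_0)$, so equality holds throughout and $\hat A_0$ also minimizes $\Delta(\xi^{*},\cdot)$ over $\Lambda$. That minimizer is, as an element of $H$, the orthogonal projection of $A_N\xi^{*}$ onto $H_{-1}(\xi^{*})$, hence coincides with the $\hat A_N^{*}$ of step (i); consequently $\max_{\xi}\Delta(\xi,\hat A_N^{*})=\max_{\xi}\Delta(\xi,\hat A_0)=P\nu_N^{2}=\min_{\hat A_N}\max_{\xi}\Delta(\xi,\hat A_N)$, so $\hat A_N^{*}$ is minimax. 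The main obstacle is the matching carried out in step (ii): one has to be careful that the truncated Wold expansion of $\xi^{*}$ computed from the maximin side is literally the second coordinate of the equilibrium produced on the minimax side (so that its worst-case error over $\Xi$ is $P\nu_N^{2}$ rather than merely $\le$ it for $\xi=\xi^{*}$); once the uniqueness of the orthogonal projection is invoked this is immediate, and the index bookkeeping in step (i) is then purely routine.
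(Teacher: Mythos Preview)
Your proposal is correct and is exactly the elaboration the paper has in mind: the paper gives no separate proof for this corollary beyond the sentence ``From the proof of the theorem a construction of the optimal minimax estimate follows,'' and your two steps spell out precisely that construction. Step~(i) is the projection computation already appearing in the lower-bound part of the proof of Theorem~\ref{theo.1.1} (equations \eqref{GrindEQ__3_3_}--\eqref{GrindEQ__3_7_}), and step~(ii) is the standard saddle-point argument that the second coordinate of an equilibrium must be the best response to the first, hence the orthogonal projection onto $H_{-1}(\xi^{*})$.

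One small caveat worth making explicit in your write-up: in step~(ii) you identify $\hat A_0$ with $\hat A_N^{*}$ via uniqueness of the orthogonal projection \emph{as a random variable in $H$}, and then transfer the minimax property. Strictly speaking this identifies the minimax estimate only through its action on the least favourable sequence $\xi^{*}$, which is exactly how the corollary is phrased (in terms of the innovations $\varepsilon(u)$ of $\xi^{*}$); it does not by itself pin down a unique spectral characteristic in $\bigcap_{\xi\in\Xi}L_2^{-}(F_\xi)$. The paper is equally informal on this point, so your level of rigor matches the source, but you might add one sentence noting that the formula describes the equilibrium estimate as it acts on the equilibrium sequence.
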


 \begin{prikl} \label{prikl.1.1}
Consider the problem of optimal linear stimulation of the functional
 \[A_{1} { \xi}= \xi (0)+  \xi (1) \]
that depends on the unknown values of a stationary sequence $ { \xi}(j)$, that satisfies the conditions
 \[ E{ \xi}(j)=0,\quad E\left | { \xi}(j) \right |^{2} \le 1, \]
based on observations of the sequence $ { \xi}(j)$ at points $j=-1,-2,\dots$.

\noindent Eigenvalues of the operator $Q_{1} $, determined by
equation \eqref{GrindEQ__3_8_}, are equal to $3 \pm \sqrt{5} $.
So the greatest eigenvalue is $ \nu_{1}^{2} =3+ \sqrt{5} $. The eigenvector corresponding to the eigenvalue $ \nu_{1}^{2} =3+ \sqrt{5} $ is of the
form $ \vec\varphi = \left \{ \varphi (0), \varphi(1) \right \}$, where
 \[ \varphi (0)= \sqrt{(5+ \sqrt{5} )/10},\quad \varphi (1)= \sqrt{(5- \sqrt{5} )/10} . \]

\noindent The least favourable stationary sequence $ { \xi}(j)$ is a moving average sequence of the form
 \[ { \xi}(j)= \varphi (0) \varepsilon (j)+ \varphi (1) \varepsilon (j-1)= \]
 \[= \sqrt{(5+ \sqrt{5} )/10} \,\,\varepsilon(j) +
 \sqrt{(5- \sqrt{5} )/10}\,\,\varepsilon(j+1). \]

\noindent The optimal linear minimax estimate $ \hat{A}_{1} { \xi}$ of the functional $A_{1} { \xi}$ is of the form
 \[ \hat{A}_{1} { \xi}=  \varphi (1)\,\, \varepsilon (-1)= \sqrt{(5- \sqrt{5} )/10}\,\, \varepsilon (-1). \]

\noindent The mean-square error of the optimal estimate of the functional
$A_{1} { \xi}$ does not exceed $3+ \sqrt{5}$.
 \end{prikl}

\subsection{The maximum value of the mean square error of estimate of the functional $A {\xi}$}

In this subsection the maximum value of the mean square error of the optimal linear estimate of the functional
\[A { \xi} = \sum_ {j = 0} ^ { \infty}  {a} (j)  { \xi} (j)\]
 which depend on unknown values of a stationary stochastic sequence $ { \xi} (j)$ from the class $ \Xi $ is found.

We will suppose that the sequence $ \{ {a}(j): j=0,1, \ldots \}$ which determines the functional $A  { \xi} $ satisfies conditions
 \begin{equation} \label{GrindEQ__3_17_}
  \sum_{j=0}^{ \infty} \left|a (j) \right|  < \infty , \quad \sum_{j=0}^{ \infty}(j+1) \left | {a}(j) \right | ^{2}  < \infty.
 \end{equation}

 \begin{teo} \label{theo1.2}
The function $ \Delta ( \xi , \hat{A})=E \left|A{ \xi}- \hat{A} { \xi} \right|^{2}$ has a saddle point on the set $ \Xi \times \Lambda $. The following relation holds true
 \[ \mathop{ \min} \limits_{ \hat{A} \in \Lambda} \mathop{ \max} \limits_{ \xi \in \Xi} \Delta ( \xi, \hat{A})= \mathop{ \max} \limits_{ \xi \in \Xi} \mathop{ \min} \limits_{ \hat{A} \in \Lambda} \Delta ( \xi , \hat{A})=P \nu ^{2} . \]

\noindent The least favourable in the class $ \Xi $ of
stationary stochastic sequences satisfying conditions
\eqref{GrindEQ__3_1_} for the optimal linear estimation of the
functional $A { \xi}$  is a  moving average
sequence
 \[ { \xi}(j)= \sum_{u=- \infty}^{j} \varphi(j-u) { \varepsilon}(u). \]
Here $ \nu^{2} $ is the greatest eigenvalue and $ \vec\varphi= \left \{
\varphi(u): u=0,1,\dots\right\}$ is the corresponding eigenvector
of the selfadjoint compact operator in the space $ \ell_{2} $ determined
 by the matrix
 \[Q= \left \{Q(p,q) \right \}_{p,q=0}^{ \infty}, \quad Q(p,q)= \sum_{u=0}^{ \infty} {a}(p+u) \overline{a}(q+u) , \]
$ { \varepsilon}(u)$ is a  stationary sequence with orthogonal values.
\end{teo}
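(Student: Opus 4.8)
The plan is to reproduce, mutatis mutandis, the two‑sided argument used for Theorem~\ref{theo.1.1}: I would prove separately that $\max_{\xi\in\Xi}\min_{\hat A\in\Lambda}\Delta(\xi,\hat A)\ge P\nu^{2}$ and that $\min_{\hat A\in\Lambda}\max_{\xi\in\Xi}\Delta(\xi,\hat A)\le P\nu^{2}$; since $\max_{\xi}\min_{\hat A}\Delta\le\min_{\hat A}\max_{\xi}\Delta$ always holds, the two bounds force all three quantities to coincide, which is the saddle‑point assertion. Before doing this I would verify that the matrix $Q=\{Q(p,q)\}_{p,q=0}^{\infty}$ really defines a self‑adjoint, nonnegative, compact operator on $\ell_{2}$, so that $\nu^{2}=\|Q\|$ is an eigenvalue attained at some $\vec\varphi\in\ell_{2}$. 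Self‑adjointness follows from $Q(q,p)=\overline{Q(p,q)}$; the factorization $Q=AA^{*}$ with $A(p,q)=a(p+q)$ gives $Q\ge0$; and, putting $b_{p}=\sum_{k\ge p}|a(k)|^{2}$, the Cauchy--Schwarz inequality yields $|Q(p,q)|^{2}\le b_{p}b_{q}$ while $\sum_{p}b_{p}=\sum_{k}(k+1)|a(k)|^{2}<\infty$ by the second condition in~\eqref{GrindEQ__3_17_}, so $\sum_{p,q}|Q(p,q)|^{2}\le\bigl(\sum_{k}(k+1)|a(k)|^{2}\bigr)^{2}<\infty$ and $Q$ is Hilbert--Schmidt, hence compact.

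For the lower bound I would restrict to the subclass $\Xi_{R}\subset\Xi$ of regular sequences. Such a $\xi$ has the canonical one‑sided moving average representation $\xi(j)=\sum_{u=-\infty}^{j}\varphi(j-u)\varepsilon(u)$ with $\sum_{u=0}^{\infty}|\varphi(u)|^{2}=E|\xi(j)|^{2}\le P$, and the optimal linear estimate of $\xi(j)$, $j\ge0$, from the observations $\{\xi(p),\ p=-1,-2,\dots\}$ is $\hat\xi(j)=\sum_{u=-\infty}^{-1}\varphi(j-u)\varepsilon(u)$; the minimum of $\Delta(\xi,\hat A)$ over all of $\Lambda$ is attained at $\hat A\xi=\sum_{j=0}^{\infty}a(j)\hat\xi(j)$. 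Then the same computation and the change of variables $p=i-u,\ q=j-u$ as in~\eqref{GrindEQ__3_6_}--\eqref{GrindEQ__3_8_}, now with infinite sums, give $\min_{\hat A\in\Lambda}\Delta(\xi,\hat A)=\sum_{p,q=0}^{\infty}\varphi(p)\overline{\varphi(q)}\,Q(p,q)=P\langle Q\vec{\tilde\varphi},\vec{\tilde\varphi}\rangle$, where $\vec{\tilde\varphi}=P^{-1/2}\vec\varphi$ and $\|\vec{\tilde\varphi}\|_{\ell_{2}}\le1$. Maximizing over such $\vec{\tilde\varphi}$ gives $\max_{\xi\in\Xi_{R}}\min_{\hat A\in\Lambda}\Delta(\xi,\hat A)=P\max_{\|\vec\psi\|\le1}\langle Q\vec\psi,\vec\psi\rangle=P\nu^{2}$, the maximum being attained at the top eigenvector of $Q$; normalized by $E|\xi(j)|^{2}=P$ this eigenvector yields precisely the least favourable moving average sequence stated in the theorem. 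As $\Xi_{R}\subset\Xi$, this proves $\max_{\xi\in\Xi}\min_{\hat A\in\Lambda}\Delta(\xi,\hat A)\ge P\nu^{2}$.

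For the upper bound I would pass to the subclass $\Lambda_{1}\subset\Lambda$ of estimates $\hat A\xi=\sum_{j=-\infty}^{-1}c(j)\xi(j)$. Using the spectral representation and the fact that~\eqref{GrindEQ__3_1_} is equivalent to $\int_{-\pi}^{\pi}F_{\xi}(d\lambda)\le P$, exactly as for Theorem~\ref{theo.1.1} one obtains $\max_{\xi\in\Xi}\Delta(\xi,\hat A)\le P\sup_{\lambda}|A(e^{i\lambda})-C(e^{i\lambda})|^{2}$ with $C(e^{i\lambda})=\sum_{j\le-1}c(j)e^{ij\lambda}$, so it suffices to show $\inf_{C}\sup_{\lambda}|A(e^{i\lambda})-C(e^{i\lambda})|^{2}\le\nu^{2}$. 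Here I would invoke the finite case: for the truncated functional $A_{N}\xi$, the proof of Theorem~\ref{theo.1.1} (the identity $G_{N}(N-p,N-q)=Q_{N}(p,q)$ and the computation $\omega_{N}^{2}=\nu_{N}^{2}$) provides a $C_{N}(e^{i\lambda})=\sum_{j\le-1}c_{N}(j)e^{ij\lambda}$, $\sum|c_{N}(j)|^{2}<\infty$, with $\sup_{\lambda}|A_{N}(e^{i\lambda})-C_{N}(e^{i\lambda})|^{2}=\nu_{N}^{2}$. Since $\sup_{\lambda}|A(e^{i\lambda})-A_{N}(e^{i\lambda})|\le\sum_{j>N}|a(j)|\to0$ by the first condition in~\eqref{GrindEQ__3_17_}, the triangle inequality gives $\sup_{\lambda}|A(e^{i\lambda})-C_{N}(e^{i\lambda})|\le\nu_{N}+\sum_{j>N}|a(j)|$. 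Finally $\nu_{N}\to\nu$: regarding $Q_{N}$ as an operator on $\ell_{2}$ extended by zero, $\|Q-Q_{N}\|_{\mathrm{HS}}\to0$, because on the block $p,q\le N$ one has $Q(p,q)-Q_{N}(p,q)=\sum_{u>\min(N-p,N-q)}a(p+u)\overline{a(q+u)}\to0$ with the summable majorant $\sqrt{b_{p}b_{q}}$ (dominated convergence), while the complementary block is a tail of the convergent series $\sum_{p,q}|Q(p,q)|^{2}$. Hence $\inf_{C}\sup_{\lambda}|A(e^{i\lambda})-C(e^{i\lambda})|^{2}\le\lim_{N\to\infty}\bigl(\nu_{N}+\sum_{j>N}|a(j)|\bigr)^{2}=\nu^{2}$, so $\min_{\hat A\in\Lambda}\max_{\xi\in\Xi}\Delta(\xi,\hat A)\le P\nu^{2}$, and together with the lower bound and the automatic inequality $\max\min\le\min\max$ all three expressions equal $P\nu^{2}$.

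The step I expect to be the main obstacle is the infinite‑dimensional part: establishing compactness of $Q$ and, above all, transferring the finite‑$N$ minimax value to the limit. The naive bound $E|A\xi-\hat A_{N}\xi|^{2}\le2E|A\xi-A_{N}\xi|^{2}+2E|A_{N}\xi-\hat A_{N}\xi|^{2}$ loses a constant factor $2$ and is useless for an exact saddle point; that is why the comparison between the functionals $A\xi$ and $A_{N}\xi$ must be carried out at the level of the uniform norm $\sup_{\lambda}|A(e^{i\lambda})-C_{N}(e^{i\lambda})|$, where the truncation error merely adds and therefore vanishes in the limit. Both conditions in~\eqref{GrindEQ__3_17_} are used exactly once and for different purposes: $\sum_{j}|a(j)|<\infty$ makes $A(e^{i\lambda})$ continuous and $\sup_{\lambda}|A(e^{i\lambda})-A_{N}(e^{i\lambda})|\to0$, whereas $\sum_{j}(j+1)|a(j)|^{2}<\infty$ makes $Q$ Hilbert--Schmidt so that $\nu^{2}$ is an attained eigenvalue.
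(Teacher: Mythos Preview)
Your proof is correct and follows the same two‑sided strategy as the paper: lower bound via regular sequences and the Rayleigh quotient of $Q$, upper bound by reduction to the finite‑$N$ case of Theorem~\ref{theo.1.1} together with $\nu_{N}\to\nu$ from Hilbert--Schmidt convergence $Q_{N}\to Q$.

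The one place where you differ from the paper is the passage to the limit in the upper bound. The paper simply asserts the equality
\[
\min_{\hat A\in\Lambda}\max_{\xi\in\Xi}\Delta(\xi,\hat A)=\lim_{N\to\infty}\min_{\hat A_{N}\in\Lambda}\max_{\xi\in\Xi}\Delta(\xi,\hat A_{N})
\]
and then invokes Theorem~\ref{theo.1.1} on the right‑hand side, without explaining why the minimax value for the functional $A\xi$ equals the limit of the minimax values for $A_{N}\xi$. You instead make this step constructive: you take the $C_{N}$ achieving $\sup_{\lambda}|A_{N}(e^{i\lambda})-C_{N}(e^{i\lambda})|=\nu_{N}$ from the finite case and apply the triangle inequality in the uniform norm, using $\sum_{j}|a(j)|<\infty$ to control $\sup_{\lambda}|A-A_{N}|$. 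This is a cleaner justification of the limiting step; it also makes transparent exactly where each of the two hypotheses in~\eqref{GrindEQ__3_17_} is used. Your treatment of $\|Q-Q_{N}\|_{\mathrm{HS}}\to0$ (splitting into the on‑block part handled by dominated convergence and the off‑block tail) is likewise more detailed than the paper's one‑line formula $N(Q-Q_{N})=\sum_{p>N}(p+1)|a(p)|^{2}$, which accounts only for the off‑block contribution.
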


\textit{Proof.} Lower bound. Let $ \xi \in \Xi_{R} $. In this
case the following inequality holds true

 \begin{equation} \label{GrindEQ__3_18_}
 \mathop{ \max} \limits_{ \xi \in \Xi} \mathop{ \min} \limits_{ \hat{A} \in \Lambda} \Delta ( \xi , \hat{A}) \ge \mathop{ \max} \limits_{ \xi \in \Xi_{R}} \mathop{ \min} \limits_{ \hat{A} \in \Lambda} \Delta ( \xi , \hat{A} ).
 \end{equation}
Making use the canonical representation of the regular
stationary sequence \eqref{GrindEQ__3_3_} as a   moving
average sequence and form
 \eqref{GrindEQ__3_5_} of the optimal estimate we have
 \[ \mathop{ \min} \limits_{ \hat{A} \in \Lambda} \Delta ( \xi , \hat{A} )= \mathop{ \min} \limits_{ \hat{A} \in \Lambda} E \left|A { \xi}- \hat{A} { \xi} \right|^{2} =
 \]
 \[
=E \left| \sum_{j=0}^{ \infty} {a}(j) \sum_{u=0}^{j} \varphi (j-u) { \varepsilon}(u) \right|^{2} = \]
 \begin{equation} \label{GrindEQ__3_19_}
= \sum_{p,q=0}^{ \infty} \;  \varphi (p) \overline{ \varphi (q)}Q (p,q),
 \end{equation}
\noindent where
 \begin{equation} \label{GrindEQ__3_20_}
Q(p,q)=  \sum_{u=0}^{ \infty} {a}(p+u) \overline{a(q+u)}.
 \end{equation}

 \noindent Denote by $Q$ the operator in the space $ \ell_{2}$,
determined by the matrix  $Q=
\left \{Q(p,q) \right \}_{p,q=0}^{ \infty}.$ Since the second
condition from \eqref{GrindEQ__3_17_} is satisfied and
 \[ \sum_{p,q=0}^{ \infty} \left | Q(p,q) \right | ^{2}  =  \sum_{p,q=0}^{ \infty} \;  \left| \sum_{u=0}^{ \infty}a (p+u) \overline{a (q+u)} \right|^{2} \le \]
 \[ \le \sum_{p,q=0}^{ \infty} \;  \left( \sum_{u=0}^{ \infty} \left|a (p+u) \right|^{2} \cdot \sum_{u=0}^{ \infty} \left|a (q+u) \right|^{2} \right)= \]
 \[= \left( \sum_{p=0}^{ \infty}  \sum_{u=0}^{ \infty} \left|a (p+u) \right|^{2}
 \right)^{2} =  \left( \sum_{p=0}^{ \infty}(p+1) \left | {a}(p) \right | ^{2} \right)^{2} , \]
\noindent we have
 \[ \left \| Q \right \| \le N(Q) \le \sum_{p=0}^{ \infty}(p+1) \left | {a}(p) \right | ^{2} < \infty, \]

 \noindent where $N(Q)$ is the Hilbert-Schmidt norm \label{normGlSh}
of the operator $Q$. The operator $Q$ is selfadjoint (its matrix is
Hermitian) Hilbert-Schmidt operator.\label{operGlSh} For these
reasons the operator $Q$ is selfadjoint continuous operator
\cite{Akhieser},\cite{Riesz}. It can be represented in the form $Q=A
\cdot A^{*}$, where the operator $A$ is determined by the matrix
 \[A= \left \{A(p,q) \right \}_{p,q=0}^{ \infty},\quad  A(p,q)= {a}(p+q),\, p,q=0,1,\dots,\infty. \]

 \noindent The operator $Q$ has real nonnegative
eigenvalues. Note that the operator $A$
is a Hilbert-Schmidt operator and its Hilbert-Schmidt norm equals to
 \[N(A)= \left( \sum_{p=0}^{ \infty}(p+1) \left | {a}(p) \right | ^{2} \right)^{1/2} . \]

\noindent Let us introduce the notation
 \[ \vec{ \varphi}=\{\tilde{ \varphi}(u): u=0,1,\dots\},\quad  \tilde{ \varphi}(u)= P^{-1/2} \varphi (u) . \]
Making use the introduced notations \eqref{GrindEQ__3_19_} can be written in the form
 \[ \mathop{ \min} \limits_{ \hat{A} \in \Lambda} \Delta ( \xi , \hat{A})=P \left \langle Q \vec{ \varphi}, \vec{ \varphi} \right \rangle . \]
\noindent Taking into consideration \eqref{GrindEQ__3_4_}, we get
 \[ \mathop{ \max} \limits_{ \xi \in \Xi_{R}} \mathop{ \min} \limits_{ \hat{A} \in \Lambda} \Delta ( \xi , \hat{A})=P \mathop{ \max} \limits_{ \left \| \vec{ \varphi} \right \| =1} \left \langle Q \vec{ \varphi}, \vec{ \varphi} \right \rangle =P \nu ^{2} , \]
\noindent where $ \nu^{2}$ is the greatest eigenvalue of the
operator $Q$ and $ \left \langle \cdot , \cdot \right \rangle $ is
the inner product in the space  $ \ell_{2}$.  Making use
\eqref{GrindEQ__3_18_}, we can estimate the maximin value of the
error
 \begin{equation} \label{GrindEQ__3_21_}
 \mathop{ \max} \limits_{ \xi \in \Xi} \mathop{ \min} \limits_{ \hat{A} \in \Lambda} \Delta ( \xi , \hat{A}) \ge P \nu ^{2} .
 \end{equation}

Upper bound. Consider the sequence of operators $Q_{N}$,
determined by matrices \eqref{GrindEQ__3_8_}, \noindent and the operator
$Q$ determined by the matrix with elements \eqref{GrindEQ__3_20_}. Since the second
condition from \eqref{GrindEQ__3_17_} is satisfied, then
 \[N(Q-Q_{N} )= \sum_{p=N+1}^{ \infty}(p+1) \left | {a}(p) \right | ^{2} \to 0, \]
\noindent  as $N \to \infty$. Having in mind that
 \[ \left \| Q-Q_{N} \right \| \le N(Q-Q_{N}), \]
\noindent we have
 \[ \mathop{ \lim} \limits_{N \to \infty} \left \| Q-Q_{N} \right \| =0. \]
\noindent It means that the sequence of operators $Q_{N}$ converge
uniformly to the operator $Q$. That is why
\cite{Dunford}, \cite{Gould}
 \[ \mathop{ \lim} \limits_{N \to \infty} \nu_{N}^{2} = \nu ^{2} , \]
where $ \nu_{N}^{2} $ is the greatest eigenvalue of the operator $Q_{N} $,
and $ \nu ^{2} $ is the greatest eigenvalue of the operator $Q$.
Making use the statement of Theorem \ref{theo.1.1}, we can write
 \begin{equation} \label{GrindEQ__3_22_}
 \mathop{ \min} \limits_{ \hat{A} \in \Lambda} \mathop{ \max} \limits_{ \xi \in \Xi} \Delta ( \xi , \hat{A})=
  \end{equation}
  \[= \mathop{ \lim} \limits_{N \to \infty} \mathop{ \min} \limits_{ \hat{A}_{N} \in \Lambda} \mathop{ \max} \limits_{ \xi \in \Xi} \Delta ( \xi , \hat{A}_{N} )= \mathop{ \lim} \limits_{N \to \infty} P \nu_{N}^{2} =P \nu ^{2} .
\]
Comparing \eqref{GrindEQ__3_22_} and \eqref{GrindEQ__3_21_}, we get
 \[ \mathop{ \min} \limits_{ \hat{A} \in \Lambda} \mathop{ \max} \limits_{ \xi \in \Xi} \Delta ( \xi , \hat{A})=P \nu ^{2} \le \mathop{ \max} \limits_{ \xi \in \Xi} \mathop{ \min} \limits_{ \hat{A} \in \Lambda} \Delta ( \xi , \hat{A}), \]
\noindent where only equality is possible.

This completes the proof of the theorem.

 \begin{cor}
The optimal minimax linear estimate $ \hat{A} { \xi}$ of the functional $A { \xi}$ is of the form

 \[ \hat{A} { \xi}= \sum_{j=0}^{ \infty} {a}(j) \left[ \sum_{u=- \infty}^{-1} \varphi (j-u) { \varepsilon}(u) \right] , \]
\noindent where $ \varepsilon(u)$
is a standard stationary sequence with orthogonal
values, the sequence $ \{\varphi (u): u= 0,1,\dots \}$ is uniquely determined by
coordinates of the eigenvector of the operator $Q$ that corresponds
to the greatest eigenvalue $ \nu^{2} $ and condition $E \left \|
{ \xi}(j) \right \| ^{2} =P$.
 \end{cor}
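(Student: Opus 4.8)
The plan is to read off the minimax estimate as the classical optimal linear estimate for the least favourable sequence, whose explicit form is already supplied by Theorem~\ref{theo1.2}. By that theorem the game has a saddle point $(\xi^{0},\hat A^{0})\in\Xi\times\Lambda$: its first component $\xi^{0}$ is the regular one-sided moving average sequence $\xi^{0}(j)=\sum_{u=-\infty}^{j}\varphi(j-u)\varepsilon(u)$, where $\varepsilon$ is its innovation sequence and $\{\varphi(u):u\ge0\}$ is the eigenvector of $Q$ belonging to the largest eigenvalue $\nu^{2}$, normalised so that $\|\xi^{0}(j)\|^{2}=\sum_{u\ge0}|\varphi(u)|^{2}=P$ (equivalently $\tilde\varphi=P^{-1/2}\varphi$ is the unit eigenvector used in the proof of the theorem). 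The saddle-point relations force $\max_{\xi\in\Xi}\Delta(\xi,\hat A^{0})=\Delta(\xi^{0},\hat A^{0})=\min_{\hat A\in\Lambda}\Delta(\xi^{0},\hat A)$, so $\hat A^{0}$ is simultaneously the minimax estimate and the optimal linear estimate $\mathrm{Proj}\,(A\xi^{0}\mid H_{-1}(\xi^{0}))$ of $A\xi^{0}$ from the observations $\{\xi^{0}(p):p\le-1\}$; it then remains only to compute this projection.

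First I would compute it using the Wold structure of $\xi^{0}$. Since $\xi^{0}$ is regular with innovation $\varepsilon$, we have $H_{-1}(\xi^{0})=H_{-1}(\varepsilon)$, and for $j\ge0$ the decomposition
\[
\xi^{0}(j)=\sum_{u=-\infty}^{-1}\varphi(j-u)\varepsilon(u)+\sum_{u=0}^{j}\varphi(j-u)\varepsilon(u)
\]
splits $\xi^{0}(j)$ into a part lying in $H_{-1}(\varepsilon)$ and a combination of $\varepsilon(0),\dots,\varepsilon(j)$, each orthogonal to $H_{-1}(\varepsilon)$; hence $\hat\xi^{0}(j):=\mathrm{Proj}\,(\xi^{0}(j)\mid H_{-1}(\xi^{0}))=\sum_{u=-\infty}^{-1}\varphi(j-u)\varepsilon(u)$, which is formula~\eqref{GrindEQ__3_5_}, with $\|\hat\xi^{0}(j)\|\le\|\xi^{0}(j)\|=\sqrt P$. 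Because $\sum_{j\ge0}|a(j)|<\infty$ by the first condition in~\eqref{GrindEQ__3_17_}, the series $A\xi^{0}=\sum_{j\ge0}a(j)\xi^{0}(j)$ converges in $L^{2}$, and by continuity of the orthogonal projection $\mathrm{Proj}\,(A\xi^{0}\mid H_{-1}(\xi^{0}))=\sum_{j\ge0}a(j)\hat\xi^{0}(j)=\sum_{j=0}^{\infty}a(j)\sum_{u=-\infty}^{-1}\varphi(j-u)\varepsilon(u)$. This is the $\hat A$-component of the saddle point, hence the minimax estimate, and it has the asserted form, with $\varphi$ determined by the normalised eigenvector. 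The same formula also follows by letting $N\to\infty$ in the estimate $\hat A_{N}\xi=\sum_{j=0}^{N}a(j)\sum_{u=j-N}^{-1}\varphi_{N}(j-u)\varepsilon(u)$ of the corollary to Theorem~\ref{theo.1.1}, using $\|Q-Q_{N}\|\to0$, whence $\nu_{N}^{2}\to\nu^{2}$ and $\varphi_{N}\to\varphi$.

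I expect the only genuine obstacle to be the possible degeneracy of the top eigenvalue. If $\nu^{2}$ is not a simple eigenvalue of $Q$, the least favourable sequence, and with it the minimax estimate, need not be unique, and in the limiting route the convergence $\varphi_{N}\to\varphi$ has to be replaced by a subsequence argument together with an identification of the limit among the $\nu^{2}$-eigenvectors; then the phrase \emph{the eigenvector} in the statement should be read as referring to a fixed extremal eigenvector (or a simplicity hypothesis is added). Everything else is routine: the $L^{2}$-convergence of $\sum_{j}a(j)\xi^{0}(j)$ and the commutation of this sum with $\mathrm{Proj}\,(\,\cdot\mid H_{-1}(\xi^{0}))$ use only the bound $\|\xi^{0}(j)\|=\sqrt P$, the summability $\sum_{j}|a(j)|<\infty$, and the closedness of $H_{-1}(\xi^{0})$.
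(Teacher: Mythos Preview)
Your proposal is correct and follows essentially the same approach as the paper: the corollary is stated immediately after Theorem~\ref{theo1.2} without a separate proof, being read off from the saddle-point structure established there together with the Wold-type projection formula~\eqref{GrindEQ__3_5_} for the least favourable moving-average sequence. Your added remarks on $L^2$-convergence via $\sum_j|a(j)|<\infty$ and on the possible non-simplicity of $\nu^2$ are reasonable refinements that the paper leaves implicit.
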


 \begin{prikl} \label{prikl.1.3}
Consider the problem of optimal linear estimation of the functional
 \[A { \xi}=  \sum_{j=0}^{ \infty}e^{- \lambda j} \xi (j)  , \]
\noindent where $ \lambda >0$,
which depends on the unknown values of a stationary sequence $ { \xi}(j)$, that satisfies the condition
 \[ E{ \xi}(j) =0,\quad E \left| \xi (j) \right|^{2} \le 1, \]
\noindent based on observations of the sequence at points $j=-1,-2,\dots$. Conditions
 \eqref{GrindEQ__3_17_} are satisfied. Elements of the matrix which determines the operator $Q$,  determined by the equation \eqref{GrindEQ__3_20_},
are of the form
 \[Q(p,q)= \sum_{u=0}^{ \infty}a(p+u) \overline{a(q+u)}=e^{- \lambda (p+q)} (1-e^{-2 \lambda} )^{-1}. \]
\noindent Eigenvalues of the operator $Q$ are determined by the system of equations
 \[ \mu \varphi (p)=  \sum_{s=0}^{ \infty}e^{- \lambda (p+s)} (1-e^{-2 \lambda} )^{-1} \varphi (s), \;  \; p=0, 1, \ldots  . \]
\noindent It follows from this system of equations that $ \varphi (p)$ are of the form
 \[ \varphi (p)=Ce^{- \lambda p},\,\, p=0, 1, \ldots  \]
\noindent The constant $C$ is determined by the
condition
\[ \sum_{p=0}^{ \infty}|\varphi (p)|^2=1.
\]
\noindent So we have
 \[C=(1-e^{-2 \lambda} )^{1/2} , \quad \varphi (p)= (1-e^{-2 \lambda} )^{1/2} e^{- \lambda p}. \]
\noindent Substitution of these expressions to the system of equations gives us
 \[ \mu =(1-e^{-2 \lambda} )^{-2} . \]
We can conclude that the least favourable in the class $ \Xi $
 stationary sequence $ { \xi}(j)$ is a
 moving average sequence of the form
 \[ { \xi}(j)= (1-e^{-2 \lambda} )^{1/2} e^{- \lambda j} \sum_{u=- \infty}^{j}e^{ \lambda u} \varepsilon (u) , \]
\noindent where  $\varepsilon (u)$ is a stationary sequence with orthogonal values.

\noindent The optimal linear minimax estimate
$ \hat{A} { \xi}$ of the functional $A { \xi}$ is as follows
 \[ \hat{A} { \xi}= (1-e^{-2 \lambda} )^{1/2} \sum_{j=0}^{ \infty}e^{-2 \lambda j} \left[ \sum_{u=- \infty}^{-1}e^{ \lambda u} \varepsilon (u) \right]. \]

\noindent The value of the mean-square error does not exceed
 \[\Delta=(1-e^{-2 \lambda} )^{-2} . \]
\noindent This value of the mean-square error gives the least favourable stationary sequence.
 \end{prikl}

\subsection{Conclusions}

In this section we propose a method of solution of the mean square optimal linear estimation of the functionals
 $A_ {N}  { \xi} = \sum_ {j = 0} ^ {N}  {a} (j) { \xi} (j)$ and  $A { \xi} = \sum_ {j = 0} ^ { \infty}  {a} (j)  { \xi} (j) $
 which depend on the unknown values of a stationary stochastic sequence $ { \xi} (j)$ from the class $ \Xi $
of stationary stochastic sequences satisfying the conditions
$E {\xi} (j)=  {0},$ $ E | { \xi} (j) |^{2}  \le P.$
Estimates are based on results of observations of the sequence $ { \xi} (j) $ at points of time $ j =-1,-2,\dots$.

Inspired by the Ulf~Grenander~\cite{Grenander} approach to investigation the problem
of optimal linear estimation of the functionals which depend on the unknown values of a stationary stochastic process
we consider the problem as a two-person zero-sum game.
It is show that this game has an equilibrium point.
The maximum error gives a moving average stationary sequence which is
least favourable in the given class of stationary sequences.
 The greatest value
of the error and the least favourable sequence are determined by the
greatest eigenvalue and the corresponding eigenvector of the
operator determined by the coefficients $a(j)$ which determine the functional.

For the corresponding results for stationary stochastic processes with values in a Hilbert space see papers by Moklyachuk~\cite{Moklyachuk:1981a} -- \cite{Moklyachuk:1986}, \cite{Moklyachuk:2008r}.

 \section{Extrapolation problem for functionals of stationary sequences}

 In this section we deal with the problem of the mean-square optimal estimation of the linear functionals
 \[A_N\xi=\sum\limits_{j=0}^{N}a(j)\xi(j),\quad A{\xi}=\sum_{j=0}^{\infty}{a}(j){\xi}(j), \]
 which depend on the unknown values of a stationary stochastic sequence $\xi(j),j\in \mathbb{Z},$
 based on observations of the sequence ${\xi} (j) $ at points of time $ j=-1,-2,\dots$.

The problem is investigated in the case of spectral certainty, where the spectral density of the stationary stochastic sequence $\xi(j)$ is exactly known.
In this case the classical Hilbert space projection method of linear estimation of the functional is applied. Formulas are derived for calculation the value of the mean square error and the spectral characteristic of the mean-square optimal estimate of the linear functional.
In the case of spectral uncertainty, where the spectral density of the stationary stochastic sequence is not exactly known, but a class of admissible spectral densities is given, the minimax-robust
procedure to linear estimation of the functional is applied.
Relations which determine the least favourable spectral densities and the minimax spectral characteristics are proposed for some special sets of admissible spectral densities.

\subsection{The classical Hilbert space projection method of linear extrapolation}

Let $\xi(j),\,j\in \mathbb{Z}$, $E\xi(j)=0$, be a (wide sense) stationary stochastic sequence. We will consider values of $\xi(j),\,j\in \mathbb{Z}$, as elements of the Hilbert space  $H=L_2(\Omega,\mathcal{F},P)$ of complex valued random variables with zero first moment, $E\xi=0$, finite second moment,
 $E|\xi|^2<\infty$, and the inner product $\left(\xi,{\eta}\right)=E\xi\overline{\eta}$.
The correlation function $R(k)=\left(\xi(j+k),\xi(j)\right)=E\xi(j+k)\overline{\xi(j)}$ of the stationary stochastic sequence $\xi(j),\,j\in \mathbb{Z}$, admits
 the spectral representation
\[
 R(k)=\int\limits_{-\pi}^{\pi}e^{ik\lambda}F(d\lambda),
 \]
where $F(d\lambda)$ is the spectral measure of the sequence.
We will consider regular stationary stochastic sequences with absolutely continuous spectral measures and the correlation functions of the form
\[
 R(k)=\,\frac{1}{2\pi}\int\limits_{-\pi}^{\pi}e^{ik\lambda}f(\lambda)d\lambda,
 \]
where $f(\lambda)$ is the spectral density function of the sequence $\xi(j)$
that satisfies the regularity condition
\begin {equation} \label{extrreg}
 \int\limits_{-\pi}^{\pi} \ln{(f(\lambda))}d\lambda >-\infty.
\end{equation}

This condition is necessary and sufficient in order that the error-free extrapolation of the unknown values of the sequence is impossible \cite{Gihman}.

\noindent Suppose that coefficients $ {a}(j)$, which determine
the functional $A { \xi}$, satisfy conditions
 \begin{equation} \label{GrindEQ__3_23_}
 \sum_{j=0}^{ \infty}  \left|a (j) \right|  < \infty , \quad \sum_{j=0}^{ \infty}(j+1) \left | {a}(j) \right | ^{2}  < \infty .
 \end{equation}
\noindent In this case the functional $A { \xi}$ has the second moment and the
Hilbert-Schmidt operator $Q$, determined in the previous section, has its Hilbert-Schmidt norm finite.

\noindent Let the sequence $ { \xi}(j)$ admits the canonical
representation \label{kanrozkl} as a moving average
sequence
 \begin{equation} \label{GrindEQ__3_24_}
 { \xi}(j)= \sum_{u=- \infty}^{j}d(j-u) { \varepsilon}(u),
 \end{equation}
where the sequence $d(u)$ satisfies condition
\[\sum_{u=0}^{ \infty}|d(u)|^2<\infty,
\]
and where $ {\varepsilon}(u) $
is a standard  stationary white noise sequence
\label{noiseposl}
 \[E { \varepsilon}(i) \overline{ \varepsilon}(j)= \delta_{ij}, \]
\noindent $ \delta_{ij}$ is the Kronecker symbol.

In this case the  spectral density $f ( \lambda ) $ of the stationary sequence $ { \xi}(j)$ admits the canonical factorization \label{canfaktor}
 \begin{equation} \label{GrindEQ__3_25_}
f( \lambda )= \varphi ( \lambda ) \overline{\varphi ( \lambda )}, \quad \varphi ( \lambda )= \sum_{k=0}^{ \infty}d(k)e^{-ik \lambda}.
 \end{equation}
\noindent Denote by $L_{2} (f)$ the Hilbert space of functions
$ {a}( \lambda )$ such that
 \[  \int_{- \pi}^{ \pi} \, a ( \lambda ) \overline{a ( \lambda )} \, f ( \lambda ) \, d \lambda  < \infty. \]

\noindent Denote by $L_{2}^{-} (f)$ the subspace of the space $L_{2} (f)$ generated by functions
 \[\{e^{in \lambda},  n=-1,-2,\dots\}. \]
\noindent Every linear estimate $ \hat{A} { \xi}$ \label{linest} of the functional $A { \xi}$ based on observations of the sequence $ { \xi}(j)$ at points $j=-1,-2,\dots$ is of the form
 \[ \hat{A} { \xi}= \int_{- \pi}^{ \pi}h(e^{i \lambda} )Z_{ \xi}(d \lambda) , \]
where $Z_{ \xi} ( \Delta )$ is the orthogonal random measure \label{vypmira} of the sequence $ { \xi}(j)$:
\[
E \left(Z_{ \xi} ( \Delta_1 )\overline{Z_{ \xi} ( \Delta_2 )}\right)=\frac{1}{2 \pi} \int_{\Delta_1\cap\Delta_2} f( \lambda )d \lambda,
\]
$h(e^{i \lambda} ) $ is the spectral characteristic of the estimate $ \hat{A} { \xi}$ which belongs to the subspace $L_{2}^{-} (f)$. The mean-square error \label{error} of the linear estimate $ \hat{A} { \xi}$ of the functional $A { \xi}$ based on observations of the sequence $ { \xi}(j)$ at points $j=-1,-2,\dots$, is calculated by the formula
 \[  \Delta (h,f)=M \left|A { \xi}- \hat{A}{ \xi} \right|^{2}= \]
 \[ = \frac{1}{2 \pi} \int_{- \pi}^{ \pi}   \left|A(e^{i \lambda} )-h(e^{i \lambda} )\right|^{2} f( \lambda )d \lambda  , \]
\noindent where
 \[A(e^{i \lambda} )= \sum_{j=0}^{ \infty} {a}(j)e^{ij \lambda}.\]

\noindent If the  stationary stochastic sequence $
{ \xi}(j)$ admits the canonical representation in the form
of a  moving average sequence \eqref{GrindEQ__3_24_},
then the optimal estimate of the functional $A { \xi}$  is
determined by the spectral characteristic \label{spchar} $h(f) \in
L_{2}^{-}(f)$ such that
 \begin{equation} \label{GrindEQ__3_26_}
 \Delta (h(f),f)= \mathop{ \min} \limits_{h \in L_{2}^{-} (f)} \Delta (h,f)= \left \| Ad \right \| ^{2} ,
 \end{equation}
\noindent where
 \[ \left \| Ad \right \| ^{2} = \sum_{k=0}^{ \infty} \left \| (Ad)_k \right \| ^{2}  , \quad \left(Ad \right)_{k}= \sum_{l=0}^{ \infty}{a}(k+l)d(l) . \]

\noindent Note, that $ \left \| Ad \right \| ^{2} < \infty $ under
the conditions \eqref{GrindEQ__3_23_}. The spectral characteristic
$h(f)$ of the optimal estimate is calculated by the formula
 \begin{equation} \label{GrindEQ__3_27_}
h(f)=A(e^{i \lambda} )- {\varphi}^{-1} ( \lambda )\, r(e^{i \lambda} ),
 \end{equation}

\noindent where
 \begin{equation} \label{r}
  r(e^{i \lambda} )= \sum_{k=0}^{ \infty}(Ad)_{k} e^{ik \lambda}.
  \end{equation}

For the functional $A_{N} { \xi}$ the mean-square error and
the spectral characteristic of the optimal estimate of the functional is calculated by the formulas
 \begin{equation} \label{GrindEQ__3_28_}
 \Delta_{N} (h(f),f)= \left \| A_{N} d \right \| ^{2} ,
 \end{equation}
 \begin{equation} \label{GrindEQ__3_29_}
h_{N}(f)=A_{N}(e^{i \lambda} )- {\varphi}^{-1} ( \lambda )\, r_{N}(e^{i \lambda} ),
 \end{equation}

\noindent where
 \begin{equation} \label{rN}
 A_{N} (e^{i \lambda} )= \sum_{j=0}^{N} {a}(j)e^{ij \lambda}  , \quad r_{N} (e^{i \lambda} )= \sum_{k=0}^{N}(A_{N} d)_{k} e^{ik \lambda},
 \end{equation}
 \[ \left \| A_{N} d \right \| ^{2} = \sum_{k=0}^{N} \left \| (A_{N} d)_{k} \right \| ^{2} , \quad(A_{N} d)_{k} = \sum_{l=0}^{N-k}{a}(k+l)d(l),k= 0,1,\dots,N. \]

\noindent As a corollary, from the formula \eqref{GrindEQ__3_28_}
one can find the following formula for calculation the mean-square
error of optimal estimates $ \hat{ \xi} (j)$ of the unknown
values $ \xi (j)$
 \begin{equation} \label{GrindEQ__3_30_}
E \left| \xi (j)- \hat{ \xi} (j) \right|^{2} = \sum_{u=0}^{j} \left | d (u) \right | ^{2}  ,
 \end{equation}
where  $d(u)$ are determined from equations of factorization \eqref{GrindEQ__3_25_} of the density $f( \lambda )$.

Thus we came to conclusion that the following theorem holds true.

 \begin{teo} \label{theo.1.3}
If conditions \eqref{GrindEQ__3_23_} are satisfied and the density $f( \lambda)$ admits the canonical factorization \eqref{GrindEQ__3_25_}, then
the mean-square error of the optimal linear estimate of the functional
$A { \xi}$ based on observations of the sequence $ { \xi}(j)$
 at points $j=-1,-2,\dots$, is calculated by formula
 \eqref{GrindEQ__3_26_} (by formula \eqref{GrindEQ__3_28_} for the functional $A_{N}
 { \xi}$).
The spectral characteristic of the optimal linear estimate of the functional is calculated by formula
 \eqref{GrindEQ__3_27_} $($ by formula
 \eqref{GrindEQ__3_29_} for the functional $A_{N} { \xi}$ $)$.
 \end{teo}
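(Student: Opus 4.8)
The plan is to carry out the classical orthogonal-projection argument in the spectral domain, exploiting the isometry furnished by the canonical factorization \eqref{GrindEQ__3_25_}; the statement for $A_N\xi$ is the same argument with a trigonometric polynomial in place of $A(e^{i\lambda})$, and formula \eqref{GrindEQ__3_30_} is a specialization.

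First I would record that, by definition, the optimal estimate $\hat A\xi$ is the orthogonal projection of the random variable $A\xi$ onto the closed linear span $H_0^-(\xi)$ of $\{\xi(j):j=-1,-2,\dots\}$ in $H=L_2(\Omega,\mathcal F,P)$. Since the correspondence $e^{ik\lambda}\mapsto\xi(k)$ extends to an isometry of $L_2(f)$ onto $H(\xi)$ that carries $L_2^-(f)$ onto $H_0^-(\xi)$, the spectral characteristic $h(f)$ is the element of the closed subspace $L_2^-(f)$ nearest to $A(e^{i\lambda})$ in the $L_2(f)$-norm; it therefore exists, is unique, and is characterized by the orthogonality relations $\frac{1}{2\pi}\int_{-\pi}^{\pi}\bigl(A(e^{i\lambda})-h(e^{i\lambda})\bigr)\overline{e^{in\lambda}}f(\lambda)\,d\lambda=0$ for all $n=-1,-2,\dots$ together with $h\in L_2^-(f)$. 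Under \eqref{GrindEQ__3_23_} one has $A(e^{i\lambda})\in L_2(f)$ (the coefficient sequence is absolutely summable, $A(e^{i\lambda})$ is bounded, and $f$ is integrable), so the problem is well posed.

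The key step is to transport this projection to $L_2=L_2\bigl([-\pi,\pi),d\lambda/(2\pi)\bigr)$. Because $f(\lambda)=\varphi(\lambda)\overline{\varphi(\lambda)}$ with $f>0$ a.e.\ (a consequence of \eqref{extrreg}), multiplication by $\varphi$ is an isometric isomorphism of $L_2(f)$ onto $L_2$, with inverse multiplication by $\varphi^{-1}$. The canonical, i.e.\ outer, character of $\varphi(\lambda)=\sum_{k\ge0}d(k)e^{-ik\lambda}$ forces this isometry to map $L_2^-(f)$ \emph{onto} the subspace $H^-$ of $L_2$ spanned by $\{e^{in\lambda}:n=-1,-2,\dots\}$: each $e^{in\lambda}\varphi(\lambda)$ with $n\le-1$ involves only frequencies $\le -1$, and outerness provides the density giving the reverse inclusion. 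Now expand $A(e^{i\lambda})\varphi(\lambda)=\bigl(\sum_{j\ge0}a(j)e^{ij\lambda}\bigr)\bigl(\sum_{l\ge0}d(l)e^{-il\lambda}\bigr)$ into a Fourier series: its part of nonnegative frequency is exactly $\sum_{k\ge0}\bigl(\sum_{l\ge0}a(k+l)d(l)\bigr)e^{ik\lambda}=\sum_{k\ge0}(Ad)_k e^{ik\lambda}=r(e^{i\lambda})$, with $r$ as in \eqref{r}. Projecting $A\varphi$ onto $H^-$ amounts to deleting this nonnegative-frequency part, so $A(e^{i\lambda})\varphi(\lambda)-\mathrm{Proj}_{H^-}\bigl(A\varphi\bigr)=r(e^{i\lambda})$, i.e.\ $\bigl(A(e^{i\lambda})-h(f)\bigr)\varphi(\lambda)=r(e^{i\lambda})$. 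Dividing by $\varphi$ gives \eqref{GrindEQ__3_27_}, and $\varphi^{-1}r\in L_2^-(f)$ because $\varphi^{-1}$ is the inverse isometry and $A\varphi-r=\mathrm{Proj}_{H^-}(A\varphi)\in H^-$.

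The error is then immediate from the isometry: $\Delta(h(f),f)=\|A-h(f)\|_{L_2(f)}^2=\|(A-h(f))\varphi\|_{L_2}^2=\|r\|_{L_2}^2=\sum_{k\ge0}|(Ad)_k|^2=\|Ad\|^2$, which is \eqref{GrindEQ__3_26_}; conditions \eqref{GrindEQ__3_23_} are precisely what guarantee $Ad\in\ell_2$ and hence convergence of every series involved. For $A_N\xi$ the same computation with $A_N(e^{i\lambda})=\sum_{j=0}^{N}a(j)e^{ij\lambda}$ gives $(A_Nd)_k=\sum_{l=0}^{N-k}a(k+l)d(l)$ for $0\le k\le N$ and $0$ for $k>N$, yielding \eqref{GrindEQ__3_29_} and \eqref{GrindEQ__3_28_} with finite sums and no convergence issue; and \eqref{GrindEQ__3_30_} follows by taking $N=j$ and $a(m)=\delta_{mj}$, for which $(A_jd)_k=d(j-k)$ on $0\le k\le j$. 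The main obstacle I expect is the justification of the one structural fact used above, namely that multiplication by the outer factor $\varphi$ sends $L_2^-(f)$ \emph{onto} $H^-$ (not merely into it) and $\varphi^{-1}r$ back into $L_2^-(f)$ — this is exactly where the regularity hypothesis \eqref{extrreg}/the canonical factorization is essential; the rest is Fourier-coefficient bookkeeping.
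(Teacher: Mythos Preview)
Your proposal is correct and follows the same Wiener--Kolmogorov route the paper has in mind: use the canonical factorization \eqref{GrindEQ__3_25_} to pass from $L_2(f)$ to plain $L_2$, reduce the projection onto $L_2^-(f)$ to truncation of Fourier coefficients, and read off \eqref{GrindEQ__3_27_}--\eqref{GrindEQ__3_26_}. The paper itself presents the formulas essentially as immediate consequences of the moving-average representation \eqref{GrindEQ__3_24_} and the factorization \eqref{GrindEQ__3_25_} without spelling out the isometry argument, so your write-up is in fact more detailed than the paper's; your explicit flagging of the one nontrivial step (that multiplication by the outer factor $\varphi$ carries $L_2^-(f)$ \emph{onto} $H^-$) is exactly the point where the regularity hypothesis does its work.
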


\subsection{Minimax-robust method of linear extrapolation of functionals}

\noindent Formulas \eqref{GrindEQ__3_26_} -- \eqref{GrindEQ__3_30_}
can be applied to calculate the spectral characteristic and the
mean-square error of the optimal linear estimate of the functional
$A { \xi}$ only in the case where the spectral
density $f( \lambda )$ of the  stationary stochastic
sequence $ { \xi}(j)$ is exactly known. In the case where the
spectral density $f( \lambda )$ is not exactly known, but, instead, a set
$D$ of admissible spectral densities is specified, the
minimax approach to the problem of the optimal linear estimate of
the functional which depends on the unknown values of the stationary sequence is
reasonable. Under this approach one finds an estimate of the
functional which is optimal and minimize the mean-square error for
all spectral densities from a given class $D$ simultaneously.
\medskip

 \begin{definition}
\noindent A spectral density $f^{0} ( \lambda )$ is called the least favourable \label{unfavor1} in the class $D$ for the optimal linear extrapolation of the functional $A { \xi}$ if the following relation holds true
 \[ \Delta (h(f^{0} ),f^{0} )= \mathop{ \max} \limits_{f \in D} \Delta (h(f),f)= \mathop{ \max} \limits_{f \in D} \mathop{ \min} \limits_{h \in L_{2}^{-} (f)} \Delta (h,f). \]
 \end{definition}

\noindent Taking into consideration relations \eqref{GrindEQ__3_26_} -- \eqref{GrindEQ__3_30_}, we can verify that the following statements hold true

 \begin{teo} \label{theo.1.4}
The spectral density $f^{0} ( \lambda ) \in D$ is the least favourable in the class $D$
for the optimal linear extrapolation of the functional $A { \xi}$, if it admits the canonical factorization
\begin{equation} \label{factor1}
 f^{0} ( \lambda )= \left( \sum_{k=0}^{ \infty}d^{0} (k)e^{-ik \lambda} \right) \cdot \overline{\left( \sum_{k=0}^{ \infty}d^{0} (k)e^{-ik \lambda} \right)},
\end{equation}
\noindent where $d^{0} = \left \{d^{0} (k):k=0, \, 1, \, \ldots \right \}$ is a solution to the constrained optimization problem
 \begin{equation} \label{GrindEQ__3_31_}
  \left \| Ad \right \| ^{2} \to \max,
\end{equation}
\[
 f( \lambda )= \left( \sum_{k=0}^{ \infty}d(k)e^{-ik \lambda} \right) \cdot \overline{\left( \sum_{k=0}^{ \infty}d(k)e^{-ik \lambda} \right)} \in D.
\]
 \noindent The sequence $ { \xi}(j)$ in this case admits the canonical one-sided moving average representation
 \begin{equation} \label{GrindEQ__3_33_}
 { \xi}(j)= \sum_{u=-\infty}^{j}d^0(j-u) { \varepsilon}(u) .
 \end{equation}
 \end{teo}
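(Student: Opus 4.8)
The plan is to establish Theorem \ref{theo.1.4} by combining the spectral-certainty formula \eqref{GrindEQ__3_26_} with the definition of the least favourable density and a compactness/continuity argument on the class $D$. Recall from Theorem \ref{theo.1.3} that whenever $f(\lambda)$ admits a canonical factorization $f(\lambda)=\varphi(\lambda)\overline{\varphi(\lambda)}$ with $\varphi(\lambda)=\sum_{k=0}^{\infty}d(k)e^{-ik\lambda}$, the minimal mean-square error over all spectral characteristics $h\in L_{2}^{-}(f)$ equals $\|Ad\|^{2}=\sum_{k=0}^{\infty}\|(Ad)_{k}\|^{2}$ with $(Ad)_{k}=\sum_{l=0}^{\infty}a(k+l)d(l)$. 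Hence $\min_{h\in L_{2}^{-}(f)}\Delta(h,f)=\|Ad\|^{2}$ depends on $f$ only through the coefficients $d=\{d(k)\}$ of its canonical factor. The least favourable density is, by definition, the one maximizing this quantity over $D$, so the optimization problem \eqref{GrindEQ__3_31_} is simply the rewriting of $\max_{f\in D}\min_{h\in L_{2}^{-}(f)}\Delta(h,f)$ in terms of the factor coefficients, subject to the constraint that the product $\bigl(\sum d(k)e^{-ik\lambda}\bigr)\overline{\bigl(\sum d(k)e^{-ik\lambda}\bigr)}$ lies in $D$.

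\textbf{First I would} make precise the correspondence between admissible densities and admissible coefficient sequences. Under the conditions \eqref{GrindEQ__3_23_} the operator $Ad$ (more precisely the map $d\mapsto\{(Ad)_{k}\}$) is bounded on $\ell_{2}$, so $\|Ad\|^{2}<\infty$ for every $d\in\ell_{2}$; this was noted after \eqref{GrindEQ__3_26_}. For any $f\in D$ satisfying the regularity condition \eqref{extrreg}, the corollary following Theorem~\ref{teo121} (and the factorization Theorem with condition \eqref{eq74-4}) guarantees that $f$ admits exactly such a canonical one-sided factorization, and the coefficients $d(k)=\frac{1}{2\pi}\int_{-\pi}^{\pi}e^{ik\lambda}\varphi(\lambda)\,d\lambda$ are determined uniquely up to a unimodular constant (which does not affect $\|Ad\|^{2}$). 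So I would set $d^{0}=\{d^{0}(k)\}$ to be a maximizer of \eqref{GrindEQ__3_31_}, define $f^{0}(\lambda)$ by \eqref{factor1}, and check $f^{0}\in D$ (which is exactly the constraint in \eqref{GrindEQ__3_31_}). Then, by construction,
\[
\Delta(h(f^{0}),f^{0})=\|Ad^{0}\|^{2}=\max_{d:\,f\in D}\|Ad\|^{2}=\max_{f\in D}\min_{h\in L_{2}^{-}(f)}\Delta(h,f),
\]
which is precisely the defining property of the least favourable density. The canonical moving-average representation \eqref{GrindEQ__3_33_} of $\xi(j)$ then follows immediately from \eqref{GrindEQ__3_24_}--\eqref{GrindEQ__3_25_} applied to $f^{0}$.

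\textbf{The hard part will be} ensuring that the maximum in \eqref{GrindEQ__3_31_} is actually attained, i.e.\ that a least favourable density exists in $D$. The statement as phrased presupposes that $d^{0}$ solves the constrained problem; for the specific classes $D$ treated later in the paper one must verify that the set of admissible coefficient sequences is weakly compact in $\ell_{2}$ (or at least that the supremum of the weakly upper-semicontinuous functional $d\mapsto\|Ad\|^{2}$ is achieved). Since $Ad$ is a Hilbert--Schmidt-type operator under \eqref{GrindEQ__3_23_}, the functional $d\mapsto\|Ad\|^{2}$ is weakly continuous on bounded sets of $\ell_{2}$, so existence reduces to weak closedness and boundedness of the admissible set; this is where the structure of each particular class $D$ enters. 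I would therefore phrase the argument so that the verification of $f^{0}\in D$ together with the extremal identity above establishes the least-favourability, leaving the existence of the maximizer to be confirmed case by case when concrete classes $D$ are introduced. The remaining claims — the factorization \eqref{factor1} and the representation \eqref{GrindEQ__3_33_} — are then direct consequences of the spectral-certainty results of the previous subsection applied to the particular density $f^{0}$.
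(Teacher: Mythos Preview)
Your proposal is correct and follows essentially the same approach as the paper: the paper does not give a detailed proof but simply remarks that the statement can be verified from relations \eqref{GrindEQ__3_26_}--\eqref{GrindEQ__3_30_}, i.e.\ from the identity $\Delta(h(f),f)=\|Ad\|^{2}$ together with the definition of the least favourable density. Your additional care about the existence of the maximizer (weak compactness, Hilbert--Schmidt continuity) goes beyond what the paper provides and is consistent with how the paper defers such verification to the concrete classes $D$ treated afterwards.
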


 \begin{teo} \label{theo.1.5}
The spectral density $f^{0}(\lambda)\in D$ is the least favourable in the class $D$
for the optimal linear extrapolation of the functional $A_N { \xi}$, if it admits the canonical factorization
\begin{equation} \label{factorN}
f^{0} ( \lambda )= \left( \sum_{k=0}^{N}d^{0} (k)e^{-ik \lambda} \right) \cdot \overline{\left( \sum_{k=0}^{N}d^{0} (k)e^{-ik \lambda} \right)},
\end{equation}
\noindent where $d^{0} = \left \{d^{0} (k): k=0,1,\dots,N \right \}$
is a solution to the constrained optimization problem
 \begin{equation} \label{GrindEQ__3_32_}
\left \| A_{N} d \right \| ^{2} \to \max,
 \end{equation}
\[
f( \lambda )= \left( \sum_{k=0}^{N}d(k)e^{-ik \lambda} \right) \cdot\overline{
\left( \sum_{k=0}^{N}d(k)e^{-ik \lambda} \right)} \in D.
\]
\noindent The sequence $ { \xi}(j)$ in this case admits the canonical one-sided moving average representation of order $N+1$:
 \begin{equation} \label{GrindEQ__3_33N_}
 { \xi}(j)= \sum_{u=j-N}^{j}d^0(j-u) { \varepsilon}(u) .
 \end{equation}
 \end{teo}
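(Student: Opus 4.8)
The plan is to deduce Theorem~\ref{theo.1.5} from the projection formulas of Theorem~\ref{theo.1.3}, exactly as Theorem~\ref{theo.1.4} is obtained, but exploiting the finite structure of $A_N\xi$. By \eqref{GrindEQ__3_28_}, for every regular density $f\in D$ admitting the canonical factorization \eqref{GrindEQ__3_25_} with coefficients $d=\{d(k):k\ge 0\}$, the minimal mean-square error of a linear estimate of $A_N\xi$ based on $\{\xi(j):j=-1,-2,\dots\}$ equals
\[
\min_{h\in L_2^-(f)}\Delta_N(h,f)=\|A_N d\|^2=\sum_{k=0}^{N}\bigl|(A_N d)_k\bigr|^2,\qquad (A_N d)_k=\sum_{l=0}^{N-k}a(k+l)d(l).
\]
Hence, by the definition of a least favourable density adapted to $A_N\xi$, a density $f^0\in D$ is least favourable for the extrapolation of $A_N\xi$ if and only if its factorization coefficients attain $\max_{f\in D}\|A_N d\|^2$, the maximum being taken over the canonical factorizations of the densities in $D$. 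It thus remains to justify that this maximization may be restricted to moving-average densities of order $N$, i.e. to densities of the form \eqref{factorN}.

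That reduction is what makes the $A_N$-case simpler than the $A\xi$-case of Theorem~\ref{theo.1.4}, and it rests on a single observation: $\|A_N d\|^2$ depends on $d$ only through the initial block $(d(0),d(1),\dots,d(N))\in\mathbb{C}^{(N+1)}$, because $(A_N d)_k=\sum_{l=0}^{N-k}a(k+l)d(l)$ never invokes a coefficient of index exceeding $N$; indeed $\|A_N d\|^2=\langle Q_N\vec\varphi,\vec\varphi\rangle$ is precisely the Hermitian form on $\mathbb{C}^{(N+1)}$ that appeared in Theorem~\ref{theo.1.1} (up to the index reversal $G_N(N-p,N-q)=Q_N(p,q)$ used there). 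Consequently the tail coefficients $d(N+1),d(N+2),\dots$ do not influence the error, so replacing an admissible factorization $\varphi(\lambda)=\sum_{k\ge0}d(k)e^{-ik\lambda}$ by the truncated polynomial $\sum_{k=0}^{N}d(k)e^{-ik\lambda}$ leaves $\|A_N d\|^2$ unchanged, and the least favourable density may be sought among the $MA(N)$ densities \eqref{factorN}; the problem then collapses to the constrained optimization \eqref{GrindEQ__3_32_} in the $N+1$ unknowns $d(0),\dots,d(N)$, and any solution $d^0$ produces, via \eqref{factorN}, a density $f^0$ realizing $\max_{f\in D}\min_h\Delta_N(h,f)$.

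To identify the least favourable sequence, note that \eqref{factorN} says $f^0(\lambda)=|P(e^{-i\lambda})|^2$ with $P(z)=d^0(0)+d^0(1)z+\dots+d^0(N)z^N$, and since $f^0$ admits a canonical factorization $P$ has no zeros in $\{|z|\le1\}$; by the Corollary from Section~\ref{sss:spectral} that identifies an $MA(N)$ sequence with the polynomial spectral density $|P(e^{-i\lambda})|^2$, the stationary sequence with spectral density $f^0$ is the finite one-sided moving average $\xi(j)=\sum_{u=j-N}^{j}d^0(j-u)\varepsilon(u)$, which is \eqref{GrindEQ__3_33N_}. The point requiring the most care is the reduction to $MA(N)$ densities when $D$ is an arbitrary admissible class: the truncated polynomial density need not itself belong to $D$, so the assertion presupposes that $D$ is stable under this truncation — as it is for the concrete classes treated later — or, equivalently, that the unconstrained maximizer of $\|A_N d\|^2$ over $D$ is already of $MA(N)$ type, which is exactly what the hypothesis ``$d^0$ solves \eqref{GrindEQ__3_32_}'' records. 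A minor auxiliary step is to confirm that $f^0$ satisfies the regularity condition \eqref{extrreg}, which holds automatically once $P$ has no zeros on $|z|=1$ (then $0<\inf f^0\le\sup f^0<\infty$).
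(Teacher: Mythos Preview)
Your argument is correct and mirrors what the paper intends: the paper offers no explicit proof of Theorem~\ref{theo.1.5}, simply stating that it ``can be verified'' from the error formulas \eqref{GrindEQ__3_26_}--\eqref{GrindEQ__3_30_}, and your derivation from \eqref{GrindEQ__3_28_} together with the finite dependence of $\|A_N d\|^2$ on $d(0),\dots,d(N)$ is precisely that verification. Your caveat about the truncation step---that the reduction to $MA(N)$ densities tacitly assumes the class $D$ either contains the truncated density or that the overall maximizer is already of $MA(N)$ type---is well taken and is indeed a point the paper glosses over; it is resolved not in Theorem~\ref{theo.1.5} itself but case by case in the subsequent subsections on the concrete classes $D_0,\,D_M,\,D_v^u,\dots$, exactly as you anticipate.
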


 \begin{definition}
The spectral characteristic $h^{0} (e^{i \lambda})$ of the optimal linear extrapolation of the functional $A { \xi}$ is called minimax \label{minmaxch1} (robust) if the following conditions hold true
 \[h^{0} (e^{i \lambda} ) \in H_{D} = \bigcap_{f \in D}L_{2}^{-} (f), \quad \mathop{ \min} \limits_{h \in H_{D}} \mathop{ \max} \limits_{f \in D} \Delta (h,f)= \mathop{ \max} \limits_{f \in D} \Delta (h^{0},f). \]
 \end{definition}

The least favourable spectral density $f^{0} ( \lambda ) \in D$
and the minimax (robust) spectral characteristic $h^{0} (e^{i \lambda} ) \in H_{D} $ form a saddle point of the function
$ \Delta (h,f)$ on the set $H_{D}\times{D}$.
The saddle point inequalities
 \[ \Delta (h,f^{0} ) \ge \Delta (h^{0} ,f^{0} ) \ge \Delta (h^{0} ,f) \quad \forall f \in D \; \forall h \in H_{D} \]
hold true if $h^{0} =h(f^{0})$ and $h(f^{0} )\in H_{D}$, where
$f^{0}$ is a solution to the constrained optimization problem
 \[ \Delta (h(f^{0} ),f^{0} )= \mathop{ \max} \limits_{f \in D} \Delta (h(f^{0} ),f). \]
If we have found a solution $f^{0} ( \lambda )$ to this problem, then the
minimax(robust) spectral characteristics can be calculated by
formulas \eqref{GrindEQ__3_27_}, \eqref{GrindEQ__3_29_} if the
condition $h(f^{0} ) \in H_{D}$ holds true.

The spectral density $f^{0} ( \lambda )$ is a solution to the following constrained optimization problem
 \begin{equation} \label{GrindEQ__3_34_}
 \Delta (f)=- \Delta (h(f^{0} ),f) \to \inf,\quad f( \lambda ) \in D,
 \end{equation}
\[
 \Delta (h(f^{0} ),f)= \frac{1}{2 \pi} \int_{- \pi}^{ \pi}\frac{|r(e^{i \lambda} )|^2}{|\varphi^0 ( \lambda )|^2}f( \lambda )d \lambda,
\]

\noindent where $r(e^{i \lambda} )$ is calculated by formulas
\eqref{r}, \eqref{rN} with $f( \lambda)=f^{0} ( \lambda )$.

The constrained optimization problem
 \eqref{GrindEQ__3_34_} is equivalent to the following unconstrained optimization problem \cite{Moklyachuk:2008},\cite{Pshenychn}
 \begin{equation} \label{GrindEQ__3_35_}
 \Delta_{D} (f)=- \Delta (h(f^{0} ),f)+ \delta (f|D) \to \inf ,
 \end{equation}
where $ \delta (f|D)$ is the indicator function of the set $D$. Solution of the problem \eqref{GrindEQ__3_35_} is determined by the condition $0 \in \partial \Delta_{D} (f^{0} )$, where $ \partial \Delta_{D} (f^{0} )$ is the subdifferential on the convex functional $ \Delta_{D} (f)$ at the point $f^{0} $. With the help of conditions \eqref{GrindEQ__3_34_}, \eqref{GrindEQ__3_35_} we can find the least favourable spectral densities for concrete classes of spectral densities.

Note, that the form of the functional $\Delta(h(f_0),f)$  is convenient for application the Lagrange method of indefinite multipliers for
finding solution to the problem (\ref{GrindEQ__3_34_}).
Making use the method of Lagrange multipliers and the form of
subdifferentials of the indicator functions of certain classes
of spectral densities we describe relations that determine least favourable spectral densities in some special classes
of spectral densities \cite{Moklyachuk:2008r}, \cite{Moklyachuk:2012}.

 \subsection{Least favourable spectral densities in the class $D_{0}$}

Consider the problem of the optimal estimation of the functionals $A\xi=\sum_{j=1}^{\infty}a(j)\xi(j)$ and $A_N\xi=\sum_{j=1}^{N}a(j)\xi(j)$  which depends on the unknown values of a stationary stochastic sequence $\xi(j)$
from observations of the sequence $\xi(j)$ at points of time $j=-1,-2,\dots$
in the case where the spectral density  $f(\lambda)$ is from the class $D_0$
of spectral densities which are characterized by restrictions on the first
moment of the density
 \[D_{0} = \left \{f( \lambda ): \, \frac{1}{2 \pi} \int_{- \pi}^{ \pi} \; f( \lambda )d \lambda \leq P_0 \right \}, \]
\noindent where $P_0,P_0>0,$ is a given number.
This class of spectral densities describes stationary sequences with restriction on the dispersion $E|\xi(j)|^2\leq P_0$.

We can apply the method of Lagrange multipliers to find solution to the optimization problem (\ref{GrindEQ__3_34_}).
We get the following relations that determine the least favourable spectral density  $f^0\in D_0$
\begin{equation} \label{rD0}
| r(e^{i \lambda} )|^2= \alpha^{2}|\varphi^0 ( \lambda )|^2,
\end{equation}
where $\alpha ^{2}$ is the Lagrange multiplier.

\noindent This relation can be rewritten in the following way
\begin{equation} \label{GrindEQ__3_36_}
 \left( \sum_{k=0}^{ \infty}(Ad^0)_{k} e^{ik \lambda} \right) \cdot \overline{\left( \sum_{k=0}^{ \infty}(Ad^0)_{k} e^{ik \lambda} \right)}= \alpha ^{2}
  \left( \sum_{k=0}^{ \infty}d^0(k)e^{-ik \lambda} \right) \cdot \overline{\left( \sum_{k=0}^{ \infty}d^0(k)e^{-ik \lambda} \right)}.
\end{equation}

\noindent The unknown $ \alpha ^{2} $ and $d^0= \left \{d^0(k):k=0, 1, \ldots \right \}$ are determined with the help of equations of the
canonical factorization \eqref{factor1} of the density $f^{0}(\lambda)$, solution of the constrained optimization problem \eqref{GrindEQ__3_31_} and restrictions
imposed on densities from the class of admissible spectral densities $D_0$.

For all solutions $d= \left \{d(k):k=0, 1, \ldots \right \}$  of the equation
\begin{equation}\label{eqAd}
Ad=\alpha \overline{d}, \,\alpha\in \mathbb C,
\end{equation}
which satisfy condition
\begin{equation}\label{nd}
\|d\|^2= \sum_{k=0}^{\infty}|d(k)|^2=P_0 ,
\end{equation}
the following equality holds true
\begin{equation} \label{lfD0}
 f^0(\lambda)=\left| \sum_{k=0}^{ \infty}d(k)e^{-ik \lambda} \right|^2=\left|c \sum_{k=0}^{ \infty}(Ad)_{k} e^{ik \lambda} \right|^2.
 \end{equation}
Solution $d^0= \left \{d^0(k):k=0, 1, \ldots \right \}$  of the equation \eqref{eqAd} which satisfies condition \eqref{nd} and gives the maximum value
 $\|Ad^0\|^2=\nu_0 P_0$ of the quantity $\|Ad\|^2$ determines the least favourable spectral density
 \begin{equation} \label{lfdD0}
 f^0(\lambda)=\left| \sum_{k=0}^{ \infty}d^0(k)e^{-ik \lambda} \right|^2,
 \end{equation}
which is the spectral density of the one-sided  moving-average sequence
 \begin{equation} \label{maD0}
 { \xi}(j)= \sum_{u=-\infty}^{j}d^0(j-u) { \varepsilon}(u) .
 \end{equation}

\noindent Thus the following statement fulfilled.

 \begin{teo} \label{theo.1.6}
The least favourable in the class $D_{0}$  for the optimal
extrapolation of the functional $A {\xi}$ is the spectral density
\eqref{lfdD0} of the one-sided  moving-average sequence \eqref{maD0}
which is determined by solution $d^0= \left \{d^0(k):k=0, 1, \ldots \right \}$  of the equation \eqref{eqAd} which satisfies condition \eqref{nd} and gives the maximum value
 $\|Ad^0\|^2=\nu^2 P_0$ of the quantity $\|Ad\|^2$.
\noindent The minimax spectral characteristic of the optimal estimate of the functional $A{\xi}$ is calculated by the formula \eqref{GrindEQ__3_27_}.
 \end{teo}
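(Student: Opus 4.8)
The plan is to reduce the assertion to the general minimax scheme developed above. By Theorem~\ref{theo.1.4}, a density $f^{0}\in D_{0}$ is least favourable for the extrapolation of $A\xi$ exactly when its canonical factorization \eqref{factor1} is generated by a sequence $d^{0}=\{d^{0}(k):k\ge0\}$ that solves the constrained problem \eqref{GrindEQ__3_31_}, i.e. maximizes $\|Ad\|^{2}$ subject to $f(\lambda)=\bigl|\sum_{k\ge0}d(k)e^{-ik\lambda}\bigr|^{2}\in D_{0}$. The first step is to translate the constraint $f\in D_{0}$ into a constraint on $d$: by the Parseval identity applied to \eqref{factor1}, $\frac{1}{2\pi}\int_{-\pi}^{\pi}f(\lambda)\,d\lambda=\sum_{k\ge0}|d(k)|^{2}=\|d\|^{2}$, so $f\in D_{0}$ is equivalent to $\|d\|^{2}\le P_{0}$. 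Thus \eqref{GrindEQ__3_31_} becomes: maximize the quadratic form $\|Ad\|^{2}=\langle A^{*}Ad,d\rangle$ over the ball $\{d\in\ell_{2}:\|d\|^{2}\le P_{0}\}$.

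Next I would solve this extremal problem and identify equation \eqref{eqAd} as its optimality condition. Since $A$ is the Hilbert--Schmidt operator with matrix $a(p+q)$ introduced in Section~\ref{est1}, $A^{*}A$ is compact, non-negative and self-adjoint and shares its non-zero eigenvalues with $Q=AA^{*}$, whose largest eigenvalue is $\nu^{2}$; hence $\max_{\|d\|^{2}\le P_{0}}\|Ad\|^{2}=\nu^{2}P_{0}$, attained only on the sphere $\|d\|^{2}=P_{0}$ at eigenvectors of $A^{*}A$ for $\nu^{2}$, which yields the candidate density \eqref{lfdD0} and its one-sided moving-average representation \eqref{maD0}. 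To obtain the form \eqref{eqAd}, I would apply the Lagrange method to \eqref{GrindEQ__3_34_}, which produces the frequency-domain relation \eqref{rD0}, $|r(e^{i\lambda})|^{2}=\alpha^{2}|\varphi^{0}(\lambda)|^{2}$, with $r(e^{i\lambda})=\sum_{k\ge0}(Ad^{0})_{k}e^{ik\lambda}$ and $\varphi^{0}(\lambda)=\sum_{k\ge0}d^{0}(k)e^{-ik\lambda}$. The crucial point is that both $r(e^{i\lambda})$ and $\overline{\varphi^{0}(\lambda)}=\sum_{k\ge0}\overline{d^{0}(k)}e^{ik\lambda}$ are boundary values of $H_{2}$ functions, and since \eqref{factor1} is the canonical (outer) factorization, $|r|^{2}=\alpha^{2}|\overline{\varphi^{0}}|^{2}$ upgrades to the pointwise identity $r(e^{i\lambda})=\alpha\,\overline{\varphi^{0}(\lambda)}$ (the quotient is inner of modulus one, hence a unimodular constant absorbed into $\alpha$). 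Matching Fourier coefficients gives $(Ad^{0})_{k}=\alpha\,\overline{d^{0}(k)}$, i.e. $Ad^{0}=\alpha\overline{d^{0}}$, which is \eqref{eqAd}; taking conjugates and using that the matrix of $A$ is symmetric ($\overline{Ad^{0}}=A^{*}\overline{d^{0}}$), then applying $A^{*}$, returns $A^{*}Ad^{0}=|\alpha|^{2}d^{0}$, so $\|Ad^{0}\|^{2}=|\alpha|^{2}P_{0}$ and the optimal choice corresponds to $|\alpha|^{2}=\nu^{2}$. Conversely, any $d$ solving \eqref{eqAd} and normalized by \eqref{nd} produces, through \eqref{lfD0}, a density satisfying \eqref{rD0}, matching the stated description.

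Then I would verify the saddle-point conditions so that $h^{0}=h(f^{0})$ given by \eqref{GrindEQ__3_27_} is the minimax characteristic. Substituting $r=\alpha\overline{\varphi^{0}}$ into the functional displayed right after \eqref{GrindEQ__3_34_} gives $\Delta(h(f^{0}),f)=\frac{\alpha^{2}}{2\pi}\int_{-\pi}^{\pi}f(\lambda)\,d\lambda\le\alpha^{2}P_{0}$ for every $f\in D_{0}$, with equality at $f=f^{0}$; hence $f^{0}$ solves $\Delta(h(f^{0}),f^{0})=\max_{f\in D_{0}}\Delta(h(f^{0}),f)$. Moreover \eqref{GrindEQ__3_27_} gives $h(f^{0})=A(e^{i\lambda})-\alpha\,\overline{\varphi^{0}(\lambda)}/\varphi^{0}(\lambda)$, whose second summand is unimodular, so $h(f^{0})$ is bounded and therefore belongs to $L_{2}^{-}(f)$ for every $f\in D_{0}$, i.e. $h(f^{0})\in H_{D_{0}}$. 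By the saddle-point criterion recalled before Theorem~\ref{theo.1.4}, $f^{0}$ is then least favourable in $D_{0}$ and $h(f^{0})$ is the minimax spectral characteristic, which is exactly the assertion.

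I expect the main obstacle to be the Hardy-space step inside the second paragraph: justifying that the canonical factorization \eqref{factor1} is the outer one and that $|r|^{2}=\alpha^{2}|\varphi^{0}|^{2}$ therefore forces $r=\alpha\overline{\varphi^{0}}$, together with checking that a leading eigenvector $d^{0}$ of $A^{*}A$ really generates an admissible density satisfying the regularity condition \eqref{extrreg}, so that the canonical factorization from which $f^{0}$ is built exists at all.
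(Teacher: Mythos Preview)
Your argument follows the paper's line---Lagrange on \eqref{GrindEQ__3_34_} gives \eqref{rD0}, which is linked to \eqref{eqAd}, and then one selects the solution of \eqref{eqAd}--\eqref{nd} maximizing $\|Ad\|^{2}$---but you carry it out with considerably more care. In particular, the paper does not derive \eqref{eqAd} from \eqref{rD0}; it only observes that any $d$ solving $Ad=\alpha\bar d$ automatically produces the modulus identity \eqref{lfD0}, and then appeals to Theorem~\ref{theo.1.4} for the maximization. Your Hardy-space step (outer $\varphi^{0}$, hence $|r|=|\alpha|\,|\overline{\varphi^{0}}|$ forces $r=\alpha\overline{\varphi^{0}}$ up to a unimodular constant) is the honest converse and is correct; your worry about it is well placed but resolved precisely because the canonical factorization \eqref{factor1} is, by construction, the outer one from Theorem~5 and its corollary in Section~\ref{sss:spectral}. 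Your Parseval reduction of the constraint $f\in D_{0}$ to $\|d\|^{2}\le P_{0}$ and the eigenvalue identification $\max\|Ad\|^{2}=\nu^{2}P_{0}$ are exactly what underlies the paper's statement but are left implicit there.

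One small gap: from ``$h(f^{0})$ is bounded'' you conclude $h(f^{0})\in L_{2}^{-}(f)$ for every $f\in D_{0}$. Boundedness only gives $h(f^{0})\in L_{2}(f)$; membership in the \emph{past} subspace $L_{2}^{-}(f)$ is a separate issue (for a general $f\in D_{0}$, which need not even satisfy \eqref{extrreg}, the structure of $L_{2}^{-}(f)$ can be quite different). The paper does not verify $h(f^{0})\in H_{D_{0}}$ either---its saddle-point discussion before \eqref{GrindEQ__3_34_} is explicitly conditional on this inclusion---so your treatment is already at least as complete as the original. Your explicit check of the saddle-point inequality $\Delta(h(f^{0}),f)=\alpha^{2}\cdot\frac{1}{2\pi}\int f\le \alpha^{2}P_{0}$ is a nice addition that the paper omits.
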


For the functional $A_N\xi=\sum_{j=1}^{N}a(j)\xi(j)$ the corresponding relation
has the following form
 \begin{equation}  \label{rND0}
\left|r_{N} (e^{i \lambda} )\right|^2= \alpha ^{2} \left|\varphi ^{0} ( \lambda )\right|^2,
 \end{equation}

\noindent and the equality
 \begin{equation}  \label{ArND0}
\left|r_{N} (e^{i \lambda} )\right|^2= \left| \sum_{k=0}^{N}(A_{N} d)_{k} e^{ik \lambda} \right|^2 =  \left| \sum_{k=0}^{N}( \tilde{A}_{N} d)_{k} e^{-ik \lambda}
\right|^2,
 \end{equation}
\noindent holds true, where
 \[( \tilde{A}_{N} d)_{k} = \sum_{u=0}^{k}{a}(N-k+u)d(u), \;  k=0,1,\dots, N. \]

\noindent For all solutions $d= \left \{d(k): k=0,1,\dots, N\right \}$ to  equations
\begin{equation}\label{eqANd1}
A_Nd=\alpha \overline{d}, \,\alpha\in \mathbb C,
\end{equation}
\begin{equation}\label{eqANd2}
\tilde{A}_{N}d=\beta {d}, \,\beta\in \mathbb C,
\end{equation}
\noindent which satisfy condition
\begin{equation}\label{ndN}
\|d\|^2= \sum_{k=0}^{N}|d(k)|^2=P_0 ,
\end{equation}
the following equality holds true
\begin{equation} \label{lfD0N}
 f^0(\lambda)=\left| \sum_{k=0}^{N}d(k)e^{-ik \lambda} \right|^2=\left|c \sum_{k=0}^{N}(Ad)_{k} e^{ik \lambda} \right|^2.
 \end{equation}

Solutions $d^0= \left \{d^0(k):k=0, 1, \ldots, N\right \}$  of the equations \eqref{eqANd1}, \eqref{eqANd2} which satisfy condition \eqref{ndN} and gives the maximum values
 $\|A_Nd^0\|^2=\|\tilde{A}_{N}d^0\|^2=\nu_N^2 P_0$ of the quantity $\|A_Nd\|^2$ determines the least favourable spectral density
 \begin{equation} \label{lfdD0N}
 f^0(\lambda)=\left| \sum_{k=0}^{N}d^0(k)e^{-ik \lambda} \right|^2,
 \end{equation}
which is the spectral density of the one-sided  moving-average sequence of order $N$
 \begin{equation} \label{maD0N}
 { \xi}(j)= \sum_{u=j-N}^{j}d^0(j-u) { \varepsilon}(u) .
 \end{equation}

\noindent Thus the following statement holds true.

\begin{teo} \label{theo.1.7}
The least favourable in the class $D_{0}$  for the optimal
extrapolation of the functional $A_N {\xi}$ is the spectral density
\eqref{lfdD0N} of the one-sided  moving-average sequence \eqref{maD0N}
which is determined by solutions $d^0= \left \{d^0(k):k=0, 1, \ldots,N \right \}$  of the equations \eqref{eqANd1}, \eqref{eqANd2} which satisfy condition \eqref{ndN}  and gives the maximum value
$\|A_Nd^0\|^2=\|\tilde{A}_{N}d^0\|^2=\nu_N^2 P_0$ of the quantity $\|A_Nd\|^2$.
\noindent The minimax spectral characteristic of the optimal estimate of the functional $A_N{\xi}$ is calculated by the formula \eqref{GrindEQ__3_29_}.
 \end{teo}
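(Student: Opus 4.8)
The plan follows the pattern established for the functional $A{\xi}$ in the discussion just before Theorem~\ref{theo.1.6}, specialised to the finite order $N$ and to the class $D_0$; the new feature is the reflected operator $\tilde{A}_{N}$ entering through the algebraic identity \eqref{ArND0}. By Theorem~\ref{theo.1.5}, a density $f^{0}\in D_0$ admitting the canonical factorization \eqref{factorN} is least favourable for $A_N{\xi}$ as soon as $d^{0}=\{d^{0}(k):k=0,\dots,N\}$ maximises $\|A_N d\|^{2}$ subject to $\bigl|\sum_{k=0}^{N}d(k)e^{-ik\lambda}\bigr|^{2}\in D_0$. On moving averages of order $N$ the membership $f\in D_0$ reads, by Parseval, $\frac{1}{2\pi}\int_{-\pi}^{\pi}f(\lambda)\,d\lambda=\|d\|^{2}\le P_0$, so the whole problem reduces to maximising the nonnegative Hermitian form $d\mapsto\|A_N d\|^{2}$ over the ball $\{d\in{\mathbb C}^{N+1}:\|d\|^{2}\le P_0\}$ and then checking the minimax side, i.e. that the resulting spectral characteristic lies in $H_{D_0}$.

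I would carry out the maximisation through the operator $Q_N$ of Theorem~\ref{theo.1.1}. Writing $\|A_N d\|^{2}=\langle A_N^{*}A_N d,d\rangle$ and noting that $A_N$ is complex symmetric (its matrix $A_N(p,q)=a(p+q)$ for $p+q\le N$, zero otherwise, is symmetric in $p,q$, so $A_N^{*}=\overline{A_N}$), the matrices $A_N^{*}A_N$ and $Q_N=A_N A_N^{*}$ have the same spectrum; hence $\max_{\|d\|^{2}\le P_0}\|A_N d\|^{2}=\nu_N^{2}P_0$, attained on the sphere $\|d\|^{2}=P_0$ by an eigenvector for $\nu_N^{2}$. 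The Autonne--Takagi factorization of the complex symmetric matrix $A_N$ lets one choose such a maximiser $d^{0}$ in the normal form $A_N d^{0}=\nu_N\overline{d^{0}}$, which is equation \eqref{eqANd1} with $\alpha=\nu_N$. Reading the Lagrange condition for the finite analogue of \eqref{GrindEQ__3_34_} backwards shows that the stationary densities are exactly those of the form \eqref{lfD0N} with $d$ solving \eqref{eqANd1}; and, using \eqref{ArND0} to rewrite $|r_N(e^{i\lambda})|^{2}$ as the squared modulus of the polynomial $\sum_{k=0}^{N}(\tilde{A}_{N}d)_k e^{-ik\lambda}$ in $e^{-i\lambda}$, the same condition read through that reflected polynomial produces the alternative form \eqref{eqANd2}. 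Since $\tilde{A}_{N}$ is $A_N$ with its rows reversed, $\|\tilde{A}_{N}d\|^{2}=\|A_N d\|^{2}$ for every $d$, so a solution of either \eqref{eqANd1} or \eqref{eqANd2} with \eqref{ndN} has $\|A_N d\|^{2}=\|\tilde{A}_{N}d\|^{2}\le\nu_N^{2}P_0$, with equality for $d^{0}$. This identifies the least favourable density as \eqref{lfdD0N}, the spectral density of the order-$N$ moving average \eqref{maD0N}.

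To finish, I would verify that $(f^{0},h(f^{0}))$ is a saddle point of $\Delta(h,f)$ on $H_{D_0}\times D_0$, which is what makes $f^{0}$ least favourable and $h(f^{0})$ — computed by \eqref{GrindEQ__3_29_} with $f=f^{0}$ — the minimax spectral characteristic. Substituting $d^{0}$ and \eqref{eqANd1} into \eqref{rND0} gives $|r_N(e^{i\lambda})|^{2}=\nu_N^{2}|\varphi^{0}(\lambda)|^{2}$, so in $\Delta(h(f^{0}),f)=\frac{1}{2\pi}\int_{-\pi}^{\pi}\frac{|r_N(e^{i\lambda})|^{2}}{|\varphi^{0}(\lambda)|^{2}}f(\lambda)\,d\lambda$ the coefficient is the constant $\nu_N^{2}$; hence $\Delta(h(f^{0}),f)=\nu_N^{2}\cdot\frac{1}{2\pi}\int_{-\pi}^{\pi}f(\lambda)\,d\lambda\le\nu_N^{2}P_0=\Delta(h(f^{0}),f^{0})$ for all $f\in D_0$ (the last equality from \eqref{GrindEQ__3_28_} and $\|A_N d^{0}\|^{2}=\nu_N^{2}P_0$), which is the right-hand saddle inequality; the left-hand inequality $\Delta(h,f^{0})\ge\Delta(h(f^{0}),f^{0})$ for all $h$ is simply optimality of the projection estimate for the fixed density $f^{0}$ (Theorem~\ref{theo.1.3}). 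Finally, since $f^{0}$ is a strictly positive trigonometric polynomial of degree $N$, $(\varphi^{0})^{-1}$ is a bounded absolutely convergent power series in $e^{-i\lambda}$ and $h(f^{0})=A_N(e^{i\lambda})-(\varphi^{0})^{-1}(\lambda)r_N(e^{i\lambda})$ is a bounded function supported on strictly negative Fourier frequencies; therefore $\int_{-\pi}^{\pi}|h(f^{0})|^{2}g(\lambda)\,d\lambda\le 2\pi P_0\,\|h(f^{0})\|_{\infty}^{2}<\infty$ and $h(f^{0})\in L_2^{-}(g)$ for every $g\in D_0$, i.e. $h(f^{0})\in H_{D_0}$, closing the argument.

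The step I expect to be the main obstacle is the production of a \emph{maximising} vector in the normal form $A_N d^{0}=\alpha\overline{d^{0}}$ together with the justification that the pointwise Lagrange identity $|r_N|^{2}=\alpha^{2}|\varphi^{0}|^{2}$ may be turned into the coefficient equations \eqref{eqANd1}, \eqref{eqANd2}: the former rests on the Autonne--Takagi factorization of the complex symmetric matrix $A_N$ (in the real-coefficient case it is immediate, the eigenvectors of $A_N$ being real), the latter on uniqueness of the canonical (outer) polynomial factorization, which needs $\varphi^{0}$ to have no zeros on the unit circle — precisely the regularity condition built into \eqref{factorN}. By comparison, the fact that the stationary point carrying the largest multiplier is the \emph{global} maximum over the convex set $D_0$ is routine here, $\|A_N d\|^{2}$ being a nonnegative quadratic form on a ball.
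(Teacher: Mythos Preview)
Your proposal is correct and follows essentially the same route as the paper: reduce via Theorem~\ref{theo.1.5} to maximising $\|A_N d\|^2$ over the ball $\|d\|^2\le P_0$, read off the Lagrange condition \eqref{rND0}, and convert it into the coefficient equations \eqref{eqANd1}, \eqref{eqANd2} via the identity \eqref{ArND0}. The paper's argument is the paragraph preceding the theorem; it simply posits \eqref{eqANd1} and \eqref{eqANd2} as the equations arising from matching the two polynomial factorizations in \eqref{rND0}--\eqref{ArND0}, and leaves the saddle-point verification and the membership $h(f^0)\in H_{D_0}$ to the general framework of Section~4.2. Your additions --- the Autonne--Takagi factorization to guarantee a maximiser in the conjugate-eigenvector form $A_N d^0=\nu_N\overline{d^0}$, the explicit computation $\Delta(h(f^0),f)=\nu_N^2\cdot\frac{1}{2\pi}\int f\le \nu_N^2 P_0$, and the boundedness check for $h(f^0)$ --- fill in details the paper omits but do not change the approach.
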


\begin{cor}
The spectral density \eqref{lfdD0N} of the one-sided  moving-average sequence \eqref{maD0N}, where the sequence  $d^0= \left \{d^0(k):k=0, 1, \ldots,N \right \}$ satisfy condition \eqref{ndN},
 is the least favourable
in the class $D_{0}$ for the optimal extrapolation of the
functional $ {a}(N) { \xi}(N)$.
\end{cor}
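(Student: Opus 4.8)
The plan is to read $a(N)\xi(N)$ as the particular instance of $A_{N}\xi=\sum_{j}\tilde a(j)\xi(j)$ in which $\tilde a(j)=0$ for $j<N$ and $\tilde a(N)=a(N)$, and then to specialise Theorem~\ref{theo.1.5} (equivalently, the constrained optimization problem \eqref{GrindEQ__3_32_}) to this choice of coefficients; for such coefficients all the auxiliary quantities collapse, and one sees at once that the norm condition \eqref{ndN} is the only effective requirement on $d^{0}$.

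First I would compute $(A_{N}d)_{k}$. Since $\tilde a(k+l)\neq0$ only when $k+l=N$, the sum $(A_{N}d)_{k}=\sum_{l=0}^{N-k}\tilde a(k+l)d(l)$ reduces to its single term $l=N-k$, so $(A_{N}d)_{k}=a(N)d(N-k)$ for $k=0,1,\dots,N$, and hence
\[
\|A_{N}d\|^{2}=\sum_{k=0}^{N}|a(N)|^{2}|d(N-k)|^{2}=|a(N)|^{2}\sum_{m=0}^{N}|d(m)|^{2}=|a(N)|^{2}\|d\|^{2};
\]
thus $\|A_{N}d\|^{2}$ depends on $d$ only through its norm. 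On the other hand, by the Parseval equality a density $f(\lambda)=|\sum_{k=0}^{N}d(k)e^{-ik\lambda}|^{2}$ lies in $D_{0}$ exactly when $\frac{1}{2\pi}\int_{-\pi}^{\pi}f(\lambda)\,d\lambda=\|d\|^{2}\le P_{0}$. Consequently the problem \eqref{GrindEQ__3_32_} attains its maximum $\nu_{N}^{2}P_{0}$ with $\nu_{N}^{2}=|a(N)|^{2}$ at \emph{every} sequence $d^{0}=\{d^{0}(k):k=0,\dots,N\}$ satisfying \eqref{ndN}, and by Theorem~\ref{theo.1.5} each such choice makes \eqref{lfdD0N} least favourable in $D_{0}$; this is the spectral density of the one-sided moving-average sequence \eqref{maD0N}.

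The remaining point is to see that the ``shape'' relations appearing in Theorem~\ref{theo.1.7} impose nothing more, and this is the only step that needs a word of care: for the present functional they are automatic. Indeed, with $\varphi^{0}(\lambda)=\sum_{k=0}^{N}d^{0}(k)e^{-ik\lambda}$,
\[
r_{N}(e^{i\lambda})=\sum_{k=0}^{N}(A_{N}d^{0})_{k}e^{ik\lambda}=a(N)\sum_{k=0}^{N}d^{0}(N-k)e^{ik\lambda}=a(N)e^{iN\lambda}\varphi^{0}(\lambda),
\]
so $|r_{N}(e^{i\lambda})|^{2}=|a(N)|^{2}|\varphi^{0}(\lambda)|^{2}$; that is, \eqref{rND0} holds with $\alpha^{2}=|a(N)|^{2}$ for any admissible $d^{0}$, while $(\tilde A_{N}d)_{k}=a(N)d(k)$ shows that \eqref{eqANd2} holds with $\beta=a(N)$ for every $d$. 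Equivalently, the functional $\Delta(h(f^{0}),f)=\frac{1}{2\pi}\int_{-\pi}^{\pi}\frac{|r_{N}(e^{i\lambda})|^{2}}{|\varphi^{0}(\lambda)|^{2}}f(\lambda)\,d\lambda$ of \eqref{GrindEQ__3_34_} equals $|a(N)|^{2}\cdot\frac{1}{2\pi}\int_{-\pi}^{\pi}f(\lambda)\,d\lambda$, which over $D_{0}$ is maximised precisely by the densities with $\frac{1}{2\pi}\int f\,d\lambda=P_{0}$; the density \eqref{lfdD0N} with $d^{0}$ obeying \eqref{ndN} is one of these and is, as just shown, a fixed point of the extremal problem, hence least favourable. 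The minimax-robust spectral characteristic is then read off from \eqref{GrindEQ__3_29_}. I do not expect a genuine obstacle here: the proof is the specialisation above together with the observation that the shape condition degenerates to an identity; the only thing worth noting in passing is that \eqref{factorN} should be a bona fide canonical factorization, which can always be arranged (for instance by choosing $d^{0}$ so that $\sum_{k=0}^{N}d^{0}(k)z^{k}$ has no zeros in $|z|<1$) and is harmless because \eqref{ndN} is the sole effective constraint.
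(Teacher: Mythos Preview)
Your argument is correct and is exactly the specialisation the paper has in mind: the corollary is stated without proof as an immediate consequence of Theorem~\ref{theo.1.7} and the optimization problem \eqref{GrindEQ__3_32_}, and your computation $(A_Nd)_k=a(N)d(N-k)$, $(\tilde A_Nd)_k=a(N)d(k)$, hence $\|A_Nd\|^2=|a(N)|^2\|d\|^2$, is precisely the content that makes the constraint \eqref{ndN} the only effective one. Your verification that \eqref{rND0} and \eqref{eqANd2} hold identically for this functional is a nice explicit check; the paper omits all of this, treating the corollary as self-evident from the theorem.
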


\subsection{Least favourable spectral densities in the class $D_{M}$}

Consider the problem of the optimal estimation of the functionals $A\xi=\sum_{j=1}^{\infty}a(j)\xi(j)$ and $A_N\xi=\sum_{j=1}^{N}a(j)\xi(j)$  which depends on the unknown values of a stationary stochastic sequence $\xi(j)$
from observations of the sequence $\xi(j)$ at points of time $j=-1,-2,\dots$
in the case where the spectral density  $f(\lambda)$ is from the class $D_M$
of spectral densities which are characterized by restrictions on the moments of the density
 \[D_{M} = \left \{f( \lambda )\left|\, \frac{1}{2 \pi} \int_{- \pi}^{ \pi} f( \lambda )\cos(m\lambda)d\lambda =r(m), \, m= 0,1,\dots,M \right.\right \}, \]
 where $r(0)=P_0$ and $r(0),r(1),\dots,r(M)$ is a strictly positive sequence. The moment problem in this case has not uniquely determined solutions and the class $D_M$ contains an infinite number of densities \cite{Krein}.

We can apply the method of Lagrange multipliers to find solution to the optimization problem (\ref{GrindEQ__3_34_}) in the case $D= D_M$.
We get the following relations that determine the least favourable spectral density  $f^0\in D_M$
\begin{equation} \label{rDM}
| r(e^{i \lambda} )|^2(f^0(\lambda))^{-1}= \sum\limits_{m=0}^M \alpha_m \cos (m\lambda)=\left|\sum\limits_{m=0}^M p(m) e^{im\lambda}\right|^2,
\end{equation}
where $\alpha_m, m=0,1,\ldots,M $ are the Lagrange multipliers.

It follows from this relation that the least favourable spectral density  $f^0\in D_M$ is of the form
\begin{equation}\label{lfDM}
f_0(\lambda)=\frac{\left|\sum\limits_{k=0}^{\infty}(Ad)_ke^{ik\lambda}\right|^2}{\left|\sum\limits_{m=0}^M p(m) e^{-im\lambda}\right|^2}.
\end{equation}

 \noindent The unknown $ \{p(m): m=0,1,\dots,M\}$ and
$d= \left \{d(k):k=0, \, 1, \, \ldots \right \}$ are determined by
equations of the canonical factorization \eqref{factor1} of the density $f^{0} ( \lambda)$,
 solution of the constrained optimization problem \eqref{GrindEQ__3_31_} and restrictions
imposed on densities from the class of admissible spectral densities $D_M$.

Denote by $\nu_M P_0$ the maximum value $\|Ad^0\|^2$ of the quantity $\|Ad\|^2$, where
$d= \left \{d(k):k=0, \, 1, \, \ldots \right \}$ satisfies condition \eqref{nd} and are determined by
equations of the canonical factorization \eqref{factor1} of the density $f^{0} ( \lambda)$
 and restrictions
imposed on densities from the class of admissible spectral densities $D_M$.

Denote by $\nu_M^{+} P_0$ the maximum value $\|Ad^0\|^2$ of the quantity $\|Ad\|^2$, where
$d= \left \{d(k):k=0,  1, \ldots \right \}$ satisfies condition \eqref{nd} and are determined by
equations of canonical factorization \eqref{factor1} of the density \eqref{lfDM}
and restrictions
imposed on densities from the class of admissible spectral densities $D_M$.

\noindent The following statement is true.

\begin{teo} \label{theo.1.8}
If there exists a sequence  $d^{0} = \left \{d^0(k):k=0,  1, \ldots \right \}$ which satisfies condition \eqref{nd}
and such that $\nu_MP_0 =\nu_M^{+}P_0=\|Ad^0\|^2$, then the least favourable in the class $D_{M}$  for the optimal
extrapolation of the functional $A {\xi}$ is the spectral density
\eqref{lfdD0} of the one-sided  moving-average sequence \eqref{maD0}.
If $\nu_M<\nu_M^{+}$, then the least favourable in the class $D_{M}$  for the optimal
extrapolation of the functional $A {\xi}$ is the spectral density
\eqref{lfDM} which admits the canonical factorization \eqref{factor1}.
The unknown $ \{p(m): m=0,1,\dots,M\}$ and
$d= \left \{d(k):k=0, \, 1, \, \ldots \right \}$ are determined by
equations of canonical factorization \eqref{factor1} of the density $f^{0} ( \lambda)$,
 solution of the constrained optimization problem \eqref{GrindEQ__3_31_} and restrictions
imposed on densities from the class of admissible spectral densities $D_M$.
 The minimax spectral characteristic of the optimal estimate of the functional $A { \xi}$ is calculated by the formula \eqref{GrindEQ__3_27_}.
 \end{teo}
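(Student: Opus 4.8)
\emph{Proof sketch.} The argument runs through the general minimax machinery already set up in this section. By Theorem~\ref{theo.1.4} together with the subdifferential optimality condition $0\in\partial\Delta_{D}(f^0)$ for problem \eqref{GrindEQ__3_34_}--\eqref{GrindEQ__3_35_}, a density $f^0\in D_M$ is least favourable for $A{\xi}$ precisely when it minimises the convex functional $\Delta_{D_M}(f)=-\Delta(h(f^0),f)+\delta(f\mid D_M)$, where $\Delta(h(f^0),f)=\frac{1}{2\pi}\int_{-\pi}^{\pi}\frac{|r(e^{i\lambda})|^2}{|\varphi^0(\lambda)|^2}f(\lambda)\,d\lambda$ is linear in $f$. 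Since the linear part contributes the single subgradient $-\frac{|r(e^{i\lambda})|^2}{|\varphi^0(\lambda)|^2}$ (for the pairing $\langle g,f\rangle=\frac1{2\pi}\int gf\,d\lambda$), the optimality condition reduces to $\frac{|r(e^{i\lambda})|^2}{|\varphi^0(\lambda)|^2}\in N_{D_M}(f^0)$, the normal cone of $D_M$ at $f^0$.

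The class $D_M$ is the intersection of the positive cone with the affine subspace cut out by the $M+1$ moment equalities $\frac1{2\pi}\int_{-\pi}^{\pi}f(\lambda)\cos(m\lambda)\,d\lambda=r(m)$. Once $f^0$ admits the canonical factorisation \eqref{factor1} it is strictly positive a.e., so the positivity constraint is inactive and $N_{D_M}(f^0)$ equals the linear span of $\{\cos(m\lambda):m=0,\dots,M\}$. Hence there are real multipliers $\alpha_0,\dots,\alpha_M$ with $\frac{|r(e^{i\lambda})|^2}{|\varphi^0(\lambda)|^2}=\sum_{m=0}^M\alpha_m\cos(m\lambda)$, the first equality in \eqref{rDM}; since the left-hand side is nonnegative, the trigonometric polynomial on the right is nonnegative on the circle, and by the Fej\'er--Riesz lemma it factors as $\bigl|\sum_{m=0}^M p(m)e^{im\lambda}\bigr|^2$, which completes \eqref{rDM}. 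Solving for $f^0=|\varphi^0|^2$ and substituting $r(e^{i\lambda})=\sum_{k\ge0}(Ad^0)_ke^{ik\lambda}$ from \eqref{r}, with $d^0$ the factorisation coefficients of $f^0$ itself, yields the self-consistent expression \eqref{lfDM}. The parameters $p(0),\dots,p(M)$ and $d^0=\{d^0(k)\}$ are then determined by the factorisation equations \eqref{factor1}, the normalisation \eqref{nd} (equivalently $r(0)=P_0$), the remaining moment equations of $D_M$, and maximality of $\|Ad\|^2$ in \eqref{GrindEQ__3_31_}.

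It remains to decide which of the two forms \eqref{lfdD0}, \eqref{lfDM} realises $\max_{f\in D_M}\Delta(h(f),f)$. The one-sided moving-average densities lying in $D_M$ are exactly the densities \eqref{lfDM} with constant denominator, so the best error they yield, $\nu_M P_0$, never exceeds the best error $\nu_M^{+}P_0$ yielded by the full rational family \eqref{lfDM}. If the two coincide and are attained at some $d^0$ with $\|d^0\|^2=P_0$, the multipliers $\alpha_1,\dots,\alpha_M$ may be taken to vanish, $\frac{|r|^2}{|\varphi^0|^2}$ is constant, and the self-consistent density collapses to the moving-average density \eqref{lfdD0}, reproducing the situation of the class $D_0$ (Theorem~\ref{theo.1.6}); in this case the minimax characteristic is given by \eqref{GrindEQ__3_27_}. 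If instead $\nu_M<\nu_M^{+}$, the denominator cannot be constant, the maximiser is a genuine rational density, and $f^0$ is the density \eqref{lfDM} admitting the canonical factorisation \eqref{factor1}. In either case one finally verifies that $h(f^0)$ computed from \eqref{GrindEQ__3_27_} lies in $H_{D_M}=\bigcap_{f\in D_M}L_2^-(f)$, which together with the above gives the saddle-point inequalities and hence that $f^0$ is least favourable.

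The main obstacle is the self-consistency in the second paragraph: showing that the coupled system for $(p(0),\dots,p(M),d^0)$ — factorisation of a rational density whose numerator is built from $Ad^0$, subject to the $M+1$ moment constraints and to optimality in \eqref{GrindEQ__3_31_} — is actually solvable, and making the comparison of $\nu_M$ with $\nu_M^{+}$ precise enough to read off which density wins. The membership $h(f^0)\in H_{D_M}$ in the last step also requires a separate check, since densities in $D_M$ need not be bounded away from zero.
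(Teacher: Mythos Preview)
Your approach is essentially the paper's own: the text preceding the theorem applies Lagrange multipliers to the constrained problem \eqref{GrindEQ__3_34_} for $D=D_M$, obtains relation \eqref{rDM}, and reads off the rational form \eqref{lfDM}; the two cases of the theorem are then distinguished via the quantities $\nu_M$ and $\nu_M^{+}$ exactly as you describe. Your normal-cone formulation and the explicit appeal to Fej\'er--Riesz make the derivation of \eqref{rDM} more transparent than the paper's bare Lagrange-multiplier statement, but the underlying argument is the same.

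The obstacles you flag --- solvability of the self-consistent system for $(p(0),\dots,p(M),d^{0})$ and verification that $h(f^{0})\in H_{D_M}$ --- are not resolved in the paper either; the theorem is stated immediately after the derivation of \eqref{lfDM} and the definitions of $\nu_M,\nu_M^{+}$, with these points left implicit. So your sketch is at the same level of completeness as the paper's own argument, and somewhat more careful about the convex-analytic step.
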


Consider now the problem of the optimal estimation of the functional  $A_N\xi=\sum_{j=1}^{N}a(j)\xi(j)$.
It follows from the relation \eqref{rDM} that in this case the least favourable spectral density  $f^{0} ( \lambda)\in D_M$ is of the form
\begin{equation}\label{NlfDM}
f_0(\lambda)=\frac{\left|\sum\limits_{k=0}^{N}(A_Nd)_ke^{ik\lambda}\right|^2}{\left|\sum\limits_{m=0}^M p(m) e^{-im\lambda}\right|^2}.
\end{equation}
These densities are spectral densities of the autoregressive-moving-average $ARMA(M,N)$ sequences
 \begin{equation} \label{armaD0N}
 \sum\limits_{m=0}^M p(m) \xi(n-m)= \sum_{k=0}^{N}(A_Nd)_k{\varepsilon}(n-k).
 \end{equation}

\noindent The unknown $ \{p(m): m=0,1,\dots,M\}$ and
$d= \left \{d(k):k=0, 1, \ldots,N \right \}$ are determined by
equations of canonical factorization \eqref{factorN}  of the density $f^{0} ( \lambda)$,
 solution of the constrained optimization problem \eqref{GrindEQ__3_32_} and restrictions
imposed on densities from the class of admissible spectral densities $D_M$.

Denote by $\nu_{MN} P_0$ the maximum value $\|A_Nd^0\|^2$ of the quantity $\|A_Nd\|^2$, where
$d= \left \{d(k):k=0, 1, \ldots,N \right \}$ are determined by equations \eqref{eqANd1}, \eqref{eqANd2}, condition \eqref{ndN} and
equations of canonical factorization \eqref{factorN}  of the density $f^{0} ( \lambda)\in D_M$.

Denote by $\nu_{MN}^{+} P_0$ the maximum value $\|A_Nd^0\|^2$ of the quantity $\|A_Nd\|^2$, where
$d= \left \{d(k):k=0,  1, \ldots,N \right \}$ satisfies condition \eqref{ndN},
equations of canonical factorization \eqref{factor1}  of the density \eqref{NlfDM} and restrictions
imposed on densities from the class of admissible spectral densities $f^{0} ( \lambda)\in D_M$.

Thus the following statement holds true.

\begin{teo} \label{theo.1.9}
If there exists a solution $d^0= \left \{d^0(k):k=0, 1, \ldots,N \right \}$ of the equation \eqref{eqANd1}, or the equation \eqref{eqANd2}, which satisfy condition \eqref{ndN}  and
such that  $\nu_{MN}P_0 =\nu_{MN}^{+}P_0=\|A_Nd^0\|^2$, then the least favourable in the class $D_{M}$  for the optimal
extrapolation of the functional $A_N {\xi}$ is the spectral density
\eqref{lfdD0N} of the one-sided  moving-average sequence \eqref{maD0N}.
If $\nu_{MN}<\nu_{MN}^{+}$, then the least favourable in the class $D_{M}$  for the optimal
extrapolation of the functional $A_N {\xi}$ is the spectral density
\eqref{NlfDM} of the autoregressive-moving-average $ARMA(M,N)$ sequences
 \eqref{armaD0N}.
The unknown $ \{p(m): m=0,1,\dots,M\}$ and
$d= \left \{d(k):k=0, \, 1, \, \ldots \right \}$ are determined by
equations of canonical factorization \eqref{factor1} of the density $f^{0} ( \lambda)$,
 solution of the constrained optimization problem \eqref{GrindEQ__3_31_} and restrictions
imposed on densities from the class of admissible spectral densities $D_M$.
 The minimax spectral characteristic of the optimal estimate of the functional $A_N { \xi}$ is calculated by the formula \eqref{GrindEQ__3_29_}.
 \end{teo}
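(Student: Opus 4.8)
The plan is to instantiate the general saddle-point scheme set up earlier in this section: $f^0$ and $h^0=h(f^0)$ form a saddle point of $\Delta(h,f)$ on $H_D\times D$ as soon as $h(f^0)\in H_D$ and $f^0$ solves $\Delta(h(f^0),f^0)=\max_{f\in D_M}\Delta(h(f^0),f)$, so it suffices to (i) solve this optimization over $D_M$, identify the extremal $f^0$ and show it has the asserted $ARMA(M,N)$ form (or, in the degenerate case, $MA(N)$ form), and (ii) check that the resulting characteristic \eqref{GrindEQ__3_29_} lies in $H_D$. Here $\Delta(h(f^0),f)=\frac{1}{2\pi}\int_{-\pi}^{\pi}\frac{|r_N(e^{i\lambda})|^2}{|\varphi^0(\lambda)|^2}f(\lambda)d\lambda$ with $r_N$ from \eqref{rN}, so this is a \emph{linear} functional of $f$ maximized over the convex set $D_M$; I would pass to the equivalent unconstrained problem \eqref{GrindEQ__3_35_} and write the optimality condition $0\in\partial\Delta_{D_M}(f^0)$, using that $\partial\delta(\cdot\,|D_M)$ at a feasible $f^0$ is generated by the functions $\cos(m\lambda)$, $m=0,\dots,M$.

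Reading off this condition with Lagrange multipliers $\alpha_0,\dots,\alpha_M$ gives $\frac{|r_N(e^{i\lambda})|^2}{|\varphi^0(\lambda)|^2}=\sum_{m=0}^{M}\alpha_m\cos(m\lambda)$ on $\{f^0>0\}$, i.e.\ relation \eqref{rDM} with $r=r_N$; since the right-hand side is a nonnegative trigonometric polynomial of degree $M$, the Fej\'er--Riesz lemma factors it as $|\sum_{m=0}^{M}p(m)e^{im\lambda}|^2$. Multiplying by $f^0(\lambda)=|\varphi^0(\lambda)|^2$ and recalling $r_N(e^{i\lambda})=\sum_{k=0}^{N}(A_Nd)_ke^{ik\lambda}$ (see \eqref{rN}, \eqref{ArND0}) yields $f^0(\lambda)=|\sum_{k=0}^{N}(A_Nd)_ke^{ik\lambda}|^2\big/|\sum_{m=0}^{M}p(m)e^{-im\lambda}|^2$, which is exactly \eqref{NlfDM}; reading the numerator as the moving-average polynomial and the denominator as the autoregressive polynomial exhibits $f^0$ as the spectral density of the $ARMA(M,N)$ sequence \eqref{armaD0N}. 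The coefficients $\{p(m)\}$ and $d=\{d(k)\}$ are then pinned down simultaneously by the canonical factorization \eqref{factorN}, the extremality requirement (maximizing $\|A_Nd\|^2$, which is what $\Delta(h(f^0),f^0)$ equals by \eqref{GrindEQ__3_28_}), and the $M+1$ moment equations defining $D_M$.

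For the dichotomy I would argue as follows. The value $\nu_{MN}^{+}P_0$ is the largest $\|A_Nd\|^2$ attainable by a normalized $d$ for which the $ARMA$ density \eqref{NlfDM} lies in $D_M$, whereas $\nu_{MN}P_0$ is the largest value under the extra requirement that $d$ solve the eigen-type equations \eqref{eqANd1} or \eqref{eqANd2}; these relations are precisely the ones that force $|r_N(e^{i\lambda})|^2$ to be proportional to $|\varphi^0(\lambda)|^2$, i.e.\ they force the autoregressive polynomial to collapse to a constant and $f^0$ to degenerate to the pure one-sided moving-average density \eqref{lfdD0N} of the sequence \eqref{maD0N}. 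Hence $\nu_{MN}\le\nu_{MN}^{+}$, and when equality holds and is attained by such a $d^0$ the moving-average density already satisfies all the moment constraints of $D_M$ and is therefore least favourable; when $\nu_{MN}<\nu_{MN}^{+}$ the genuine $ARMA(M,N)$ density \eqref{NlfDM} must be used. In both cases, with $f^0$ (hence its outer factor $\varphi^0$) fixed, the minimax characteristic is given by the projection formula \eqref{GrindEQ__3_29_}, and the saddle-point inequalities then follow from the convexity of $\Delta_{D_M}(f)$ together with $0\in\partial\Delta_{D_M}(f^0)$.

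The main obstacle, as for the class $D_M$ generally, is the verification $h(f^0)\in H_D=\bigcap_{f\in D_M}L_2^-(f)$: unlike $D_0$, the set $D_M$ contains densities that may degenerate, so one must show the single characteristic \eqref{GrindEQ__3_29_} is an admissible one-sided estimate for \emph{every} admissible density, not just $f^0$. I would handle this by noting that in the $ARMA$ case $\varphi^0$ is an outer function with $|\varphi^0|$ bounded above and below — the autoregressive polynomial having no zeros on $|z|=1$ as long as $\sum_m\alpha_m\cos(m\lambda)$ is strictly positive — so that $h(f^0)=A_N(e^{i\lambda})-(\varphi^0)^{-1}(\lambda)r_N(e^{i\lambda})$ is a bounded function built from nonnegative powers of $e^{i\lambda}$; the boundary subcase where $\sum_m\alpha_m\cos(m\lambda)$ has zeros on the circle requires a separate limiting argument. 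A secondary technical point is the attainment of the maxima defining $\nu_{MN}$ and $\nu_{MN}^{+}$, which rests on the Hilbert--Schmidt/finite-rank structure of the operators $A_N$ established in Section~\ref{est1}.
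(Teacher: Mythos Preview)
Your proposal is correct and follows essentially the same route as the paper: the paper's ``proof'' is simply the discussion preceding the theorem statement, where the Lagrange-multiplier condition for the constrained problem \eqref{GrindEQ__3_34_} over $D_M$ is read off as \eqref{rDM}, factored (implicitly via Fej\'er--Riesz) to produce the $ARMA(M,N)$ form \eqref{NlfDM}, and the two values $\nu_{MN}$, $\nu_{MN}^{+}$ are defined exactly so as to formalize the dichotomy you describe. Your treatment is in fact more careful than the paper's, which does not explicitly address the $h(f^0)\in H_D$ verification or the attainment issues you flag; these points are simply taken for granted in the text.
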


 \subsection{Least favourable spectral densities in the class $D_{v}^{u}$}

\noindent Consider the problem of minimax estimation of the functionals
$A { \xi}$ and $A_{N} { \xi}$ which depend on the unknown values of a  stationary stochastic sequence $ { \xi}(j)$ for the sets of spectral densities that describe the ``strip'' model \label{smuga} of stationary stochastic sequences
 \[D_{v}^{u} = \left \{f( \lambda )\left| v( \lambda ) \le f( \lambda ) \le u( \lambda ), \; \frac{1}{2 \pi} \int_{- \pi}^{ \pi} f( \lambda )d \lambda  =P_0 \right.\right \}, \]

\noindent where $v( \lambda ), u( \lambda)$ are given bounded spectral densities.

From the condition $0 \in \partial\Delta_D(f_0)$ for $D=D_v^{u}$ we find the following
equation which determines the least favourable spectral density
for the optimal estimation of the functional $A { \xi}$
\begin{equation}\label{eqDvu}
\left|\sum\limits_{k=0}^{\infty}(Ad)_ke^{ik\lambda}\right|^2=(\psi_1(\lambda)+\psi_2(\lambda)+\alpha_0)\left|\varphi^0(\lambda)\right|^2,
\end{equation}
where $\psi_1(\lambda)\geq 0$ and $\psi_1(\lambda)=0$ if $f_0(\lambda)\geq v(\lambda);$ $\psi_2(\lambda)\leq 0$ and $\psi_2(\lambda)=0$ if $f_0(\lambda)\leq u(\lambda).$

From this equation we find that the least favourable spectral density for the optimal estimation of the functional $A { \xi}$ is of the form
\begin{equation}\label{lfDvu}
f_0(\lambda)=\max\left\{v(\lambda),\min\left\{u(\lambda), c\left|\sum\limits_{k=0}^{\infty}(Ad)_ke^{ik\lambda}\right|^2\right\}\right\}.
\end{equation}

Denote by $\nu_u P_0$ the maximum value $\|Ad^0\|^2$ of the quantity $\|Ad\|^2$, where
$d= \left \{d(k):k=0, \, 1, \, \ldots \right \}$ are solutions of the equation \eqref{eqAd}
which satisfy condition \eqref{nd}, the inequality
\begin{equation}\label{ieqDvu}
v(\lambda)\leq \left|\sum\limits_{k=0}^{\infty}d(k)e^{-ik\lambda}\right|^2\leq u(\lambda),
\end{equation}
and determine the canonical factorization \eqref{factor1} of the density $f^{0} ( \lambda)\in D_v^{u}$.

Denote by $\nu_u^{+} P_0$ the maximum value $\|Ad^0\|^2$ of the quantity $\|Ad\|^2$, where
$d= \left \{d(k):k=0,  1, \ldots \right \}$ satisfies condition \eqref{nd} and determine the canonical factorization \eqref{factor1} of the density \eqref{lfDvu}
from the class of admissible spectral densities $D_v^{u}$.

Thus the following theorem holds true.

\begin{teo} \label{theo1.10}
If there exists a solution of the equation \eqref{eqAd}
which satisfy condition \eqref{nd} and such that
$\nu_u P_0=\nu_u^{+} P_0=\|Ad^0\|^2$, then the spectral density \eqref{lfdD0} of the one-sided  moving-average sequence \eqref{maD0}
is the least favourable in the set $D_{v}^{u}$
for the optimal extrapolation of the functional $A{ \xi}$.
If $\nu_u<\nu_u^{+}$, then the least favourable in the class $D_{v}^{u}$  for the optimal
extrapolation of the functional $A {\xi}$ is the spectral density
\eqref{lfDvu} which admits the canonical factorization \eqref{lfdD0}.
The sequence $d= \left \{d(k):k=0,  1, \ldots \right \}$ is determined by
the optimisation problem \eqref{GrindEQ__3_31_} and restrictions
imposed on densities by the given set of admissible spectral
densities.
\noindent The minimax spectral characteristic of the optimal estimate of the functional $A{ \xi}$ is calculated by the formula \eqref{GrindEQ__3_27_}.
 \end{teo}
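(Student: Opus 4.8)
The plan is to show that the spectral density $f^{0}(\lambda)$ described in the statement, together with the spectral characteristic $h(f^{0})$ given by \eqref{GrindEQ__3_27_}, is a saddle point of $\Delta(h,f)$ on $H_{D_{v}^{u}}\times D_{v}^{u}$; by the general saddle-point scheme for the minimax extrapolation problem this reduces to two claims: (i) $f^{0}$ solves the constrained problem \eqref{GrindEQ__3_34_}, equivalently the unconstrained problem \eqref{GrindEQ__3_35_}, so that $0\in\partial\Delta_{D_{v}^{u}}(f^{0})$; and (ii) $h(f^{0})\in H_{D_{v}^{u}}=\bigcap_{f\in D_{v}^{u}}L_{2}^{-}(f)$.

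First I would determine $\partial\delta(f^{0}|D_{v}^{u})$. The class $D_{v}^{u}$ is the intersection of the ``box'' $B=\{f:v(\lambda)\le f(\lambda)\le u(\lambda)\}$ with the moment hyperplane $M=\{f:\frac{1}{2\pi}\int_{-\pi}^{\pi}f(\lambda)d\lambda=P_{0}\}$; provided $D_{v}^{u}$ has non-empty interior, $\partial\delta(f^{0}|D_{v}^{u})=\partial\delta(f^{0}|B)+\partial\delta(f^{0}|M)$, where $\partial\delta(f^{0}|M)$ consists of the constants $\alpha_{0}\in\mathbb{R}$ and $\partial\delta(f^{0}|B)$ consists of the functions $\psi_{1}+\psi_{2}$ supported on $\{f^{0}=v\}$ and $\{f^{0}=u\}$ with the signs recorded after \eqref{eqDvu}. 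Since $\Delta(h(f^{0}),f)=\frac{1}{2\pi}\int_{-\pi}^{\pi}|r(e^{i\lambda})|^{2}|\varphi^{0}(\lambda)|^{-2}f(\lambda)d\lambda$ is linear in $f$ (the functions $r$ of \eqref{r} and $\varphi^{0}$ being frozen at $f=f^{0}$), its gradient is $|r|^{2}|\varphi^{0}|^{-2}$, and the stationarity condition $0\in\partial\Delta_{D_{v}^{u}}(f^{0})$ collapses to the extremal relation \eqref{eqDvu}. Solving \eqref{eqDvu} pointwise yields $f^{0}=|\varphi^{0}|^{2}=c|r|^{2}$ (with $c=\alpha_{0}^{-1}$) on the set where the box is inactive, $f^{0}=v$ where the lower bound is active, and $f^{0}=u$ where the upper bound is active, i.e., exactly \eqref{lfDvu}, with $r(e^{i\lambda})=\sum_{k\ge 0}(Ad^{0})_{k}e^{ik\lambda}$ and $d^{0}$ the coefficients of the canonical factorization \eqref{factor1} of $f^{0}$.

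Next I would pin down which of the two candidate densities $f^{0}$ is, using Theorem~\ref{theo.1.4}: the least favourable density in $D_{v}^{u}$ is the one whose factorization coefficients maximize $\|Ad\|^{2}$. If some solution $d^{0}$ of the eigen-type equation \eqref{eqAd} normalized by \eqref{nd} yields a density obeying the strip bounds \eqref{ieqDvu}, so that $r=\alpha\,\overline{\varphi^{0}}$ and \eqref{eqDvu} holds with $\psi_{1}\equiv\psi_{2}\equiv 0$ and $\alpha_{0}=|\alpha|^{2}$, and if the resulting value $\nu_{u}P_{0}=\|Ad^{0}\|^{2}$ is not exceeded by the best value $\nu_{u}^{+}P_{0}$ attainable from the boxed form \eqref{lfDvu}, then $d^{0}$ maximizes $\|Ad\|^{2}$ over all of $D_{v}^{u}$, so the pure moving-average density \eqref{lfdD0} of the sequence \eqref{maD0} is least favourable. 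If instead $\nu_{u}<\nu_{u}^{+}$, no admissible eigen-solution attains the maximum, which is then realised by \eqref{lfDvu}; this density admits the canonical factorization \eqref{lfdD0} because $v\le f^{0}\le u$ forces $\int_{-\pi}^{\pi}\ln f^{0}(\lambda)d\lambda>-\infty$, and its coefficients $d^{0}$ are the solution of \eqref{GrindEQ__3_31_} restricted to $D_{v}^{u}$. In both cases $f^{0}$ solves \eqref{GrindEQ__3_34_}.

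Finally I would check $h(f^{0})\in H_{D_{v}^{u}}$: the estimate \eqref{GrindEQ__3_27_} built from $f^{0}$ equals $A(e^{i\lambda})-(\varphi^{0})^{-1}(\lambda)r(e^{i\lambda})$ with $r$ a one-sided analytic series, and since $f\ge v>0$ for every $f\in D_{v}^{u}$ this function lies in $L_{2}^{-}(f)$ for every admissible $f$; hence $h(f^{0})\in\bigcap_{f}L_{2}^{-}(f)=H_{D_{v}^{u}}$ and the saddle-point inequalities $\Delta(h,f^{0})\ge\Delta(h^{0},f^{0})\ge\Delta(h^{0},f)$ hold with $h^{0}=h(f^{0})$, which proves minimaxity and the stated formula \eqref{GrindEQ__3_27_}. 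The main obstacle, I expect, is the dichotomy step: one must argue rigorously that an eigen-solution of \eqref{eqAd}--\eqref{nd} lying inside the strip and matching the boxed optimum does maximize $\|Ad\|^{2}$ over the \emph{whole} class $D_{v}^{u}$ (and otherwise the boxed density \eqref{lfDvu} does), which requires carefully comparing the three constrained extremal problems that define $\nu^{2}$, $\nu_{u}$ and $\nu_{u}^{+}$, and verifying that the piecewise-defined density \eqref{lfDvu}, despite its kinks, genuinely admits the canonical factorization \eqref{factor1}.
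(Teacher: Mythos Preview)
Your proposal follows exactly the route the paper takes: the paper derives \eqref{eqDvu} from the subdifferential condition $0\in\partial\Delta_{D_v^u}(f^0)$, solves it pointwise to obtain \eqref{lfDvu}, introduces the quantities $\nu_u$ and $\nu_u^+$ to separate the two cases, and then simply states the theorem; your write-up is in fact more detailed than what the paper offers, since the paper does not spell out the decomposition of the normal cone, the appeal to Theorem~\ref{theo.1.4} for the dichotomy, or the verification that $h(f^0)\in H_{D_v^u}$. One small caution: your argument that $h(f^0)\in L_2^-(f)$ for every $f\in D_v^u$ relies on $v(\lambda)>0$, which the paper does not explicitly assume (it only requires $v$ bounded), so that step would need the additional hypothesis or a different justification.
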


For the functional $A_{N} { \xi}$
the least favourable spectral density
for the optimal estimation of the functional is of the form
\begin{equation}\label{lfDvuN}
f_0(\lambda)=\max\left\{v(\lambda),\min\left\{u(\lambda), c\left|\sum\limits_{k=0}^{N}(A_Nd)_ke^{ik\lambda}\right|^2\right\}\right\}.
\end{equation}

Denote by $\nu_{uN} P_0$ the maximum value $\|A_Nd^0\|^2$ of the quantity $\|A_Nd\|^2$, where
$d= \left \{d(k):k=0, 1,\ldots,N \right \}$ are solutions of the equations \eqref{eqANd1}, \eqref{eqANd2},
which satisfy condition \eqref{ndN}, the inequality
\begin{equation}\label{ieqDvu}
v(\lambda)\leq \left|\sum\limits_{k=0}^{N}d(k)e^{-ik\lambda}\right|^2\leq u(\lambda),
\end{equation}
and determine the canonical factorization \eqref{factorN} of the density $f^{0} ( \lambda)\in D_v^{u}$.

Denote by $\nu_{uN}^{+} P_0$ the maximum value $\|A_Nd^0\|^2$ of the quantity $\|A_Nd\|^2$, where
$d= \left \{d(k):k=0,  1, \ldots \right \}$ satisfies condition \eqref{nd} and determine the canonical factorization \eqref{factor1} of the density \eqref{lfDvuN}
from the class of admissible spectral densities $D_v^{u}$.

The following theorem holds true.

\begin{teo} \label{theo1.11}
If there exists a solution of the equation \eqref{eqANd1}, or equation \eqref{eqANd2},
which satisfy condition \eqref{ndN} and such that
$\nu_{uN} P_0=\nu_{uN}^{+} P_0=\|A_Nd^0\|^2$,
then the spectral density \eqref{lfdD0N} of the one-sided  moving-average sequence \eqref{maD0N}
is the least favourable in the set $D_{v}^{u}$
for the optimal extrapolation of the functional $A_N{ \xi}$.
If $\nu_{uN}<\nu_{uN}^{+}$, then the least favourable in the class $D_{v}^{u}$  for the optimal
extrapolation of the functional $A_N {\xi}$ is the spectral density
\eqref{lfDvuN} which admits the canonical factorization \eqref{lfdD0}.
The sequence $d= \left \{d(k):k=0,  1, \ldots \right \}$ is determined by
the optimisation problem \eqref{GrindEQ__3_31_} and restrictions
imposed on densities by the given set of admissible spectral
densities.
\noindent  The minimax spectral characteristic of the optimal estimate of the functional $A_N{ \xi}$ is calculated by the formula \eqref{GrindEQ__3_29_}.
 \end{teo}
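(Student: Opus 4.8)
The plan is to carry over the convex-optimization machinery used for the functional $A\xi$ in Theorem~\ref{theo1.10}, replacing $r(e^{i\lambda})$ throughout by its finite-order analogue $r_N(e^{i\lambda})=\sum_{k=0}^{N}(A_Nd)_ke^{ik\lambda}$ from \eqref{rN}. First I would recall that, by the game-theoretic reduction \eqref{GrindEQ__3_34_}--\eqref{GrindEQ__3_35_}, a density $f^0\in D_v^{u}$ is least favourable as soon as it maximizes the \emph{linear} functional $f\mapsto\Delta(h(f^0),f)=\frac{1}{2\pi}\int_{-\pi}^{\pi}|r_N(e^{i\lambda})|^2|\varphi^0(\lambda)|^{-2}f(\lambda)\,d\lambda$ over $D_v^{u}$; equivalently, $0\in\partial\Delta_{D_v^{u}}(f^0)$. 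Since this amounts to minimizing a linear functional over a convex set, the subdifferential condition is necessary and sufficient, so the proof reduces to exhibiting an $f^0\in D_v^{u}$ satisfying it.

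Next I would compute $\partial\delta(\cdot|D_v^{u})(f^0)$. Since $D_v^{u}$ is the intersection of the pointwise bounds $v(\lambda)\le f(\lambda)\le u(\lambda)$ with the single linear moment constraint $\frac{1}{2\pi}\int f\,d\lambda=P_0$, the subdifferential contributes a constant $\alpha_0$ (the multiplier of the moment constraint) together with two sign-restricted Lagrange functions $\psi_1,\psi_2$ supported on $\{f^0=v\}$ and $\{f^0=u\}$ respectively. Writing out $0\in\partial\Delta_{D_v^{u}}(f^0)$ gives the extremal relation $|r_N(e^{i\lambda})|^2=(\psi_1(\lambda)+\psi_2(\lambda)+\alpha_0)|\varphi^0(\lambda)|^2$, the exact analogue of \eqref{eqDvu}, which reduces to \eqref{rND0} when the strip bounds are inactive. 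On $\{v<f^0<u\}$ complementary slackness forces $\psi_1=\psi_2=0$, hence $f^0=c|r_N|^2$ with $c=\alpha_0^{-1}$; on $\{f^0=v\}$ it yields $c|r_N|^2\le v$, and on $\{f^0=u\}$ it yields $c|r_N|^2\ge u$. Assembling the three cases and using the identity $|r_N(e^{i\lambda})|^2=|\sum_{k=0}^{N}(A_Nd)_ke^{ik\lambda}|^2$ from \eqref{ArND0} gives $f^0(\lambda)=\max\{v(\lambda),\min\{u(\lambda),c|r_N(e^{i\lambda})|^2\}\}$, which is formula \eqref{lfDvuN}.

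It remains to determine the sequence $d=\{d(k)\}$ and the constant $c$, which enter $f^0$ both through its canonical factorization $f^0=|\sum_k d(k)e^{-ik\lambda}|^2$ and through the coefficients $(A_Nd)_k$ inside $r_N$; this is where the two regimes arise. If some $d^0$ with $\|d^0\|^2=P_0$ solving \eqref{eqANd1} (equivalently \eqref{eqANd2}) and realizing the maximum of $\|A_Nd\|^2$ already satisfies the strip inequality \eqref{ieqDvu}, then $\psi_1\equiv\psi_2\equiv0$, the pointwise bounds are inactive, and \eqref{lfDvuN} collapses to the one-sided $MA(N)$ density \eqref{lfdD0N} of the sequence \eqref{maD0N}; in terms of the operator-eigenvalue description of $\|A_Nd\|^2$ (Theorem~\ref{theo.1.7}) this is precisely the case $\nu_{uN}=\nu_{uN}^{+}$, these two numbers being by definition the largest values of $\|A_Nd\|^2$ attainable over the $MA(N)$ densities in $D_v^{u}$ and over the truncated densities \eqref{lfDvuN}, respectively. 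If instead $\nu_{uN}<\nu_{uN}^{+}$, the truncation is genuine: the least favourable density is \eqref{lfDvuN}, which still admits the canonical one-sided factorization \eqref{lfdD0} (the bound $v>0$ ensuring the regularity condition \eqref{extrreg}), and the defining $d$ is recovered from the constrained maximization \eqref{GrindEQ__3_32_} under the restrictions of $D_v^{u}$.

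Finally, to close the saddle-point inequalities one checks $h(f^0)\in H_{D_v^{u}}=\bigcap_{f\in D_v^{u}}L_2^{-}(f)$, which holds because $f^0$ is squeezed between the bounded densities $v$ and $u$, so that the relevant $L_2^{-}$-subspaces are comparable; the minimax spectral characteristic is then given by \eqref{GrindEQ__3_29_} with $\varphi=\varphi^0$. I expect the main obstacle to be not the first-order analysis but the self-consistency of this implicit system: showing that a sequence $d^0$ simultaneously satisfying an eigen-equation \eqref{eqANd1} or \eqref{eqANd2}, the normalization \eqref{ndN}, the factorization of the truncated density \eqref{lfDvuN}, and the strip bounds \eqref{ieqDvu} actually exists and realizes the extremum — which must be argued through compactness of the operator $A_N$ and the boundedness hypotheses on $v$ and $u$, rather than by pure convex duality.
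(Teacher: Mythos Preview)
Your proposal is correct and follows essentially the same approach as the paper: the paper's ``proof'' consists of the derivation preceding the theorem statement, namely applying the subdifferential condition $0\in\partial\Delta_{D_v^u}(f^0)$ to obtain the Lagrange relation (the $A_N$-analogue of \eqref{eqDvu}), solving it into the truncated form \eqref{lfDvuN}, and then distinguishing the two cases via the quantities $\nu_{uN}$ and $\nu_{uN}^{+}$. Your treatment is in fact more thorough than the paper's, which does not explicitly verify $h(f^0)\in H_{D_v^u}$ nor address the self-consistency of the implicit system for $d^0$; your final caveat about existence is well placed, as the paper takes this for granted.
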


 \subsection{Least favourable spectral densities in the class $D_{\varepsilon}$}

Consider the problem of minimax estimation of the functionals
$A { \xi}$ and $A_{N} { \xi}$ which depend on the unknown values of a stationary stochastic sequence $ { \xi}(j)$ for the set of spectral densities that describes the ``$ \varepsilon$-- contamination" model \label{contam} of stationary stochastic sequences
 \[
 D_{ \varepsilon} = \left \{f( \lambda )\left| \; f( \lambda )=(1- \varepsilon )w( \lambda )+ \varepsilon u( \lambda ), \,\,  \frac{1}{2 \pi} \int_{- \pi}^{ \pi} f ( \lambda )d \lambda  =P_0 \right.\right\},
\]
\noindent where $w( \lambda )$ is a known
spectral density, and $u( \lambda )$ is an
unknown spectral density.
 From the condition $0 \in \partial \Delta_{D}
(f^{0} )$ for the functional $A { \xi}$  we find the following
equations which determine the least favourable spectral densities
for the optimal estimation of the functional $A { \xi}$ for
the given set of admissible spectral densities
\begin{equation}\label{eqDe}
\left|\sum\limits_{k=0}^{\infty}(Ad)_ke^{ik\lambda}\right|^2=(\psi_1(\lambda)+\alpha_0^{-1})\left|\varphi^0(\lambda)\right|^2,
\end{equation}
where $\psi_1(\lambda)\geq 0$ and $\psi_1(\lambda)=0$ if $f_0(\lambda)\geq (1- \varepsilon )w( \lambda ).$

From this equation we find that the least favourable spectral density for the optimal estimation of the functional $A { \xi}$ is of the form
\begin{equation}\label{lfDe}
f_0(\lambda)=\max\left\{(1- \varepsilon )w( \lambda ), \alpha_0\left|\sum\limits_{k=0}^{\infty}(Ad)_ke^{ik\lambda}\right|^2\right\}.
\end{equation}

 Denote by $\nu_{\varepsilon} P_0$ the maximum value $\|Ad^0\|^2$ of the quantity $\|Ad\|^2$, where
$d= \left \{d(k):k=0, \, 1, \, \ldots \right \}$ are solutions of the equation \eqref{eqAd}
which satisfy condition \eqref{nd}, the inequality
\begin{equation}\label{ieqDe}
\left|\sum\limits_{k=0}^{\infty}d(k)e^{-ik\lambda}\right|^2\geq (1- \varepsilon )w( \lambda ),
\end{equation}
and determine the canonical factorization \eqref{factor1} of the density $f^{0} ( \lambda)\in D_{ \varepsilon}$.

Denote by $\nu_{\varepsilon}^{+} P_0$ the maximum value $\|Ad^0\|^2$ of the quantity $\|Ad\|^2$, where
$d= \left \{d(k):k=0,  1, \ldots \right \}$ satisfies condition \eqref{nd} and determine the canonical factorization \eqref{factor1} of the density \eqref{lfDe}
from the class of admissible spectral densities $D_{ \varepsilon}$.

The following theorem holds true.

 \begin{teo} \label{theo1.12}
 If there exists a solution of the equation \eqref{eqAd}
which satisfy condition \eqref{nd} and such that
$\nu_{\varepsilon} P_0=\nu_{\varepsilon}^{+} P_0=\|Ad^0\|^2$, then the spectral density \eqref{lfdD0} of the one-sided  moving-average sequence \eqref{maD0}
is the least favourable in the set $D_{ \varepsilon}$
for the optimal extrapolation of the functional $A{ \xi}$.
If $\nu_{\varepsilon}<\nu_{\varepsilon}^{+}$, then the least favourable in the class $D_{ \varepsilon}$  for the optimal
extrapolation of the functional $A {\xi}$ is the spectral density
\eqref{lfDe} which admits the canonical factorization \eqref{lfdD0}.
The sequence $d= \left \{d(k):k=0,  1, \ldots \right \}$ is determined by
the optimisation problem \eqref{GrindEQ__3_31_} and restrictions
imposed on densities by the given set of admissible spectral
densities.
\noindent  The minimax spectral characteristic of the optimal estimate of the functional $A{ \xi}$ is calculated by the formula \eqref{GrindEQ__3_27_}.
 \end{teo}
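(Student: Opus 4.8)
The plan is to follow the general minimax scheme set up in this section: exhibit a density $f^{0} \in D_{\varepsilon}$ that solves the constrained optimization problem \eqref{GrindEQ__3_34_} and for which $h(f^{0}) \in H_{D_{\varepsilon}}$. Once such a pair is in hand, the saddle-point inequalities $\Delta(h,f^{0}) \ge \Delta(h^{0},f^{0}) \ge \Delta(h^{0},f)$ recorded in the subsection on the minimax-robust method show at once that $f^{0}$ is least favourable and $h^{0}=h(f^{0})$ is the minimax-robust spectral characteristic, whence the optimal estimate is given by \eqref{GrindEQ__3_27_}. So the whole task is to identify $f^{0}$ and check admissibility of $h(f^{0})$.

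First I would pass from \eqref{GrindEQ__3_34_} with $D=D_{\varepsilon}$ to the unconstrained problem \eqref{GrindEQ__3_35_}, $\Delta_{D_{\varepsilon}}(f)=-\Delta(h(f^{0}),f)+\delta(f|D_{\varepsilon})\to\inf$, and impose $0\in\partial\Delta_{D_{\varepsilon}}(f^{0})$. Since $-\Delta(h(f^{0}),f)=-\frac{1}{2\pi}\int_{-\pi}^{\pi}\frac{|r(e^{i\lambda})|^{2}}{|\varphi^{0}(\lambda)|^{2}}f(\lambda)\,d\lambda$ is linear in $f$, its subdifferential is the single function $-\frac{|r(e^{i\lambda})|^{2}}{|\varphi^{0}(\lambda)|^{2}}$, so everything reduces to describing $\partial\delta(f^{0}|D_{\varepsilon})$. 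Writing an admissible density as $f=(1-\varepsilon)w+\varepsilon u$ with $u\ge 0$ and the prescribed normalization, the active constraints at $f^{0}$ are the lower bound $f\ge(1-\varepsilon)w$ and the moment equality; this yields, as stated, $\frac{|r(e^{i\lambda})|^{2}}{|\varphi^{0}(\lambda)|^{2}}=\psi_{1}(\lambda)+\alpha_{0}^{-1}$ with a Lagrange multiplier $\alpha_{0}$ for the moment constraint and a nonnegative multiplier $\psi_{1}(\lambda)$ for the inequality, vanishing a.e.\ on $\{f^{0}>(1-\varepsilon)w\}$. Multiplying by $|\varphi^{0}(\lambda)|^{2}=f^{0}(\lambda)$ and substituting $r(e^{i\lambda})=\sum_{k=0}^{\infty}(Ad)_{k}e^{ik\lambda}$ gives \eqref{eqDe}.

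Next I would solve \eqref{eqDe}. Where the lower bound is inactive, $\psi_{1}=0$ and $f^{0}(\lambda)=\alpha_{0}\bigl|\sum_{k=0}^{\infty}(Ad)_{k}e^{ik\lambda}\bigr|^{2}$; on the complementary set $f^{0}(\lambda)=(1-\varepsilon)w(\lambda)$, and the requirement $f^{0}\ge(1-\varepsilon)w$ forces the pointwise maximum \eqref{lfDe}. To close the argument one must verify that the right-hand side of \eqref{lfDe} admits the canonical factorization \eqref{lfdD0}, i.e.\ that $f^{0}$ is the spectral density of a one-sided moving average, with the generating sequence $d^{0}$ satisfying the normalization \eqref{nd} and maximizing $\|Ad\|^{2}$ among admissible candidates. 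The two alternatives in the statement then separate according to whether the ``free'' moving-average maximizer of \eqref{eqAd} already obeys $f^{0}\ge(1-\varepsilon)w$ everywhere, so that the $D_{0}$-type density \eqref{lfdD0} of the sequence \eqref{maD0} is itself least favourable in $D_{\varepsilon}$ (the case $\nu_{\varepsilon}=\nu_{\varepsilon}^{+}$), or whether the bound \eqref{ieqDe} is binding on a set of positive measure, in which case the max-truncated density \eqref{lfDe} is the least favourable one (the case $\nu_{\varepsilon}<\nu_{\varepsilon}^{+}$), and $\nu_{\varepsilon}$, $\nu_{\varepsilon}^{+}$ are exactly the two maxima of $\|Ad\|^{2}$ computed under the two corresponding sets of constraints.

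The step I expect to be the main obstacle is the simultaneous handling of the subdifferential $\partial\delta(f^{0}|D_{\varepsilon})$ and the verification that $h(f^{0})\in H_{D_{\varepsilon}}=\bigcap_{f\in D_{\varepsilon}}L_{2}^{-}(f)$: one must show that $\psi_{1}$ may indeed be taken nonnegative and complementary to $f^{0}-(1-\varepsilon)w$, that the resulting $f^{0}$ is a genuine maximizer of the linear functional $\Delta(h(f^{0}),\cdot)$ on $D_{\varepsilon}$ (not merely a stationary point), and that the trigonometric polynomials in $e^{-i\lambda},e^{-2i\lambda},\dots$ approximating $h(f^{0})$ in $L_{2}(f^{0})$ still approximate it in $L_{2}(f)$ for every admissible $f$ — the latter resting on the lower bound $f\ge(1-\varepsilon)w$ enjoyed by all densities in $D_{\varepsilon}$ together with the explicit structure of $f^{0}$. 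Everything downstream — the saddle-point inequalities and the formula \eqref{GrindEQ__3_27_} for the minimax characteristic — is then inherited from the general results already proved.
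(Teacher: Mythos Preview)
Your approach is essentially the same as the paper's: derive \eqref{eqDe} from the subdifferential condition $0\in\partial\Delta_{D_\varepsilon}(f^{0})$ via the Lagrange/KKT multipliers for the lower-bound constraint $f\ge(1-\varepsilon)w$ and the moment constraint, read off the max-form \eqref{lfDe}, and then split into the two cases according to whether the constrained maximizer of $\|Ad\|^{2}$ under \eqref{eqAd}, \eqref{nd}, \eqref{ieqDe} coincides with the maximizer coming from the factorization of \eqref{lfDe}. The paper does not prove more than this; it simply records the derivation preceding the theorem and asserts the result, so your write-up is in fact more explicit than the paper's own argument, and your flagging of the verification $h(f^{0})\in H_{D_\varepsilon}$ is a point the paper passes over silently.
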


For the functional $A_{N} { \xi}$
the least favourable spectral density
for the optimal estimation of the functional is of the form
\begin{equation}\label{lfDeN}
f_0(\lambda)=\max\left\{(1- \varepsilon )w( \lambda ), \alpha_0\left|\sum\limits_{k=0}^{N}(A_Nd)_ke^{ik\lambda}\right|^2\right\}.
\end{equation}

Denote by $\nu_{\varepsilon}^N P_0$ the maximum value $\|A_Nd^0\|^2$ of the quantity $\|A_Nd\|^2$, where
$d= \left \{d(k):k=0, 1,\ldots,N \right \}$ are solutions of the equations \eqref{eqANd1}, \eqref{eqANd2},
which satisfy condition \eqref{ndN}, the inequality
\begin{equation}\label{ieqDeN}
 \left|\sum\limits_{k=0}^{N}d(k)e^{-ik\lambda}\right|^2\geq (1- \varepsilon )w( \lambda ),
\end{equation}
and determine the canonical factorization \eqref{factorN} of the density $f^{0} ( \lambda)\in D_{ \varepsilon}$.

Denote by $\nu_{\varepsilon}^{N+} P_0$ the maximum value $\|A_Nd^0\|^2$ of the quantity $\|A_Nd\|^2$, where
$d= \left \{d(k):k=0,  1, \ldots \right \}$ satisfies condition \eqref{nd} and determine the canonical factorization \eqref{factor1} of the density \eqref{lfDvuN}
from the class of admissible spectral densities $D_{ \varepsilon}$.

The following theorem holds true.

\begin{teo} \label{theo1.11}
If there exists a solution of the equation \eqref{eqANd1}, or the equation, \eqref{eqANd2},
which satisfy condition \eqref{ndN} and such that
$\nu_{\varepsilon}^N P_0=\nu_{\varepsilon}^{N+} P_0=\|A_Nd^0\|^2$,
then the spectral density \eqref{lfdD0N} of the one-sided  moving-average sequence \eqref{maD0N}
is the least favourable in the set $D_{ \varepsilon}$
for the optimal extrapolation of the functional $A_N{ \xi}$.
If $\nu_{\varepsilon}^N<\nu_{\varepsilon}^{N+}$, then the least favourable in the class $D_{ \varepsilon}$  for the optimal
extrapolation of the functional $A_N {\xi}$ is the spectral density
\eqref{lfDeN} which admits the canonical factorization \eqref{lfdD0}.
The sequence $d= \left \{d(k):k=0,  1, \ldots \right \}$ is determined by
the optimisation problem \eqref{GrindEQ__3_31_} and restrictions
imposed on densities by the given set of admissible spectral
densities.
\noindent  The minimax spectral characteristic of the optimal estimate of the functional $A_N{ \xi}$ is calculated by the formula \eqref{GrindEQ__3_29_}.
 \end{teo}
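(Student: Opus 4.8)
The plan is to obtain the least favourable density as the solution of the convex problem \eqref{GrindEQ__3_34_}--\eqref{GrindEQ__3_35_} with $D=D_\varepsilon$, to read off its shape from the corresponding subdifferential (Euler) condition, and then to verify that this density together with the spectral characteristic \eqref{GrindEQ__3_29_} constitutes a saddle point of $\Delta(h,f)$ on $H_{D_\varepsilon}\times D_\varepsilon$, so that the defining inequalities of a least favourable density and of a minimax spectral characteristic both hold.

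First I fix a candidate $f^{0}\in D_\varepsilon$ and use Theorem \ref{theo.1.3}: the inner minimum $\min_{h\in L_{2}^{-}(f^{0})}\Delta(h,f^{0})$ is attained at $h_{N}(f^{0})$ given by \eqref{GrindEQ__3_29_}, and for this fixed estimate the error is the \emph{linear} functional of the density
\[
\Delta(h(f^{0}),f)=\frac{1}{2\pi}\int_{-\pi}^{\pi}\frac{|r_{N}(e^{i\lambda})|^{2}}{|\varphi^{0}(\lambda)|^{2}}\,f(\lambda)\,d\lambda .
\]
Hence choosing $f^{0}$ is the same as maximising a linear functional over the convex set $D_\varepsilon$, and $f^{0}$ solves this problem iff $0\in\partial\Delta_{D_\varepsilon}(f^{0})$, i.e. iff $|r_{N}(e^{i\lambda})|^{2}/|\varphi^{0}(\lambda)|^{2}$ is a subgradient of $\delta(\cdot\,|D_\varepsilon)$ at $f^{0}$. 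Writing $D_\varepsilon=\{f:\ f(\lambda)\ge(1-\varepsilon)w(\lambda),\ \frac{1}{2\pi}\int_{-\pi}^{\pi}f(\lambda)\,d\lambda=P_{0}\}$ and computing that subdifferential (one multiplier $\alpha_{0}$ attached to the normalisation, one nonnegative multiplier $\psi_{1}$ attached to the pointwise bound, with $\psi_{1}=0$ wherever $f^{0}>(1-\varepsilon)w$) turns the Euler condition into the analogue of \eqref{eqDe} with $r_{N}$ in place of $r$. Solving that relation pointwise — on the set where the lower bound is inactive it forces $|\varphi^{0}(\lambda)|^{2}$ to be a fixed positive multiple of $|r_{N}(e^{i\lambda})|^{2}=\bigl|\sum_{k=0}^{N}(A_{N}d)_{k}e^{ik\lambda}\bigr|^{2}$, and where it is active it forces $f^{0}(\lambda)=(1-\varepsilon)w(\lambda)$ — yields precisely the envelope \eqref{lfDeN}.

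Next I separate the two regimes. Suppose a solution $d^{0}$ of \eqref{eqANd1} (or of \eqref{eqANd2}) satisfying \eqref{ndN} produces a density $\bigl|\sum_{k=0}^{N}d^{0}(k)e^{-ik\lambda}\bigr|^{2}$ that already obeys the pointwise bound \eqref{ieqDeN}. Then this density lies in $D_\varepsilon$, the lower bound is inactive everywhere ($\psi_{1}\equiv0$), the two factorisations of $f^{0}$ coincide — which is exactly the self-consistency forcing $A_{N}d^{0}=\alpha\overline{d^{0}}$, so \eqref{lfDeN} collapses to \eqref{lfdD0N} — and the value realised is $\|A_{N}d^{0}\|^{2}=\nu_{\varepsilon}^{N}P_{0}=\nu_{\varepsilon}^{N+}P_{0}$; since no admissible density can give a larger error, the spectral density \eqref{lfdD0N} of the order-$N$ moving average \eqref{maD0N} is least favourable. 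If instead $\nu_{\varepsilon}^{N}<\nu_{\varepsilon}^{N+}$, the pure moving-average candidates are strictly suboptimal, the lower bound $(1-\varepsilon)w$ is active on a set of positive measure, and the maximiser is the genuinely $\varepsilon$-contaminated density \eqref{lfDeN}; one checks that it admits the canonical one-sided factorisation \eqref{lfdD0} (the regularity $\int_{-\pi}^{\pi}\ln f^{0}(\lambda)\,d\lambda>-\infty$ holds because $f^{0}\ge(1-\varepsilon)w$ with $w$ a genuine, hence a.e.\ positive, spectral density), and the coefficients $d$, the multiplier $\alpha_{0}$, and (in the $ARMA$ description) the numbers $\{p(m)\}$ are pinned down by \eqref{lfdD0}, by the maximisation problem \eqref{GrindEQ__3_31_}, and by the constraints defining $D_\varepsilon$. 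In either case, to close the saddle point I verify $h(f^{0})\in H_{D_\varepsilon}=\bigcap_{f\in D_\varepsilon}L_{2}^{-}(f)$: every $f\in D_\varepsilon$ dominates $(1-\varepsilon)w>0$, so $L_{2}^{-}(f)\supseteq L_{2}^{-}(f^{0})$ up to the usual density argument, and then the inequalities $\Delta(h,f^{0})\ge\Delta(h(f^{0}),f^{0})\ge\Delta(h(f^{0}),f)$ (respectively by optimality of $h(f^{0})$ in $L_{2}^{-}(f^{0})$ and by $f^{0}$ being the maximiser) give that $h(f^{0})$ is the minimax spectral characteristic, computed by \eqref{GrindEQ__3_29_}.

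The main obstacle is the passage between the two descriptions of the optimum: the subdifferential calculus operates in the density variable $f$ and only tells us that $f^{0}$ is the envelope \eqref{lfDeN}, whereas the statement is phrased through the eigen-type equations \eqref{eqANd1}, \eqref{eqANd2} in the coefficient variable $d$, and the feasibility constraint ``$\bigl|\sum d(k)e^{-ik\lambda}\bigr|^{2}\in D_\varepsilon$'' is non-convex in $d$, so the Lagrange conditions for \eqref{GrindEQ__3_31_} are merely necessary. One therefore has to argue — through the shared extremal value $\|A_{N}d^{0}\|^{2}$ and the comparison $\nu_{\varepsilon}^{N}$ versus $\nu_{\varepsilon}^{N+}$ — that the stationary point in $d$-space actually supplies the global maximiser of the error over $D_\varepsilon$ and that the density it generates is exactly \eqref{lfdD0N} or \eqref{lfDeN}. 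A secondary technical point is the inclusion $h(f^{0})\in H_{D_\varepsilon}$ when the lower envelope is active, which relies on the remark that multiplying by a bounded, boundedly invertible spectral factor leaves the one-sided subspace unchanged — the place where the hypothesis that $w$ is a bona fide spectral density enters.
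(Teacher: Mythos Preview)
Your proposal is correct and follows essentially the same route as the paper: the paper's argument for this theorem is the short derivation immediately preceding it, namely applying the condition $0\in\partial\Delta_{D_\varepsilon}(f^{0})$ to obtain the analogue of \eqref{eqDe} with $r_{N}$ in place of $r$, reading off the envelope form \eqref{lfDeN}, and then invoking the comparison of $\nu_{\varepsilon}^{N}$ with $\nu_{\varepsilon}^{N+}$ to split into the two cases. You supply more detail than the paper does --- in particular the saddle-point verification and the check that $h(f^{0})\in H_{D_\varepsilon}$ --- which the paper leaves implicit via the general discussion around \eqref{GrindEQ__3_34_}--\eqref{GrindEQ__3_35_}.
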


 \subsection{Least favourable spectral densities in the class $D_{1 \varepsilon}$}

 Consider the problem of minimax estimation of the functionals
$A { \xi}$ and $A_{N} { \xi}$ which depend on the unknown values of a  stationary stochastic sequence
$ { \xi}(j)$ for the set of spectral densities that describes the model of "$ \varepsilon$-- neighborhood" \label{okilL1} in the space $L_{1}$  of a stationary stochastic sequence
 \[D_{1 \varepsilon} = \left \{f( \lambda)\left|\, \frac{1}{2 \pi} \int_{- \pi}^{ \pi} \left|f(\lambda)-v(\lambda) \right|d \lambda \le \varepsilon \right.\right \}, \]

\noindent where $ \varepsilon$ is a given numbers,  $v( \lambda)$ is a given spectral density.

From the condition $0 \in \partial \Delta_{D} (f^{0} )$  we find that the least favourable spectral density for the optimal
estimation of the functional $A {\xi}$ is of the form
\begin{equation}\label{lfDe1}
f_0(\lambda)=\max\left\{v(\lambda), c\left|\sum\limits_{k=0}^{\infty}(Ad)_ke^{ik\lambda}\right|^2\right\}.
\end{equation}

Denote by $\nu_{1 \varepsilon} P_0$ the maximum value $\|Ad^0\|^2$ of the quantity $\|Ad\|^2$, where
$d= \left \{d(k):k=0, \, 1, \, \ldots \right \}$ are solutions of the equation \eqref{eqAd}
which satisfy condition \eqref{nd}, the inequality
\begin{equation}\label{ieqDe1}
\left|\sum\limits_{k=0}^{\infty}d(k)e^{-ik\lambda}\right|^2\geq v(\lambda),
\end{equation}
and determine the canonical factorization \eqref{factor1} of the density $f^{0} ( \lambda)\in D_{1 \varepsilon}$.

Denote by $\nu_{1 \varepsilon}^{+} P_0$ the maximum value $\|Ad^0\|^2$ of the quantity $\|Ad\|^2$, where
$d= \left \{d(k):k=0,  1, \ldots \right \}$ satisfies condition \eqref{nd} and determine the canonical factorization \eqref{factor1} of the density \eqref{lfDe1}
from the class of admissible spectral densities $D_{1 \varepsilon}$.

The following theorem holds true.

 \begin{teo} \label{theo1.14}
 If there exists a solution of the equation \eqref{eqAd}
which satisfy condition \eqref{nd} and such that
$\nu_{1\varepsilon} P_0=\nu_{1\varepsilon}^{+} P_0=\|Ad^0\|^2$, then the spectral density \eqref{lfdD0} of the one-sided  moving-average sequence \eqref{maD0}
is the least favourable in the set $D_{1 \varepsilon}$
for the optimal extrapolation of the functional $A{ \xi}$.
If $\nu_{1\varepsilon}<\nu_{1\varepsilon}^{+}$, then the least favourable in the class $D_{1 \varepsilon}$  for the optimal
extrapolation of the functional $A {\xi}$ is the spectral density
\eqref{lfDe1} which admits the canonical factorization \eqref{lfdD0}.
The sequence $d= \left \{d(k):k=0,  1, \ldots \right \}$ is determined by
the optimisation problem \eqref{GrindEQ__3_31_} and restrictions
imposed on densities by the given set of admissible spectral
densities.
\noindent  The minimax spectral characteristic of the optimal estimate of the functional $A{ \xi}$ is calculated by the formula \eqref{GrindEQ__3_27_}.
 \end{teo}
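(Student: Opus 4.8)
\textit{Proof.} The plan is to run, for the class $D_{1\varepsilon}$, the same saddle-point scheme that supports the preceding theorems of this section; the only genuinely new computation is the subdifferential of the indicator function $\delta(f|D_{1\varepsilon})$, after which the form \eqref{lfDe1} of the least favourable density drops out and the two cases of the statement are separated by an elementary comparison of two maximal values.

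First I would recall from the discussion preceding \eqref{GrindEQ__3_34_} that it suffices to exhibit a density $f^0\in D_{1\varepsilon}$ admitting the canonical factorization \eqref{factor1} such that (i) $h(f^0)\in H_{D_{1\varepsilon}}=\bigcap_{f\in D_{1\varepsilon}}L_2^-(f)$ and (ii) $f^0$ maximizes $\Delta(h(f^0),f)=\frac{1}{2\pi}\int_{-\pi}^{\pi}\frac{|r(e^{i\lambda})|^2}{|\varphi^0(\lambda)|^2}f(\lambda)\,d\lambda$ over $D_{1\varepsilon}$; then $(h(f^0),f^0)$ is a saddle point of $\Delta(h,f)$ on $H_{D_{1\varepsilon}}\times D_{1\varepsilon}$, $f^0$ is least favourable, and the minimax characteristic is given by \eqref{GrindEQ__3_27_}. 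To obtain (ii) I would pass to the equivalent unconstrained problem \eqref{GrindEQ__3_35_} and write out the optimality condition $0\in\partial\Delta_{D_{1\varepsilon}}(f^0)$.

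The core step is the computation of the two subdifferentials. The smooth term contributes the function $-|r(e^{i\lambda})|^2/|\varphi^0(\lambda)|^2$; on the active constraint $\frac{1}{2\pi}\int_{-\pi}^{\pi}|f^0-v|\,d\lambda=\varepsilon$ the normal cone $\partial\delta(\cdot|D_{1\varepsilon})(f^0)$ consists of the functions $\gamma\psi(\lambda)$ with $\gamma\ge 0$, $|\psi(\lambda)|\le 1$ a.e., and $\psi(\lambda)=\operatorname{sign}(f^0(\lambda)-v(\lambda))$ wherever $f^0\ne v$ (this is the standard description of the normal cone to an $L_1$-ball, coming from the equality case in H\"older's inequality). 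Equating, and using $|\varphi^0(\lambda)|^2=f^0(\lambda)\ge 0$ together with $|r|^2\ge 0$, forces $\psi\ge 0$, hence $f^0\ge v$ a.e.; on the set $\{f^0>v\}$ one gets $f^0=\gamma^{-1}|r|^2$, and on $\{f^0=v\}$ one gets $\gamma^{-1}|r|^2\le v$. With $c=\gamma^{-1}$ these combine into exactly \eqref{lfDe1}, the second relation being the inequality \eqref{ieqDe1}. The coefficients $d^0=\{d^0(k)\}$ are then tied down implicitly by the canonical factorization \eqref{factor1} of the density \eqref{lfDe1}, the maximization \eqref{GrindEQ__3_31_}, and the normalization \eqref{nd} together with membership in $D_{1\varepsilon}$.

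Finally I would settle the dichotomy and check admissibility. Among the $d$'s satisfying \eqref{nd} and \eqref{ieqDe1} and factorizing a density in $D_{1\varepsilon}$, the functional $\|Ad\|^2$ is maximized by a $d^0$ solving the eigenequation \eqref{eqAd} — by the computation of Section~\ref{est1}, where $\|Ad\|^2=\langle Q\tilde\varphi,\tilde\varphi\rangle$ and $Ad=\alpha\overline d$ singles out the extremal vector — giving value $\nu_{1\varepsilon}P_0$ and the pure one-sided moving-average density \eqref{lfdD0}, \eqref{maD0}; if this value already equals $\nu_{1\varepsilon}^{+}P_0$, the best attainable over densities of the clipped form \eqref{lfDe1}, then \eqref{lfdD0} is itself least favourable (the $\max$ in \eqref{lfDe1} being inactive on account of \eqref{ieqDe1}), whereas if $\nu_{1\varepsilon}<\nu_{1\varepsilon}^{+}$ the constraint \eqref{ieqDe1} genuinely binds and the least favourable density is the properly clipped \eqref{lfDe1}, which — being $\ge v$ a.e.\ and integrable, so that $\int_{-\pi}^{\pi}\ln f^0(\lambda)\,d\lambda>-\infty$ whenever $v$ itself is regular — admits the canonical factorization \eqref{lfdD0}. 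In either case $h(f^0)\in H_{D_{1\varepsilon}}$, since \eqref{lfDe1} and \eqref{ieqDe1} give $|r(e^{i\lambda})|^2/f^0(\lambda)\le c^{-1}$ a.e., whence $h(f^0)=A(e^{i\lambda})-(\varphi^0(\lambda))^{-1}r(e^{i\lambda})$ is bounded (using $\sum_j|a(j)|<\infty$) and therefore lies in $L_2^-(f)$ for every $f\in D_{1\varepsilon}$, all such $f$ being integrable. This makes $(h(f^0),f^0)$ a saddle point and completes the proof. I expect the main obstacle to be exactly this last step: the self-referential nature of \eqref{lfDe1} — $r$ is assembled from $d^0$, which in turn factorizes $f^0$ — so that establishing existence and correctly identifying when one is in the ``moving-average'' regime $\nu_{1\varepsilon}=\nu_{1\varepsilon}^{+}$ versus the ``clipped'' regime $\nu_{1\varepsilon}<\nu_{1\varepsilon}^{+}$ is where the real work lies; the convex-analytic calculation itself is routine.
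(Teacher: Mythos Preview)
Your proposal is correct and follows essentially the same approach as the paper: the subdifferential condition $0\in\partial\Delta_{D_{1\varepsilon}}(f^0)$ applied to the unconstrained problem \eqref{GrindEQ__3_35_}, yielding the clipped form \eqref{lfDe1}, followed by the comparison of $\nu_{1\varepsilon}$ and $\nu_{1\varepsilon}^{+}$ to separate the two regimes. The paper itself gives only the one-line statement ``From the condition $0\in\partial\Delta_D(f^0)$ we find that the least favourable spectral density is of the form \eqref{lfDe1}'' and then defines $\nu_{1\varepsilon}$, $\nu_{1\varepsilon}^{+}$ before asserting the theorem, so your explicit calculation of the normal cone to the $L_1$-ball, the observation that nonnegativity of $|r|^2/f^0$ forces $f^0\ge v$ (turning the two-sided ball into a one-sided clip), and the verification that $h(f^0)\in H_{D_{1\varepsilon}}$ via boundedness all go beyond what the paper records, but are exactly the details one would supply to make the paper's sketch complete.
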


For the functional $A_{N} { \xi}$
the least favourable spectral density
for the optimal estimation of the functional is of the form
\begin{equation}\label{lfDe1N}
f_0(\lambda)=\max\left\{v(\lambda), c\left|\sum\limits_{k=0}^{N}(A_Nd)_ke^{ik\lambda}\right|^2\right\}.
\end{equation}

Denote by $\nu_{1\varepsilon}^N P_0$ the maximum value $\|A_Nd^0\|^2$ of the quantity $\|A_Nd\|^2$, where
$d= \left \{d(k):k=0, 1,\ldots,N \right \}$ are solutions of the equations \eqref{eqANd1}, \eqref{eqANd2},
which satisfy condition \eqref{ndN}, the inequality
\begin{equation}\label{ieqDe1N}
 \left|\sum\limits_{k=0}^{N}d(k)e^{-ik\lambda}\right|^2\geq v( \lambda ),
\end{equation}
and determine the canonical factorization \eqref{factorN} of the density $f^{0} ( \lambda)\in D_{1 \varepsilon}$.

Denote by $\nu_{1\varepsilon}^{N+} P_0$ the maximum value $\|A_Nd^0\|^2$ of the quantity $\|A_Nd\|^2$, where
$d= \left \{d(k):k=0,  1, \ldots \right \}$ satisfies condition \eqref{nd} and determine the canonical factorization \eqref{factor1} of the density \eqref{lfDe1N}
from the class of admissible spectral densities $D_{1 \varepsilon}$.

The following theorem holds true.

\begin{teo} \label{theo1.15}
If there exists a solution of the equation \eqref{eqANd1}, or equation \eqref{eqANd2},
which satisfy condition \eqref{ndN} and such that
$\nu_{1\varepsilon}^{N} P_0=\nu_{1\varepsilon}^{N+} P_0=\|A_Nd^0\|^2$,
then the spectral density \eqref{lfdD0N} of the one-sided moving-average sequence \eqref{maD0N}
is the least favourable in the set $D_{1 \varepsilon}$
for the optimal extrapolation of the functional $A_N{ \xi}$.
If $\nu_{1\varepsilon}^{N}<\nu_{1\varepsilon}^{N+}$, then the least favourable in the class $D_{1 \varepsilon}$  for the optimal
extrapolation of the functional $A_N {\xi}$ is the spectral density
\eqref{lfDe1N} which admits the canonical factorization \eqref{lfdD0}.
The sequence $d= \left \{d(k):k=0,  1, \ldots \right \}$ is determined by
the optimisation problem \eqref{GrindEQ__3_31_} and restrictions
imposed on densities by the given set of admissible spectral
densities.
\noindent  The minimax spectral characteristic of the optimal estimate of the functional $A_N{ \xi}$ is calculated by the formula \eqref{GrindEQ__3_29_}.
 \end{teo}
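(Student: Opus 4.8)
The plan is to follow the scheme of the derivations carried out above for the classes $D_0$, $D_M$, $D_v^u$ and $D_\varepsilon$, now applied to the functional $A_N\xi$ and to the $L_1$-neighbourhood $D_{1\varepsilon}$. I would start from the fact recorded before these theorems: a density $f^0$ together with $h^0=h(f^0)$ is a saddle point of $\Delta(h,f)$ on $H_{D_{1\varepsilon}}\times D_{1\varepsilon}$ provided $h(f^0)\in H_{D_{1\varepsilon}}$ and $f^0$ solves the constrained problem \eqref{GrindEQ__3_34_}, which is equivalent to the unconstrained problem \eqref{GrindEQ__3_35_} with $D=D_{1\varepsilon}$ and is characterised by $0\in\partial\Delta_{D_{1\varepsilon}}(f^0)$. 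For $A_N\xi$ one has $\Delta(h(f^0),f)=\frac{1}{2\pi}\int_{-\pi}^{\pi}|r_N(e^{i\lambda})|^2\,|\varphi^0(\lambda)|^{-2}f(\lambda)\,d\lambda$ with $r_N$ given by \eqref{rN} evaluated at $f=f^0$, so the derivative of $-\Delta(h(f^0),\cdot)$ at $f^0$ is the fixed function $-|r_N(e^{i\lambda})|^2|\varphi^0(\lambda)|^{-2}$, and everything reduces to identifying $\partial\delta(\cdot\mid D_{1\varepsilon})$.

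Next I would compute this subdifferential. The defining inequality $\frac{1}{2\pi}\int_{-\pi}^{\pi}|f(\lambda)-v(\lambda)|\,d\lambda\le\varepsilon$ produces subgradients $\gamma(\lambda)$ with $|\gamma(\lambda)|\le c$ and $\gamma(\lambda)=c\,\mathrm{sign}(f^0(\lambda)-v(\lambda))$ on $\{f^0\neq v\}$, where $c\ge0$ is the multiplier of the $L_1$-bound. Writing $0=-|r_N(e^{i\lambda})|^2|\varphi^0(\lambda)|^{-2}+\gamma(\lambda)$ gives $\gamma=|r_N|^2|\varphi^0|^{-2}\ge0$; combined with $\gamma\le c$ and $\gamma=c$ wherever $f^0>v$ this yields $|r_N(e^{i\lambda})|^2\le c\,|\varphi^0(\lambda)|^2$ throughout, with equality on $\{f^0>v\}$, i.e. $f^0(\lambda)=\max\{v(\lambda),c^{-1}|r_N(e^{i\lambda})|^2\}$, which after rescaling the constant is exactly the representation \eqref{lfDe1N}.

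Then I would make the density factorisable and split into the two cases. By \eqref{ArND0} the polynomial $r_N(e^{i\lambda})=\sum_{k=0}^{N}(A_Nd)_ke^{ik\lambda}$ also equals $\sum_{k=0}^{N}(\tilde A_Nd)_ke^{-ik\lambda}$, a polynomial of degree $N$ in $e^{-i\lambda}$, so $c^{-1}|r_N(e^{i\lambda})|^2$ is itself a moving-average spectral density of order $N$ and \eqref{lfDe1N} admits the canonical factorisation \eqref{lfdD0}. If one can pick $d^0$ solving \eqref{eqANd1} or \eqref{eqANd2} under \eqref{ndN} with $(A_Nd^0)_k=\alpha\,\overline{d^0(k)}$, then $\sum_k(A_Nd^0)_ke^{ik\lambda}=\alpha\,\overline{\varphi^0(\lambda)}$, hence $|r_N|^2=|\alpha|^2|\varphi^0|^2$, the truncation with $v$ never activates, and the pure one-sided moving-average density \eqref{lfdD0N} of \eqref{maD0N} is admissible in $D_{1\varepsilon}$ and realises the value $\|A_Nd^0\|^2=\nu_{1\varepsilon}^{N}P_0$; the equality $\nu_{1\varepsilon}^{N}=\nu_{1\varepsilon}^{N+}$ then shows that no density of the cut-off form \eqref{lfDe1N} gives a larger value of $\|A_Nd\|^2$, so \eqref{lfdD0N} is least favourable. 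If instead $\nu_{1\varepsilon}^{N}<\nu_{1\varepsilon}^{N+}$, the supremum in \eqref{GrindEQ__3_34_} is attained on a genuine cut-off density \eqref{lfDe1N}, whose coefficients $d=\{d(k)\}$ are fixed by the optimisation problem \eqref{GrindEQ__3_32_} together with the $L_1$-restriction. In both cases the inclusion $h(f^0)\in H_{D_{1\varepsilon}}$ is checked exactly as for $D_\varepsilon$, so by Theorem \ref{theo.1.5} (together with formula \eqref{GrindEQ__3_29_} from Theorem \ref{theo.1.3}) the saddle-point inequalities hold and the minimax spectral characteristic of the optimal estimate of $A_N\xi$ is given by \eqref{GrindEQ__3_29_}.

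The hard part will be the middle step: making the subdifferential of the indicator of the $L_1$-ball $D_{1\varepsilon}$ precise and verifying that the stationary point it singles out is a global minimiser of \eqref{GrindEQ__3_35_} and belongs to $H_{D_{1\varepsilon}}$; once that is in place, the dichotomy between $\nu_{1\varepsilon}^{N}$ and $\nu_{1\varepsilon}^{N+}$ is merely the bookkeeping that decides which of the two admissible shapes attains the maximum of $\|A_Nd\|^2$.
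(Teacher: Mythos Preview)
Your proposal is correct and follows essentially the same approach as the paper: apply the condition $0\in\partial\Delta_{D_{1\varepsilon}}(f^0)$, compute the subdifferential of the $L_1$-ball indicator to obtain the cut-off form \eqref{lfDe1N}, and then distinguish the two cases via the comparison of $\nu_{1\varepsilon}^{N}$ and $\nu_{1\varepsilon}^{N+}$. The paper itself does not give a detailed proof of this theorem but simply records the derivation leading to \eqref{lfDe1N} and states the result by analogy with the preceding classes $D_v^u$ and $D_\varepsilon$; your write-up supplies more detail (particularly on the subdifferential step and the role of \eqref{ArND0}) than the paper does.
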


 \subsection{Least favourable spectral densities in the class $D_{2 \varepsilon}$}

Consider the problem of minimax estimation of the functionals
$A { \xi}$ and $A_{N} { \xi}$ which depend on the unknown values of a  stationary stochastic sequence
$ { \xi}(j)$ for the set of spectral densities that describes the model of "$ \varepsilon$-- neighborhood" \label{okilL1} in the space $L_{2}$  of a stationary stochastic sequence
 \[D_{2 \varepsilon} = \left \{f( \lambda)\left|\, \frac{1}{2 \pi} \int_{- \pi}^{ \pi} \left|f(\lambda)-v(\lambda) \right|^2d \lambda \le \varepsilon \right.\right \}, \]

\noindent where $ \varepsilon$ is a given numbers,  $v( \lambda)$ is a given spectral density.

 From the condition $0 \in \partial \Delta_{D} (f^{0} )$  we find that the least favourable spectral density for the optimal
estimation of the functional $A {\xi}$ is of the form
\begin{equation}\label{lfDe2}
f_0(\lambda)=\frac{v(\lambda)}{2} +\left(\frac{(v(\lambda))^2}{4}+\left|\sum\limits_{k=0}^{\infty}(Ad)_ke^{ik\lambda}\right|^2\right)^{1/2}.
\end{equation}
The sequence $d= \left \{d(k):k=0,  1, \ldots \right \}$ is determined by
equations of the canonical factorization \eqref{lfdD0} of the density \eqref{lfDe2}, optimisation problem \eqref{GrindEQ__3_31_} and restriction
\begin{equation}\label{De2}
\frac{1}{2 \pi} \int_{- \pi}^{ \pi} \left|f(\lambda)-v(\lambda) \right|^2d \lambda = \varepsilon.
\end{equation}
The following theorem holds true.

\begin{teo} \label{theo1.17}
The least favourable in the set $D_{2 \varepsilon}$ spectral density for the optimal extrapolation of the functional $A{ \xi}$ is determined by
equation \eqref{lfDe2}.
The sequence $d= \left \{d(k):k=0,  1, \ldots \right \}$ is determined by
equations of the canonical factorization \eqref{lfdD0} of the density \eqref{lfDe2}, optimisation problem \eqref{GrindEQ__3_31_} and restriction
\eqref{De2}
imposed on densities by the given set of admissible spectral
densities.
The minimax spectral characteristic of the optimal estimate of the functional $A{ \xi}$ is calculated by the formula \eqref{GrindEQ__3_27_}.
\end{teo}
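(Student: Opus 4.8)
\textit{Proof (plan).} I would follow the scheme already applied to the classes $D_0,\,D_M,\,D_v^u,\,D_\varepsilon,\,D_{1\varepsilon}$. By Theorem~\ref{theo.1.3} and the discussion preceding it, determining the least favourable density in $D_{2\varepsilon}$ reduces to the convex problem \eqref{GrindEQ__3_34_}, equivalently the unconstrained problem \eqref{GrindEQ__3_35_} with $D=D_{2\varepsilon}$, whose solution $f^0$ is characterized by $0\in\partial\Delta_{D_{2\varepsilon}}(f^0)$. With $\varphi^0$ (hence $f^0=|\varphi^0|^2$) held fixed, the functional $\Delta(h(f^0),f)=\frac{1}{2\pi}\int_{-\pi}^{\pi}\frac{|r(e^{i\lambda})|^2}{|\varphi^0(\lambda)|^2}\,f(\lambda)\,d\lambda$ is \emph{linear} in $f$, so its subdifferential is the single function $|r(e^{i\lambda})|^2/|\varphi^0(\lambda)|^2$, and the optimality condition becomes
\[
\frac{|r(e^{i\lambda})|^2}{|\varphi^0(\lambda)|^2}\in\partial\delta\bigl(f^0\,|\,D_{2\varepsilon}\bigr),
\]
where $r(e^{i\lambda})=\sum_{k=0}^{\infty}(Ad)_ke^{ik\lambda}$ is computed from \eqref{r} with $f=f^0$ and $d=\{d(k)\}$ are the coefficients of the canonical factorization of $f^0$.

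Next I would compute $\partial\delta(f^0\,|\,D_{2\varepsilon})$. The set $D_{2\varepsilon}$ is the closed ball of radius $\sqrt{\varepsilon}$ about $v$ in $L_2([-\pi,\pi))$ with the normalized Lebesgue measure. Since $|r(e^{i\lambda})|^2/|\varphi^0(\lambda)|^2\ge0$ and is not identically zero, adding a small positive constant to $f$ strictly increases $\Delta(h(f^0),f)$, so the maximum in \eqref{GrindEQ__3_34_} is attained on the boundary, i.e. \eqref{De2} holds with equality; the subdifferential of the indicator of an $L_2$-ball at a boundary point $f^0$ is the outward normal ray $\{\alpha(f^0-v):\alpha\ge0\}$. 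Hence there is $\alpha\ge0$ with $|r(e^{i\lambda})|^2/f^0(\lambda)=\alpha(f^0(\lambda)-v(\lambda))$. (When $v(\lambda)>0$ the pointwise constraint $f^0\ge0$ is inactive and need not be carried along.)

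Multiplying by $f^0(\lambda)$ turns this into a quadratic in $f^0(\lambda)$ whose nonnegative root is
\[
f^0(\lambda)=\frac{v(\lambda)}{2}+\left(\frac{(v(\lambda))^2}{4}+\frac{1}{\alpha}\left|\sum_{k=0}^{\infty}(Ad)_ke^{ik\lambda}\right|^2\right)^{1/2},
\]
and absorbing $\alpha^{-1/2}$ into the (otherwise undetermined) scale of $d$ gives exactly \eqref{lfDe2}. It then remains to verify that the coupled system — \eqref{lfDe2}, the canonical factorization \eqref{lfdD0} of this density, the extremal problem \eqref{GrindEQ__3_31_}, and the normalization \eqref{De2} — admits a solution, that $f^0\ge v/2$ inherits the regularity $\int_{-\pi}^{\pi}\ln f^0(\lambda)\,d\lambda>-\infty$ from $v$, and that $h(f^0)\in H_{D_{2\varepsilon}}$; these imply that $(f^0,h(f^0))$ is a saddle point of $\Delta(h,f)$ on $H_{D_{2\varepsilon}}\times D_{2\varepsilon}$, hence $f^0$ is least favourable and the minimax spectral characteristic is given by \eqref{GrindEQ__3_27_}.

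The main obstacle is the self-referential character of \eqref{lfDe2}: the density $f^0$ enters its own right-hand side both through the factorization coefficients $d$ inside $r$ and through the multiplier $\alpha$ pinned down by \eqref{De2}, so \eqref{lfDe2} is an implicit equation rather than a closed formula. Proving existence and consistency of a solution of this coupled system is the delicate point; the subdifferential computation and the reduction to a quadratic are routine.
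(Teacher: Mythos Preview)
Your proposal is correct and follows essentially the same approach as the paper: the paper's entire derivation consists of the single sentence ``From the condition $0\in\partial\Delta_D(f^0)$ we find that the least favourable spectral density \ldots\ is of the form \eqref{lfDe2}'', and you have simply written out what that subdifferential computation actually entails --- identifying the normal cone to the $L_2$-ball, deriving the pointwise quadratic, and taking the nonnegative root. Your observation that \eqref{lfDe2} is a self-consistent system rather than a closed formula, and that existence of a solution is not established, is accurate; the paper itself does not address this point and treats the theorem as a characterization rather than an existence statement.
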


For the functional $A_{N} { \xi}$
the least favourable spectral density
for the optimal estimation of the functional is of the form
\begin{equation}\label{lfDe2N}
f_0(\lambda)=\frac{v(\lambda)}{2} +\left(\frac{(v(\lambda))^2}{4}+\left|\sum\limits_{k=0}^{N}(A_Nd)_ke^{ik\lambda}\right|^2\right)^{1/2}.
\end{equation}
The sequence $d= \left \{d(k):k=0,1,\ldots,N \right \}$ is determined by
equations of the canonical factorization \eqref{lfdD0} of the density \eqref{lfDe2N}, optimisation problem \eqref{GrindEQ__3_31_} and restriction
\eqref{De2}.

\begin{teo} \label{theo1.18}
The least favourable in the set $D_{2 \varepsilon}$ spectral density for the optimal extrapolation of the functional $A_N{ \xi}$ is determined by
equation \eqref{lfDe2N}.
The sequence $d= \left \{d(k):k=0,1,\ldots,N \right \}$ is determined by
equations of the canonical factorization \eqref{lfdD0} of the density \eqref{lfDe2N}, optimisation problem \eqref{GrindEQ__3_31_} and restriction
\eqref{De2}
imposed on densities by the given set of admissible spectral
densities.
The minimax spectral characteristic of the optimal estimate of the functional $A{ \xi}$ is calculated by the formula \eqref{GrindEQ__3_29_}.
\end{teo}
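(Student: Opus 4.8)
The plan is to obtain the theorem from the general minimax scheme set up above: the least favourable density $f^{0}$ together with $h^{0}=h_{N}(f^{0})$ must form a saddle point of $\Delta(h,f)$ on $H_{D_{2\varepsilon}}\times D_{2\varepsilon}$, and since $D_{2\varepsilon}$ is convex and $\Delta_{D}(f)=-\Delta(h_{N}(f^{0}),f)+\delta(f|D_{2\varepsilon})$ is a convex functional, this reduces to the condition $0\in\partial\Delta_{D}(f^{0})$ together with the membership $h_{N}(f^{0})\in H_{D_{2\varepsilon}}$. First I would record that, with $r_{N}$ and $\varphi^{0}$ frozen at $f^{0}$, the functional $f\mapsto\Delta(h_{N}(f^{0}),f)=\tfrac{1}{2\pi}\int_{-\pi}^{\pi}|r_{N}(e^{i\lambda})|^{2}|\varphi^{0}(\lambda)|^{-2}f(\lambda)\,d\lambda$ is linear in $f$, so its subdifferential at $f^{0}$ is the single function $|r_{N}(e^{i\lambda})|^{2}|\varphi^{0}(\lambda)|^{-2}$, which by the canonical factorization $|\varphi^{0}(\lambda)|^{2}=f^{0}(\lambda)$ equals $|r_{N}(e^{i\lambda})|^{2}/f^{0}(\lambda)$.

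Next I would compute the normal cone to $D_{2\varepsilon}$ at $f^{0}$, i.e.\ $\partial\delta(f^{0}|D_{2\varepsilon})$. Writing the defining inequality as $g(f)=\tfrac{1}{2\pi}\int_{-\pi}^{\pi}|f(\lambda)-v(\lambda)|^{2}\,d\lambda-\varepsilon\le0$ and noting that the constraint must be active at $f^{0}$ (an inactive constraint would give $\partial\delta(f^{0}|D_{2\varepsilon})=\{0\}$, forcing $r_{N}\equiv 0$ and a degenerate zero-error estimate), one gets $\partial\delta(f^{0}|D_{2\varepsilon})=\{\mu\,(f^{0}-v):\mu\ge 0\}$. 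Hence $0\in\partial\Delta_{D}(f^{0})$ becomes $|r_{N}(e^{i\lambda})|^{2}/f^{0}(\lambda)=\mu(f^{0}(\lambda)-v(\lambda))$, that is, the quadratic equation $\mu(f^{0}(\lambda))^{2}-\mu v(\lambda)f^{0}(\lambda)-|r_{N}(e^{i\lambda})|^{2}=0$ in $f^{0}(\lambda)$. Taking the nonnegative root and absorbing the factor $\mu^{-1}$ into the coefficients $(A_{N}d)_{k}$ (equivalently rescaling $d$) yields exactly formula~\eqref{lfDe2N}. Since the chosen root satisfies $f^{0}(\lambda)\ge v(\lambda)\ge 0$, $f^{0}$ is a legitimate nonnegative density and the positivity constraint implicit in $D_{2\varepsilon}$ stays inactive; moreover $f^{0}$ admits the canonical factorization~\eqref{lfdD0}, which is precisely the self-consistency requirement that $d=\{d(k)\}_{k=0}^{N}$ be the factorization coefficients of $f^{0}$ and $r_{N}(e^{i\lambda})=\sum_{k=0}^{N}(A_{N}d)_{k}e^{ik\lambda}$.

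It then remains to pin down $d$ and the multiplier. Among the solutions $d$ of the coupled system (the form~\eqref{lfDe2N} and the factorization~\eqref{lfdD0}) one selects the one maximizing $\|A_{N}d\|^{2}$: by Theorem~\ref{theo.1.3}, $\Delta(h_{N}(f),f)=\|A_{N}d\|^{2}$, so this is exactly the maximization of $\min_{h}\Delta(h,f)$ over $f\in D_{2\varepsilon}$, i.e.\ it identifies the least favourable density, which is~\eqref{GrindEQ__3_32_}; the scaling (the value of $\mu$) is then fixed by the active constraint~\eqref{De2}. With $f^{0}$ so determined and $h^{0}=h_{N}(f^{0})$ given by~\eqref{GrindEQ__3_29_} through Theorem~\ref{theo.1.3}, the saddle-point inequalities $\Delta(h,f^{0})\ge\Delta(h^{0},f^{0})\ge\Delta(h^{0},f)$ for all $f\in D_{2\varepsilon}$, $h\in H_{D_{2\varepsilon}}$, follow: the left one is the optimality of $h^{0}=h_{N}(f^{0})$ in $L_{2}^{-}(f^{0})$ from Theorem~\ref{theo.1.3} together with $H_{D_{2\varepsilon}}\subseteq L_{2}^{-}(f^{0})$, while the right one says that $f^{0}$ maximizes the linear functional $f\mapsto\Delta(h^{0},f)=\tfrac{1}{2\pi}\int_{-\pi}^{\pi}|r_{N}(e^{i\lambda})|^{2}|\varphi^{0}(\lambda)|^{-2}f(\lambda)\,d\lambda$ over the convex set $D_{2\varepsilon}$, which for a linear objective is exactly the first-order condition $0\in\partial\Delta_{D}(f^{0})$ verified above. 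The formula~\eqref{GrindEQ__3_29_} for the minimax spectral characteristic is then nothing but Theorem~\ref{theo.1.3} applied to the density $f^{0}$.

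The subdifferential bookkeeping is routine; the real obstacle is existence and self-consistency. One must show that the coupled system \eqref{lfDe2N}--\eqref{lfdD0} together with~\eqref{De2} actually admits a solution $d$, that the resulting $f^{0}$ obeys the regularity condition $\int_{-\pi}^{\pi}\ln f^{0}(\lambda)\,d\lambda>-\infty$ so that its canonical factorization and hence $h_{N}(f^{0})$ exist, and --- the most delicate point --- that $h_{N}(f^{0})\in H_{D_{2\varepsilon}}=\bigcap_{f\in D_{2\varepsilon}}L_{2}^{-}(f)$, so that $h^{0}$ is an admissible estimate against every competing density in the class. The bound $f^{0}\ge v$, together with a mild nondegeneracy assumption such as $v(\lambda)\ge v_{0}>0$, makes these last points tractable.
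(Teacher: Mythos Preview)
Your approach is essentially the same as the paper's: apply the general subdifferential condition $0\in\partial\Delta_{D}(f^{0})$ from the minimax framework \eqref{GrindEQ__3_34_}--\eqref{GrindEQ__3_35_} to the specific constraint set $D_{2\varepsilon}$, solve the resulting pointwise quadratic for $f^{0}(\lambda)$, and then invoke Theorem~\ref{theo.1.3} for the spectral characteristic. The paper does not actually write out a proof for this theorem (nor for the parallel Theorems in \S4.3--\S4.7); it simply records the outcome of the subdifferential computation and the form \eqref{lfDe2N}, so your derivation is a faithful and considerably more explicit expansion of what the paper leaves implicit. Your honest flagging of the existence/self-consistency issues and of the membership $h_{N}(f^{0})\in H_{D_{2\varepsilon}}$ is appropriate: the paper does not address these points either, treating them as standing hypotheses throughout the section.
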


\subsection{Conclusions}

In this section  we propose methods of solution of the problem of the mean-square optimal linear estimation of the functionals $A\xi=\sum\limits_{j=0}^{\infty}a(j)\xi(j)$ and $A_N\xi=\sum\limits_{j=0}^{N}a(j)\xi(j)$  which depend on the unknown values of the stationary stochastic sequence $\xi(j)$.
Estimates are based on observations of the sequence $\xi(j)$ at points of time $j=-1,-2,\dots$.
The problem is investigated in the case of spectral certainty, where the spectral densities of the stationary stochastic sequence $\xi(j)$ is exactly known.
In this case the classical Hilbert space projection method of linear estimation is applied. Formulas are derived for calculation the value of the mean square errors and the spectral characteristics of the mean-square optimal estimates of the linear functionals.
In the case of spectral uncertainty, where the spectral density of the stationary stochastic sequence is not exactly known, but a class of admissible spectral densities is given the minimax-robust
procedure to linear estimation of the functionals is applied.
Relations which determine the least favourable spectral densities and the minimax spectral characteristics are proposed for some given sets of admissible spectral densities.

The minimax-robust approach to the problem of one step ahead optimal prediction of the stationary stochastic sequences as well as estimation of one missed value of the sequences based on convex optimization methods was initiated in papers by Franke \cite{Franke1984,Franke1985},  Franke and Poor \cite{FrankePoor}. See also papers by Hosoya \cite{Hosoya}, Taniguchi \cite{Taniguchi1981}, and survey paper by Kassam and Poor \cite{Kassam}.

For the relative results on the mean-square optimal linear extrapolation of linear functionals for stationary stochastic sequences and processes see papers by Moklyachuk~\cite{Moklyachuk:1986} -- \cite{Moklyachuk:1992b}, \cite{Moklyachuk:2008r}, book by Moklyachuk and Masyutka~\cite{Moklyachuk:2012}.

\section{Extrapolation problem for stationary sequences from observations with noise}

In this section we deal with the problem of the mean-square optimal estimation of the linear
functionals $A\xi=\sum\limits_{j=0}^{\infty}a(j)\xi(j)$ and $A_N\xi=\sum\limits_{j=0}^{N}a(j)\xi(j)$ which depend on the unknown values of a stationary stochastic sequence $\xi(j),j\in \mathbb{Z},$
from observations of the sequence $\xi(j)+\eta(j)$ at points of time $j=-1,-2,\dots$, where $\eta(j)$ is an uncorrelated with $\xi(j)$ stationary stochastic sequence.
The problem is investigated in the case of spectral certainty, where the spectral densities of the stationary stochastic sequences $\xi(j)$ and $\eta(j)$ are exactly known.
In this case the classical Hilbert space projection method of linear estimation is applied. Formulas are derived for calculation the value of the mean square errors and the spectral characteristics of the mean-square optimal estimates of the linear functionals.
In the case of spectral uncertainty, where the spectral densities of the stationary stochastic sequences are not exactly known, but a class of admissible spectral densities is given the minimax-robust
procedure to linear estimation of the functional is applied.
Relations which determine the least favourable spectral densities and the minimax spectral characteristics are proposed for some special sets of admissible spectral densities.

\subsection{The classical Hilbert space projection method of linear extrapolation}

Let  $\xi(j), j\in \mathbb{Z}$, and $\eta(j), j\in  \mathbb{Z}$,  be (wide sense) stationary stochastic sequences with zero mathematical expectations $E\xi(j)=0$, $E\eta(j)=0$.
The correlation functions $R_{\xi}(k)=E\xi(j+k)\overline{\xi(j)}$ and $R_{\eta}(k)=E\eta(j+k)\overline{\eta(j)}$
 of stationary stochastic sequences $\xi(j), j\in \mathbb{Z}$, and $\eta(j), j\in  \mathbb{Z}$,
admit  the spectral representations
\[
 R_{\xi}(k)=\int\limits_{-\pi}^{\pi}e^{ik\lambda}F(d\lambda), \quad R_{\eta}(k)=\int\limits_{-\pi}^{\pi}e^{ik\lambda}G(d\lambda),
 \]
where $F(d\lambda)$ and $G(d\lambda)$ are spectral measures of the sequences.
We will consider stationary stochastic sequences with absolutely continuous spectral measures $F(d\lambda)$ and $G(d\lambda)$ and the correlation functions of the form
\[
 R_{\xi}(k)=\,\frac{1}{2\pi}\int\limits_{-\pi}^{\pi}e^{ik\lambda}f(\lambda)d\lambda, \quad R_{\eta}(k)=\,\frac{1}{2\pi}\int\limits_{-\pi}^{\pi}e^{ik\lambda}g(\lambda)d\lambda,
 \]
where $f(\lambda)$ and $g(\lambda)$ are the spectral density functions of the sequences $\xi(j), j\in \mathbb{Z}$, and $\eta(j), j\in  \mathbb{Z}$, correspondingly.

We will suppose that the spectral density functions $f(\lambda)$ and $g(\lambda)$
satisfy the minimality condition
\begin{equation}\label{extrminimalAn}
\int\limits_{-\pi}^{\pi}\frac{1}{f(\lambda)+g(\lambda)}d\lambda<\infty.
\end{equation}
Under this condition the error-free extrapolation of the unknown values of the sequence $\xi(j)+\eta(j)$ is impossible.

The stationary stochastic sequences $\xi(j)$ and $\eta(j)$ admit
 the spectral representations
\begin{equation*}
\xi(j)=\int\limits_{-\pi}^{\pi}e^{ij\lambda}dZ_{\xi}(\lambda), \hspace{1cm}
\eta(j)=\int\limits_{-\pi}^{\pi}e^{ij\lambda}dZ_{\eta}(\lambda),
\end{equation*}
where $Z_{\xi}(d\lambda)$ and $Z_{\eta}(d\lambda)$ are orthogonal stochastic measures of the sequences $\xi(j)$ and $\eta(j)$ such that
\[
E{Z_{\xi}}(\Delta_1)\overline{{Z_{\xi}}(\Delta_2)}=F(\Delta_1\cap\Delta_2)=\,\frac{1}{2\pi}\int_{\Delta_1\cap\Delta_2}f(\lambda)d\lambda,
\]
\[
E{Z_{\eta}}(\Delta_1)\overline{{Z_{\eta}}(\Delta_2)}=G(\Delta_1\cap\Delta_2)=\,\frac{1}{2\pi}\int_{\Delta_1\cap\Delta_2}g(\lambda)d\lambda.
\]

Consider the problem of the mean-square optimal estimation of the linear
functional
\[A\xi=\sum\limits_{j=0}^{\infty}a(j)\xi(j)\]
 which depends on the unknown values of a stationary stochastic sequence $\xi(j),j=0,1,\dots,$  based on observations of the sequence
$\xi(j)+\eta(j)$ at points of time $j=-1,-2,\dots$.

We will suppose that the sequence $ \{ {a}(j): j=0,1, \ldots \}$ which determines the functional $A  { \xi} $ satisfies the following conditions
 \begin{equation} \label{condAn}
  \sum_{j=0}^{ \infty} \left|a (j) \right|  < \infty , \quad \sum_{j=0}^{ \infty}(j+1) \left | {a}(j) \right | ^{2}  < \infty.
 \end{equation}

It follows from the spectral representation of the sequence $\xi(j)$
that we can represent the functional $A\xi$ in the  form
\begin{equation}\label{extrAn}
A\xi=\int\limits_{-\pi}^{\pi}A(e^{i\lambda}){Z}_{\xi}(d\lambda),
 \end{equation}
where
\begin{equation*}
  A(e^{i\lambda})=\sum\limits_{j=0}^{\infty}a(j)e^{ij\lambda }.
 \end{equation*}

Denote by $\hat{A}\xi$  the mean square optimal linear estimate of the functional  $A\xi$ from observations of the sequence
 $\xi(j)+\eta(j)$ at points of time  $j=-1,-2,\dots$.
Denote by $\Delta(f,g)=E\left|A\xi-\hat{A}\xi\right|^2$ the mean square error of the estimate $\hat{A}\xi$. To find the estimate $\hat{A}\xi$
we will use the Hilbert space projection method proposed by Kolmogorov~\cite{Kolmogorov}.
We will consider $\xi(j), j\in \mathbb{Z}$, and $\eta(j), j\in  \mathbb{Z}$, as elements of the Hilbert space  $H=L_2(\Omega,\mathcal{F},P)$
 of complex valued random variables with zero first moment, $E\xi=0$, finite second moment,
 $E|\xi|^2<\infty$, and the inner product $\left(\xi,{\eta}\right)=E\xi\overline{\eta}$.

Denote by $H^0(\xi+\eta)$ the subspace  of the Hilbert space  $H=L_2(\Omega,\mathcal{F},P)$ generated by elements $\{\xi(j)+\eta(j):j=-1,-2,\dots\}$.
Denote by $L_2(f+g)$ the Hilbert space of complex-valued functions  that are square-integrable  with respect to the measure whose density is $f(\lambda)+g(\lambda).$
Denote by $L_2^0(f+g)$ the subspace of $L_2(f+g)$ generated by functions $\{e^{ij\lambda}:j=-1,-2,\dots\}.$

The mean square optimal linear estimate $\hat{A}\xi$ of the functional  $A\xi$ based on observations of the sequence $\xi(j)+\eta(j)$ at points of time  $j=-1,-2,\dots$
is an element of the $H^0(\xi+\eta)$. It can be represented in the form
\begin{equation}\label{hatAn}
\hat{A}\xi=\,\int\limits_{-\pi}^{\pi}h(e^{i\lambda}) (Z_{\xi}(d\lambda)+Z_{\eta}(d\lambda)),
 \end{equation}
where $h(e^{i\lambda}) \in L_2^0(f+g)$  is the spectral characteristic of the estimate $\hat{A}\xi.$

The mean square error $\Delta(h;f,g)$ of the estimate $\hat{A}\xi$ is given by the formula
\begin{equation*}
 \Delta(h;f,g)=E\left|A\xi-\hat{A}\xi\right|^2=\frac{1}{2\pi}\int\limits_{-\pi}^{\pi}\left|A(e^{i\lambda})-h(e^{i\lambda})\right|^2 f(\lambda)d\lambda
 +\frac{1}{2\pi}\int\limits_{-\pi}^{\pi}\left|h(e^{i\lambda})\right|^2 g(\lambda)d\lambda.
\end{equation*}

The Hilbert space projection method proposed by A. N. Kolmogorov \cite{Kolmogorov} makes it possible to find the spectral characteristic  $h(e^{i\lambda})$ and the mean square error $\Delta(h;f,g)$ of the optimal linear estimate of the functional  $A\xi$ in the case where the spectral densities $f(\lambda)$ and $g(\lambda)$ of the sequences $\xi(j),j\in\mathbb{Z},$ and $\eta(j),j\in\mathbb{Z},$ are exactly known and the minimality condition (\ref{extrminimalAn}) is satisfied. The spectral characteristic is determined by the following conditions:
\begin{equation*} \begin{split}
1)& \hat{A}\xi \in H^0(\xi+\eta), \\
2)& A\xi-\hat{A}\xi \bot  H^0(\xi+\eta).
\end{split} \end{equation*}

It follows from the second condition that the following equations should be satisfied
\begin{equation*} \begin{split}
&E\left(A\xi-\hat{A}\xi\right)\left(\overline{\xi(j)}+\overline{\eta(j)}\right)=\\
&=\frac{1}{2\pi}\int\limits_{-\pi}^{\pi} \left(A(e^{i\lambda})- h(e^{i\lambda})\right)e^{-ij\lambda}f(\lambda)d\lambda-\int\limits_{-\pi}^{\pi}  h(e^{i\lambda})e^{-ij\lambda}g(\lambda)d\lambda=0, \,\,j=-1,-2,\dots.
\end{split} \end{equation*}

 The last equations are equivalent to equations
 \begin{equation*}
\int\limits_{-\pi}^{\pi} \left[A(e^{i\lambda})f(\lambda)- h(e^{i\lambda})(f(\lambda)  +g(\lambda))\right]e^{-ij\lambda}d\lambda=0, \,\,j=-1,-2,\dots.
 \end{equation*}

It follows from these equations that the function $\left[A(e^{i\lambda})f(\lambda)- h(e^{i\lambda})(f(\lambda)  +g(\lambda))\right]$ is of the form
\begin{equation}\label{extrAn}
A(e^{i\lambda})f(\lambda)- h(e^{i\lambda})(f(\lambda)  +g(\lambda))=C(e^{i\lambda}),
 \end{equation}
\[C(e^{i\lambda})=\sum\limits_{j=0}^{\infty}c(j)e^{ij\lambda},
\]
where $c(j), j=0,1,\dots$ are unknown coefficients that we have to find.

 From the relation (\ref{extrAn}) we deduce that the spectral characteristic $ h(e^{i\lambda})$
 of the optimal linear estimate of the functional  $A\xi$
 is of the form
 \begin{equation} \label{extrspectharAAnAn}
 \begin{split}
h(e^{i\lambda})=&\frac{A(e^{i\lambda})f(\lambda)-C(e^{i\lambda})}{f(\lambda)+g(\lambda)}=\\
=A(e^{i\lambda})-&\frac{A(e^{i\lambda})g(\lambda)+C(e^{i\lambda})}{f(\lambda)+g(\lambda)}.
\end{split} \end{equation}

It follows from the first condition, which determines the spectral characteristic $h(e^{i\lambda}) \in L_2^0(f+g)$
 of the optimal linear estimate of the functional  $A\xi$
that the Fourier coefficients of the function  $h(e^{i\lambda})$ are equal to zero for $j=0,1,\dots$, namely
\[
\frac{1}{2\pi}\int\limits_{-\pi}^{\pi} h(e^{i\lambda})e^{-ij\lambda }d\lambda=0, \,\, j =0,1,\dots
\]
Using the last relations and (\ref{extrspectharAAnAn}) we get the following system of equations

\begin{equation*} \begin{split}
\int\limits_{-\pi}^{\pi}\left(A(e^{i\lambda})\frac{f(\lambda)}{f(\lambda)+g(\lambda)}-\frac{C(e^{i\lambda})}{f(\lambda)+g(\lambda)}\right)e^{-ij\lambda}d\lambda=0, \,\, j =0,1,\dots
\end{split} \end{equation*}

These equations can be represented in the form
\begin{equation}\label{extrAn3} \begin{split}
\sum_{k=0}^{\infty}a(k)\int\limits_{-\pi}^{\pi}\frac{e^{i(k-j)\lambda}f(\lambda)}{f(\lambda)+g(\lambda)}d\lambda
-\sum\limits_{k=0}^{\infty}c(k)\int\limits_{-\pi}^{\pi}\frac{e^{i(k-j)\lambda}}{f(\lambda)+g(\lambda)}d\lambda=0, \,\, j =0,1,\dots
\end{split} \end{equation}

Let us introduce the following notations
$$R_{j,k}=\frac{1}{2\pi} \int\limits_{-\pi}^{\pi}e^{-i(j-k)\lambda}\frac{f(\lambda)}{f(\lambda)+g(\lambda)}d\lambda;$$
$$B_{j,k}=\frac{1}{2\pi} \int\limits_{-\pi}^{\pi}e^{-i(j-k)\lambda}\frac{1}{f(\lambda)+g(\lambda)}d\lambda;$$
$$Q_{j,k}=\frac{1}{2\pi} \int\limits_{-\pi}^{\pi}e^{-i(j-k)\lambda}\frac{f(\lambda)g(\lambda)}{f(\lambda)+g(\lambda)}d\lambda.$$

Making use the introduced notations we can write equations (\ref{extrAn3})  in the form
\begin{equation*}\begin{split}
\sum\limits_{k=0}^{\infty}R_{j,k}a(k)=
\sum\limits_{k=0}^{\infty}B_{j,k}c(k), \,\, j =0,1,\dots
\end{split} \end{equation*}

The derived equations can be written in the form
\begin{equation*}\begin{split}
\bold{R}\bold{a}=\bold{B} \bold{c},
\end{split} \end{equation*}
where $\bold{a}=(a(0),a(1),\dots)$ is a vector constructed from the coefficients that determine the functional $A\xi$,
$\bold{c}=(c(0),c(1),\dots)$ is a vector constructed from the unknown coefficients $c(k),k=0,1,\dots$, $\bold{B}$ and $ \bold{R}$ are linear operators in $\ell_2$, which are determined by matrices with elements $(\bold{B})_{j,k}=B_{j,k}$, $(\bold{R})_{j,k}=R_{j,k}$, $j,k=0,1,\dots$

We get the formula
\begin{equation} \label{rivnAn2}
\bold{c}=\bold{B}^{-1}\bold{R}\bold{a},
\end{equation}

 Hence, the unknown coefficients $c(j), j=0,1,\dots$, are calculated  by the formula
\[c(j)=\left(\bold{B}^{-1}\bold{R}\bold{a}\right)_j,
\]
where  $\left(\bold{B}^{-1}\bold{R}\bold{a}\right)_j $ is the  $j$-th  component of the vector  $\bold{B}^{-1}\bold{R}\bold{a},$
and the formula for calculating  the spectral characteristic of the estimate $\hat{A}\xi$ is of the form
\begin{equation}\label{extrspectharAAn} \begin{split}
h(e^{i\lambda})=A(e^{i\lambda})\frac{f(\lambda)}{f(\lambda)+g(\lambda)}-
\frac{\sum\limits_{k=0}^{\infty}(\bold{B}^{-1}\bold{R}\bold{a})_ke^{ik\lambda} }{f(\lambda)+g(\lambda)}.
\end{split} \end{equation}

The mean square error of the estimate of the function can be calculated by the formula
\begin{equation} \label{extrdeltaAn} \begin{split}
\Delta(h;f,g)=E\left|A\xi-\hat{A}\xi\right|^2&=\frac{1}{2\pi}\int\limits_{-\pi}^{\pi}\frac{\left|A(e^{i\lambda})g(\lambda)+
\sum\limits_{k=0}^{\infty}(\bold{B}^{-1}\bold{R}\bold{a})_k e^{ik\lambda}\right|^2}{(f(\lambda)+g(\lambda))^2}f(\lambda)d\lambda\\
&+\frac{1}{2\pi}\int\limits_{-\pi}^{\pi}\frac{\left|A(e^{i\lambda})f(\lambda)-
\sum\limits_{k=0}^{\infty}(\bold{B}^{-1}\bold{R}\bold{a})_k e^{ik\lambda}\right|^2}{(f(\lambda)+g(\lambda))^2}g(\lambda)d\lambda\\
&=\langle\bold{R}\bold{a},\bold{B}^{-1}\bold{R}\bold{a}\rangle+\langle\bold{Q}\bold{a},\bold{a}\rangle,
\end{split} \end{equation}
where $\langle\bold{a},\bold{c}\rangle=\sum_{k=0}^{\infty}a(k)\overline{c(k)}$ is the inner product in the space $\ell_2$ and $\bold{Q}$ is a linear operator in $\ell_2$, which is determined by the matrix with elements $(\bold{Q})_{j,k}=Q_{j,k}$,  $j,k=0,1,\dots$

\noindent Let us summarize our results and present them in the form
of a theorem.

\begin{thm}
Let $\xi(j)$ and $\eta(j)$ be stationary stochastic sequences with the spectral densities $f(\lambda)$ and $g(\lambda)$ that satisfy the minimality condition (\ref{extrminimalAn}).
Let conditions \eqref{condAn} be satisfied.
The spectral characteristic $h(e^{i\lambda})$ and the mean square error  $\Delta(h;f,g)$ of the optimal linear estimate $\hat{A}\xi$ of the functional $A\xi$ from observations of the sequence $\xi(j)+\eta(j)$ at points of time $j=-1,-2,\dots $ can be calculated by  formulas (\ref{extrspectharAAn}), (\ref{extrdeltaAn}).
\end{thm}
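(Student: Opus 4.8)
The plan is to realize $\hat{A}\xi$ as the orthogonal projection of $A\xi$ onto the subspace $H^0(\xi+\eta)\subset H=L_2(\Omega,\mathcal F,P)$ and then to transcribe that projection into the spectral domain. First I would note that, under \eqref{condAn}, the functional $A\xi$ has finite second moment and, through its spectral representation, is an element of $H$; hence the projection $\hat{A}\xi=\mathrm{Proj}\bigl(A\xi\,\big|\,H^0(\xi+\eta)\bigr)$ exists, and since every element of $H^0(\xi+\eta)$ has a representation as an integral of a function from $L_2^0(f+g)$ against $Z_\xi+Z_\eta$, it takes the form \eqref{hatAn} with spectral characteristic $h\in L_2^0(f+g)$. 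The estimate is then characterized by the two conditions $\hat{A}\xi\in H^0(\xi+\eta)$ and $A\xi-\hat{A}\xi\perp H^0(\xi+\eta)$.

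Next I would turn the orthogonality condition into a spectral identity. Testing $A\xi-\hat{A}\xi$ against the generators $\xi(j)+\eta(j)$, $j=-1,-2,\dots$, and using the spectral measures of $\xi$ and $\eta$, one obtains $\int_{-\pi}^{\pi}\bigl[A(e^{i\lambda})f(\lambda)-h(e^{i\lambda})(f(\lambda)+g(\lambda))\bigr]e^{-ij\lambda}\,d\lambda=0$ for every $j\le-1$; consequently the bracketed function has vanishing Fourier coefficients of negative index, i.e.\ it equals $C(e^{i\lambda})=\sum_{k\ge0}c(k)e^{ik\lambda}$ for some coefficient sequence $\{c(k)\}$, and solving for $h$ gives formula \eqref{extrspectharAAnAn}. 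Imposing now the membership condition $h\in L_2^0(f+g)$, which says that the Fourier coefficients of $h$ vanish for $j\ge0$, and expanding $A(e^{i\lambda})$ in its series, produces the infinite linear system \eqref{extrAn3}; in the notation $R_{j,k}$, $B_{j,k}$ this reads $\mathbf R\mathbf a=\mathbf B\mathbf c$.

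The key technical step, and the one place where the minimality hypothesis \eqref{extrminimalAn} enters, is to show that $\mathbf B$ is a bounded, self-adjoint, positive and boundedly invertible operator on $\ell_2$: its symbol $(f(\lambda)+g(\lambda))^{-1}$ is integrable by \eqref{extrminimalAn}, so the Toeplitz operator $\mathbf B$ is well defined; it is the Gram operator of the system $\{e^{ij\lambda}:j\le-1\}$ in $L_2\bigl((f+g)^{-1}\,d\lambda\bigr)$, and the minimality condition is precisely what guarantees that this system generates a proper subspace whose Gram operator is bounded below, so that $\mathbf B^{-1}$ exists and is bounded. Conditions \eqref{condAn} ensure $\mathbf a\in\ell_2$ and $\mathbf R\mathbf a\in\ell_2$, hence $\mathbf c=\mathbf B^{-1}\mathbf R\mathbf a$ is well defined and $c(k)=(\mathbf B^{-1}\mathbf R\mathbf a)_k$. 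Substituting this back into \eqref{extrspectharAAnAn} yields \eqref{extrspectharAAn}. Finally, the mean square error is $\Delta(h;f,g)=\|A\xi-\hat{A}\xi\|^2$; writing $A(e^{i\lambda})-h(e^{i\lambda})=(A(e^{i\lambda})g(\lambda)+C(e^{i\lambda}))/(f(\lambda)+g(\lambda))$ and $h(e^{i\lambda})=(A(e^{i\lambda})f(\lambda)-C(e^{i\lambda}))/(f(\lambda)+g(\lambda))$ in the two integral terms and collecting, the resulting quadratic forms are recognized as $\langle\mathbf R\mathbf a,\mathbf B^{-1}\mathbf R\mathbf a\rangle$ and $\langle\mathbf Q\mathbf a,\mathbf a\rangle$, giving \eqref{extrdeltaAn}. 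I expect the main obstacle to be the rigorous proof of boundedness and bounded invertibility of $\mathbf B$, together with the attendant convergence bookkeeping for the operator identities; the remainder is the classical Kolmogorov--Wiener projection computation.
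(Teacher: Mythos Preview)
Your proposal is correct and follows essentially the same route as the paper: the Kolmogorov projection is characterized by the two conditions $\hat A\xi\in H^0(\xi+\eta)$ and $A\xi-\hat A\xi\perp H^0(\xi+\eta)$, the orthogonality condition yields the representation $A(e^{i\lambda})f-h(f+g)=C$ with $C$ supported on nonnegative frequencies, the membership condition produces the Toeplitz system $\mathbf R\mathbf a=\mathbf B\mathbf c$, and the error formula is obtained by direct substitution. You are in fact more explicit than the paper about the one genuinely delicate point, the bounded invertibility of $\mathbf B$ on $\ell_2$; the paper simply writes $\mathbf c=\mathbf B^{-1}\mathbf R\mathbf a$ without further justification, so your plan to isolate and prove this (and to check that conditions \eqref{condAn} place $\mathbf a$ and $\mathbf R\mathbf a$ in $\ell_2$) is an improvement rather than a deviation.
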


\subsection{Minimax-robust method of extrapolation}

The traditional methods of estimation of the functional $A\xi$ which depends on the unknown values of a stationary stochastic sequence $\xi(j)$ can be applied in the case  where the spectral densities $f(\lambda)$ and $g(\lambda)$ of the considered stochastic sequences $\xi(j)$ and $\eta(j)$ are exactly known.
In practise, however, we do not have complete information on spectral densities of the sequences. For this reason  we apply the minimax(robust) method of estimation of the functional $A\xi$, that is we find an estimate that minimizes the maximum of the mean square errors for all spectral densities from the given class of admissible spectral densities $D$.

\begin{ozn}
For a given class of spectral densities $D=D_f\times D_g$ the spectral densities $f_0(\lambda)\in D_f$, $g_0(\lambda)\in D_g$  are called the least favourable in $D$ for the optimal linear estimation of the functional $A\xi$ if the following relation holds true
$$\Delta\left(f_0,g_0\right)=\Delta\left(h\left(f_0,g_0\right);f_0,g_0\right)=\max\limits_{(f,g)\in D_f\times D_g}\Delta\left(h\left(f,g\right);f,g\right).$$
\end{ozn}

\begin{ozn}
For a given class of spectral densities $D=D_f\times D_g$ the spectral characteristic $h^0(e^{i\lambda})$ of the optimal linear estimate of the functional $A\xi$ is called minimax-robust if
$$h^0(e^{i\lambda})\in H_D= \bigcap\limits_{(f,g)\in D_f\times D_g} L_2^0(f+g),$$
$$\min\limits_{h\in H_D}\max\limits_{(f,g)\in D}\Delta\left(h;f,g\right)=\sup\limits_{(f,g)\in D}\Delta\left(h^0;f,g\right).$$
\end{ozn}

It follows from the introduced definitions and the obtained formulas that the following statement holds true.

\begin{lem} The spectral densities $f_0(\lambda)\in D_f$, $g_0(\lambda)\in D_g$ are the least favourable in the class of admissible spectral densities $D=D_f\times D_g$ for the optimal linear estimate of the functional $A\xi$ if the Fourier coefficients of the functions
$$(f_0(\lambda)+g_0(\lambda))^{-1}, \quad f_0(\lambda)(f_0(\lambda)+g_0(\lambda))^{-1}, \quad f_0(\lambda)g_0(\lambda)(f_0(\lambda)+g_0(\lambda))^{-1}$$
define operators $\bold B^0, \bold R^0, \bold Q^0$ that determine a solution to the optimization problem
\begin{equation} \label{extrAnDR}\begin{split}
\max\limits_{(f,g)\in D_f\times D_g}\langle\bold{R}\bold{a},\bold{B}^{-1}\bold{R}\bold{a}\rangle+\langle\bold{Q}\bold{a},\bold{a}\rangle\\
=\langle\bold{R}^0\bold{a},(\bold{B}^0)^{-1}\bold{R}^0\bold{a}\rangle+\langle\bold{Q}^0\bold{a},\bold{a}\rangle.
\end{split}\end{equation}
The minimax spectral characteristic $h^0=h(f_0,g_0)$ can be calculated by the formula (\ref{extrspectharAAn}) if $h(f_0,g_0) \in H_D.$
\end{lem}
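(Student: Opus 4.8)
\emph{Proof (plan).} The plan is to first read the optimization problem \eqref{extrAnDR} correctly and then build a saddle point. By \eqref{extrdeltaAn} (equivalently, by the preceding theorem), for every admissible pair $(f,g)$ the operators $\bold{B},\bold{R},\bold{Q}$ generated by the Fourier coefficients of $(f+g)^{-1}$, $f(f+g)^{-1}$ and $fg(f+g)^{-1}$ satisfy $\langle\bold{R}\bold{a},\bold{B}^{-1}\bold{R}\bold{a}\rangle+\langle\bold{Q}\bold{a},\bold{a}\rangle=\Delta(h(f,g);f,g)=\min_{h\in L_2^0(f+g)}\Delta(h;f,g)$. Hence the requirement that $\bold{B}^0,\bold{R}^0,\bold{Q}^0$ determine a solution of \eqref{extrAnDR} is, verbatim, the requirement $\Delta(h(f_0,g_0);f_0,g_0)=\max_{(f,g)\in D}\Delta(h(f,g);f,g)$, which is exactly the definition of $(f_0,g_0)$ being least favourable. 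So the first assertion of the lemma is merely this identity; the substantive content is the formula for the minimax spectral characteristic.

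To prove the latter I would put $h^0:=h(f_0,g_0)$, given by \eqref{extrspectharAAn}, and verify that $\bigl((f_0,g_0),h^0\bigr)$ is a saddle point of $\Delta(h;f,g)$ on $H_D\times D$, i.e. $\Delta(h;f_0,g_0)\ge\Delta(h^0;f_0,g_0)\ge\Delta(h^0;f,g)$ for all $h\in H_D$ and all $(f,g)\in D$. The left inequality is where the hypothesis $h^0\in H_D$ is used: $h^0$ is the Hilbert-space projection estimate for the fixed densities $(f_0,g_0)$, hence it minimises $\Delta(\,\cdot\,;f_0,g_0)$ over all of $L_2^0(f_0+g_0)\supseteq H_D$. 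For the right inequality I would substitute the explicit form of $h^0$: writing $C_0(e^{i\lambda})=\sum_{k=0}^{\infty}\bigl((\bold{B}^0)^{-1}\bold{R}^0\bold{a}\bigr)_k e^{ik\lambda}$, relation \eqref{extrspectharAAnAn} gives $A(e^{i\lambda})-h^0(e^{i\lambda})=\frac{A(e^{i\lambda})g_0(\lambda)+C_0(e^{i\lambda})}{f_0(\lambda)+g_0(\lambda)}$ and $h^0(e^{i\lambda})=\frac{A(e^{i\lambda})f_0(\lambda)-C_0(e^{i\lambda})}{f_0(\lambda)+g_0(\lambda)}$, so that
\[
\Delta(h^0;f,g)=\frac{1}{2\pi}\int_{-\pi}^{\pi}\frac{|A(e^{i\lambda})g_0(\lambda)+C_0(e^{i\lambda})|^2}{(f_0(\lambda)+g_0(\lambda))^2}\,f(\lambda)\,d\lambda+\frac{1}{2\pi}\int_{-\pi}^{\pi}\frac{|A(e^{i\lambda})f_0(\lambda)-C_0(e^{i\lambda})|^2}{(f_0(\lambda)+g_0(\lambda))^2}\,g(\lambda)\,d\lambda
\]
is an affine functional of $(f,g)$ whose value at $(f_0,g_0)$ equals $\langle\bold{R}^0\bold{a},(\bold{B}^0)^{-1}\bold{R}^0\bold{a}\rangle+\langle\bold{Q}^0\bold{a},\bold{a}\rangle$ by \eqref{extrdeltaAn}. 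Thus the right inequality says precisely that this linear functional is maximised over $D$ at $(f_0,g_0)$.

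The main obstacle is exactly this last point: passing from ``$(f_0,g_0)$ maximises the concave map $(f,g)\mapsto\min_h\Delta(h;f,g)$ over $D$'' to ``$(f_0,g_0)$ maximises the affine majorant $\Delta(h^0;\,\cdot\,)$ over $D$''. This is the convex-analysis step already used for the density classes of the previous section: the map $(f,g)\mapsto\min_h\Delta(h;f,g)$ is concave, being a pointwise minimum of functionals affine in $(f,g)$, so maximising it over the convex set $D$ is equivalent to the unconstrained problem $-\Delta(h^0;f,g)+\delta\bigl((f,g)\mid D\bigr)\to\inf$, whose optimality condition is $0\in\partial\bigl(-\Delta(h^0;\,\cdot\,)+\delta(\,\cdot\mid D)\bigr)(f_0,g_0)$; and since the minimality condition \eqref{extrminimalAn} forces the inner minimiser $h(f_0,g_0)$ to be unique, a Danskin-type argument identifies the superdifferential of $\min_h\Delta(h;\,\cdot\,)$ at $(f_0,g_0)$ with the single linear functional $\Delta(h^0;\,\cdot\,)$, which yields $\Delta(h^0;f,g)\le\Delta(h^0;f_0,g_0)$ for all $(f,g)\in D$ (cf. \cite{Moklyachuk:2008},\cite{Pshenychn}). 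Granting the saddle point, and using that $\inf_h\sup_f\ge\sup_f\inf_h$ always holds, one concludes $\min_{h\in H_D}\max_{(f,g)\in D}\Delta(h;f,g)=\max_{(f,g)\in D}\min_{h\in H_D}\Delta(h;f,g)=\Delta(h^0;f_0,g_0)$, so that $h^0=h(f_0,g_0)$ is the minimax-robust spectral characteristic, which is the assertion of the lemma.
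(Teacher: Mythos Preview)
Your proposal is correct and follows the same overall logic as the paper, but it is considerably more detailed than what the paper offers. In the paper the lemma is not given a separate proof at all: the sentence preceding it reads ``It follows from the introduced definitions and the obtained formulas that the following statement holds true,'' and indeed the first assertion is nothing more than the definition of ``least favourable'' rewritten through the error formula \eqref{extrdeltaAn}, exactly as you say. The saddle-point reasoning you supply for the second assertion is not packaged into the lemma in the paper; it appears in the text immediately after, where the paper \emph{assumes} \eqref{extrDeltaAn1} (i.e.\ that $(f_0,g_0)$ maximises the affine functional $\Delta(h^0;\,\cdot\,)$ over $D$) rather than deriving it from least-favourability, and then passes to the unconstrained problem \eqref{extrDeltaAn2} and the subdifferential condition.

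So the one substantive difference is that you attempt to \emph{derive} \eqref{extrDeltaAn1} from the maximiser property via a Danskin-type argument, whereas the paper simply posits \eqref{extrDeltaAn1} as the working hypothesis for the subsequent analysis. Your argument is sound under the uniqueness of the inner minimiser (guaranteed by the minimality condition \eqref{extrminimalAn}) and the convexity of $D$; the only delicacy you should be aware of is that the inner minimisation is over $L_2^0(f+g)$, a set that varies with $(f,g)$, so the clean Danskin statement requires either restricting to $h\in H_D$ (which you may, since $h^0\in H_D$ by hypothesis) or invoking the infinite-dimensional versions in the references you cite. With that caveat noted, your write-up is a legitimate, more complete version of what the paper sketches.
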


The least favourable spectral densities $f_0(\lambda)$, $g_0(\lambda)$  and the minimax spectral characteristic $h^0=h(f_0,g_0)$
form a saddle point of the function  $\Delta \left(h;f,g\right)$ on the set $H_D\times D$. The saddle point inequalities
$$\Delta\left(h;f_0,g_0\right)\geq\Delta\left(h^0;f_0,g_0\right)\geq \Delta\left(h^0;f,g\right) $$  $$ \forall h \in H_D, \forall f \in D_f, \forall g \in D_g$$
hold true if $h^0=h(f_0,g_0)$ and $h(f_0,g_0)\in H_D,$  where $(f_0,g_0)$ is a solution to the constrained optimization problem
\begin{equation} \label{extrDeltaAn1}
\sup\limits_{(f,g)\in D_f\times D_g}\Delta\left(h(f_0,g_0);f,g\right)=\Delta\left(h(f_0,g_0);f_0,g_0\right),
\end{equation}
where
\begin{equation}\label{ExtrDfgAn}
\begin{split}
\Delta\left(h(f_0,g_0);f,g\right)&=\frac{1}{2\pi}\int\limits_{-\pi}^{\pi}\frac{\left|A(e^{i\lambda})g_0(\lambda)
+\sum\limits_{j=0}^{\infty}((\bold{B}^0)^{-1}\bold{R}^0\bold{a})_je^{ij\lambda}\right|^2}{(f_0(\lambda)+g_0(\lambda))^2}f(\lambda)d\lambda\\
&+\frac{1}{2\pi}\int\limits_{-\pi}^{\pi}\frac{\left|A(e^{i\lambda})f_0(\lambda)-
\sum\limits_{j=0}^{\infty}((\bold{B}^0)^{-1}\bold{R}^0\bold{a})_je^{ij\lambda}\right|^2}{(f_0(\lambda)+g_0(\lambda))^2}g(\lambda)d\lambda,
\end{split}\end{equation}

The constrained optimization problem (\ref{extrDeltaAn1}) is equivalent to the unconstrained optimization problem
\begin{equation} \label{extrDeltaAn2}
\Delta_D(f,g)=-\Delta(h(f_0,g_0);f,g)+\delta(f,g\left|D_f\times D_g\right.)\rightarrow \inf,
\end{equation}
where $\delta(f,g\left|D_f\times D_g\right.)$ is the indicator function of the set  $D=D_f\times D_g$. Solution $(f_0,g_0)$ to the problem (\ref{extrDeltaAn2}) is characterized by the condition $0 \in \partial\Delta_D(f_0,g_0),$ where $\partial\Delta_D(f_0,g_0)$ is the subdifferential of the convex functional $\Delta_D(f,g)$ at point $(f_0,g_0)$.
This condition makes it possible to find the least favourable spectral densities in some special classes of spectral densities $D$ (see books \cite{Ioffe}, \cite{Pshenychn}, \cite{Rockafellar} for additional details).

Note, that the form of the functional $\Delta(h(f_0,g_0);f,g)$ \eqref{ExtrDfgAn} is convenient for application the Lagrange method of indefinite multipliers for
finding solution to the problem (\ref{extrDeltaAn1}).
Making use the method of Lagrange multipliers and the form of
subdifferentials of the indicator functions
we describe relations that determine least favourable spectral densities in some special classes
of spectral densities
(see books \cite{Golichenko,Moklyachuk:2008,Moklyachuk:2012} for additional details).

\begin{lem} Let $(f_0,g_0)$ be a solution to the optimization problem  (\ref{extrDeltaAn2}). The spectral densities $f_0(\lambda)$, $g_0(\lambda)$ are the least favourable in the class $D=D_f\times D_g$, and
the spectral characteristic $h^0=h(f_0,g_0)$ is minimax for the optimal estimate of the functional $A\xi$, in the case where  $h(f_0,g_0) \in H_D$.
\end{lem}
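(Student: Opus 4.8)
The plan is to show that the triple $(h^0, f_0, g_0)$ with $h^0 = h(f_0, g_0)$ is a saddle point of $\Delta(h; f, g)$ on $H_D \times D$, i.e. that
\[
\Delta(h; f_0, g_0) \ge \Delta(h^0; f_0, g_0) \ge \Delta(h^0; f, g) \qquad \forall\, h \in H_D,\ \forall\, (f,g) \in D,
\]
and then to read off both assertions of the lemma from these inequalities by the standard minimax argument.

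For the left-hand inequality I would use the projection theorem of the preceding subsection: $h^0 = h(f_0,g_0)$ is the spectral characteristic of the mean-square optimal estimate of $A\xi$ for the pair $(f_0,g_0)$, hence it minimizes $\Delta(\cdot;f_0,g_0)$ over the whole subspace $L_2^0(f_0+g_0)$. Since $H_D = \bigcap_{(f,g)\in D} L_2^0(f+g) \subseteq L_2^0(f_0+g_0)$ and, by hypothesis, $h^0 \in H_D$, it follows that $\Delta(h;f_0,g_0) \ge \Delta(h^0;f_0,g_0)$ for every $h \in H_D$, and moreover that $h^0$ is the minimizer of $\Delta(\cdot;f_0,g_0)$ over $H_D$ as well. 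For the right-hand inequality I would use that $(f_0,g_0)$ solves $(\ref{extrDeltaAn2})$, that is, it minimizes the convex functional $\Delta_D(f,g) = -\Delta(h^0;f,g) + \delta(f,g\,|\,D_f\times D_g)$ over all spectral densities; since the minimum is finite and attained at $(f_0,g_0) \in D$, we get $-\Delta(h^0;f_0,g_0) \le -\Delta(h^0;f,g)$, i.e. $\Delta(h^0;f,g) \le \Delta(h^0;f_0,g_0)$ for all $(f,g) \in D$. Here I rely on the explicit form $(\ref{ExtrDfgAn})$, which exhibits $\Delta(h^0;f,g)$ as a well-defined functional, linear in $(f,g)$, so that the optimization problem $(\ref{extrDeltaAn2})$ and the associated condition $0\in\partial\Delta_D(f_0,g_0)$ make sense.

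With the saddle-point inequalities in hand the conclusions are routine. Taking the supremum over $(f,g)\in D$ in the right inequality gives $\max_{(f,g)\in D}\Delta(h^0;f,g) = \Delta(h^0;f_0,g_0)$, while the left inequality together with $h^0\in H_D$ gives $\min_{h\in H_D}\Delta(h;f_0,g_0) = \Delta(h^0;f_0,g_0)$. Therefore
\[
\min_{h\in H_D}\max_{(f,g)\in D}\Delta(h;f,g) \le \max_{(f,g)\in D}\Delta(h^0;f,g) = \Delta(h^0;f_0,g_0),
\]
\[
\Delta(h^0;f_0,g_0) = \min_{h\in H_D}\Delta(h;f_0,g_0) \le \max_{(f,g)\in D}\min_{h\in H_D}\Delta(h;f,g),
\]
and since $\max_{(f,g)}\min_{h}\Delta \le \min_{h}\max_{(f,g)}\Delta$ always holds, all these quantities coincide; hence $h^0 = h(f_0,g_0) \in H_D$ is minimax-robust in the sense of the definition. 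For the least-favourability claim I would additionally note that for each $(f,g)\in D$ the optimality of $h(f,g)$ over $L_2^0(f+g)\supseteq H_D$ yields $\Delta(h(f,g);f,g) \le \Delta(h^0;f,g) \le \Delta(h^0;f_0,g_0) = \Delta(h(f_0,g_0);f_0,g_0)$; since equality is attained at $(f,g)=(f_0,g_0)$ this shows $\Delta(h(f_0,g_0);f_0,g_0) = \max_{(f,g)\in D}\Delta(h(f,g);f,g)$, so $f_0$ and $g_0$ are least favourable in $D$.

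I expect the only delicate point to be the interplay between minimizing the spectral characteristic over the small common subspace $H_D$ and over the density-dependent subspaces $L_2^0(f+g)$: it is precisely the hypothesis $h(f_0,g_0)\in H_D$ that makes $h^0$ simultaneously optimal for $(f_0,g_0)$ among all admissible estimates and admissible for every density in $D$, so that the two halves of the saddle point fit together. Everything else is a direct unwinding of the definitions of least favourable density and minimax-robust spectral characteristic, using the formula $(\ref{ExtrDfgAn})$ and the classical projection theorem.
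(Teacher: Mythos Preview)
Your proof is correct and follows the same saddle-point route that the paper sketches in the paragraph immediately preceding the lemma: the paper states (without formal proof) that the saddle-point inequalities $\Delta(h;f_0,g_0)\geq\Delta(h^0;f_0,g_0)\geq\Delta(h^0;f,g)$ hold when $h^0=h(f_0,g_0)\in H_D$ and $(f_0,g_0)$ solves the constrained optimization problem, and then records the lemma as the conclusion. You have simply supplied the details the paper leaves implicit, in particular the observation that $h(f_0,g_0)\in H_D\subseteq L_2^0(f_0+g_0)$ is what makes the projection-theorem minimizer coincide with the $H_D$-minimizer.
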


\subsection{Least favourable spectral densities in the class $D_f^0 \times D_g^0$}

Consider the problem of the optimal estimation of the functional  $A\xi=\sum_{j=0}^{\infty}a(j)\xi(j)$ which depends on the unknown values of a stationary stochastic sequence $\xi(j)$
based on observations of the sequence $\xi(j)+\eta(j)$ at points of time $j=-1,-2,\dots$
in the case where the spectral densities  $f(\lambda)$, $g(\lambda)$ are from the class $D=D_f^0\times D_g^0$, where
\begin{equation*}
D_f^0 = \left\{f(\lambda)\left|\frac{1}{2\pi}\int\limits_{-\pi}^{\pi} f(\lambda)d\lambda\leq P_1\right.\right\},
\end{equation*}
\begin{equation*}
 D_g^0 = \left\{g(\lambda)\left|\frac{1}{2\pi}\int\limits_{-\pi}^{\pi} g(\lambda)d\lambda\leq P_2\right.\right\}.
\end{equation*}

Let the densities $f_0(\lambda) \in D_f^0$, $g_0(\lambda) \in D_g^0$ and the functions $h_f(f_0,g_0)$, $h_g(f_0,g_0)$, determined by the relations
\begin{equation} \label{extrAnhf}
h_f(f_0,g_0)=\frac{\left|A(e^{i\lambda})g_0(\lambda)+
\sum\limits_{j=0}^{\infty}((\bold{B}^0)^{-1}\bold{R}^0\bold{a})_je^{ij\lambda} \right|^2}{(f_0(\lambda)+g_0(\lambda))^2},
\end{equation}
\begin{equation} \label{extrAnhg}
h_g(g_0,g_0)=\frac{\left|A(e^{i\lambda})f_0(\lambda)-
\sum\limits_{j=0}^{\infty}((\bold{B}^0)^{-1}\bold{R}^0\bold{a})_je^{ij\lambda}\right|^2}{(f_0(\lambda)+g_0(\lambda))^2},
\end{equation}
be bounded. In this case the functional
$$
\Delta(h(f_0,g_0);f,g)=\frac{1}{2\pi}\int\limits_{-\pi}^{\pi}h_f(f_0,g_0)f(\lambda)d\lambda + \frac{1}{2\pi}\int\limits_{-\pi}^{\pi}h_g(f_0,g_0)g(\lambda)d\lambda
$$
is linear and continuous on the space $L_1 \times L_1$ and we can apply the method of Lagrange multipliers to find solution to the optimization problem  (\ref{extrDeltaAn2}).
We get the following relations that determine least favourable spectral densities  $f^0\in D^0_f$, $g^0\in D^0_g$
\begin{equation*} \begin{split}
&-\frac{1}{2\pi}\int\limits_{-\pi}^{\pi}h_f(f_0,g_0)\rho(f(\lambda))d\lambda - \frac{1}{2\pi}\int\limits_{-\pi}^{\pi}h_g(f_0,g_0)\rho(g(\lambda))d\lambda\\
&+\alpha_1\frac{1}{2\pi}\int\limits_{-\pi}^{\pi}\rho(f(\lambda))d\lambda+\alpha_2\frac{1}{2\pi}\int\limits_{-\pi}^{\pi}\rho(g(\lambda))d\lambda=0,
\end{split}\end{equation*}
where $\rho(f(\lambda))$ and $\rho(g(\lambda))$ are variations of the functions $f(\lambda)$ and $g(\lambda)$, the constants $\alpha_1\geq 0$ and $\alpha_2\geq 0$.
From this relation we get that the least favourable spectral densities
 $f_0(\lambda) \in D_f^0$, $g_0(\lambda) \in D_g^0$ satisfy equations
\begin{equation} \label{extrAnhf1}
\left|A(e^{i\lambda})g_0(\lambda)+
\sum\limits_{j=0}^{\infty}((\bold{B}^0)^{-1}\bold{R}^0\bold{a})_je^{ij\lambda}\right|=\alpha_1(f_0(\lambda)+g_0(\lambda)),
\end{equation}
\begin{equation} \label{extrAnhg1}
\left|A(e^{i\lambda})f_0(\lambda)-
\sum\limits_{j=0}^{\infty}((\bold{B}^0)^{-1}\bold{R}^0\bold{a})_je^{ij\lambda}\right|=\alpha_2(f_0(\lambda)+g_0(\lambda)).
\end{equation}
Note, that  $\alpha_1\neq 0$ in the case, where
\[\frac{1}{2\pi}\int\limits_{-\pi}^{\pi} f_{0}(\lambda)d\lambda=P_1,\]
and $\alpha_2\neq 0$ in the case, where
\[\frac{1}{2\pi}\int\limits_{-\pi}^{\pi} g_{0}(\lambda)d\lambda=P_2.
\]

Summing up our reasoning we come to conclusion that the following theorem holds true.
\begin{thm}
Let the spectral densities $f_0(\lambda) \in D_f^0$ and $g_0(\lambda) \in D_g^0$ satisfy the minimality condition (\ref{extrminimalAn}) and let the functions $h_f(f_0,g_0)$ and $h_g(f_0,g_0)$, determined by the formulas (\ref{extrAnhf}), (\ref{extrAnhg}), be bounded. The functions  $f_0(\lambda)$ and $g_0(\lambda)$, which give solution to the system of equations (\ref{extrAnhf1}), (\ref{extrAnhg1})
are the least favourable spectral densities in the class $D=D_f^0\times D_g^0$ for the optimal estimation of the functional  $A\xi=\sum_{j=0}^{\infty}a(j)\xi(j)$ which depends on the unknown values of a stationary stochastic sequence $\xi(j)$
based on observations of the sequence $\xi(j)+\eta(j)$ at points of time $j=-1,-2,\dots$, if they determine a solution to the optimization problem (\ref{extrAnDR}).
The function $h^0(e^{i\lambda})$, determined by the formula (\ref{extrspectharAAn}), is minimax-robust spectral characteristic of the optimal linear estimate of the functional $A\xi$.
\end{thm}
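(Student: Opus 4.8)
The plan is to exhibit the triple $(h^0, f_0, g_0)$ as a saddle point of the error functional $\Delta(h;f,g)$ on $H_D\times D$, where $D=D_f^0\times D_g^0$, and then invoke the two lemmas established above. Concretely, I must verify the two saddle inequalities $\Delta(h;f_0,g_0)\ge\Delta(h^0;f_0,g_0)\ge\Delta(h^0;f,g)$ for all $h\in H_D$ and all $(f,g)\in D$, together with the admissibility $h^0\in H_D$.

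First I would dispose of the left inequality. Since $(f_0,g_0)\in D$, we have $H_D=\bigcap_{(f,g)\in D}L_2^0(f+g)\subseteq L_2^0(f_0+g_0)$; and by the classical Hilbert space projection theorem stated above (applied with the fixed densities $f_0,g_0$), the spectral characteristic $h(f_0,g_0)$ given by (\ref{extrspectharAAn}) is precisely the minimizer of $\Delta(h;f_0,g_0)$ over $h\in L_2^0(f_0+g_0)$. Hence $\Delta(h;f_0,g_0)\ge\Delta(h^0;f_0,g_0)$ for every $h\in H_D$, once it is known that $h^0=h(f_0,g_0)$ belongs to $H_D$: its Fourier coefficients of nonnegative index vanish by the very conditions that define $h(f,g)$, and the hypothesised boundedness of $h_f(f_0,g_0)$ and $h_g(f_0,g_0)$ together with the minimality condition (\ref{extrminimalAn}) — which forces $f_0+g_0>0$ almost everywhere — keeps $h^0$ square-integrable against every admissible weight $f+g$.

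Next I would establish the right inequality, $\Delta(h^0;f_0,g_0)\ge\Delta(h^0;f,g)$ for all $(f,g)\in D$. By formula (\ref{ExtrDfgAn}) the quantity $\Delta(h(f_0,g_0);f,g)=\frac1{2\pi}\int_{-\pi}^{\pi}h_f(f_0,g_0)f(\lambda)\,d\lambda+\frac1{2\pi}\int_{-\pi}^{\pi}h_g(f_0,g_0)g(\lambda)\,d\lambda$ is a \emph{linear} functional of $(f,g)$. The content of the defining equations (\ref{extrAnhf1}), (\ref{extrAnhg1}) is exactly that $h_f(f_0,g_0)\equiv\alpha_1^2$ and $h_g(f_0,g_0)\equiv\alpha_2^2$ almost everywhere (the quotients are genuine constants because $f_0+g_0>0$ a.e.). Consequently $\Delta(h(f_0,g_0);f,g)=\alpha_1^2\cdot\frac1{2\pi}\int_{-\pi}^{\pi}f(\lambda)\,d\lambda+\alpha_2^2\cdot\frac1{2\pi}\int_{-\pi}^{\pi}g(\lambda)\,d\lambda\le\alpha_1^2P_1+\alpha_2^2P_2$, and equality is attained at $(f_0,g_0)$ by the complementary-slackness remark after (\ref{extrAnhg1}): whenever $\alpha_i>0$ the corresponding moment of $f_0$ (resp.\ $g_0$) equals $P_1$ (resp.\ $P_2$), and whenever $\alpha_i=0$ the term drops out. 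Thus $(f_0,g_0)$ solves the constrained extremal problem (\ref{extrDeltaAn1}); the Lagrange-multiplier derivation preceding the statement is justified here precisely because the objective $\Delta(h(f_0,g_0);\cdot)$ is linear, hence concave, on the convex set $D$, so the stationarity conditions (\ref{extrAnhf1}), (\ref{extrAnhg1}) are sufficient and not merely necessary.

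With both saddle inequalities and $h^0\in H_D$ in hand, the second Lemma above applies directly and yields that $f_0(\lambda),g_0(\lambda)$ are least favourable in $D_f^0\times D_g^0$ and that $h^0=h(f_0,g_0)$, computed by (\ref{extrspectharAAn}), is the minimax-robust spectral characteristic; the hypothesis that $f_0,g_0$ determine a solution of the operator problem (\ref{extrAnDR}) is simply the reformulation of the first Lemma's condition and closes the argument. I expect the genuine obstacle to be the admissibility step $h^0\in H_D$ — i.e.\ that the single characteristic $h^0$ remains a legitimate estimate for \emph{every} density in the class — since this is where the boundedness assumptions on $h_f(f_0,g_0),h_g(f_0,g_0)$ and the minimality condition (\ref{extrminimalAn}) are really used; the remainder is the linear-programming-type comparison above plus a direct appeal to the classical formulas.
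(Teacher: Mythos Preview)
Your argument is correct and follows essentially the same route as the paper. The paper's proof is the discussion immediately preceding the theorem: it observes that boundedness of $h_f(f_0,g_0),h_g(f_0,g_0)$ makes $\Delta(h(f_0,g_0);f,g)$ linear and continuous on $L_1\times L_1$, applies the Lagrange-multiplier method to the constrained problem (\ref{extrDeltaAn1}) to obtain the stationarity equations (\ref{extrAnhf1}), (\ref{extrAnhg1}) together with the complementary-slackness remarks on $\alpha_1,\alpha_2$, and then invokes the two lemmas; your explicit verification that $h_f\equiv\alpha_1^2$, $h_g\equiv\alpha_2^2$ turns the right saddle inequality into a direct linear comparison is exactly the content of that Lagrange step, and your treatment of $h^0\in H_D$ (via boundedness of $h_g=|h^0|^2$) makes explicit what the paper leaves implicit.
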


\begin{thm}
Let the spectral density $f(\lambda)$ be known and fixed and the spectral density $g_0(\lambda) \in D_g^0$. Let the functions $f(\lambda)$ and $g_0(\lambda)$ be such that the function $(f(\lambda)+g_0(\lambda))^{-1}$ is integrable and let the function $h_g(f,g_0)$, determined by the formula (\ref{extrAnhg}), be bounded.
The spectral density $g_0(\lambda)$ is the least favourable spectral densities in the class $D_g^0$
for the optimal estimation of the functional  $A\xi=\sum_{j=0}^{\infty}a(j)\xi(j)$ which depends on the unknown values of a stationary stochastic sequence $\xi(j)$
based on observations of the sequence $\xi(j)+\eta(j)$ at points of time $j=-1,-2,\dots$,
if it is of the form
\begin{equation*}
g_0(\lambda)= \max\left\{0, \alpha_2^{-1}\left|A(e^{i\lambda})f(\lambda)-
\sum\limits_{j=0}^{\infty}((\bold{B}^0)^{-1}\bold{R}^0\bold{a})_je^{ij\lambda}\right|-f(\lambda)\right\}
\end{equation*}
and the functions $f(\lambda)$ and $ g_0(\lambda)$ determine a solution to the optimization problem (\ref{extrAnDR}).
The function $h^0(e^{i\lambda})$, determined by the formula (\ref{extrspectharAAn}), is minimax-robust spectral characteristic of the optimal linear estimate of the functional $A\xi$.
\end{thm}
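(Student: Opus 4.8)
\emph{Proof plan.} The idea is to run the general minimax scheme of the two lemmas of the preceding subsection in the special case where $f(\lambda)$ is held fixed so that the only free variable is $g\in D_g^0$. First I would recall that it suffices to produce a pair $(f,g_0)$ with $g_0\in D_g^0$, satisfying the minimality condition \eqref{extrminimalAn} (here the integrability of $(f+g_0)^{-1}$), such that $h(f,g_0)\in H_D$ and $(f,g_0)$ solves the optimization problem \eqref{extrDeltaAn1} --- equivalently $0\in\partial\Delta_D(f,g_0)$ for the functional $\Delta_D$ of \eqref{extrDeltaAn2}: by those lemmas, such a pair forms a saddle point of $\Delta(h;f,g)$ on $H_D\times D$, whence $g_0$ is least favourable and $h^0=h(f,g_0)$, given by \eqref{extrspectharAAn}, is the minimax-robust spectral characteristic.

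Next I would examine what is being maximized. Since $f$ is fixed, in the representation \eqref{ExtrDfgAn} of $\Delta(h(f,g_0);f,g)$ the first integral does not involve the variable $g$, while the second equals $\frac{1}{2\pi}\int_{-\pi}^{\pi}h_g(f,g_0)\,g(\lambda)\,d\lambda$ with $h_g(f,g_0)$ the bounded nonnegative function \eqref{extrAnhg}; thus, for fixed $g_0$, the map $g\mapsto\Delta(h(f,g_0);f,g)$ is linear and continuous on $L_1[-\pi,\pi]$, and \eqref{extrDeltaAn1} becomes the maximization of a linear functional over $\{g\ge0:\frac1{2\pi}\int_{-\pi}^{\pi}g\,d\lambda\le P_2\}$. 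Writing out $0\in\partial\Delta_D(f,g_0)$ by the Lagrange method --- using the normal cones of the moment ball and of the cone $\{g\ge0\}$ --- produces a multiplier $\alpha_2\ge0$ for which equation \eqref{extrAnhg1} (with $f_0$ replaced by the fixed $f$) holds on $\{g_0(\lambda)>0\}$ while $h_g(f,g_0)\le\alpha_2^2$ on $\{g_0(\lambda)=0\}$, with $\alpha_2\ne0$ exactly when $\frac1{2\pi}\int_{-\pi}^{\pi}g_0(\lambda)\,d\lambda=P_2$. Solving $\big|A(e^{i\lambda})f(\lambda)-\sum_{j=0}^{\infty}((\bold{B}^0)^{-1}\bold{R}^0\bold{a})_je^{ij\lambda}\big|=\alpha_2\,(f(\lambda)+g_0(\lambda))$ for $g_0$ on its support and imposing $g_0\ge0$ elsewhere gives precisely
\[ g_0(\lambda)=\max\Big\{0,\ \alpha_2^{-1}\big|A(e^{i\lambda})f(\lambda)-\sum_{j=0}^{\infty}((\bold{B}^0)^{-1}\bold{R}^0\bold{a})_je^{ij\lambda}\big|-f(\lambda)\Big\}. \]

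Finally I would close the argument: since the functional being maximized is linear, hence concave, in $g$, the Lagrange/KKT conditions are also sufficient, so any $g_0$ of the above form lying in $D_g^0$ is a maximizer in \eqref{extrDeltaAn1}, and --- by the first lemma of the preceding subsection --- the pair $(f,g_0)$ then determines a solution of the operator problem \eqref{extrAnDR}, which is the standing hypothesis of the theorem; the lemmas identify $g_0$ as the least favourable density and \eqref{extrspectharAAn} as the minimax-robust spectral characteristic, the inclusion $h(f,g_0)\in H_D$ being supplied by the integrability of $(f+g_0)^{-1}$. The main obstacle is the implicit, self-referential character of the characterization: $h_g(f,g_0)$ depends on $g_0$ both through the operators $\bold{B}^0,\bold{R}^0$ and through the denominator $(f+g_0)^2$, so the displayed identity is really a fixed-point equation for $g_0$; one has to check that it has a solution inside $D_g^0$, that for that solution $h_g(f,g_0)$ stays bounded and $(f+g_0)^{-1}$ stays integrable (which is where the corresponding hypotheses of the theorem enter), and that the extremal pair actually attains the maximum in \eqref{extrAnDR} --- which is exactly why that last requirement is imposed explicitly in the statement.
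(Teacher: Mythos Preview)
Your proposal is correct and follows essentially the same route as the paper: specialize the saddle-point lemmas and the Lagrange-multiplier derivation of \eqref{extrAnhf1}--\eqref{extrAnhg1} to the case where $f$ is held fixed, retain only the $g$-equation \eqref{extrAnhg1}, and resolve the nonnegativity constraint $g_0\ge0$ into the $\max\{0,\cdot\}$ formula. The paper itself presents the argument more tersely --- it states the theorem immediately after the general Lagrange computation without spelling out the KKT/normal-cone analysis or the sufficiency by linearity that you make explicit --- but the underlying method is identical.
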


\subsection{Least favourable spectral densities in the class $D_v^u \times D_{\varepsilon}$}

Consider the problem for the optimal estimation of the functional  $A\xi=\sum_{j=0}^{\infty}a(j)\xi(j)$ which depends on the unknown values of a stationary stochastic sequence $\xi(j)$
based on observations of the sequence $\xi(j)+\eta(j)$ at points of time $j=-1,-2,\dots$
in the case where the spectral densities  $f(\lambda)$, $g(\lambda)$ are from the class $D=D_v^u \times D_{\varepsilon}$, where
\[
D_v^u = \left\{f(\lambda)\left| v(\lambda)\leq f(\lambda)\leq u(\lambda),\frac{1}{2\pi}\int\limits_{-\pi}^{\pi} f(\lambda)d\lambda\leq P_1\right.\right\},
\]
\[ D_\varepsilon = \left\{g(\lambda)\left| g(\lambda)=(1-\varepsilon)g_1(\lambda)+\varepsilon w(\lambda),\frac{1}{2\pi}\int\limits_{-\pi}^{\pi} g(\lambda)d\lambda\leq P_2\right.\right\}.
\]
Here the spectral densities $v(\lambda), u(\lambda), g_1(\lambda)$ are known and fixed and the densities $v(\lambda)$ and $u(\lambda)$ are bounded.

Let the densities $f^0(\lambda) \in D_v^u $, $g^0(\lambda) \in D_{\varepsilon}$ determine bounded functions  $h_f(f_0,g_0)$, $h_g(f_0,g_0)$ with the help of formulas  (\ref{extrAnhf}), (\ref{extrAnhg}).
 Then from the condition $0 \in \partial\Delta_{D_{f,g}}(f_0,g_0)$ we derive the following equations that determine the least favourable spectral densities
\begin{equation} \label{extrAnhf2}
\left|A(e^{i\lambda})g_0(\lambda)+
\sum\limits_{j=0}^{\infty}((\bold{B}^0)^{-1}\bold{R}^0\bold{a})_je^{ij\lambda}\right|=(f_0(\lambda)+g_0(\lambda))(\gamma_1(\lambda)+\gamma_2(\lambda)+\alpha_1),
\end{equation}
\begin{equation} \label{extrAnhg2}
\left|A(e^{i\lambda})f_0(\lambda)-
\sum\limits_{j=0}^{\infty}((\bold{B}^0)^{-1}\bold{R}^0\bold{a})_je^{ij\lambda}\right|=(f_0(\lambda)+g_0(\lambda))(\varphi(\lambda)+\alpha_2),
\end{equation}
where $\gamma_1\leq 0$ and $\gamma_1= 0$ in the case $f_0(\lambda)\geq v(\lambda)$; $\gamma_2\geq 0$ and $\gamma_2= 0$ in the case $f_0(\lambda)\leq u(\lambda)$;
 $\varphi (\lambda)\leq 0$ and $\varphi(\lambda)= 0$ in the case $g_0(\lambda)\geq (1-\varepsilon) g_1(\lambda)$.

The following theorems hold true.

\begin{thm}
Let the spectral densities $f_0(\lambda) \in D_v^u $ and $g_0(\lambda) \in D_{\varepsilon}$ satisfy the minimality condition (\ref{extrminimalAn}) and let the functions $h_f(f_0,g_0)$ and $h_g(f_0,g_0)$, determined by the formulas (\ref{extrAnhf}), (\ref{extrAnhg}), be bounded.
The functions  $f_0(\lambda), g_0(\lambda)$, which give solution to the system of equations (\ref{extrAnhf2}), (\ref{extrAnhg2})
are the least favourable spectral densities in the class $D_v^u \times D_{\varepsilon}$, if they determine a solution to the optimization problem (\ref{extrAnDR}).
The function $h^0(e^{i\lambda})$, determined by the formula (\ref{extrspectharAAn}), is minimax-robust spectral characteristic of the optimal linear estimate of the functional $A\xi$.
 \end{thm}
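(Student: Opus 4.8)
The plan is to run the general duality scheme of the preceding subsections for the concrete class $D=D_v^u\times D_\varepsilon$. By the lemmas above, a pair $(f_0,g_0)$ is least favourable precisely when the operators $\bold{B}^0,\bold{R}^0,\bold{Q}^0$ built from the Fourier coefficients of $(f_0+g_0)^{-1}$, $f_0(f_0+g_0)^{-1}$ and $f_0 g_0(f_0+g_0)^{-1}$ realize the maximum in \eqref{extrAnDR}, and, via the equivalence \eqref{extrDeltaAn1}--\eqref{extrDeltaAn2}, this happens iff $(f_0,g_0)$ solves the unconstrained convex problem $\Delta_D(f,g)=-\Delta(h(f_0,g_0);f,g)+\delta(f,g\,|\,D_v^u\times D_\varepsilon)\to\inf$, i.e. iff $0\in\partial\Delta_D(f_0,g_0)$. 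So the task reduces to computing this subdifferential for the present $D$ and checking that the inclusion forces exactly \eqref{extrAnhf2}, \eqref{extrAnhg2}.

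First I would use the standing hypotheses — that $f_0,g_0$ obey the minimality condition \eqref{extrminimalAn} and that the functions $h_f(f_0,g_0)$, $h_g(f_0,g_0)$ of \eqref{extrAnhf}, \eqref{extrAnhg} are bounded — to see from \eqref{ExtrDfgAn} that $(f,g)\mapsto\Delta(h(f_0,g_0);f,g)$ is a bounded linear functional on $L_1\times L_1$ with gradient the pair $(h_f(f_0,g_0),h_g(f_0,g_0))$. Hence $\partial\Delta_D(f_0,g_0)=-(h_f(f_0,g_0),h_g(f_0,g_0))+\partial\delta(f_0\,|\,D_v^u)\times\partial\delta(g_0\,|\,D_\varepsilon)$, and it remains to identify the two normal cones. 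The set $D_v^u$ is the intersection of the pointwise strip $v(\lambda)\le f(\lambda)\le u(\lambda)$ with the moment half-space $\frac1{2\pi}\int_{-\pi}^{\pi}f\le P_1$, so its normal cone at $f_0$ consists of functions $\gamma_1(\lambda)+\gamma_2(\lambda)+\alpha_1$ with $\alpha_1\ge 0$ (and $\alpha_1=0$ unless the moment constraint is active), $\gamma_1(\lambda)\le 0$ vanishing a.e. where $f_0(\lambda)>v(\lambda)$, and $\gamma_2(\lambda)\ge 0$ vanishing a.e. where $f_0(\lambda)<u(\lambda)$. The set $D_\varepsilon$ is, after eliminating $w\ge 0$, the pointwise half-line $g(\lambda)\ge(1-\varepsilon)g_1(\lambda)$ cut by $\frac1{2\pi}\int_{-\pi}^{\pi}g\le P_2$, so its normal cone at $g_0$ is the set of $\varphi(\lambda)+\alpha_2$ with $\alpha_2\ge 0$ and $\varphi(\lambda)\le 0$ vanishing a.e. where $g_0(\lambda)>(1-\varepsilon)g_1(\lambda)$.

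Substituting these into $0\in\partial\Delta_D(f_0,g_0)$ yields $h_f(f_0,g_0)=\gamma_1(\lambda)+\gamma_2(\lambda)+\alpha_1$ and $h_g(f_0,g_0)=\varphi(\lambda)+\alpha_2$; taking square roots in \eqref{extrAnhf}, \eqref{extrAnhg} and clearing the common factor $(f_0(\lambda)+g_0(\lambda))$ reproduces exactly \eqref{extrAnhf2}, \eqref{extrAnhg2} with the stated sign conventions on $\gamma_1,\gamma_2,\varphi$, the complementary-slackness remarks on $\alpha_1,\alpha_2$ following from the two moment inequalities. Finally, once it is also assumed that this $(f_0,g_0)$ attains the maximum in \eqref{extrAnDR} and that $h(f_0,g_0)\in H_D$, I would invoke the saddle-point argument already set up: $(f_0,g_0)$ solves \eqref{extrDeltaAn1}, so $\sup_{(f,g)}\Delta(h(f_0,g_0);f,g)=\Delta(h(f_0,g_0);f_0,g_0)$, the inequalities $\Delta(h;f_0,g_0)\ge\Delta(h^0;f_0,g_0)\ge\Delta(h^0;f,g)$ hold for all admissible $h,f,g$, and hence $f_0,g_0$ are least favourable in $D_v^u\times D_\varepsilon$ and $h^0=h(f_0,g_0)$ given by \eqref{extrspectharAAn} is the minimax-robust spectral characteristic.

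The hard part will be the rigorous identification of $\partial\delta(\,\cdot\,|\,D_v^u)$ and $\partial\delta(\,\cdot\,|\,D_\varepsilon)$ — that is, of the normal cones to a strip and to a contamination set, each intersected with a moment half-space, together with the precise complementary-slackness behaviour of $\gamma_1,\gamma_2,\varphi,\alpha_1,\alpha_2$ — and the verification that the candidate solutions of \eqref{extrAnhf2}, \eqref{extrAnhg2} genuinely lie in $D_v^u$ and $D_\varepsilon$, satisfy \eqref{extrminimalAn}, and yield $h(f_0,g_0)\in H_D$, so that $\bold{B}^0,\bold{R}^0,\bold{Q}^0$ and the estimate \eqref{extrspectharAAn} are well defined; once those points are secured, matching the Lagrangian stationarity conditions to \eqref{extrAnhf2}, \eqref{extrAnhg2} is routine.
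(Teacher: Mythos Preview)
Your proposal is correct and follows essentially the same route as the paper: the paper simply states that equations \eqref{extrAnhf2}, \eqref{extrAnhg2} are obtained ``from the condition $0\in\partial\Delta_{D_{f,g}}(f_0,g_0)$'' without further elaboration, and you fill in precisely that computation via the normal cones of the strip and the contamination set intersected with moment half-spaces. One cosmetic point: your ``taking square roots'' step silently relabels $\sqrt{\gamma_1+\gamma_2+\alpha_1}$ and $\sqrt{\varphi+\alpha_2}$ as new multipliers of the same form, which is exactly the convention the paper uses in \eqref{extrAnhf2}, \eqref{extrAnhg2}; just make that renaming explicit so the sign and complementary-slackness statements carry over cleanly.
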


\begin{thm}
Let the spectral density $f(\lambda)$ be known and fixed and let the spectral density $g_0(\lambda) \in D_{\varepsilon}$. Let the function $f(\lambda)+g_0(\lambda)$  satisfy the minimality condition (\ref{extrminimalAn}), and let the function $h_g(f,g_0)$, determined by the formula (\ref{extrAnhg}), be bounded.
The spectral density $g_0(\lambda)$ is the least favourable spectral densities in the class $D_{\varepsilon}$ for the optimal linear estimate of the functional $A\xi$, based on observations of the sequence $\xi(j)+\eta(j)$ at points of time $j=-1,-2,\dots$,
if it is of the form
\begin{equation*}
g_0(\lambda)= \max\left\{(1-\varepsilon)g_1(\lambda), \alpha_2\left|A(e^{i\lambda})f(\lambda)-\sum\limits_{j=0}^{\infty}((\bold{B}^0)^{-1}\bold{R}^0\bold{a})_je^{ij\lambda}
\right|-f(\lambda)\right\}
\end{equation*}
and the functions $f(\lambda), g_0(\lambda)$ determine a solution to the optimization problem (\ref{extrAnDR}).
The function $h^0(e^{i\lambda})$, determined by the formula (\ref{extrspectharAAn}), is minimax-robust spectral characteristic of the optimal linear estimate of the functional $A\xi$.
\end{thm}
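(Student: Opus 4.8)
\textit{Proof plan.} The plan is to reduce the minimax problem, in which only $g$ varies, to a single‑variable convex optimization and to read off $g_0$ from its stationarity condition, and then to verify the saddle‑point inequalities. Since $f$ is held fixed, formula (\ref{ExtrDfgAn}) shows that
\[
\Delta\bigl(h(f,g_0);f,g\bigr)=\frac{1}{2\pi}\int_{-\pi}^{\pi}h_f(f,g_0)f(\lambda)\,d\lambda+\frac{1}{2\pi}\int_{-\pi}^{\pi}h_g(f,g_0)g(\lambda)\,d\lambda ,
\]
where the first summand does not depend on $g$ and the second is, by the assumed boundedness of $h_g(f,g_0)$ from (\ref{extrAnhg}), a bounded linear functional of $g$. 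I would therefore study the unconstrained convex problem
\[
\Delta_{D_\varepsilon}(g)=-\frac{1}{2\pi}\int_{-\pi}^{\pi}h_g(f,g_0)g(\lambda)\,d\lambda+\delta\bigl(g\mid D_\varepsilon\bigr)\to\inf ,
\]
and characterize its minimizer by $0\in\partial\Delta_{D_\varepsilon}(g_0)$, which is where the special structure of the problem — fixed $f$, and hence linearity of $\Delta$ in $g$ — is used.

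Next I would compute $\partial\delta(\cdot\mid D_\varepsilon)$ at $g_0$. Writing $D_\varepsilon=\{g:\ g(\lambda)\ge(1-\varepsilon)g_1(\lambda),\ \tfrac{1}{2\pi}\int_{-\pi}^{\pi}g(\lambda)\,d\lambda\le P_2\}$, its subdifferential at $g_0$ consists of functions $\varphi(\lambda)+\alpha_2$, with a constant $\alpha_2\ge0$ dual to the integral constraint (vanishing unless $\tfrac{1}{2\pi}\int g_0=P_2$) and $\varphi(\lambda)\le0$ dual to the pointwise lower bound, with $\varphi(\lambda)=0$ on $\{g_0>(1-\varepsilon)g_1\}$. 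The stationarity condition then reads $\tfrac{1}{2\pi}h_g(f,g_0)=\varphi(\lambda)+\alpha_2$, i.e.\ equation (\ref{extrAnhg2}) specialized to $\gamma_1\equiv\gamma_2\equiv0$ (there is no constraint imposed on $f$):
\[
\Bigl|A(e^{i\lambda})f(\lambda)-\sum_{j=0}^{\infty}\bigl((\bold{B}^0)^{-1}\bold{R}^0\bold{a}\bigr)_je^{ij\lambda}\Bigr|=\bigl(f(\lambda)+g_0(\lambda)\bigr)\bigl(\varphi(\lambda)+\alpha_2\bigr).
\]
On $\{g_0>(1-\varepsilon)g_1\}$ one has $\varphi=0$, hence $f+g_0=\alpha_2\,\bigl|A(e^{i\lambda})f(\lambda)-\sum_j(\cdots)_je^{ij\lambda}\bigr|$ after absorbing the reciprocal into the renamed multiplier; on the complement $g_0=(1-\varepsilon)g_1$, and $\varphi\le0$ forces $\alpha_2\,|\cdots|-f\le(1-\varepsilon)g_1$. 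These two cases amalgamate into the claimed formula
\[
g_0(\lambda)=\max\Bigl\{(1-\varepsilon)g_1(\lambda),\ \alpha_2\Bigl|A(e^{i\lambda})f(\lambda)-\sum_{j=0}^{\infty}\bigl((\bold{B}^0)^{-1}\bold{R}^0\bold{a}\bigr)_je^{ij\lambda}\Bigr|-f(\lambda)\Bigr\},
\]
with $\alpha_2$ pinned down by $\tfrac{1}{2\pi}\int_{-\pi}^{\pi}g_0(\lambda)\,d\lambda=P_2$.

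Finally I would confirm that $(h^0,g_0)$, $h^0=h(f,g_0)$, is a saddle point of $\Delta(h;f,g)$ on $H_D\times D_\varepsilon$. The inequality $\Delta(h;f,g_0)\ge\Delta(h^0;f,g_0)$ for every $h\in H_D$ follows from the first theorem of this section, which identifies $h(f,g_0)$, given by (\ref{extrspectharAAn}) for the pair $(f,g_0)$, as the mean‑square optimal linear estimate for that pair; here one also checks $h(f,g_0)\in H_D=\bigcap_{g\in D_\varepsilon}L_2^0(f+g)$, using $g\ge(1-\varepsilon)g_1$ and the minimality of $f+g_0$. The reverse inequality $\Delta(h^0;f,g_0)\ge\Delta(h^0;f,g)$ for every $g\in D_\varepsilon$ is exactly the assertion that $g_0$ maximizes the affine functional $g\mapsto\Delta(h(f,g_0);f,g)$ over the convex set $D_\varepsilon$, which is guaranteed by the stationarity condition above together with the hypothesis that $(f,g_0)$ solves (\ref{extrAnDR}). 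This simultaneously shows that $g_0$ is least favourable in $D_\varepsilon$ and that $h^0$ is the minimax‑robust spectral characteristic.

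The step I expect to be the main obstacle is the subdifferential bookkeeping for $D_\varepsilon$ together with the admissibility of the candidate: one must argue that the multiplier $\varphi$ may be chosen measurable and supported on $\{g_0=(1-\varepsilon)g_1\}$, that $\alpha_2$ can be selected so that the max‑formula actually yields a density lying in $D_\varepsilon$ with the prescribed total mass, and — most delicately — that the associated $h(f,g_0)$ still belongs to $H_D$. The boundedness hypothesis on $h_g(f,g_0)$ is precisely what makes it legitimate to treat $\Delta(h(f,g_0);f,\cdot)$ as a bounded linear functional and to differentiate it in the first place.
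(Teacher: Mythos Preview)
Your proposal is correct and follows essentially the same approach as the paper: reduce to the unconstrained problem via $\delta(g\mid D_\varepsilon)$, apply the subdifferential condition $0\in\partial\Delta_{D_\varepsilon}(g_0)$ to obtain the analogue of (\ref{extrAnhg2}) with complementary slackness on the lower bound $(1-\varepsilon)g_1$, and read off the $\max$-formula. The paper presents this derivation more tersely (stating the stationarity equations and then the theorem), while you spell out the multiplier structure and the saddle-point verification more explicitly; the substance is the same.
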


\subsection{Least favourable spectral densities in the class $D_{2\varepsilon_1}\times D_{1\varepsilon_2}$}

Consider the problem for the optimal estimation of the functional  $A\xi=\sum_{j=0}^{\infty}a(j)\xi(j)$ which depends on the unknown values of a stationary stochastic sequence $\xi(j)$
based on observations of the sequence $\xi(j)+\eta(j)$ at points of time $j=-1,-2,\dots$
in the case where the spectral densities  $f(\lambda)$, $g(\lambda)$ are from the class $D_{2\varepsilon_1}\times D_{1\varepsilon_2}$, which describe the model of
 "$\varepsilon$-neighbourhood" of spectral densities in the space $L_2 \times L_1$. Let
\begin{equation*}
D_{2\varepsilon_1}=\left\{f(\lambda)\left|\frac{1}{2\pi}\int \limits_{-\pi}^{\pi}\left|f(\lambda)-f_1(\lambda)\right|^2d\lambda\leq\varepsilon_1\right.\right\}
\end{equation*}
be the ``$\varepsilon$-neighbourhood'' in the space  $L_2$ of a given bounded spectral density $f_1(\lambda)$ and let
\begin{equation*}
D_{1\varepsilon_2}=\left\{g(\lambda)\left|\frac{1}{2\pi}\int \limits_{-\pi}^{\pi}\left|g(\lambda)-g_1(\lambda)\right|d\lambda\leq\varepsilon_2\right.\right\}
\end{equation*}
be the ``$\varepsilon$-neighbourhood'' in the space  $L_1$ of a given bounded spectral density  $g_1(\lambda)$.

Let the spectral densities  $f_0(\lambda) \in D_{2\varepsilon_1}$, $g_0(\lambda) \in D_{1\varepsilon_2}$ determine the bounded functions  $h_f(f_0,g_0)$, $h_g(f_0,g_0)$ with the help of formulas  (\ref{extrAnhf}), (\ref{extrAnhg}).
 Then from the condition $0 \in \partial\Delta_{D_{f,g}}(f_0,g_0)$ for $D=D_{2\varepsilon_1}\times D_{1\varepsilon_2}$ we derive the following equations that determine the least favourable spectral densities
\begin{equation} \label{extrAnhf3}
\left|A(e^{i\lambda})g_0(\lambda)+\sum\limits_{j=0}^{\infty}((\bold{B}^0)^{-1}\bold{R}^0\bold{a})_je^{ij\lambda}
\right|^2=(f_0(\lambda)+g_0(\lambda))^2(f_0(\lambda)-f_1(\lambda))\alpha_1,
\end{equation}
\begin{equation} \label{extrAnhg3}
\left|A(e^{i\lambda})f_0(\lambda)-\sum\limits_{j=0}^{\infty}((\bold{B}^0)^{-1}\bold{R}^0\bold{a})_je^{ij\lambda}
\right|^2=(f_0(\lambda)+g_0(\lambda))^2\Psi(\lambda)\alpha_2,
\end{equation}
where $\left|\Psi(\lambda)\right|\leq 1$ and $\Psi(\lambda)=sign (g_0(\lambda)-g_1(\lambda))$, in the case $g_0(\lambda)\neq g_1(\lambda)$, $\alpha_1, \alpha_2$ are constants.

Equations (\ref{extrAnhf3}), (\ref{extrAnhg3}) with the optimization problem (\ref{extrAnDR}) and the normalizing conditions
\begin{equation} \label{extrAn1}
\frac{1}{2\pi}\int \limits_{-\pi}^{\pi}\left|f(\lambda)-f_1(\lambda)\right|^2d\lambda=\varepsilon_1
\end{equation}
\begin{equation} \label{extrAn2}
\frac{1}{2\pi}\int \limits_{-\pi}^{\pi}\left|g(\lambda)-g_1(\lambda)\right|d\lambda=\varepsilon_2
\end{equation}
determine the least favourable spectral densities in the class $D=D_{2\varepsilon_1}\times D_{1\varepsilon_2}$.

The following theorems hold true.

\begin{thm}
Let the spectral densities $f_0(\lambda) \in D_{2\varepsilon_1}$, $g_0(\lambda) \in D_{1\varepsilon_2}$ satisfy the minimality condition (\ref{extrminimalAn}) and let the functions $h_f(f_0,g_0)$ and $h_g(f_0,g_0)$, determined by the formulas (\ref{extrAnhf}), (\ref{extrAnhg}), be bounded.
The spectral densities  $f_0(\lambda), g_0(\lambda)$, which give solution to the system of equations (\ref{extrAnhf3})--(\ref{extrAn2})
are the least favourable spectral densities in the class $D_{2\varepsilon_1}\times D_{1\varepsilon_2}$, if they determine a solution to the optimization problem (\ref{extrAnDR}).
The function $h^0(e^{i\lambda})$, determined by the formula (\ref{extrspectharAAn}), is minimax-robust spectral characteristic of the optimal linear estimate of the functional $A\xi$.
\end{thm}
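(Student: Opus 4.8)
\textit{Proof.} The plan is to exhibit the pair $(f_0,g_0)$ together with $h^0=h(f_0,g_0)$ as a saddle point of $\Delta(h;f,g)$ on $H_D\times D$, where $D=D_{2\varepsilon_1}\times D_{1\varepsilon_2}$, and to recover the system \eqref{extrAnhf3}--\eqref{extrAn2} from the optimality condition $0\in\partial\Delta_D(f_0,g_0)$ of the preceding lemma. One of the two saddle inequalities is immediate: since $(f_0,g_0)$ satisfies the minimality condition \eqref{extrminimalAn}, the Hilbert space projection theorem shows that $h^0$, given by \eqref{extrspectharAAn}, minimizes $\Delta(\cdot;f_0,g_0)$ over the whole subspace $L_2^0(f_0+g_0)$, which contains $H_D=\bigcap_{(f,g)\in D}L_2^0(f+g)$; hence $\Delta(h;f_0,g_0)\ge\Delta(h^0;f_0,g_0)$ for every $h\in H_D$. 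Everything therefore reduces to the reverse inequality $\Delta(h^0;f_0,g_0)\ge\Delta(h^0;f,g)$ for all $(f,g)\in D$, i.e. to showing that $(f_0,g_0)$ maximizes $(f,g)\mapsto\Delta(h(f_0,g_0);f,g)$ over $D$.

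For this I would use the representation \eqref{ExtrDfgAn}, which writes $\Delta(h(f_0,g_0);f,g)=\frac{1}{2\pi}\int_{-\pi}^{\pi}h_f(f_0,g_0)f(\lambda)\,d\lambda+\frac{1}{2\pi}\int_{-\pi}^{\pi}h_g(f_0,g_0)g(\lambda)\,d\lambda$ with $h_f,h_g$ as in \eqref{extrAnhf}, \eqref{extrAnhg}. By hypothesis $h_f(f_0,g_0)$ and $h_g(f_0,g_0)$ are bounded, so this is a bounded linear functional of $(f,g)$; moreover every density in $D_{2\varepsilon_1}$ (an $L_2$-ball about $f_1$) and in $D_{1\varepsilon_2}$ (an $L_1$-ball about $g_1$) lies in $L_1$ with norm controlled by $f_1,g_1,\varepsilon_1,\varepsilon_2$, so the functional is continuous on a space containing $D$. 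Consequently the constrained problem $\sup_{(f,g)\in D}\Delta(h(f_0,g_0);f,g)$ is equivalent to the unconstrained problem \eqref{extrDeltaAn2} for $\Delta_D(f,g)=-\Delta(h(f_0,g_0);f,g)+\delta(f,g\,|\,D)$, whose solutions are exactly the points at which $0\in\partial\Delta_D(f_0,g_0)$. Since the linear part supplies the constant subgradient $-(h_f(f_0,g_0),h_g(f_0,g_0))$ and $\delta(\cdot\,|\,D)$ splits as $\delta(\cdot\,|\,D_{2\varepsilon_1})+\delta(\cdot\,|\,D_{1\varepsilon_2})$, the inclusion reads $h_f(f_0,g_0)\in\partial\delta(f_0\,|\,D_{2\varepsilon_1})$ and $h_g(f_0,g_0)\in\partial\delta(g_0\,|\,D_{1\varepsilon_2})$.

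The computational core is then the evaluation of these two subdifferentials. At a boundary density $f_0$ of the $L_2$-ball $D_{2\varepsilon_1}$ (where $\frac{1}{2\pi}\int_{-\pi}^{\pi}|f_0-f_1|^2\,d\lambda=\varepsilon_1$) the normal cone is $\{\alpha_1(f_0-f_1):\alpha_1\ge0\}$, the direction of the gradient $2(f_0-f_1)$ of the constraint function. For the $L_1$-ball $D_{1\varepsilon_2}$ the subdifferential of $g\mapsto\frac{1}{2\pi}\int_{-\pi}^{\pi}|g-g_1|\,d\lambda$ at $g_0$ consists of the functions $\Psi$ with $|\Psi(\lambda)|\le1$ and $\Psi(\lambda)=\operatorname{sign}(g_0(\lambda)-g_1(\lambda))$ a.e. on $\{g_0\ne g_1\}$, so $\partial\delta(g_0\,|\,D_{1\varepsilon_2})=\{\alpha_2\Psi:\alpha_2\ge0\}$ for $\Psi$ of this form. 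Substituting and clearing the denominator $(f_0(\lambda)+g_0(\lambda))^2$ from \eqref{extrAnhf}, \eqref{extrAnhg} turns $h_f(f_0,g_0)=\alpha_1(f_0-f_1)$ and $h_g(f_0,g_0)=\alpha_2\Psi(\lambda)$ into exactly \eqref{extrAnhf3} and \eqref{extrAnhg3}; moreover the corresponding constraint is active whenever $\alpha_1\ne0$ or $\alpha_2\ne0$, which produces the normalizing conditions \eqref{extrAn1}, \eqref{extrAn2}. Adjoining the optimization problem \eqref{extrAnDR} — which via the lemma above fixes the operators $\me B^0,\me R^0,\me Q^0$ and thus the coefficient vector $(\me B^0)^{-1}\me R^0\me a$ occurring in \eqref{extrspectharAAn} — pins down the pair $(f_0,g_0)$; reading this chain backwards, if $(f_0,g_0)$ solves \eqref{extrAnhf3}--\eqref{extrAn2} together with \eqref{extrAnDR} then $0\in\partial\Delta_D(f_0,g_0)$, so by convexity of $\Delta_D$ the pair $(f_0,g_0)$ is the required maximizer and the reverse saddle inequality follows.

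It remains to record the admissibility of $h^0$: the proviso $h(f_0,g_0)\in H_D$ is met because, comparing \eqref{extrspectharAAn} with \eqref{extrAnhf}, \eqref{extrAnhg}, one has $h_g(f_0,g_0)=|h^0|^2$ and $h_f(f_0,g_0)=|A(e^{i\lambda})-h^0|^2$, so the boundedness hypothesis together with \eqref{condAn} (which bounds $A(e^{i\lambda})$) makes $h^0$ a bounded function, and a bounded function lying in $L_2^0(f_0+g_0)$ lies in every $L_2^0(f+g)$, $(f,g)\in D$. Hence $(f_0,g_0)$ and $h^0$ form a saddle point of $\Delta$ on $H_D\times D$: $(f_0,g_0)$ is least favourable in $D_{2\varepsilon_1}\times D_{1\varepsilon_2}$ and $h^0$ from \eqref{extrspectharAAn} is the minimax-robust spectral characteristic of the optimal estimate of $A\xi$. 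I expect the main obstacle to be the $L_1$-subdifferential: handling $\partial\delta(g_0\,|\,D_{1\varepsilon_2})$ correctly on the set $\{g_0=g_1\}$, where $\operatorname{sign}$ degenerates to the interval $[-1,1]$, and verifying that boundedness of $h_f,h_g$ is precisely what makes $\Delta(h(f_0,g_0);\cdot,\cdot)$ a legitimate continuous linear functional on $L_1\times L_1$ so that the subdifferential calculus of the lemma applies without change.
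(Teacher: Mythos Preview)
Your proposal is correct and follows essentially the same route as the paper: the paper derives equations \eqref{extrAnhf3}--\eqref{extrAn2} directly from the condition $0\in\partial\Delta_D(f_0,g_0)$ applied to $D=D_{2\varepsilon_1}\times D_{1\varepsilon_2}$, then invokes the two preceding lemmas and states the theorem without further argument. You have simply filled in the subdifferential computations for the $L_2$- and $L_1$-balls and added the verification that boundedness of $h_f,h_g$ forces $h^0$ to be bounded (via $h_g=|h^0|^2$), which the paper leaves implicit.
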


\begin{thm}
Let the spectral density $f(\lambda)$ be known, and let the spectral density $g_0(\lambda) \in D_{1\varepsilon_2}$. Let the function $f(\lambda)+g_0(\lambda)$  satisfy the minimality condition (\ref{extrminimalAn}), and let the function $h_g(f,g_0)$, determined by the formula (\ref{extrAnhg}), be bounded.
The spectral density $g_0(\lambda)$ is the least favourable spectral densities in the class $D_{1\varepsilon_2}$ for the optimal linear estimate of the functional $A\xi$, based on observations of the sequence $\xi(j)$ at points of time $j=-1,-2,\dots$,
if it is of the form
\begin{equation*}
g_0(\lambda)= \max\left\{g_1(\lambda), \alpha_2\left|A(e^{i\lambda})f(\lambda)-\sum\limits_{j=0}^{\infty}((\bold{B}^0)^{-1}\bold{R}^0\bold{a})_je^{ij\lambda}
\right|-f(\lambda)\right\}
\end{equation*}
and the functions $f(\lambda), g_0(\lambda)$ determine a solution to the optimization problem (\ref{extrAnDR}).
The function $h^0(e^{i\lambda})$, determined by the formula (\ref{extrspectharAAn}), is minimax-robust spectral characteristic of the optimal linear estimate of the functional $A\xi$.
\end{thm}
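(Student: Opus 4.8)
The plan is to show that the fixed density $f(\lambda)$ together with $g_{0}(\lambda)$ and the spectral characteristic $h^{0}=h(f,g_{0})$ of \eqref{extrspectharAAn} form a saddle point of $\Delta(h;f,g)$ on $H_{D}\times D$, where $D=\{f\}\times D_{1\varepsilon_{2}}$. According to the saddle-point scheme recalled above — the inequalities $\Delta(h;f,g_{0})\ge\Delta(h^{0};f,g_{0})\ge\Delta(h^{0};f,g)$ hold for all $h\in H_{D}$ and all $g\in D_{1\varepsilon_{2}}$ as soon as $h^{0}=h(f,g_{0})\in H_{D}$ and $g_{0}$ solves the constrained problem \eqref{extrDeltaAn1} with $f_{0}$ there replaced by the known $f$ — the left inequality is automatic, since $h^{0}$ is the projection-method optimal characteristic for the fixed pair $(f,g_{0})$ and $\Delta(h^{0};f,g_{0})$ is its minimal value \eqref{extrdeltaAn}. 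Hence everything reduces to verifying the right inequality, i.e.\ that $g_{0}$ solves \eqref{extrDeltaAn1}, and that $h(f,g_{0})\in H_{D}$.

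To check that $g_{0}$ solves \eqref{extrDeltaAn1}, I would pass to the equivalent unconstrained problem \eqref{extrDeltaAn2}, $\Delta_{D}(g)=-\Delta(h(f,g_{0});f,g)+\delta(g\mid D_{1\varepsilon_{2}})\to\inf$. Because $f$ is fixed, the first summand of $\Delta(h(f,g_{0});f,g)$ in \eqref{ExtrDfgAn} is a constant in $g$, so this amounts to minimising $-\frac{1}{2\pi}\int_{-\pi}^{\pi}h_{g}(f,g_{0})(\lambda)\,g(\lambda)\,d\lambda+\delta(g\mid D_{1\varepsilon_{2}})$, where $h_{g}(f,g_{0})$ is the (bounded, by hypothesis) function \eqref{extrAnhg}. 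This functional is convex and continuous on $L_{1}$, and $g_{0}$ is a minimiser exactly when $0\in\partial\Delta_{D}(g_{0})$. The gradient of the affine part is the $g$-independent function $-\frac{1}{2\pi}h_{g}(f,g_{0})$; the subdifferential of $\delta(\cdot\mid D_{1\varepsilon_{2}})$ at $g_{0}$ is the cone of functions $c\,\Psi(\lambda)$ with $c\ge0$, $|\Psi(\lambda)|\le1$ and $\Psi(\lambda)=\mathrm{sign}\bigl(g_{0}(\lambda)-g_{1}(\lambda)\bigr)$ on $\{g_{0}\ne g_{1}\}$ (the nonnegativity constraint $g_{0}\ge0$ plays no role, since the solution below satisfies $g_{0}\ge g_{1}\ge0$). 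The condition $0\in\partial\Delta_{D}(g_{0})$ then becomes $h_{g}(f,g_{0})(\lambda)=c\,\Psi(\lambda)$ for some $c\ge0$, which, as $h_{g}\ge0$, forces $g_{0}\ge g_{1}$ a.e.\ and is exactly the specialisation of \eqref{extrAnhg2} to the class $D_{1\varepsilon_{2}}$: $h_{g}(f,g_{0})=\varphi+\alpha_{2}$ with $\varphi\le0$ and $\varphi=0$ on $\{g_{0}>g_{1}\}$.

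Solving this relation pointwise: on $\{g_{0}>g_{1}\}$ it reads $\bigl|A(e^{i\lambda})f(\lambda)-\sum_{j=0}^{\infty}((\bold{B}^{0})^{-1}\bold{R}^{0}\bold{a})_{j}e^{ij\lambda}\bigr|=c\,\bigl(f(\lambda)+g_{0}(\lambda)\bigr)$ for a constant $c\ge0$, hence $g_{0}(\lambda)=c^{-1}\bigl|A(e^{i\lambda})f(\lambda)-\sum_{j=0}^{\infty}((\bold{B}^{0})^{-1}\bold{R}^{0}\bold{a})_{j}e^{ij\lambda}\bigr|-f(\lambda)$; on $\{g_{0}=g_{1}\}$ the inequality $\varphi\le0$ says that this same expression is $\le g_{1}(\lambda)$. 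Combining the two cases and writing $\alpha_{2}=c^{-1}$ gives precisely the asserted $\max$-formula for $g_{0}$. The constant $\alpha_{2}$, and with it $g_{0}$ and the operators $\bold{B}^{0},\bold{R}^{0},\bold{Q}^{0}$ formed from the Fourier coefficients of $(f+g_{0})^{-1}$, $f(f+g_{0})^{-1}$ and $fg_{0}(f+g_{0})^{-1}$, are pinned down self-consistently by the normalisation \eqref{extrAn2} together with the requirement that $(f,g_{0})$ solve the outer optimisation problem \eqref{extrAnDR}. Finally, $h(f,g_{0})\in H_{D}=\bigcap_{g\in D_{1\varepsilon_{2}}}L_{2}^{0}(f+g)$: boundedness of $h_{g}(f,g_{0})$ makes $h(f,g_{0})$ bounded, and since $f$ and each $g\in D_{1\varepsilon_{2}}$ are integrable, $|h(f,g_{0})|^{2}(f+g)$ is integrable, so $h(f,g_{0})\in L_{2}^{0}(f+g)$ for every admissible $g$. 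With both saddle-point inequalities established, $g_{0}$ is least favourable in $D_{1\varepsilon_{2}}$ and $h^{0}$ of \eqref{extrspectharAAn} is the minimax-robust spectral characteristic of the optimal estimate of $A\xi$.

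The step I expect to be the main obstacle is the subdifferential computation together with the consistency of the resulting Lagrange system: $\delta(\cdot\mid D_{1\varepsilon_{2}})$ is non-smooth exactly on $\{g_{0}=g_{1}\}$, so its subgradients must be described with care and matched against the nonnegativity of $g_{0}$, and one then has to show that the coupled conditions — the stationarity relation \eqref{extrAnhg2}, the normalisation \eqref{extrAn2}, and the optimisation \eqref{extrAnDR} — admit a solution of the stated form. The standing hypotheses that $h_{g}(f,g_{0})$ be bounded and that $f+g_{0}$ satisfy the minimality condition \eqref{extrminimalAn} are exactly what make this subdifferential calculus legitimate, keep $h(f,g_{0})$ well defined and bounded, and secure the membership $h(f,g_{0})\in H_{D}$.
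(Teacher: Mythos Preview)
Your proposal is correct and follows essentially the same approach as the paper: reduce to the saddle-point framework, pass to the unconstrained problem \eqref{extrDeltaAn2}, apply the subdifferential condition $0\in\partial\Delta_D(g_0)$ for the $L_1$-ball $D_{1\varepsilon_2}$ to obtain the stationarity relation (the specialisation of \eqref{extrAnhg3}), and solve it pointwise to recover the $\max$-formula. You supply more detail than the paper on the subdifferential of $\delta(\cdot\mid D_{1\varepsilon_2})$ and on why $h(f,g_0)\in H_D$, but the route is the same.
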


\subsection{Conclusions}

In this section  we propose methods of solution of the problem of the mean-square optimal linear estimation of the functional $A\xi=\sum\limits_{j=0}^{\infty}a(j)\xi(j)$ which depends on the unknown values of the stationary stochastic sequence $\xi(j)$.
Estimates are based on observations of the sequence $\xi(j)+\eta(j)$ at points  of time $j=-1,-2,\dots$. Here $\eta(j)$ is an uncorrelated with $\xi(j)$ stationary sequence. We provide formulas for calculating the values of the mean square error and the spectral characteristic of the optimal linear estimate of the functional in the case of spectral certainty, where the spectral densities $f(\lambda)$ and $g(\lambda)$ of the sequences $\xi(j)$ and  $\eta(j)$ are exactly known.
In the case of spectral uncertainty, where the spectral densities $f(\lambda)$ and $g(\lambda)$ are not known, but a set of admissible spectral densities is given, the minimax approach is applied to estimation of the functionals. We obtain formulas that determine the least favourable spectral densities and the minimax spectral characteristics of the optimal linear estimates of the functional $A\xi$ for concrete classes of admissible spectral densities.

For the relative results on the mean-square optimal linear extrapolation of linear functionals for stationary stochastic sequences and processes based on observations with noise see papers by Moklyachuk~\cite{Moklyachuk:1993a} -- \cite{Moklyachuk:1993r}, \cite{Moklyachuk:1997} -- \cite{Moklyachuk:2008r}, book by Moklyachuk and Masyutka~\cite{Moklyachuk:2012}.

\section{Interpolation problem for stationary sequences}

In this section we deal with the problem of the mean-square optimal estimation of the linear
functional $A_N\xi=\sum\limits_{j=0}^{N}a(j)\xi(j)$ which depends on the unknown values of a stationary stochastic sequence $\xi(j),j\in \mathbb{Z},$  based on observations of the sequence at points of time $j\in\mathbb{Z}\backslash \{0,1,\dots,N\}$.
The problem is investigated in the case of spectral certainty, where the spectral density of the stationary stochastic sequence $\xi(j)$ is exactly known.
In this case the classical Hilbert space projection method of linear estimation of the functional is applied. Formulas are derived for calculation the value of the mean square error and the spectral characteristic of the mean-square optimal estimate of the linear functional.
In the case of spectral uncertainty, where the spectral density of the stationary stochastic sequence is not exactly known, but a class of admissible spectral densities is given the minimax-robust
procedure to linear estimation of the functional is applied.
Relations which determine the least favourable spectral densities and the minimax spectral characteristics are proposed for some special sets of admissible spectral densities.

\subsection{The classical Hilbert space projection method of linear interpolation}

Let $\xi(j),\,j\in \mathbb{Z}$,  be a (wide sense) stationary stochastic sequence.
We will consider values of $\xi(j),\,j\in \mathbb{Z}$, as elements of the Hilbert space  $H=L_2(\Omega,\mathcal{F},P)$ of complex valued random variables with zero first moment, $E\xi=0$, finite second moment,
 $E|\xi|^2<\infty$, and the inner product $\left(\xi,{\eta}\right)=E\xi\overline{\eta}$.
The correlation function $R(k)=\left(\xi(j+k),\xi(j)\right)=E\xi(j+k)\overline{\xi(j)}$ of the stationary stochastic sequence $\xi(j),\,j\in \mathbb{Z}$, admits
 the spectral representation \cite{Gihman}
\[
 R(k)=\int\limits_{-\pi}^{\pi}e^{ik\lambda}F(d\lambda),
 \]
where $F(d\lambda)$ is the spectral measure of the sequence.
We will consider stationary stochastic sequences with absolutely continuous spectral measures and the correlation functions of the form
\[
 R(k)=\,\frac{1}{2\pi}\int\limits_{-\pi}^{\pi}e^{ik\lambda}f(\lambda)d\lambda,
 \]
where $f(\lambda)$ is the spectral density function of the sequence $\xi(j)$
that satisfies the minimality condition
\begin {equation} \label{min}
 \int\limits_{-\pi}^{\pi} f^{-1}(\lambda)d\lambda < \infty.
\end{equation}

Under this condition the error-free interpolation of the unknown values of the sequence is impossible \cite{Rozanov}.

The stationary stochastic sequence $\xi(j),\,j\in \mathbb{Z}$, admits
 the spectral representation \cite{Gihman,Karhunen}
\begin {equation} \label{sp-r}
 \xi(j)=\int\limits_{-\pi}^{\pi}e^{ij\lambda}{Z}(d\lambda),
\end{equation}
where ${Z}(\Delta)$ is the orthogonal stochastic measure of the sequence such that
\[
E{Z}(\Delta_1)\overline{{Z}(\Delta_2)}=F(\Delta_1\cap\Delta_2)=\,\frac{1}{2\pi}\int_{\Delta_1\cap\Delta_2}f(\lambda)d\lambda.
\]

Consider the problem of the mean-square optimal estimation of the linear
functional
\[A_N\xi=\sum\limits_{j=0}^{N}a(j)\xi(j)\]
 which depends on the unknown values of a stationary stochastic sequence $\xi(j),j\in\mathbb{Z},$  from observations of the sequence at points of time $j\in\mathbb{Z}\backslash \{0,1,\dots,N\}$.

It follows from the spectral representation (\ref{sp-r}) of the sequence $\xi(j)$
that we can represent the functional $A_{N}\xi$ in the  form
\begin{equation}\label{asx}
A_{N}\xi=\int\limits_{-\pi}^{\pi}A_{N}(e^{i\lambda}){Z}(d\lambda),
 \end{equation}
where
\begin{equation*}
  A_N(e^{i\lambda})=\sum\limits_{j=0}^{N}a(j)e^{ij\lambda }.
 \end{equation*}

Denote by $H^N(\xi)$ the subspace  of the Hilbert space  $H=L_2(\Omega,\mathcal{F},P)$ generated by elements $\{\xi(j):j\in \mathbb{Z}\backslash \{0,1,\dots,N\}$. Let $L_2(f)$ be the Hilbert space of complex-valued functions  that are square-integrable  with respect to the measure whose density is $f(\lambda).$ Denote by $L_2^N(f)$ the subspace of $L_2(f)$ generated by functions $\{e^{ij\lambda},\,j\in \mathbb{Z}\backslash \{0,1,\dots,N\} \}.$
The mean square optimal linear estimate $\hat{A}_N\xi$ of the functional  $A_N\xi$ from observations of the sequence \(\xi(j)\) at points of time  $j\in\mathbb{Z} \{0,1,\dots,N\}$
is an element of the $H^N(\xi)$. It can be represented in the form
\begin{equation}\label{hatas}
\hat{A}_N\xi=\,\int\limits_{-\pi}^{\pi}h(e^{i\lambda}) {Z}(d\lambda),
 \end{equation}
where $h(e^{i\lambda}) \in L_2^N(f)$  is the spectral characteristic of the estimate $\hat{A}_N\xi.$

The mean square error $\Delta(h;f)$ of the estimate $\hat{A}_N\xi$ is given by the formula
\begin{equation*}
 \Delta(h;f)=E\left|A_N\xi-\hat{A}_N\xi\right|^2=\,\frac{1}{2\pi}\int\limits_{-\pi}^{\pi}\left|A_N(e^{i\lambda})-h(e^{i\lambda})\right|^2 f(\lambda)d\lambda.
\end{equation*}

The Hilbert space projection method proposed by A. N. Kolmogorov \cite{Kolmogorov} makes it possible to find the spectral characteristic  $h(e^{i\lambda})$ and the mean square error $\Delta(h;f)$ of the optimal linear estimate of the functional  $A_N\xi$ in the case where the spectral density $f(\lambda)$ of the sequence $\xi(j),j\in\mathbb{Z},$ is exactly known and the minimality condition (\ref{min}) is satisfied. The spectral characteristic can be found from the following conditions:
\begin{equation*} \begin{split}
1)& h(e^{i\lambda}) \in L_2^N(f), \\
2)& A_N(e^{i\lambda})- h(e^{i\lambda}) \bot \, L_2^N(f).
\end{split} \end{equation*}

It follows from the second condition that for any $\eta \in H^N(\xi)$ the following equation should be satisfied
\begin{equation*}
\left( A_N\xi-\hat{A}_N\xi,\eta\right)=E(A_N\xi-\hat{A}_N\xi)\overline{\eta} =0.
 \end{equation*}

 The last relation is equivalent to equations
\begin{equation*}
E(A_N\xi-\hat{A}_N\xi)\overline{\xi}_k =0, \,\,\, k \in \mathbb{Z} \backslash \{0,1,\dots,N\}.
 \end{equation*}

By using representations (\ref{asx}), (\ref{hatas}) and definition of the inner product in  the Hilbert space  $H=L_2(\Omega,\mathcal{F},P)$  we get
\begin{equation*}
\begin{split}
E\left(\int\limits_{-\pi}^{\pi}\left(A_N(e^{i\lambda})- h(e^{i\lambda})\right)Z(d\lambda)\cdot\int\limits_{-\pi}^{\pi}e^{-ik\lambda}\overline{Z(d\lambda)}\right)=\\
= \,\frac{1}{2\pi}\int\limits_{-\pi}^{\pi} \left(A_N(e^{i\lambda})- h(e^{i\lambda})\right)f(\lambda)e^{-ik\lambda }d\lambda=0,\,\,k \in \mathbb{Z} \backslash \{0,1,\dots,N\}.
 \end{split}
 \end{equation*}
It follows from these equations that the function $(A_N(e^{i\lambda})- h(e^{i\lambda}))f(\lambda)$ is of the form
\begin{equation}\label{A_N}
(A_N(e^{i\lambda})- h(e^{i\lambda}))f(\lambda)=C_N(e^{i\lambda}),
 \end{equation}
\[C_N(e^{i\lambda})=\sum\limits_{j=0}^{N}c(j)e^{ij\lambda},
\]
where $c(j), j=0,1,\dots,N,$ are unknown coefficients that we have to find.

 From the relation (\ref{A_N}) we deduce that the spectral characteristic $ h(e^{i\lambda})$
 of the optimal linear estimate of the functional  $A_N\xi$
 is of the form
\begin{equation} \label {5}
 h(e^{i\lambda})=\,A_N(e^{i\lambda})-C_N(e^{i\lambda})f^{-1}(\lambda).
\end{equation}

To find equations for determination the unknown coefficients $c(j), j=0,1,\dots,N,$ we use the decomposition of the function $f^{-1}(\lambda)$ into the Fourier series
\begin{equation} \label{decomp}
f^{-1}(\lambda)=\sum\limits_{m=-\infty}^{\infty}r_m e^{im\lambda},
\end{equation}
where $r_m$ are the Fourier coefficients of the function $f^{-1}(\lambda)$
\[r_m=\,\frac{1}{2\pi}\int\limits_{-\pi}^{\pi} f^{-1}(\lambda)e^{-im\lambda }d\lambda.
\]

\noindent Inserting (\ref{decomp}) into (\ref{5}) we obtain the following representation of the spectral characteristic
\begin{equation} \label{spchar}
\begin{split}
 h(e^{i\lambda})=\left(\sum\limits_{j=0}^{N}a(j)e^{ij\lambda }\right)-\left(\sum\limits_{j=0}^{N}c(j)e^{ij\lambda }\right)\left(\sum\limits_{m=-\infty}^{\infty}r_m e^{im\lambda}\right).
\end{split} \end{equation}

It follows from the first condition, $h(e^{i\lambda}) \in L_2^N(f)$,  which determines the spectral characteristic,
that the Fourier coefficients of the function  $h(e^{i\lambda})$ are equal to zero for $j=0,1,\dots,N$, namely
\[\frac{1}{2\pi}\int\limits_{-\pi}^{\pi} h(e^{i\lambda})e^{-ij\lambda }d\lambda=0, \,\, j =0,1,\dots,N.
\]

Using the last relations and (\ref{spchar}) we get the following system of equations that determine the unknown coefficients  $c(j), j=0,1,\dots,N,$
\begin{equation} \label{koef}
\begin{split}
a(0)-&\sum\limits_{j=0}^{N}c(j)r_{-j}=0;\\
a(1)-&\sum\limits_{j=0}^{N}c(j)r_{1-j}=0;\\
&\ldots\\
a(N)-&\sum\limits_{j=0}^{N}c(j)r_{N-j}=0.\\
\end{split} \end{equation}

Denote by
$\vec{\bold{a}}_N=(a(0),a(1),\ldots,a(N))$ and let  $\vec{\bold{c}}_N=(c(0),c(1),\ldots, c(N))$ be a vector constructed from the unknown coefficients $c(j), j=0,1,\dots,N.$
Let
$B_N$  be an  $(N+1)\times(N+1)$  matrix
\begin {equation*}
B_N= \left( \begin{array}{cccc}
B_{00}& B_{01}&\ldots&B_{0s}\\
B_{10}& B_{11}&\ldots&B_{1s}\\
\vdots&\vdots&\ddots &\vdots\\
B_{N0}& B_{N1}&\ldots&B_{NN}
\end{array}\right),
\end{equation*}
 with elements that are Fourier coefficients of the function $ f^{-1}(\lambda): $
\begin {equation*}
  B_{kj}=\,\frac{1}{2\pi}\int\limits_{-\pi}^{\pi}f^{-1}(\lambda)e^{-i(k-j)\lambda}d\lambda=\,r_{k-j},\quad
k= 0,1, \ldots,N, \,\,
j= 0,1, \ldots,N.
\end{equation*}

Making use the introduced notations we can write equations (\ref{koef}) in the form of equation
 \begin{equation} \label{rivn}
 \vec{\bold{a}}_N=B_N\vec{\bold{c}}_N,
 \end{equation}

Since the matrix $B_N$ is reversible \cite{Salehi}, we get the formula
\begin{equation} \label{rivn2}
\vec{\bold{c}}_N=B_N^{-1}\vec{\bold{a}}_N.
\end{equation}

 Hence, the unknown coefficients $c(j), j=0,1,\dots,N,$ are calculated  by the formula
\[c(j)=\left(B_N^{-1}\vec{\bold{a}}_N\right)_j,
\]
where  $\left(B_N^{-1}\vec{\bold{a}}_N\right)_j $ is the  $j$  component of the vector  $B_N^{-1}\vec{\bold{a}}_N,$
and the formula for calculating  the spectral characteristic of the estimate $\hat{A}_N\xi$ is of the form
\begin{equation} \label{ch}
\begin{split}
 h(e^{i\lambda})=\left(\sum\limits_{j=0}^{N}a(j)e^{ij\lambda }\right)-\left(\sum\limits_{j=0}^{N}\left(B_N^{-1}\vec{\bold{a}}_N\right)_je^{ij\lambda }\right)\left(\sum\limits_{m=-\infty}^{\infty}r_m e^{im\lambda}\right).
\end{split} \end{equation}

The mean square error of the estimate of the function can be calculated by the formula
\begin {equation} \label {6} \begin{split}
 \Delta(h;f)&=\frac{1}{2\pi}\int\limits_{-\pi}^{\pi} \left|C_N(e^{i\lambda})\right|^2 f^{-1}(\lambda)d\lambda=\\
&=\int\limits_{-\pi}^{\pi}\left(\sum\limits_{k=0}^{N}c(k) e^{ik\lambda}\right)\left(\sum\limits_{j=0}^{N}\overline{c(j)} e^{-ij\lambda}\right)\left(\sum\limits_{m=-\infty}^{\infty}r_m e^{im\lambda}\right)d\lambda=\\
&=\left<\vec{\bold{c}}_N,\vec{\bold{c}}_N B_N\right>=\left<B_N^{-1}\vec{\bold{a}}_N, \vec{\bold{a}}_N\right>,
\end{split} \end{equation}
where $\left<\cdot,\cdot\right> $\,is the inner product in $ C^{(N+1)}. $

\noindent Let us summarize our results and present them in the form
of a theorem.

\begin{thm}
Let $\xi(j)$ be a stationary stochastic sequence with the spectral density $f(\lambda)$ that satisfies the minimality condition (\ref{min}). The spectral characteristic $h(e^{i\lambda})$
and the mean square error  $\Delta(h,f)$
of the optimal linear estimate $\hat{A}_N\xi$ of the functional $A_N\xi$ based on observations of the sequence $\xi(j)$ at points of time $j\in\mathbb{Z}\backslash \{0,1,\dots,N\} $ can be calculated by  formulas (\ref{ch}), (\ref{6}).
\end{thm}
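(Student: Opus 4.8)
The plan is to realize $\hat A_N\xi$ as the orthogonal projection of $A_N\xi$ onto the subspace $H^N(\xi)$ and to transport this projection to the spectral domain via the standard isometry $\xi(j)\leftrightarrow e^{ij\lambda}$ between $H(\xi)$ and $L_2(f)$. Under this isometry $A_N\xi$ corresponds to $A_N(e^{i\lambda})$ by (\ref{asx}), the estimate $\hat A_N\xi$ corresponds to some $h(e^{i\lambda})\in L_2^N(f)$ by (\ref{hatas}), and optimality of $\hat A_N\xi$ is equivalent to the two conditions $h\in L_2^N(f)$ and $A_N(e^{i\lambda})-h(e^{i\lambda})\perp L_2^N(f)$ in $L_2(f)$. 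This reduces the theorem to an explicit computation of that projection, and the three formulas to be proved — (\ref{5})/(\ref{ch}) for the characteristic and (\ref{6}) for the error — correspond to the three stages below.

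First I would exploit the orthogonality condition. Testing $A_N-h$ against $e^{ik\lambda}$ for every $k\in\mathbb Z\setminus\{0,\dots,N\}$ gives $\int_{-\pi}^{\pi}(A_N(e^{i\lambda})-h(e^{i\lambda}))f(\lambda)e^{-ik\lambda}\,d\lambda=0$ for all such $k$, so the integrable function $(A_N-h)f$ has vanishing Fourier coefficients outside $\{0,\dots,N\}$ and hence equals a trigonometric polynomial $C_N(e^{i\lambda})=\sum_{j=0}^{N}c(j)e^{ij\lambda}$ with coefficients still to be determined. Solving for $h$ yields $h(e^{i\lambda})=A_N(e^{i\lambda})-C_N(e^{i\lambda})f^{-1}(\lambda)$, which is (\ref{5}); the manipulation is legitimate because $\int|C_N|^2 f^{-1}\,d\lambda\le(\max_\lambda|C_N(e^{i\lambda})|^2)\int f^{-1}\,d\lambda<\infty$ by the minimality condition (\ref{min}), so $C_N f^{-1}\in L_2(f)$.

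Next I would impose $h\in L_2^N(f)$, i.e. that the Fourier coefficients of $h$ vanish for $j=0,1,\dots,N$. Expanding $f^{-1}$ in its Fourier series $\sum_m r_m e^{im\lambda}$ and matching coefficients in (\ref{spchar}) produces the $(N+1)$-dimensional linear system (\ref{koef}), that is $\vec{\mathbf a}_N=B_N\vec{\mathbf c}_N$ with $B_N=(r_{k-j})_{k,j=0}^{N}$. The point I expect to be the main obstacle is the invertibility of $B_N$: this is precisely where the minimality condition is essential, and it can be quoted from \cite{Salehi}, or seen directly from the identity $\vec{\mathbf c}_N^{\,*}B_N\vec{\mathbf c}_N=\frac{1}{2\pi}\int_{-\pi}^{\pi}|C_N(e^{i\lambda})|^2 f^{-1}(\lambda)\,d\lambda>0$ for every $\vec{\mathbf c}_N\neq 0$, since $f^{-1}>0$ almost everywhere; thus $B_N$ is positive definite, hence nonsingular. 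Inverting gives $\vec{\mathbf c}_N=B_N^{-1}\vec{\mathbf a}_N$, and substituting the resulting $c(j)$ back into (\ref{spchar}) yields formula (\ref{ch}).

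Finally I would compute the error. By the isometry, $\Delta(h;f)=\frac{1}{2\pi}\int_{-\pi}^{\pi}|A_N(e^{i\lambda})-h(e^{i\lambda})|^2 f(\lambda)\,d\lambda=\frac{1}{2\pi}\int_{-\pi}^{\pi}|C_N(e^{i\lambda})|^2 f^{-1}(\lambda)\,d\lambda$, using $A_N-h=C_N f^{-1}$. Expanding $|C_N|^2$ and $f^{-1}$ in Fourier series and integrating term by term collapses the double sum to $\langle \vec{\mathbf c}_N, B_N\vec{\mathbf c}_N\rangle$, which by $\vec{\mathbf c}_N=B_N^{-1}\vec{\mathbf a}_N$ and the Hermitian symmetry of $B_N$ equals $\langle B_N^{-1}\vec{\mathbf a}_N,\vec{\mathbf a}_N\rangle$; this is (\ref{6}). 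Combining the three stages — the representation (\ref{5}), the solved system (\ref{ch}), and the error identity (\ref{6}) — establishes the theorem. The only nonroutine ingredient is the invertibility of $B_N$ under (\ref{min}); everything else is bookkeeping with Fourier coefficients.
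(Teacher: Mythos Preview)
Your proposal is correct and follows essentially the same approach as the paper: both arguments transport the projection problem to $L_2(f)$, derive $(A_N-h)f=C_N$ from the orthogonality condition, solve the resulting linear system $B_N\vec{\mathbf c}_N=\vec{\mathbf a}_N$ via the Fourier expansion of $f^{-1}$, and evaluate the error as a quadratic form. Your treatment is slightly more self-contained in that you supply the positive-definiteness argument for the invertibility of $B_N$ (the paper simply cites \cite{Salehi}) and you explicitly check that $C_Nf^{-1}\in L_2(f)$ via the minimality condition, but the overall structure and key steps coincide with the paper's proof.
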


\begin{exm}
Consider the problem of linear estimation of the functional  $A_1\xi=a\xi(0)+b\xi(1)$ which depends on the unknown values $\xi(0)$, $\xi(1)$ of the stationary stochastic sequence $\xi(j)$ from observations of the
sequence $\xi(j)$
at points $j\in \mathbb{Z}\backslash \{0,1\}.$ In this case the spectral characteristic (\ref{spchar}) of the estimate $\hat{A}_1 \xi$ can be calculated by the formula
\begin{equation}\label{pr1}
h(f)=a+be^{i\lambda}-\left(c(0)+c(1)e^{i\lambda}\right)f^{-1}(\lambda),
\end{equation}
where $f(\lambda)$ is a known spectral density, coefficients $c(0), c(1)$ are determined by the system of equations
\begin{eqnarray*}
a=c(0)\alpha+c(1)\beta, \\
b=c(0)\bar\beta+c(1)\alpha, \\
\end{eqnarray*}
where
\[\alpha={\frac{1}{2\pi}}\int_{\pi}^{\pi} f^{-1}(\lambda)\,d\lambda,
\]
\[\beta={\frac{1}{2\pi}}\int_{\pi}^{\pi}e^{i\lambda}f^{-1}(\lambda)\,d\lambda,
\]

The matrix $B_1$ is of the form
\begin {equation*}
B_1= \left( \begin{array}{cc}
\alpha& \beta\\
\bar\beta& \alpha\
\end{array}\right),
\end{equation*}

The determinant $ D=det(B_1)$ of the matrix $B_1$ is as follows $ D=det(B_1)=\alpha^2-|\beta|^2$

We get the following formulas for calculating the coefficients $c(0), c(1)$
\[c(0)=(a\alpha-b\beta)/(\alpha^2-|\beta|^2),\]
\[c(1)=(b\alpha-a\bar\beta) /(\alpha^2-|\beta|^2).\]

Thus, the unknown coefficients $c(0), c(1)$ in (\ref{pr1}) are determined.

The mean square error of the estimate is calculated by the formula
\begin{equation}
\Delta(f)=\left[\left(|a|^2+|b|^2\right)\alpha-
                    (\bar ab+a\bar b)\beta\right]/(\alpha^2-|\beta|^2).
\end{equation}

\end{exm}

\subsection{Minimax-robust method of interpolation}

The traditional methods of estimation of the functional $A_N\xi$ which depends on the unknown values of a stationary stochastic sequence $\xi(j)$ can be applied in the case  where the spectral density $f(\lambda)$ of the considered stochastic sequence $\xi(j)$ is exactly known.
In practice, however, we do not have complete information on spectral density of the sequence. For this reason  we apply the minimax(robust) method of estimation of the functional $A_N\xi$, that is we find an estimate that minimizes the maximum of the mean square errors for all spectral densities from a given class of admissible spectral densities $D$.

\begin{ozn}
For a given class of spectral densities $D$ a spectral density $f_0(\lambda)\in D$  is called the least favourable in $D$ for the optimal linear estimation of the functional $A_N\xi$ if the following relation holds true
$$\Delta\left(f_0\right)=\Delta\left(h\left(f_0\right);f_0\right)=\max\limits_{f\in D}\Delta\left(h\left(f\right);f\right).$$
\end{ozn}

\begin{ozn}
For a given class of spectral densities $D$ the spectral characteristic $h^0(e^{i\lambda})$ of the optimal linear estimate of the functional $A_N\xi$ is called minimax-robust if
$$h^0(e^{i\lambda})\in H_D= \bigcap\limits_{f\in D} L_2^N(f),$$
$$\min\limits_{h\in H_D}\max\limits_{f\in D}\Delta\left(h;f\right)=\sup\limits_{f\in D}\Delta\left(h^0;f\right).$$
\end{ozn}

It follows from the introduced definitions and the obtained formulas that the following statement holds true.

\begin{lem} The spectral density $f_0(\lambda)\in D$ is the least favourable in a class of admissible spectral densities  $D$ for the optimal linear estimation of the functional $A_N\xi$ if the Fourier coefficients of the function $f_0^{-1}(\lambda)$ define a matrix $B_N^0$ that determines a solution to the optimization problem
\begin{equation} \label{extrem}
\max\limits_{f\in D}\left<B_N^{-1}\vec{\bold{a}}_N,\,\vec{\bold{a}}_N\right>=\left<(B_N^0)^{-1}\vec{\bold{a}}_N,\,\vec{\bold{a}}_N\right>.
\end{equation}
The minimax spectral characteristic $h^0=h(f_0)$ can be calculated by the formula (\ref{ch}) if $h(f_0) \in H_D.$
\end{lem}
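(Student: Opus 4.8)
The plan is to read the statement off the error formula established for the classical interpolation problem and then to check that, under the hypothesis $h(f_0)\in H_D$, the pair $(h(f_0),f_0)$ is a saddle point of $\Delta(h;f)$; the two definitions of this subsection then apply directly.

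First I would recall that for every admissible $f$ satisfying the minimality condition \eqref{min} the optimal spectral characteristic is $h(f)$ of \eqref{ch} and, by \eqref{6},
\[
\min_{h\in L_2^N(f)}\Delta(h;f)=\Delta(h(f);f)=\left\langle B_N^{-1}\vec{\bold{a}}_N,\vec{\bold{a}}_N\right\rangle,
\]
where $B_N=B_N(f)$ is the $(N+1)\times(N+1)$ matrix whose entries are the Fourier coefficients $r_{k-j}$ of $f^{-1}$. By the definition of the least favourable density, $f_0\in D$ is least favourable precisely when $\Delta(h(f_0);f_0)=\max_{f\in D}\Delta(h(f);f)=\max_{f\in D}\min_{h}\Delta(h;f)$; substituting the displayed identity turns this equality into $\langle(B_N^0)^{-1}\vec{\bold{a}}_N,\vec{\bold{a}}_N\rangle=\max_{f\in D}\langle B_N^{-1}\vec{\bold{a}}_N,\vec{\bold{a}}_N\rangle$ with $B_N^0=B_N(f_0)$, which is exactly \eqref{extrem}. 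This settles the first assertion with no further computation.

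For the second assertion I would put $h^0:=h(f_0)$ and assume $h^0\in H_D=\bigcap_{f\in D}L_2^N(f)$, so that $h^0$ is an admissible estimator simultaneously for every $f\in D$. The goal is the saddle-point chain
\[
\Delta(h;f_0)\ge\Delta(h^0;f_0)\ge\Delta(h^0;f),\qquad\forall\,h\in H_D,\ \forall\,f\in D.
\]
The left inequality is immediate, since $h^0=h(f_0)$ realizes $\min_{h\in L_2^N(f_0)}\Delta(h;f_0)$ and $H_D\subseteq L_2^N(f_0)$. Granting the right inequality one gets $\max_{f\in D}\Delta(h^0;f)=\Delta(h^0;f_0)$, and combining this with the left inequality yields $\min_{h\in H_D}\max_{f\in D}\Delta(h;f)=\max_{f\in D}\Delta(h^0;f)$, which is the definition of the minimax-robust characteristic; hence $h^0$ is minimax and is computed by \eqref{ch} taken at $f=f_0$.

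The hard part will be the right inequality $\Delta(h^0;f_0)\ge\Delta(h^0;f)$ for all $f\in D$, the only step not reducible to a restatement of earlier formulas. I would use that for fixed $h^0$ the functional $f\mapsto\Delta(h^0;f)=\frac{1}{2\pi}\int_{-\pi}^{\pi}|A_N(e^{i\lambda})-h^0(e^{i\lambda})|^2 f(\lambda)\,d\lambda$ is linear in $f$, that $D$ is convex, and that $f_0$ maximizes over $D$ the concave functional $\Phi(f):=\min_{h\in L_2^N(f)}\Delta(h;f)$. Moving along $f_\varepsilon=(1-\varepsilon)f_0+\varepsilon f$ and using $h^0\in L_2^N(f_\varepsilon)$, an envelope (Danskin-type) argument identifies the supporting affine functional of $\Phi$ at $f_0$ with $\Delta(h^0;\cdot)$ and forces it to attain its maximum over $D$ at $f_0$; equivalently, this is the optimality condition $0\in\partial\big(-\Delta(h^0;\cdot)+\delta(\cdot\,|D)\big)(f_0)$ for the associated unconstrained problem. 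This is precisely where the convexity of $D$ and the optimization built into \eqref{extrem} enter.
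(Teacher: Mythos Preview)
Your treatment of the first assertion is exactly the paper's: the paper says only that the lemma ``follows from the introduced definitions and the obtained formulas'', and this means precisely substituting the error formula \eqref{6}, $\Delta(h(f);f)=\langle B_N^{-1}\vec{\bold a}_N,\vec{\bold a}_N\rangle$, into the definition of a least favourable density. Nothing more is done or needed there.

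For the second assertion you go further than the paper does. The paper does not prove the right saddle-point inequality $\Delta(h^0;f_0)\ge\Delta(h^0;f)$ as part of the lemma; immediately after the lemma it simply \emph{restates} that inequality as the constrained optimization problem \eqref{7} (equivalently, the subdifferential condition $0\in\partial\Delta_D(f_0)$) and then verifies it class by class in the subsequent subsections. In other words, the paper treats ``$f_0$ solves \eqref{7}'' as an additional hypothesis to be checked, not as a consequence of \eqref{extrem}. Your Danskin/envelope sketch is a legitimate attempt to close that gap in general, and the linearity of $f\mapsto\Delta(h^0;f)$ together with concavity of $\Phi(f)=\min_h\Delta(h;f)$ is the right structure to exploit. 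One point to be careful with: the minimization defining $\Phi(f)$ is over $L_2^N(f)$, which varies with $f$, so the standard Danskin theorem does not apply verbatim; your observation that $h^0\in H_D\subseteq L_2^N(f_\varepsilon)$ gives only the upper bound $\Phi(f_\varepsilon)\le\Delta(h^0;f_\varepsilon)$, and you still need continuity of $f\mapsto h(f)$ (or an explicit superdifferential computation) to identify the one-sided derivative of $\Phi$ at $f_0$ with $\Delta(h^0;f)-\Delta(h^0;f_0)$. That step is plausible under the minimality condition but is not automatic, and it is exactly what the paper sidesteps by deferring to \eqref{7}.
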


The least favourable spectral density $f_0$ and the minimax spectral characteristic $h^0$
form a saddle point of the function $\Delta \left(h;f\right)$ on the set $H_D\times D$. The saddle point inequalities
$$\Delta\left(h;f_0\right)\geq\Delta\left(h^0;f_0\right)\geq \Delta\left(h^0;f\right)   \hspace{1cm}   \forall f \in D, \forall h \in H_D$$
hold true if $h^0=h(f_0)$ and $h(f_0)\in H_D,$  where $f_0$ is a solution to the constrained optimization problem
\begin{equation} \label{7}
\tilde{\Delta}(f)=-\Delta\left(h^0;f\right)=-\frac{1}{2\pi}\int\limits_{-\pi}^{\pi}\frac{\left|C_N^0(e^{i\lambda})\right|^2}{f_0^2(\lambda)} f(\lambda)d\lambda\rightarrow \inf, \hspace{1cm} f(\lambda) \in D,
\end{equation}
where
\[C_N^0(e^{i\lambda})=\,\sum\limits_{j=0}^{N}\left((B_N^0)^{-1}\vec{\bold{a}}_N\right)_j e^{ij\lambda}.
\]

The constrained optimization problem (\ref{7}) is equivalent to the unconstrained optimization problem
\begin{equation*}
\Delta_D(f)=\tilde{\Delta}(f)+\delta(f\left|D\right.)\rightarrow \inf,
\end{equation*}
where $\delta(f\left|D\right.)$ is the indicator function of the set  $D.$ Solution $f_0$ to this problem is characterized by the condition $0 \in \partial\Delta_D(f_0),$ where $\partial\Delta_D(f_0)$ is the subdifferential of the convex functional $\Delta_D(f)$ at point $f_{0}$.
This condition makes it possible to find the least favourable spectral densities in some special classes of spectral densities $D$ \cite{Ioffe}, \cite{Pshenychn}, \cite{Rockafellar}.

Note, that the form of the functional $\Delta\left(h^0;f\right)$ is convenient for application the Lagrange method of indefinite multipliers for
finding solution to the problem (\ref{7}).
Making use the method of Lagrange multipliers and the form of
subdifferentials of the indicator functions
we describe relations that determine least favourable spectral densities in some special classes
of spectral densities
(see books \cite{Golichenko,Moklyachuk:2008,Moklyachuk:2012} for additional details).

\subsection{Least favourable spectral densities in the class $D_0^{-}$}

Consider the problem of the optimal estimation of the functional $A_N\xi=\sum_{j=1}^{N}a(j)\xi(j)$ which depends on the unknown values of a stationary stochastic sequence $\xi(j)$
from observations of the sequence $\xi(j)$ at points of time $j\in\mathbb{Z}\backslash \{0,1,\dots,N\} $
in the case where the spectral density is from the class
\begin{equation*}
D_0^{-} = \left\{f(\lambda)\left|\frac{1}{2\pi}\int\limits_{-\pi}^{\pi} f^{-1}(\lambda)d\lambda\geq P\right.\right\}.
\end{equation*}
Let the sequence $a(k), k=0,1,\dots,N,$ that determines the functional  $A_N\xi$, be strictly positive. To find solutions to the constrained optimization problem (\ref{7}) we
use the  Lagrange multipliers method. With the help of this method we get the equation
\begin{equation*}
\frac{1}{2\pi}\int\limits_{-\pi}^{\pi}\left[\frac{\left|C_N^0(e^{i\lambda})\right|^2}{f_0^2(\lambda)} - p_0^2 \frac{1}{f_0^{2}(\lambda)}\right]\rho (f(\lambda))d\lambda=0,
\end{equation*}
where $p_0^2$ is a  constant (the Lagrange multiplier), $\rho(f(\lambda))$ is a variation of the function  $f(\lambda)$.
From a generalization of the Lagrange lemma we get that the Fourier coefficients of the function  $f_0^{-1}$ satisfy the equation
\begin{equation} \label{99}
\left|\sum\limits_{k=0}^{N}c(k) e^{ik\lambda}\right|^2=p_0^2,
\end{equation}
where $ c(k), k=0,1,\dots,N,$ are components of the vector $\vec{\bold{c}}_N$ that satisfies the equation
 \begin{equation} \label{999}
 B_N^0\vec{\bold{c}}_N=\vec{\bold{a}}_N,
 \end{equation}
 the matrix  $B_N^0$  is  determined by the Fourier coefficients of the function $f_0^{-1}(\lambda)$
\begin{equation*}
B_{N}^0(k,j)=\,\frac{1}{2\pi}\int\limits_{-\pi}^{\pi}f_0^{-1}(\lambda)e^{-i(k-j)\lambda}d\lambda=r_{k-j}^0,
\end{equation*}
\begin {equation*} \begin{split}
&k= 0,1, \ldots,N, \\
&j= 0,1, \ldots,N. \\
\end{split}\end{equation*}

The Fourier coefficients $r_k=r_{-k}, k=0,1,\dots,N,$ satisfy both equation  (\ref{99}) and equation   (\ref{999}). These coefficients can be found from the equation
\[B_N^0\vec{\bold{p}}_N^0=\vec{\bold{a}}_N, \quad \vec{\bold{p}}_N^0=(p_0,0,\ldots,0).
\]
The last relation can be presented in the form of the system of equations
\begin{equation*}
r_kp_0=a(k), \, k=0,1,\dots,N.
\end{equation*}

From the first equation of the system (for $k=0$) we find the unknown value
\[p_0=a(0) (r_0)^{-1}.\]
It follows from the extremum condition  (\ref{extrem}) and the restriction on the spectral densities from the class $D_0^{-}$  that the Fourier coefficient \[r_0=\frac{1}{2\pi}\int\limits_{-\pi}^{\pi}f_0^{-1}(\lambda)d\lambda=P.\]
 Thus,
 \[r_k=Pa(k)a^{-1}(0), \,\, k=0,1,\dots,N.\]
 We can represent the function $f_0^{-1}(\lambda)$ in the form
 \[f_0^{-1}(\lambda)=\sum_{k=-N}^{N}r_{|k|} e^{ik\lambda}.
 \]
  Since the sequence $a(k), k=0,1,\dots,N,$ is strictly positive, the sequence  $r_k,  k= 0,1,\ldots, N$, is also strictly positive and the function  $f_0^{-1}(\lambda)$ is positive, so it can be represented in the form \cite{Krein}
 \[f_0^{-1}(\lambda)=\left|\sum\limits_{k=0}^{N}\gamma_k e^{-ik\lambda}\right|^2,\,\,  \lambda \in \left[-\pi, \pi\right],
 \]
Hence, $f_0(\lambda)$ is the spectral density of the autoregressive stochastic sequence of order  $N$ generated by the equation
\begin{equation} \label{regr}
\sum\limits_{k=0}^{N}\gamma_k \xi(n-k)=\varepsilon_n,
\end{equation}
where $\varepsilon_n$ is a ``white noise'' sequence.

The minimax spectral characteristic $h(f_0)$ of the optimal linear estimate of the functional $A_N\xi$ can be calculated by the formula (\ref{5}), where
 \[C_N(e^{i\lambda})=\sum\limits_{k=0}^{N}c(k) e^{ik\lambda}=p_0=P^{-1}a(0),\]
  namely
\begin{equation} \label{spectr} \begin{gathered}
h(f_0)=\sum\limits_{k=0}^{N}a(k) e^{ik\lambda} - P^{-1}a(0)\sum\limits_{k=-N}^{N}r_k e^{ik\lambda}
 =\sum\limits_{k=1}^{N}a(k) e^{-ik\lambda}.
\end{gathered}\end{equation}

Let the sequence $a(N), a(N-1),\dots,a(0)$ that determines the functional  $A_N\xi$ be strictly positive.
In this case the Fourier coefficients $r_k=r_{-k}, k=0,1,\dots,N,$  can be found from the equation
\[B_N^0\vec{\bold{p}}_N^0=\vec{\bold{a}}_N, \quad \vec{\bold{p}}_N^0=(0,\ldots,0,p_0).
\]
From this equation we find the unknown values of
 \[r_k=Pa(N-k)a^{-1}(N), \,\, k=0,1,\dots,N.\]

The minimax spectral characteristic $h(f_0)$ of the optimal linear estimate of the functional $A_N\xi$ can be calculated by the formula
\begin{equation} \label{spectr2}
h(f_0)=\sum\limits_{k=0}^{N}a(k) e^{ik\lambda} - P^{-1}a(N)e^{iN\lambda}\sum\limits_{k=-N}^{N}r_{|k|} e^{ik\lambda}
 =\sum\limits_{k=1}^{N}a(N-k) e^{-i(N+k)\lambda}.
\end{equation}

Summing up our reasoning we come to conclusion that the following theorem holds true.
\begin{thm}
The least favourable in the class $D_0^{-}$ spectral density for the optimal linear estimation of the functional $A_N\xi$ determined by strictly positive sequence $a(0), a(1),\dots,a(N)$
is the spectral density of the autoregressive sequence (\ref{regr}) whose Fourier coefficients are $r_k=r_{-k}=Pa(k)a^{-1}(0), k=0,1,\dots,N$. The minimax spectral characteristics $h(f_0)$ is given by formula (\ref{spectr}).
The least favourable in the class $D_0^{-}$ spectral density for the optimal linear estimation of the functional $A_N\xi$ determined by strictly positive sequence $a(N), a(N-1),\dots,a(0)$
is the spectral density of the autoregressive sequence whose Fourier coefficients are $r_k=r_{-k}=Pa(N-k)a^{-1}(N), k=0,1,\dots,N$. The minimax spectral characteristics $h(f_0)$ is given by formula (\ref{spectr2}).
\end{thm}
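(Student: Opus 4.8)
The plan is to verify that the density $f_0$ and the characteristic $h(f_0)$ described above form a saddle point of $\Delta(h,f)$ on $H_D\times D_0^{-}$; the two assertions are symmetric, so I carry out the first one (the sequence $a(0),\dots,a(N)$ strictly positive) and obtain the second by reindexing. First I would pass from the constrained problem \eqref{7} to Lagrange multipliers: since $\tilde{\Delta}(f)=-\tfrac{1}{2\pi}\int_{-\pi}^{\pi}|C_N^{0}(e^{i\lambda})|^{2}f_0^{-2}(\lambda)f(\lambda)\,d\lambda$ is linear in $f$ and $D_0^{-}$ is convex, the condition $0\in\partial\Delta_{D}(f_0)$ together with a generalization of the Lagrange lemma yields \eqref{99}, i.e. $\bigl|\sum_{k=0}^{N}c(k)e^{ik\lambda}\bigr|^{2}=p_0^{2}$ almost everywhere, where $\vec{\bold{c}}_N$ solves the Toeplitz system $B_N^{0}\vec{\bold{c}}_N=\vec{\bold{a}}_N$ of \eqref{999} built from the Fourier coefficients $r_k$ of $f_0^{-1}$. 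The key algebraic observation is that a polynomial $P(z)=\sum_{k=0}^{N}c(k)z^{k}$ of constant modulus $p_0$ on $|z|=1$ must be a single monomial $c(m)z^{m}$ with $|c(m)|=p_0$, since $P(z)\widetilde{P}(z)=p_0^{2}z^{N}$ with $\widetilde{P}(z)=z^{N}\overline{P(1/\bar z)}$ forces $P$ to divide $p_0^{2}z^{N}$. Taking $m=0$ gives $\vec{\bold{c}}_N=\vec{\bold{p}}_N^{0}=(p_0,0,\dots,0)$, and inserting this into \eqref{999} yields $r_k p_0=a(k)$, hence $r_k=a(k)/p_0$ and $p_0=a(0)/r_0$ from the $k=0$ row.

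Next I would use the maximin character of $f_0$ to pin down the remaining constant. By \eqref{6} the error at $f_0$ equals $\langle (B_N^{0})^{-1}\vec{\bold{a}}_N,\vec{\bold{a}}_N\rangle=p_0 a(0)=a(0)^{2}/r_0$, and since $f_0$ must maximize this over the range $r_0=\tfrac{1}{2\pi}\int_{-\pi}^{\pi}f_0^{-1}(\lambda)\,d\lambda\ge P$ admitted by $D_0^{-}$, the maximum is attained exactly at $r_0=P$, so $r_k=r_{-k}=Pa(k)a^{-1}(0)$ for $k=0,1,\dots,N$. The function $f_0^{-1}(\lambda)=\sum_{k=-N}^{N}r_{|k|}e^{ik\lambda}$ is then a real trigonometric polynomial with strictly positive coefficients, hence $f_0^{-1}(\lambda)>0$ on $[-\pi,\pi]$, and by the factorization of a positive trigonometric polynomial \cite{Krein} it can be written as $\bigl|\sum_{k=0}^{N}\gamma_k e^{-ik\lambda}\bigr|^{2}$; thus $f_0$ is the spectral density of the autoregressive sequence \eqref{regr}.

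It then remains to close the saddle point. From \eqref{5} with $C_N(e^{i\lambda})\equiv p_0=P^{-1}a(0)$ one has $C_N(e^{i\lambda})f_0^{-1}(\lambda)=\sum_{k=-N}^{N}a(|k|)e^{ik\lambda}$, so $h(f_0)=A_N(e^{i\lambda})-C_N(e^{i\lambda})f_0^{-1}(\lambda)$ collapses to a combination of the exponentials $e^{-ik\lambda}$, $k=1,\dots,N$, which is \eqref{spectr}; all of its frequencies lie in $\mathbb{Z}\backslash\{0,1,\dots,N\}$, so $h(f_0)\in L_2^{N}(f)$ for every $f\in D_0^{-}$, i.e. $h(f_0)\in H_D$. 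Hence the saddle point inequalities displayed after \eqref{7} hold, and $f_0$ is least favourable with minimax characteristic \eqref{spectr}. For the second assertion one takes the monomial exponent $m=N$, i.e. $\vec{\bold{c}}_N=\vec{\bold{p}}_N^{0}=(0,\dots,0,p_0)$; then \eqref{999} gives $r_k=r_{-k}=Pa(N-k)a^{-1}(N)$, $f_0$ is again autoregressive of order $N$, and the same substitution in \eqref{5} produces \eqref{spectr2}, whose frequencies again avoid $\{0,1,\dots,N\}$, so $h(f_0)\in H_D$ once more.

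The step I expect to be the main obstacle sits between the Lagrange condition and the final formulas: on the one hand, deducing from \eqref{99} that $\sum_{k=0}^{N}c(k)e^{ik\lambda}$ is a pure monomial (so that $\vec{\bold{c}}_N=\vec{\bold{p}}_N^{0}$), and on the other hand, arguing that the lower-bounded constraint defining $D_0^{-}$ is active at the optimum ($r_0=P$), which has to come from the monotone dependence of the maximin error on $r_0$ rather than from a mere normalization. Once these are settled, what remains is linear algebra on $B_N^{0}\vec{\bold{c}}_N=\vec{\bold{a}}_N$ and direct substitution into the already-established formulas \eqref{5}, \eqref{6}, \eqref{99}, \eqref{999}.
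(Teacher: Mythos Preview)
Your argument follows the paper's route almost line by line: Lagrange multiplier $\Rightarrow$ \eqref{99}, solve the Toeplitz system \eqref{999} with $\vec{\bold c}_N=(p_0,0,\dots,0)$, force $r_0=P$ from the extremum condition, factorize $f_0^{-1}$ \`a la Fej\'er--Riesz, and substitute back into \eqref{5}. Your monomial observation (a polynomial of constant modulus on $|z|=1$ must be $cz^m$) is actually a clean addition: the paper simply writes down $\vec{\bold p}_N^0=(p_0,0,\dots,0)$ without saying why \eqref{99} forces this shape, so you have made that step more honest.

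There is one genuine slip. The implication ``a real trigonometric polynomial with strictly positive coefficients, hence $f_0^{-1}(\lambda)>0$'' is false in general; e.g.\ $1+2\cos\lambda$ has positive coefficients but is $-1$ at $\lambda=\pi$. What the theorem assumes is that $a(0),a(1),\dots,a(N)$ is a \emph{strictly positive sequence} in the sense of \cite{Krein}, which means precisely that the trigonometric polynomial $\sum_{k=-N}^{N}a(|k|)e^{ik\lambda}$ is strictly positive on $[-\pi,\pi]$ (this is the condition $|b/a|<\tfrac12$ in the paper's Example~2). Since $r_k=Pa(k)a^{-1}(0)$ is a positive scalar multiple of $a(k)$, the sequence $r_0,\dots,r_N$ is strictly positive in the same sense, and \emph{that} is what gives $f_0^{-1}(\lambda)>0$ and hence the Fej\'er--Riesz factorization. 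Replace your ``positive coefficients'' sentence with this use of the hypothesis and the proof is complete.
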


\begin{exm} Consider the problem of the optimal linear estimation of the functional
$A_1\xi = a\xi(0)+b\xi(1)$ which depends on the unknown values $\xi(0)$, $\xi(1)$ of the stationary stochastic sequence $\left\{\xi(j):j \in \mathbb{Z}\right\}$ from observations of the sequence at points of time $\mathbb{Z}\backslash \left\{0,1\right\}. $

The least favourable spectral density in the class $D_0^{-}$ is of the form
\[  f_0(\lambda)=1\big/{|x+ye^{i\lambda}|^2},
\]
where
\[
x=
 \pm\left(P\left(1\pm\left(1-4(b/a)^2\right)^{1/2}\right)\Big/2\right)^{1/2},
\]
\[
y=
  \pm\left(P\left(1\mp\left(1-4(b/a)^2\right)^{1/2}\right)\Big/2\right)^{1/2}
\]
under the condition $|b/a|<1/2$. For example, in the case of $a=4$, $b=\sqrt3$
the least favourable spectral density
 $f_0(\lambda)$ and the minimax spectral characteristic are calculated by the formulas
\[
f_0(\lambda)=4\Big/P|\sqrt3+e^{i\lambda}|^2,
\]
\[ h(f_0)=-\sqrt3 e^{-i\lambda}.
\]

Under the condition $|b/a|>2$ the least favourable spectral density
 $f_0(\lambda)$ is as follows
\[
f_0(\lambda)=1\Big/|x+ye^{i\lambda}|^2,
\]
where
\[
x=
 \pm\left(P\left(1\pm\left(1-4(a/b)^2\right)^{1/2}\right)\Big/2\right)^{1/2},
\]
\[
y=
  \pm\left(P\left(1\mp\left(1-4(a/b)^2\right)^{1/2}\right)\Big/2\right)^{1/2}.
\]
For example, in the case of $a=\sqrt3$, $b=4$
the least favourable spectral density
 $f_0(\lambda)$ and the minimax spectral characteristic are calculated by the formulas
\[   f_0(\lambda)=
     4\Big/P|1+\sqrt3 e^{i\lambda}|^2,\]
\[
     h(f_0)=-\sqrt3 e^{2i\lambda}.
\]

\end{exm}

\subsection{Least favourable spectral densities in the class $D_M$}

Consider the problem of the optimal estimation of the functional $A_N\xi=\sum_{j=1}^{N}a(j)\xi(j)$ which depends on the unknown values of a stationary stochastic sequence $\xi(j)$
from observations of the sequence $\xi(j)$ at points of time $j\in\mathbb{Z}\backslash \{0,1,\dots,N\} $
in the case where the spectral density is from
the set of spectral densities with  restrictions on the moments of the function $f^{-1}(\lambda).$ Let
\begin{equation*}
D_M = \left\{f(\lambda)\left|\frac{1}{2\pi}\int\limits_{-\pi}^{\pi} f^{-1}(\lambda)\cos(m\lambda)d\lambda= r_m \right. , m=0,1,\ldots, M\right\},
\end{equation*}
 where $r_m, m=0,1,\ldots,M $ is a strictly positive sequence. There is an infinite number of functions in the class $D_M$ \cite{Krein} and the function
\[f^{-1}(\lambda)=\sum\limits_{m=-M}^{M}r_{\left|m\right|} e^{im\lambda}>0,  \,\,\,\lambda \in \left[-\pi,\pi\right].
\]
To find solutions to the constrained optimization problem (\ref{7}) for the set $D_M$ of admissible spectral densities we
use the  Lagrange multipliers method and get the equation
\begin{equation} \label{momenty}
\left|\sum\limits_{k=0}^{N}c(k) e^{ik\lambda}\right|^2 = \sum\limits_{m=0}^M \alpha_m \cos (m\lambda)=\left|\sum\limits_{m=0}^M p(m) e^{im\lambda}\right|^2,
\end{equation}
where $\alpha_m, m=0,1,\ldots,M $ are the Lagrange multipliers and $c(k), k=0,\ldots,N $ are solutions to the equation $B_N^0\vec{{c}}_N=\vec{{a}}_N$.

Consider two cases: $M\geq N$ and $M < N.$ Let $M\geq N.$ In this case the given Fourier coefficients  $r_m$ define the matrix $B_N^0$ and the optimization problem (\ref{extrem}) is degenerate.
If we take
$p(N+1)=\cdots=p(M)=0$ and components $(p(0),\dots,p(N))$ of the vector $\vec{{p}}_N$  find from the equation $B_N^0\vec{{p}}_N=\vec{{a}}_N$ then
the relation (\ref{momenty}) holds true. Thus the least favorable is every density $f(\lambda)\in D_M$ and  the density of the autoregression stochastic sequence
\begin{equation}\label{ff1}
f_0(\lambda)=1/{\sum\limits_{m=-M}^M r_{\left|m\right|}e^{im\lambda}}=1/{\left|\sum\limits_{k=0}^M \gamma_k e^{ik\lambda}\right|}
\end{equation}
is least favourable, too.

Let $M < N.$ Then the matrix $B_N$ is determined by the known  $r_m$, $m=0,1,\dots,M$ and the unknown  $r_m$, $m=M+1,\dots,N$,  Fourier coefficients of the function  $f^{-1}(\lambda).$

The unknown coefficients $p(k)$, $k=0,1,\dots,M$, and  $r_m$, $m=M+1,\dots,N$, can be found  from the equation  $B_N\vec{{p}}_N^0=\vec{{a}}_N$ with
 $p_N^0=(p(0),\dots,p(M),0,\dots,0)$, or from the equation  $B_N\vec{{p}}^N_0=\vec{{a}}_N$ with
$p_0^N=(0,\dots,0,p(M),p(M-1),\dots,p(0))$.

If the sequence $r_m$, $m=0,1,\dots,N$, that is constructed from the strictly positive sequence  $r_m$, $m=0,1,\dots,M$, and the calculated coefficients $r_m$, $m=M+1,\dots,N$, is also strictly  positive, then the least favourable spectral density  $f_0(\lambda)$ is determined by the Fourier coefficients  $r_m$, $m=0,1,\dots,N$, of the function $f_0^{-1}(\lambda)$
\begin{equation} \label{ff2}
f_0(\lambda)=1/{\sum\limits_{m=-N}^N r_{\left|m\right|}e^{im\lambda}}=1/{\left|\sum\limits_{k=0}^N \gamma_k e^{ik\lambda}\right|}
\end{equation}

\noindent Let us summarize our results and present them in the form
of a theorem.

\begin{thm}
The least favourable spectral density in the class  $D_M$  for the optimal linear estimation of the functional $A_N\xi$ in the case where $M\geq N$ is the spectral density (\ref{ff1}) of the autoregression stochastic sequence of order $M$ determined by  coefficients $r_m, m=0,1,\ldots,M.$
If $M < N$ and solutions $r_m$, $m=M+1,\dots,N$, to  the equation $B_N\vec{{p}}_N^0=\vec{{a}}_N$ with
 $p_N^0=(p(0),\dots,p(M),0,\dots,0)$, or to the equation  $B_N\vec{{p}}^N_0=\vec{{a}}_N$ with
$p_0^N=(0,\dots,0,p(M),p(M-1),\dots,p(0))$
together with coefficients $r_m$, $m=0,1,\dots,M$, form a strictly positive sequence, the least favourable spectral density in  $D_M$ is the density (\ref{ff2}) of the autoregression stochastic sequence of the order $N.$ The minimax characteristic of the estimate is calculated by formula (\ref{ch}).
\end{thm}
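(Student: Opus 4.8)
The plan is to extract the candidate least favourable density from the stationarity condition for the optimisation problem \eqref{7} and then to argue that this condition is also sufficient. Since the functional $\tilde\Delta(f)=-\frac{1}{2\pi}\int_{-\pi}^{\pi}|C_N^0(e^{i\lambda})|^2 f_0^{-2}(\lambda)\,f(\lambda)\,d\lambda$ is linear in $f$ and the set $D_M$ is convex, a density $f_0$ solves \eqref{7} precisely when $0\in\partial\bigl(\tilde\Delta+\delta(\cdot\,|D_M)\bigr)(f_0)$. Writing the variations of the constraint functionals $f\mapsto\frac{1}{2\pi}\int_{-\pi}^{\pi}f^{-1}(\lambda)\cos(m\lambda)\,d\lambda$, $m=0,\dots,M$, and cancelling the common positive factor $f_0^{-2}$ from the resulting Euler equation, this condition reduces to \eqref{momenty}, namely
\[
\Bigl|\sum_{k=0}^{N}c(k)e^{ik\lambda}\Bigr|^2=\sum_{m=0}^{M}\alpha_m\cos(m\lambda),
\]
where $\vec{\bold{c}}_N=(c(0),\dots,c(N))$ solves $B_N^0\vec{\bold{c}}_N=\vec{\bold{a}}_N$ with $B_N^0$ assembled from the Fourier coefficients of $f_0^{-1}$, and where the right-hand side, being a nonnegative trigonometric polynomial of degree $M$, factors as $|\sum_{m=0}^{M}p(m)e^{im\lambda}|^2$ by the Fej\'er--Riesz lemma. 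The first step is just to record this characterisation and the factorisation.

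I would then separate the cases $M\ge N$ and $M<N$. If $M\ge N$, the entries $r_{k-j}$, $0\le k,j\le N$, of $B_N^0$ all carry an index of absolute value $\le N\le M$ and are thus fixed by the prescribed moment data; hence $B_N^0$, and with it $\vec{\bold{c}}_N$ and the polynomial $C_N^0$, is the \emph{same} for every admissible density, and the factorisation equation holds identically with $p(k)=c(k)$ for $k\le N$ and $p(N+1)=\dots=p(M)=0$. Thus the stationarity condition is met, with one and the same multiplier vector, by \emph{all} $f\in D_M$: the problem \eqref{7} is degenerate and every density in $D_M$ is least favourable. In particular the autoregressive density \eqref{ff1}, whose inverse $\sum_{|m|\le M}r_{|m|}e^{im\lambda}$ is a strictly positive trigonometric polynomial by the hypothesis on $(r_m)$ (so that \eqref{ff1} is indeed an admissible spectral density lying in $D_M$), is least favourable; linearity of $\tilde\Delta$ on the convex set $D_M$ turns stationarity into genuine optimality.

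If $M<N$, the matrix $B_N$ now involves the unprescribed Fourier coefficients $r_{M+1},\dots,r_N$ of $f_0^{-1}$. I would treat these $N-M$ numbers together with $p(0),\dots,p(M)$ as the $N+1$ unknowns of the square system $B_N\vec{\bold{p}}_N^{0}=\vec{\bold{a}}_N$ with $\vec{\bold{p}}_N^{0}=(p(0),\dots,p(M),0,\dots,0)$, or of the reversed system $B_N\vec{\bold{p}}_0^{N}=\vec{\bold{a}}_N$ with $\vec{\bold{p}}_0^{N}=(0,\dots,0,p(M),\dots,p(0))$; solvability of one of these systems together with strict positivity of the completed sequence $r_0,\dots,r_N$ are exactly the hypotheses of the theorem. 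Granting them, $f_0^{-1}(\lambda)=\sum_{|m|\le N}r_{|m|}e^{im\lambda}$ is a positive trigonometric polynomial of degree $N$, hence by the factorisation of positive polynomials \cite{Krein} equals $|\sum_{k=0}^{N}\gamma_k e^{ik\lambda}|^2$, which is precisely the autoregressive density \eqref{ff2}. It remains to check that $f_0\in D_M$ (its first $M+1$ cosine moments are the prescribed $r_0,\dots,r_M$ by construction) and that $f_0$ satisfies \eqref{momenty} (by the very choice of $p(0),\dots,p(M)$), after which convexity again upgrades this to optimality. With $f_0$ in hand, the minimax spectral characteristic is read off from \eqref{ch}: since $C_N^0 f_0^{-1}$ is then a finite trigonometric polynomial whose Fourier coefficients at the indices $0,\dots,N$ equal $a(0),\dots,a(N)$ by \eqref{koef}, the function $h(f_0)=A_N(e^{i\lambda})-C_N^0(e^{i\lambda})f_0^{-1}(\lambda)$ has vanishing coefficients on $\{0,\dots,N\}$ and hence belongs to $L_2^N(f)$ for \emph{every} admissible $f$, i.e. to $H_D$; so $(f_0,h(f_0))$ is the sought saddle point.

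The main obstacle I expect is the verification rather than the computation: one must justify that the formal Lagrange condition \eqref{momenty} is sufficient for the maximin, which is a self-consistency (fixed-point) argument because the functional minimised in \eqref{7} is itself built from the candidate $f_0$; and in the case $M<N$ one must ensure that the augmented linear system can be solved compatibly both with strict positivity of the completed moment sequence and with the polynomial factorisation --- this is exactly where the hypotheses of the theorem carry the weight. The degeneracy in the case $M\ge N$, where optimality holds throughout $D_M$ and the autoregressive density is only a distinguished, non-unique least favourable density, should also be stated carefully so as not to overclaim.
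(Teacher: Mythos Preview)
Your proposal is correct and follows essentially the same route as the paper: derive the Lagrange condition \eqref{momenty}, split into the cases $M\ge N$ (degenerate, $B_N^0$ fully determined by the prescribed moments, so every density in $D_M$ is least favourable and in particular \eqref{ff1}) and $M<N$ (solve $B_N\vec p_N^0=\vec a_N$ for the $M+1$ values $p(0),\dots,p(M)$ together with the $N-M$ missing moments $r_{M+1},\dots,r_N$, then invoke strict positivity to obtain the autoregressive density \eqref{ff2}). Your additional remarks---the Fej\'er--Riesz factorisation, the count of unknowns making the $M<N$ system square, and the explicit check that $h(f_0)\in H_D$ because $C_N^0 f_0^{-1}$ is a trigonometric polynomial whose coefficients at $0,\dots,N$ match $a(0),\dots,a(N)$---go slightly beyond what the paper spells out, but they are sound refinements rather than a different argument.
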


Similar statement holds true for the set of spectral densities
\begin{equation*}
D_{M,\cal R}=  \left\{f(\lambda)\left|\frac{1}{2\pi}\int\limits_{-\pi}^{\pi} f^{-1}(\lambda)\cos(w\lambda)d\lambda= r_m \right., \, m=0,1,\dots,M; \vec
r_{{M}}\in\cal R\right\},
\end{equation*}
where $\cal R$ is a convex compact which have a strictly positive sequence as an interior point.

\subsection{Least favourable spectral densities in the class $D_v^{u-}$}

Consider the problem of the optimal estimation of the functional $A_N\xi=\sum_{j=1}^{N}a(j)\xi(j)$ which depends on the unknown values of a stationary stochastic sequence $\xi(j)$
from observations of the sequence $\xi(j)$ at points of time $j\in\mathbb{Z}\backslash \{0,1,\dots,N\} $
in the case where the spectral density is from the set of spectral densities
\begin{equation*}
D_v^{u-} = \left\{f(\lambda)\left|0\leq v(\lambda)\leq f(\lambda)\leq u(\lambda), \frac{1}{2\pi}\int\limits_{-\pi}^{\pi} f^{-1}(\lambda)d\lambda= P \right. \right\},
\end{equation*}
where $v(\lambda), u(\lambda)$ are given bounded spectral densities. Let the sequence
$\{a(0),a(1),\dots,a(N)\}$ (or the sequence $\{a(N),a(N-1),\dots,a(0)\}$)
 be strictly positive.
To find solutions to the constrained optimization problem (\ref{7}) for the set $D_v^{u-}$ of admissible spectral densities we
use the condition $0 \in \partial\Delta_D(f_0)$.
It follows from the condition $0 \in \partial\Delta_D(f_0)$ for $D=D_v^{u-}$ that the Fourier coefficients of the function $f_0^{-1}$ satisfy both equation
\begin{equation*}
B_N^0\vec{{c}}_N=\vec{{a}}_N
\end{equation*}
 and the equation
\begin{equation*}
\left|\sum\limits_{k=0}^{N}\left(\left(B_N^0\right)^{-1}\vec{{a}}_N\right)_k e^{ik\lambda}\right|^2=\psi_1(\lambda)+\psi_2(\lambda)+p_0^{-2},
\end{equation*}
where $\psi_1(\lambda)\geq 0$ and $\psi_1(\lambda)=0$ if $f_0(\lambda)\geq v(\lambda);$ $\psi_2(\lambda)\leq 0$ and $\psi_2(\lambda)=0$ if $f_0(\lambda)\leq u(\lambda).$

Therefore, in the case where  $ v(\lambda)\leq f_0(\lambda)\leq u(\lambda)$, the function $ f_0^{-1}(\lambda)$ is of the form
\begin{equation*}
f_0^{-1}(\lambda)=\sum\limits_{k=-N}^{N}r_{{|k|}}e^{ik\lambda}=\left|\sum\limits_{k=0}^{N} \gamma_k e^{ik\lambda}\right|^2,
\end{equation*}
with $r_{{k}}=Pa(k)a^{-1}(0)$, in the case where the sequence $\{a(0),a(1),\dots,a(N)\}$ is strictly positive, and with
$r_{{k}}=r_{{-k}}=Pa(N-k)a^{-1}(N)$, in the case where the sequence $\{a(N),a(N-1),\dots,a(0)\}$ is strictly positive.

The least favourable in the class $D=D_v^{u-}$ is the density of the autoregression stochastic sequence of the order  $N$  if the following inequality holds true
\begin{equation}\label{18}
v^{-1}(\lambda) \geq \sum\limits_{k=0}^{N}(r_k e^{ik\lambda}+r_{-k}e^{-ik\lambda})=\left|\sum\limits_{k=0}^{N} \gamma_k e^{ik\lambda}\right|^2\geq u^{-1}(\lambda), \,\, \lambda \in \left[-\pi,\pi\right].
\end{equation}

In general case the least favourable density is of the form
\begin{equation}\label{19}
f_0(\lambda)=\max \left\{v(\lambda), \min \left\{u(\lambda),\left|p_0\sum\limits_{k=0}^{N}\left(\left(B_N^0\right)^{-1}\vec{{a}}_N\right)_k e^{ik\lambda}\right|^{-2}\right\}\right\}.
\end{equation}

The following theorem holds true.
\begin{thm}
If the sequence $\{a(0),a(1),\dots,a(N)\}$ is strictly positive and coefficients $r_{{k}}=r_{{-k}}=Pa(k)a^{-1}(0)\,$, $k=0,\dots,\,N $, satisfy the inequality  (\ref{18})
(or the sequence $\{a(N),a(N-1),\dots,a(0)\}$ is strictly positive and coefficients
$r_{{k}}=r_{{-k}}=Pa(N-k)\*a^{-1}(N)$
satisfy the inequality  (\ref{18})), then the least favourable in the class $D_v^{u-}$ spectral density for the optimal linear estimate of the functional $A_N\xi$ is density  of the autoregression stochastic sequence (\ref{regr}) of order $N$.
The minimax characteristic  $h(f_0)$ of the estimate can be calculated by the formula  (\ref{spectr}).
If the inequality  (\ref{18}) is not satisfied, then the least favourable spectral density in  $D_v^{u-}$ is determined by  relation (\ref{19}) and the extremum condition (\ref{extrem}). The minimax characteristic of the estimate is calculated by formula (\ref{ch}).
\end{thm}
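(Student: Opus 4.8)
The plan is to obtain the least favourable density from the general variational characterization $0\in\partial\Delta_D(f_0)$ recorded in the subsection on the minimax-robust method of interpolation, and then to deduce the two assertions by verifying the saddle point inequalities through the Lemma of that subsection. First I would rewrite the functional to be minimised over $D$ as
\[
\tilde\Delta(f)=-\frac{1}{2\pi}\int\limits_{-\pi}^{\pi}\left|C_N^0(e^{i\lambda})\right|^2 f_0^{-2}(\lambda)\,f(\lambda)\,d\lambda,
\]
which is linear in $f$, add the indicator function $\delta(f\,|\,D_v^{u-})$, and compute the subdifferential: the pointwise bounds $v\le f\le u$ contribute functions $\psi_1(\lambda)\ge 0$ vanishing where $f_0>v$ and $\psi_2(\lambda)\le 0$ vanishing where $f_0<u$, while the moment constraint $\frac{1}{2\pi}\int f^{-1}=P$, differentiated through $f\mapsto f^{-1}$, contributes a term proportional to $f_0^{-2}$ with a Lagrange multiplier $p_0^{-2}$. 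Cancelling the common factor $f_0^{-2}$ gives the stationarity relation $|\sum_{k=0}^N((B_N^0)^{-1}\vec{\bold{a}}_N)_k e^{ik\lambda}|^2=\psi_1(\lambda)+\psi_2(\lambda)+p_0^{-2}$, to be solved together with the compatibility system $B_N^0\vec{\bold{c}}_N=\vec{\bold{a}}_N$ and the normalisation $\frac{1}{2\pi}\int f_0^{-1}=P$.

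Next I would treat the strip-interior case $v(\lambda)<f_0(\lambda)<u(\lambda)$ a.e., in which $\psi_1\equiv\psi_2\equiv0$ and hence $C_N^0(e^{i\lambda})$ reduces to a nonzero constant. Retracing the argument already used for the class $D_0^-$, the Fourier coefficients of $f_0^{-1}$ then satisfy $r_k p_0=a(k)$, and the extremum condition \eqref{extrem} together with the normalisation $r_0=P$ forces $r_k=r_{-k}=Pa(k)a^{-1}(0)$ (respectively $r_k=r_{-k}=Pa(N-k)a^{-1}(N)$ when it is the reversed sequence that is strictly positive). Strict positivity of the $a(k)$ makes $\sum_{k=-N}^{N}r_{|k|}e^{ik\lambda}$ a strictly positive trigonometric polynomial of degree $N$, which by \cite{Krein} factors as $|\sum_{k=0}^N\gamma_k e^{ik\lambda}|^2$; hence $f_0$ is exactly the spectral density of the autoregressive sequence \eqref{regr}. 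Inequality \eqref{18} is precisely the requirement that this $f_0$ lie inside the strip, so that the assumption $\psi_1\equiv\psi_2\equiv0$ is self-consistent. Substituting $C_N^0(e^{i\lambda})=P^{-1}a(0)$ into \eqref{5} yields $h(f_0)=\sum_{k=1}^N a(k)e^{-ik\lambda}$, formula \eqref{spectr} (and \eqref{spectr2} in the reversed case); since this is a bounded function supported on the frequencies $\{-1,\dots,-N\}$, it lies in $L_2^N(f)$ for every admissible $f$, so $h(f_0)\in H_D$ and the Lemma gives the saddle point, establishing the first assertion. When \eqref{18} fails, the strip constraints are active on a set of positive measure; solving the stationarity relation with the complementary-slackness description of $\psi_1$ and $\psi_2$ forces $f_0(\lambda)=\max\{v(\lambda),\min\{u(\lambda),|p_0\sum_{k=0}^N((B_N^0)^{-1}\vec{\bold{a}}_N)_k e^{ik\lambda}|^{-2}\}\}$, which is \eqref{19}; the matrix $B_N^0$, and hence the coefficients and $p_0$, are then pinned down self-consistently by \eqref{extrem} and the normalisation, and the minimax characteristic is read off from \eqref{ch}.

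The main obstacle, I expect, is justifying that the first-order condition $0\in\partial\Delta_D(f_0)$ is sufficient for global minimality of $\tilde\Delta$ over $D_v^{u-}$: the objective is linear in $f$, but the moment constraint is an equality constraint on the convex functional $f\mapsto\int f^{-1}$, so $D_v^{u-}$ is not convex, and the convex-duality step must either be run on the relaxation in which the equality is replaced by $\frac{1}{2\pi}\int f^{-1}\ge P$ (as in the class $D_0^-$, checking that the maximum of $\tilde\Delta$ is attained on the boundary) or be replaced by a direct verification of the right saddle inequality $\Delta(h(f_0);f)\le\Delta(h(f_0);f_0)$ for every $f\in D_v^{u-}$. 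Secondary technical points are confirming that the computed Fourier coefficients of $f_0^{-1}$ indeed keep the trigonometric polynomial positive so that the factorization applies — which is exactly the dichotomy the theorem records — and checking $h(f_0)\in H_D$ in the general case \eqref{19}.
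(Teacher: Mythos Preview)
Your proposal is correct and follows essentially the same route as the paper: derive the stationarity relation $|C_N^0(e^{i\lambda})|^2=\psi_1+\psi_2+p_0^{-2}$ from the condition $0\in\partial\Delta_D(f_0)$, reduce to the $D_0^-$ computation in the interior case to obtain the autoregressive density with $r_k=Pa(k)a^{-1}(0)$, and read off the truncation formula \eqref{19} when the strip constraints are active. You are in fact more thorough than the paper, which simply records the subdifferential equation and its consequences without discussing the non-convexity of the equality constraint $\frac{1}{2\pi}\int f^{-1}=P$ or the verification that $h(f_0)\in H_D$; your suggested remedy of relaxing to $\ge P$ and checking that the extremum is attained on the boundary (as is done implicitly for $D_0^-$) is the natural way to close that gap.
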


\subsection{Conclusions}

In this section we propose methods of solution of the problem of the mean-square optimal linear estimation of the functional $A_N\xi=\sum\limits_{j=0}^{N}a(j)\xi(j)$ which depends on the unknown values of the stationary stochastic sequence $\xi(j)$.
Estimates are based on observations of the sequence $\xi(j)$ at points $j\in\mathbb{Z}\backslash \{0,1,\dots,N\} $. We provide formulas for calculating the values of the mean square error and the spectral characteristic of the optimal linear estimate of the functional in the case of spectral certainty, where the spectral density of the sequence $\xi(j)$ is exactly known.
In the case of spectral uncertainty, where the spectral density is unknown, but a set of admissible spectral densities is given, the minimax approach is applied to estimation of the functional. We obtain formulas that determine the least favourable spectral densities and the minimax spectral characteristics of the optimal linear estimates of the functional $A_N\xi$ for concrete classes of admissible spectral densities. It is shown that spectral densities the autoregressive stochastic sequences are the least favourable in some classes of spectral densities.

The minimax-robust approach to the problem of  estimation of one missed value of the stationary stochastic sequences based on convex optimization methods was initiated in papers by Franke \cite{Franke1984,Franke1985}. See also papers by Hosoya \cite{Hosoya}, Taniguchi \cite{Taniguchi1981}, and survey by Kassam and Poor \cite{Kassam}.

For the relative results on the mean-square optimal linear interpolation of linear functionals for stationary stochastic sequences and processes see papers by Moklyachuk~\cite{Moklyachuk:1994}  -- \cite{Moklyachuk:2008r}, book by Moklyachuk and Masyutka~\cite{Moklyachuk:2012}, papers by Moklyachuk and Sidei\cite{Sidei}, Moklyachuk and Ostapenko\cite{Ostapenko}.

\section{Interpolation problem for stationary sequences from observations with noise}

In this section we consider the problem of the mean-square optimal estimation of the linear
functional $A_N\xi=\sum\limits_{j=0}^{N}a(j)\xi(j)$ which depends on the unknown values of a stationary stochastic sequence $\xi(j),j\in \mathbb{Z},$
from observations of the sequence $\xi(j)+\eta(j)$ at points of time $j\in\mathbb{Z}\backslash \{0,1,\dots,N\}$.
The problem is investigated in the case of spectral certainty, where the spectral densities of the stationary stochastic sequences $\xi(j)$ and $\eta(j)$ are exactly known.
In this case the classical Hilbert space projection method of linear estimation of the functional is applied. Formulas are derived for calculation the value of the mean square error and the spectral characteristic of the mean-square optimal estimate of the linear functional.
In in the case of spectral uncertainty, where the spectral densities of the stationary stochastic sequences $\xi(j)$ and $\eta(j)$ are not exactly known, but classes of admissible spectral densities are given, the minimax-robust
procedure to linear estimation of the functional is applied.
Relations which determine the least favourable spectral densities and the minimax spectral characteristics are proposed for some special sets of admissible spectral densities.

\subsection{The classical Hilbert space projection method of linear interpolation}

Let  $\xi(j), j\in \mathbb{Z}$, and $\eta(j), j\in  \mathbb{Z}$,  be (wide sense) stationary stochastic sequences with zero mathematical expectations $E\xi(j)=0$, $E\eta(j)=0$.
The correlation functions $R_{\xi}(k)=E\xi(j+k)\overline{\xi(j)}$ and $R_{\eta}(k)=E\eta(j+k)\overline{\eta(j)}$
 of the stationary stochastic sequences $\xi(j), j\in \mathbb{Z}$, and $\eta(j), j\in  \mathbb{Z}$,
admit  the spectral representations \cite{Gihman}
\[
 R_{\xi}(k)=\int\limits_{-\pi}^{\pi}e^{ik\lambda}F(d\lambda), \quad R_{\eta}(k)=\int\limits_{-\pi}^{\pi}e^{ik\lambda}G(d\lambda),
 \]
where $F(d\lambda)$ and $G(d\lambda)$ are the spectral measures of the sequences.
We will consider stationary stochastic sequences with absolutely continuous spectral measures $F(d\lambda)$ and $G(d\lambda)$ and the correlation functions of the form
\[
 R_{\xi}(k)=\,\frac{1}{2\pi}\int\limits_{-\pi}^{\pi}e^{ik\lambda}f(\lambda)d\lambda, \quad R_{\eta}(k)=\,\frac{1}{2\pi}\int\limits_{-\pi}^{\pi}e^{ik\lambda}g(\lambda)d\lambda,
 \]
where $f(\lambda)$ and $g(\lambda)$ are the spectral density functions of the sequences $\xi(j), j\in \mathbb{Z}$, and $\eta(j), j\in  \mathbb{Z}$, correspondingly.

We will suppose that the spectral density functions  $f(\lambda)$ and $g(\lambda)$
satisfy the minimality condition
\begin{equation}\label{nminimal}
\int\limits_{-\pi}^{\pi}\frac{1}{f(\lambda)+g(\lambda)}d\lambda<\infty.
\end{equation}
Under this condition  the error-free interpolation of the unknown values of the sequence $\xi(j)+\eta(j)$ is impossible \cite{Rozanov}.

The stationary stochastic sequences $\xi(j)$ and $\eta(j)$ admit
 the spectral representations \cite{Gihman,Karhunen}
\begin{equation*}
\xi(j)=\int\limits_{-\pi}^{\pi}e^{ij\lambda}dZ_{\xi}(\lambda), \hspace{1cm}
\eta(j)=\int\limits_{-\pi}^{\pi}e^{ij\lambda}dZ_{\eta}(\lambda),
\end{equation*}
where $Z_{\xi}(d\lambda)$ and $Z_{\eta}(d\lambda)$ are orthogonal stochastic measure of the sequences $\xi(j)$ and $\eta(j)$ such that
\[
E{Z_{\xi}}(\Delta_1)\overline{{Z_{\xi}}(\Delta_2)}=F(\Delta_1\cap\Delta_2)=\,\frac{1}{2\pi}\int_{\Delta_1\cap\Delta_2}f(\lambda)d\lambda,
\]
\[
E{Z_{\eta}}(\Delta_1)\overline{{Z_{\eta}}(\Delta_2)}=G(\Delta_1\cap\Delta_2)=\,\frac{1}{2\pi}\int_{\Delta_1\cap\Delta_2}g(\lambda)d\lambda.
\]

Consider the problem of the mean-square optimal estimation of the linear
functional
\[A_N\xi=\sum\limits_{j=0}^{N}a(j)\xi(j)\]
 which depends on the unknown values of a stationary stochastic sequence $\xi(j),j\in\mathbb{Z},$  from observations of the sequence
$\xi(j)+\eta(j)$ at points of time $j\in\mathbb{Z}\backslash \{0,1,\dots,N\}$, where $\eta(j),j\in\mathbb{Z},$ is uncorrelated with $\xi(j),j\in\mathbb{Z},$  stationary stochastic sequence.

It follows from the spectral decomposition of the sequence $\xi(j)$
that we can represent the functional $A_{N}\xi$ in the  form
\begin{equation}\label{nasx}
A_{N}\xi=\int\limits_{-\pi}^{\pi}A_{N}(e^{i\lambda}){Z}_{\xi}(d\lambda),
 \end{equation}
where
\begin{equation*}
  A_N(e^{i\lambda})=\sum\limits_{j=0}^{N}a(j)e^{ij\lambda }.
 \end{equation*}

Denote by $\hat{A}_N\xi$  the mean square optimal linear estimate of the functional  $A_N\xi$ from observations of the sequence
 $\xi(j)+\eta(j)$ at points of time  $j\in\mathbb{Z}\backslash \{0,1,\dots,N\}$.
Denote by $\Delta(f,g)=E\left|A_N\xi-\hat{A}_N\xi\right|^2$ the mean square error of the estimate $\hat{A}_N\xi$. To find the estimate $\hat{A}_N\xi$
we will use the Hilbert space projection method proposed by A.~N.~Kolmogorov \cite{Kolmogorov}.
We will consider random values $\xi(j), j\in \mathbb{Z}$, and $\eta(j), j\in  \mathbb{Z}$, as elements of the Hilbert space  $H=L_2(\Omega,\mathcal{F},P)$
 of complex valued random variables with zero first moment, $E\xi=0$, finite second moment,
 $E|\xi|^2<\infty$, and the inner product $\left(\xi,{\eta}\right)=E\xi\overline{\eta}$.

Denote by $H^N(\xi+\eta)$ the subspace  of the Hilbert space  $H=L_2(\Omega,\mathcal{F},P)$ generated by elements $\{\xi(j)+\eta(j):j\in \mathbb{Z}\backslash \{0,1,\dots,N\}$.
Denote by $L_2(f+g)$ be the Hilbert space of complex-valued functions  that are square-integrable  with respect to the measure whose density is $f(\lambda)+g(\lambda).$
Denote by $L_2^N(f+g)$ the subspace of $L_2(f+g)$ generated by functions $\{e^{ij\lambda},\,j\in \mathbb{Z}\backslash \{0,1,\dots,N\} \}.$

The mean square optimal linear estimate $\hat{A}_N\xi$ of the functional  $A_N\xi$ from observations of the sequence $\xi(j)+\eta(j)$ at points of time  $j\in\mathbb{Z}\backslash \{0,1,\dots,N\}$
is an element of the $H^N(\xi+\eta)$. It can be represented in the form
\begin{equation}\label{nhatas}
\hat{A}_N\xi=\,\int\limits_{-\pi}^{\pi}h(e^{i\lambda}) (Z_{\xi}(d\lambda)+Z_{\eta}(d\lambda)),
 \end{equation}
where $h(e^{i\lambda}) \in L_2^N(f+g)$  is the spectral characteristic of the estimate $\hat{A}_N\xi.$

The mean square error $\Delta(h;f,g)$ of the estimate $\hat{A}_N\xi$ is given by the formula
\begin{equation*}
 \Delta(h;f,g)=E\left|A_N\xi-\hat{A}_N\xi\right|^2=\frac{1}{2\pi}\int\limits_{-\pi}^{\pi}\left|A_N(e^{i\lambda})-h(e^{i\lambda})\right|^2 f(\lambda)d\lambda
 +\frac{1}{2\pi}\int\limits_{-\pi}^{\pi}\left|h(e^{i\lambda})\right|^2 g(\lambda)d\lambda.
\end{equation*}

The Hilbert space projection method proposed by A. N. Kolmogorov \cite{Kolmogorov} makes it possible to find the spectral characteristic  $h(e^{i\lambda})$ and the mean square error $\Delta(h;f,g)$ of the optimal linear estimate of the functional  $A_N\xi$ in the case where the spectral densities $f(\lambda)$ and $g(\lambda)$ of the sequences $\xi(j),j\in\mathbb{Z},$ and $\eta(j),j\in\mathbb{Z},$ are exactly known and the minimality condition (\ref{nminimal}) is satisfied. The spectral characteristic can be found from the following conditions:
\begin{equation*} \begin{split}
1)& \hat{A}_N\xi \in H^N(\xi+\eta), \\
2)& A_N\xi-\hat{A}_N\xi \bot  H^N(\xi+\eta).
\end{split} \end{equation*}

It follows from the second condition that for any $j\in\mathbb{Z}\backslash \{0,1,\dots,N\}$ the following equations should be satisfied
\begin{equation*} \begin{split}
&E\left[\left(A_N\xi-\hat{A}_N\xi\right)\left(\overline{\xi(j)}+\overline{\eta(j)}\right)\right]=\\
&=\frac{1}{2\pi}\int\limits_{-\pi}^{\pi} \left(A_N(e^{i\lambda})- h(e^{i\lambda})\right)e^{-ij\lambda}f(\lambda)d\lambda-\frac{1}{2\pi}\int\limits_{-\pi}^{\pi}  h(e^{i\lambda})e^{-ij\lambda}g(\lambda)d\lambda=0.
\end{split} \end{equation*}

 The last equations are equivalent to equations
 \begin{equation*}
\frac{1}{2\pi}\int\limits_{-\pi}^{\pi} \left[A_N(e^{i\lambda})f(\lambda)- h(e^{i\lambda})(f(\lambda)  +g(\lambda))\right]e^{-ij\lambda}d\lambda=0, \quad  j\in \mathbb{Z} \backslash \{0,1,\dots,N\}.
 \end{equation*}

It follows from these equations that the function $\left[A_N(e^{i\lambda})f(\lambda)- h(e^{i\lambda})(f(\lambda)  +g(\lambda))\right]$ is of the form
\begin{equation}\label{nA_N}
A_N(e^{i\lambda})f(\lambda)- h(e^{i\lambda})(f(\lambda)  +g(\lambda))=C_N(e^{i\lambda}),
 \end{equation}
\[C_N(e^{i\lambda})=\sum\limits_{j=0}^{N}c(j)e^{ij\lambda},
\]
where $c(j), j=0,1,\dots,N,$ are unknown coefficients that we have to find.

 From the relation (\ref{nA_N}) we deduce that the spectral characteristic $ h(e^{i\lambda})$
 of the optimal linear estimate of the functional  $A_N\xi$
 is of the form
 \begin{equation} \label{nsphar}
 \begin{split}
h(e^{i\lambda})=&\frac{A_N(e^{i\lambda})f(\lambda)-C_N(e^{i\lambda})}{f(\lambda)+g(\lambda)}=\\
=A_N(e^{i\lambda})-&\frac{A_N(e^{i\lambda})g(\lambda)+C_N(e^{i\lambda})}{f(\lambda)+g(\lambda)}.
\end{split} \end{equation}

It follows from the first condition, which determines the spectral characteristic $h(e^{i\lambda}) \in L_2^N(f+g)$
 of the optimal linear estimate of the functional  $A_N\xi$,
that the Fourier coefficients of the function  $h(e^{i\lambda})$ are equal to zero for $j=0,1,\dots,N$, namely
\[
\frac{1}{2\pi}\int\limits_{-\pi}^{\pi} h(e^{i\lambda})e^{-ij\lambda }d\lambda=0, \,\, j =0,1,\dots,N.
\]
Using the last relations and (\ref{nsphar}) we get the following system of equations
\begin{equation*} \begin{split}
\int\limits_{-\pi}^{\pi}\left(A_N(e^{i\lambda})\frac{f(\lambda)}{f(\lambda)+g(\lambda)}-\frac{C_N(e^{i\lambda})}{f(\lambda)+g(\lambda)}\right)e^{-ij\lambda}d\lambda=0, \hspace{1cm} j =0,1,\dots,N.
\end{split} \end{equation*}

The last equations can be written in the form
\begin{equation}\label{n3} \begin{split}
\sum_{k=0}^{N}a(k)\int\limits_{-\pi}^{\pi}\frac{e^{i(k-j)\lambda}f(\lambda)}{f(\lambda)+g(\lambda)}d\lambda
-\sum\limits_{k=0}^{N}c(k)\int\limits_{-\pi}^{\pi}\frac{e^{i(k-j)\lambda}}{f(\lambda)+g(\lambda)}d\lambda=0, \,\, j =0,1,\dots,N.
\end{split} \end{equation}

Let us introduce the following notations
$$R_{j,k}^N=\frac{1}{2\pi} \int\limits_{-\pi}^{\pi}e^{-i(j-k)\lambda}\frac{f(\lambda)}{f(\lambda)+g(\lambda)}d\lambda,$$
$$B_{j,k}^N=\frac{1}{2\pi} \int\limits_{-\pi}^{\pi}e^{-i(j-k)\lambda}\frac{1}{f(\lambda)+g(\lambda)}d\lambda,$$
$$Q_{j,k}^N=\frac{1}{2\pi} \int\limits_{-\pi}^{\pi}e^{-i(j-k)\lambda}\frac{f(\lambda)g(\lambda)}{f(\lambda)+g(\lambda)}d\lambda.$$

Making use the introduced notations we can write equations (\ref{n3})  in the form
\begin{equation*}\begin{split}
\sum\limits_{k=0}^{N}R^N_{j,k}a(k)=
\sum\limits_{k=0}^{N}B^N_{j,k}c(k), \,\, j =0,1,\dots,N.
\end{split} \end{equation*}

The derived equations can be written in the matrix form
\begin{equation*}\begin{split}
\bold{R}_N\bold{a}_N=\bold{B}_N \bold{c}_N,
\end{split} \end{equation*}
where $\bold{a}_N=(a(0),a(1),\dots,a(N))$ is a vector constructed from the coefficients that determine the functional $A_N\xi$,
$\bold{c}_N=(c(0),c(1),\dots,c(N))$ is a vector constructed from the unknown coefficients $c(k),k=0,1,\dots,N$, $\bold{B}_N$ and $ \bold{R}_N$ are linear operators in $\mathbb C^{N+1}$, which are determined by matrices with elements $(\bold{B}_N)_{j,k}=B_{j,k}^N$, $(\bold{R}_N)_{j,k}=R_{j,k}^N$, $j,k=0,1,\dots,N.$

Since the matrix $\bold B_N$ is reversible \cite{Salehi}, we get the formula
\begin{equation} \label{nrivn2}
\bold{c}_N=\bold{B}_N^{-1}\bold{R}_N\bold{a}_N,
\end{equation}

 Hence, the unknown coefficients $c(j), j=0,1,\dots,N,$ are calculated  by the formula
\[c(j)=\left(\bold{B}_N^{-1}\bold{R}_N\bold{a}_N\right)_j,
\]
where  $\left(\bold{B}_N^{-1}\bold{R}_N\bold{a}_N\right)_j $ is the  $j$-th  component of the vector  $\bold{B}_N^{-1}\bold{R}_N\bold{a}_N,$
and the formula for calculating  the spectral characteristic of the estimate $\hat{A}_N\xi$ is of the form
\begin{equation}\label{n4} \begin{split}
h(e^{i\lambda})=A_N(e^{i\lambda})\frac{f(\lambda)}{f(\lambda)+g(\lambda)}-
\frac{\sum\limits_{k=0}^{N}(\bold{B}_N^{-1}\bold{R}_N\bold{a}_N)_ke^{ik\lambda} }{f(\lambda)+g(\lambda)}.
\end{split} \end{equation}

The mean square error of the estimate of the function can be calculated by the formula
\begin{equation} \label{n55} \begin{split}
\Delta(h;f,g)=E\left|A_N\xi-\hat{A}_N\xi\right|^2&=\frac{1}{2\pi}\int\limits_{-\pi}^{\pi}\frac{\left|A_N(e^{i\lambda})g(\lambda)+
\sum\limits_{k=0}^{N}(\bold{B}_N^{-1}\bold{R}_N\bold{a}_N)_k e^{ik\lambda}\right|^2}{(f(\lambda)+g(\lambda))^2}f(\lambda)d\lambda\\
&+\frac{1}{2\pi}\int\limits_{-\pi}^{\pi}\frac{\left|A_N(e^{i\lambda})f(\lambda)-
\sum\limits_{k=0}^{N}(\bold{B}_N^{-1}\bold{R}_N\bold{a}_N)_k e^{ik\lambda}\right|^2}{(f(\lambda)+g(\lambda))^2}g(\lambda)d\lambda\\
&=\langle\bold{R}_N\bold{a}_N,\bold{B}_N^{-1}\bold{R}_N\bold{a}_N\rangle+\langle\bold{Q}_N\bold{a}_N,\bold{a}_N\rangle,
\end{split} \end{equation}
where $\bold{Q_N}$ is a linear operator in $\mathbb C^{N+1}$, which is determined by matrix with elements $(\bold{Q}_N)_{j,k}=Q^N_{j,k}$,  $j,k=0,1,\dots,N.$

\noindent Let us summarize our results and present them in the form
of a theorem.

\begin{thm}
Let $\xi(j)$ and $\eta(j)$ be uncorrelated stationary stochastic sequences with the spectral densities $f(\lambda)$ and $g(\lambda)$ that satisfy the minimality condition (\ref{nminimal}). The spectral characteristic $h(e^{i\lambda})$ and the mean square error  $\Delta(h;f,g)$ of the optimal linear estimate $\hat{A}_N\xi$ of the functional $A_N\xi$ based on observations of the sequence $\xi(j)+\eta(j)$ at points of time $j\in\mathbb{Z}\backslash \{0,1,\dots,N\} $ can be calculated by  formulas (\ref{n4}), (\ref{n55}).
\end{thm}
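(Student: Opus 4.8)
The plan is to apply the Hilbert space projection method of A.~N.~Kolmogorov; in fact the argument is the sequence of computations leading up to the statement, so I will only lay out its logical skeleton. First I would regard $A_N\xi$ together with the observed values $\{\xi(j)+\eta(j):j\in\mathbb{Z}\setminus\{0,1,\dots,N\}\}$ as elements of $H=L_2(\Omega,\mathcal{F},P)$ and note that $\hat{A}_N\xi$, being the mean square optimal linear estimate, is precisely the orthogonal projection of $A_N\xi$ onto the subspace $H^N(\xi+\eta)$. Thus it is characterized by the two conditions $\hat{A}_N\xi\in H^N(\xi+\eta)$ and $A_N\xi-\hat{A}_N\xi\perp H^N(\xi+\eta)$. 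Passing to the spectral representations of $\xi$ and $\eta$, the membership condition forces $\hat{A}_N\xi$ into the form (\ref{nhatas}) with spectral characteristic $h(e^{i\lambda})\in L_2^N(f+g)$. Here the minimality assumption (\ref{nminimal}) is what makes the scheme work: it guarantees that $L_2^N(f+g)$ is a proper subspace of $L_2(f+g)$, so that the estimation problem is nondegenerate, and that $1/(f+g)$, $f/(f+g)$ and $fg/(f+g)$ are integrable, which is precisely what allows the operators $\bold{B}_N$, $\bold{R}_N$, $\bold{Q}_N$ to be defined by their Fourier coefficients.

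Next I would translate the orthogonality condition into the frequency domain: testing $A_N\xi-\hat{A}_N\xi$ against $\overline{\xi(j)}+\overline{\eta(j)}$ for each $j\in\mathbb{Z}\setminus\{0,1,\dots,N\}$ and using the orthogonal stochastic measures, one arrives at integral relations which force the function $A_N(e^{i\lambda})f(\lambda)-h(e^{i\lambda})(f(\lambda)+g(\lambda))$ to be a trigonometric polynomial $C_N(e^{i\lambda})=\sum_{j=0}^{N}c(j)e^{ij\lambda}$ of degree at most $N$ with unknown coefficients $c(0),\dots,c(N)$; solving for $h$ yields the representation (\ref{nsphar}). Then I would impose the remaining requirement $h\in L_2^N(f+g)$, namely that the Fourier coefficients of $h$ vanish for $j=0,1,\dots,N$. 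Expanding $f/(f+g)$ and $1/(f+g)$ into Fourier series, this produces the finite linear system $\bold{R}_N\bold{a}_N=\bold{B}_N\bold{c}_N$ for the vector $\bold{c}_N=(c(0),\dots,c(N))$. Because $\bold{B}_N$ is invertible under the minimality condition (cf.~\cite{Salehi}), I solve $\bold{c}_N=\bold{B}_N^{-1}\bold{R}_N\bold{a}_N$, and substituting this back into (\ref{nsphar}) gives exactly formula (\ref{n4}).

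Finally, for the error I would substitute the computed $h$ into $\Delta(h;f,g)=\frac{1}{2\pi}\int_{-\pi}^{\pi}|A_N(e^{i\lambda})-h(e^{i\lambda})|^2f(\lambda)\,d\lambda+\frac{1}{2\pi}\int_{-\pi}^{\pi}|h(e^{i\lambda})|^2g(\lambda)\,d\lambda$, using $A_N-h=(A_N g+C_N)/(f+g)$ and $h=(A_N f-C_N)/(f+g)$; after combining the two integrals the cross terms cancel, and what remains, by Parseval's identity together with the system $\bold{R}_N\bold{a}_N=\bold{B}_N\bold{c}_N$, is the quadratic form $\langle\bold{R}_N\bold{a}_N,\bold{B}_N^{-1}\bold{R}_N\bold{a}_N\rangle+\langle\bold{Q}_N\bold{a}_N,\bold{a}_N\rangle$, which is (\ref{n55}). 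I expect the main difficulties to be twofold: first, establishing that $\bold{B}_N$ is nonsingular, which is the matrix analogue of the classical scalar fact that integrability of $1/(f+g)$ makes the interpolation equations uniquely solvable and is the one point where the hypothesis (\ref{nminimal}) is genuinely needed; and second, the slightly tedious bookkeeping in the last step that turns the two spectral integrals into the compact quadratic form involving $\bold{Q}_N$. Everything else is a routine application of the projection theorem.
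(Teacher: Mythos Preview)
Your proposal is correct and follows essentially the same route as the paper: the Kolmogorov projection conditions, the spectral orthogonality equations yielding the polynomial $C_N$, the membership condition producing the linear system $\bold{R}_N\bold{a}_N=\bold{B}_N\bold{c}_N$, invertibility of $\bold{B}_N$ via \cite{Salehi}, and the substitution back to obtain (\ref{n4}) and (\ref{n55}). Your remarks on where the minimality hypothesis (\ref{nminimal}) is actually used and on the Parseval/cancellation step for the error formula are a bit more explicit than the paper's own derivation, but the argument is the same.
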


\subsection{Minimax-robust method of interpolation}

The traditional methods of estimation of the functional $A_N\xi$ which depends on the unknown values of a stationary stochastic sequence $\xi(j)$ can be applied in the case  where the spectral densities $f(\lambda)$ and $g(\lambda)$ of the considered stochastic sequences $\xi(j)$ and $\eta(j)$ are exactly known.
In practise, however, we do not have complete information on spectral densities of the sequences. For this reason  we apply the minimax(robust) method of estimation of the functional $A_N\xi$, that is we find an estimate that minimizes the maximum of the mean square errors for all spectral densities from the given class of admissible spectral densities $D$.

\begin{ozn}
For a given class of spectral densities $D=D_f\times D_g$ the spectral densities $f_0(\lambda)\in D_f$, $g_0(\lambda)\in D_g$  are called the least favourable in $D$ for the optimal linear estimation of the functional $A_N\xi$ if the following relation holds true
$$\Delta\left(f_0,g_0\right)=\Delta\left(h\left(f_0,g_0\right);f_0,g_0\right)=\max\limits_{(f,g)\in D_f\times D_g}\Delta\left(h\left(f,g\right);f,g\right).$$
\end{ozn}

\begin{ozn}
For a given class of spectral densities $D=D_f\times D_g$ the spectral characteristic $h^0(e^{i\lambda})$ of the optimal linear estimate of the functional $A_N\xi$ is called minimax-robust if
$$h^0(e^{i\lambda})\in H_D= \bigcap\limits_{(f,g)\in D_f\times D_g} L_2^N(f+g),$$
$$\min\limits_{h\in H_D}\max\limits_{(f,g)\in D}\Delta\left(h;f,g\right)=\sup\limits_{(f,g)\in D}\Delta\left(h^0;f,g\right).$$
\end{ozn}

It follows from the introduced definitions and the obtained formulas that the following statement holds true.

\begin{lem} The spectral densities $f_0(\lambda)\in D_f,$ $g_0(\lambda)\in D_g$ are the least favourable in the class of admissible spectral densities $D=D_f\times D_g$ for the optimal linear estimate of the functional $A_N\xi$ if the Fourier coefficients of the functions
$$(f_0(\lambda)+g_0(\lambda))^{-1}, \quad f_0(\lambda)(f_0(\lambda)+g_0(\lambda))^{-1}, \quad f_0(\lambda)g_0(\lambda)(f_0(\lambda)+g_0(\lambda))^{-1}$$
define operators $\bold B_N^0, \bold R_N^0, \bold Q_N^0$ that determine a solution to the optimization problem
\begin{equation} \label{nextrem}\begin{split}
\max\limits_{(f,g)\in D_f\times D_g}\langle\bold{R}_N\bold{a}_N,\bold{B}_N^{-1}\bold{R}_N\bold{a}_N\rangle+\langle\bold{Q}_N\bold{a}_N,\bold{a}_N\rangle\\
=\langle\bold{R}_N^0\bold{a}_N,(\bold{B}_N^0)^{-1}\bold{R}_N^0\bold{a}_N\rangle+\langle\bold{Q}_N^0\bold{a}_N,\bold{a}_N\rangle.
\end{split}\end{equation}
The minimax spectral characteristic $h^0=h(f_0,g_0)$ can be calculated by the formula (\ref{n4}) if $h(f_0,g_0) \in H_D.$
\end{lem}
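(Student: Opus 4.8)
The plan is to deduce the statement directly from the two definitions preceding it together with the formulas \eqref{n4} and \eqref{n55} obtained in the spectral-certainty case; no new analytic input is needed beyond minimax bookkeeping. First I would fix an arbitrary pair $(f,g)\in D_f\times D_g$ and recall that, as throughout the section, every admissible pair is assumed to satisfy the minimality condition \eqref{nminimal}, so the matrix $\mathbf{B}_N$ formed from the Fourier coefficients of $(f+g)^{-1}$ is invertible \cite{Salehi} and the theorem of the preceding subsection applies. That theorem tells us that the spectral characteristic $h(f,g)$ given by \eqref{n4} realizes $\min_{h\in L_2^N(f+g)}\Delta(h;f,g)$, and \eqref{n55} evaluates this minimum as $\langle\mathbf{R}_N\mathbf{a}_N,\mathbf{B}_N^{-1}\mathbf{R}_N\mathbf{a}_N\rangle+\langle\mathbf{Q}_N\mathbf{a}_N,\mathbf{a}_N\rangle$.

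The first assertion then follows by substitution. By the definition of the least favourable densities, $(f_0,g_0)$ is least favourable exactly when $\Delta(h(f_0,g_0);f_0,g_0)=\max_{(f,g)\in D_f\times D_g}\min_{h\in L_2^N(f+g)}\Delta(h;f,g)$. Replacing both sides by the closed-form expression just recalled, with $\mathbf{B}_N^0,\mathbf{R}_N^0,\mathbf{Q}_N^0$ the operators attached to the Fourier coefficients of the three functions listed in the statement evaluated at $(f_0,g_0)$, turns this equality verbatim into the optimization problem \eqref{nextrem}. This is the routine part of the argument.

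For the minimax characteristic I would argue that, when $h^0:=h(f_0,g_0)\in H_D$, the triple $\bigl(h^0,(f_0,g_0)\bigr)$ is a saddle point of $\Delta(h;f,g)$ on $H_D\times D$. One inequality is immediate: since $H_D=\bigcap_{(f,g)}L_2^N(f+g)\subseteq L_2^N(f_0+g_0)$ and $h^0$ minimizes $\Delta(\cdot;f_0,g_0)$ over $L_2^N(f_0+g_0)$, we get $\Delta(h;f_0,g_0)\ge\Delta(h^0;f_0,g_0)$ for every $h\in H_D$. The other inequality, $\Delta(h^0;f,g)\le\Delta(h^0;f_0,g_0)$ for all $(f,g)\in D$, is the point that needs care, and I expect it to be the main obstacle: with $h^0$ frozen the functional $(f,g)\mapsto\Delta(h^0;f,g)$ is linear, so one must combine its maximization over the convex set $D=D_f\times D_g$ with the least-favourable identity \eqref{nextrem} and the trivial bound $\Delta(h^0;f,g)\ge\Delta(h(f,g);f,g)$, which amounts to an interchange of $\min$ and $\max$ resting on convexity of $\Delta$ in $h$, linearity in $(f,g)$, and the general saddle-point theory cited in the section (\cite{Ioffe}, \cite{Pshenychn}, \cite{Rockafellar}). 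Once both inequalities are in hand, the saddle point yields $\min_{h\in H_D}\max_{(f,g)\in D}\Delta(h;f,g)=\max_{(f,g)\in D}\Delta(h^0;f,g)$, which is precisely the defining property of the minimax-robust spectral characteristic; and since $h^0=h(f_0,g_0)$, it is computed by formula \eqref{n4} with $f=f_0$, $g=g_0$, which completes the proof.
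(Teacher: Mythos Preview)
Your argument is correct and matches the paper's approach. The paper itself offers no detailed proof of this lemma, remarking only that ``it follows from the introduced definitions and the obtained formulas''; your first paragraph is exactly that derivation, substituting the closed form \eqref{n55} for $\Delta(h(f,g);f,g)$ into the definition of least favourable densities to obtain \eqref{nextrem}.

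For the second sentence you go further than the paper does \emph{within} the lemma. The saddle-point argument you sketch---the easy inequality $\Delta(h;f_0,g_0)\ge\Delta(h^0;f_0,g_0)$ from $H_D\subseteq L_2^N(f_0+g_0)$, and the harder inequality $\Delta(h^0;f,g)\le\Delta(h^0;f_0,g_0)$ requiring a min--max interchange---is precisely what the paper develops in the paragraph \emph{following} the lemma, where it reformulates the latter inequality as the constrained optimization problem \eqref{n7} and then passes to the unconstrained problem \eqref{n8} via the indicator function. So your proof anticipates and absorbs that subsequent discussion; the paper simply separates the bookkeeping (this lemma) from the convex-analytic step (the saddle-point paragraph and the next lemma). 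Your honest flagging of the second inequality as ``the main obstacle'' is apt: neither you nor the paper proves the interchange from scratch, both deferring to the cited convex-optimization references.
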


The least favourable spectral densities $f_0(\lambda)$, $g_0(\lambda)$  and the minimax spectral characteristic $h^0=h(f_0,g_0)$
form a saddle point of the function  $\Delta \left(h;f,g\right)$ on the set $H_D\times D$. The saddle point inequalities
$$\Delta\left(h;f_0,g_0\right)\geq\Delta\left(h^0;f_0,g_0\right)\geq \Delta\left(h^0;f,g\right) $$  $$ \forall h \in H_D, \forall f \in D_f, \forall g \in D_g$$
hold true if $h^0=h(f_0,g_0)$ and $h(f_0,g_0)\in H_D,$  where $(f_0,g_0)$ is a solution to the constrained optimization problem
\begin{equation} \label{n7}
\sup\limits_{(f,g)\in D_f\times D_g}\Delta\left(h(f_0,g_0);f,g\right)=\Delta\left(h(f_0,g_0);f_0,g_0\right),
\end{equation}
where
\begin{equation*}\begin{split}
\Delta\left(h(f_0,g_0);f,g\right)&=\frac{1}{2\pi}\int\limits_{-\pi}^{\pi}\frac{\left|A_N(e^{i\lambda})g_0(\lambda)
+C_N^0(e^{i\lambda})\right|^2}{(f_0(\lambda)+g_0(\lambda))^2}f(\lambda)d\lambda\\
&+\frac{1}{2\pi}\int\limits_{-\pi}^{\pi}\frac{\left|A_N(e^{i\lambda})f_0(\lambda)-
C_N^0(e^{i\lambda})\right|^2}{(f_0(\lambda)+g_0(\lambda))^2}g(\lambda)d\lambda,
\end{split}\end{equation*}
\begin{equation*}
C_N^0(e^{i\lambda})=\sum\limits_{j=0}^{N}((\bold{B}_N^0)^{-1}\bold{R}_N^0\bold{a}_N)_je^{ij\lambda},
\end{equation*}

The constrained optimization problem (\ref{n7}) is equivalent to the unconstrained optimization problem
\begin{equation} \label{n8}
\Delta_D(f,g)=-\Delta(h(f_0,g_0);f,g)+\delta(f,g\left|D_f\times D_g\right.)\rightarrow \inf,
\end{equation}
where $\delta(f,g\left|D_f\times D_g\right.)$ is the indicator function of the set  $D=D_f\times D_g$. Solution $(f_0,g_0)$ to the problem (\ref{n8}) is characterized by the condition $0 \in \partial\Delta_D(f_0,g_0),$ where $\partial\Delta_D(f_0,g_0)$ is the subdifferential of the convex functional $\Delta_D(f,g)$ at point $(f_0,g_0)$.
This condition makes it possible to find the least favourable spectral densities in some special classes of spectral densities $D$ \cite{Ioffe}, \cite{Pshenychn}, \cite{Rockafellar}.

Note, that the form of the functional $\Delta(h(f_0,g_0);f,g)$  is convenient for application the Lagrange method of indefinite multipliers for
finding solution to the problem (\ref{n7}).
Making use the method of Lagrange multipliers and the form of
subdifferentials of the indicator functions
we describe relations that determine least favourable spectral densities in some special classes
of spectral densities
(see books \cite{Golichenko,Moklyachuk:2008,Moklyachuk:2012} for additional details).

\begin{lem} Let $(f_0,g_0)$ be a solution to the optimization problem  (\ref{n8}). The spectral densities $f_0(\lambda)$, $g_0(\lambda)$ are the least favourable in the class $D=D_f\times D_g$, and
the spectral characteristic $h^0=h(f_0,g_0)$ is minimax for the optimal estimate of the functional $A_N\xi$ if  $h(f_0,g_0) \in H_D$.
\end{lem}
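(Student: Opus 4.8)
The plan is to show that the pair $(f_0,g_0)$ together with $h^0=h(f_0,g_0)$ forms a saddle point of $\Delta(h;f,g)$ on $H_D\times(D_f\times D_g)$, and then to read the definitions of least favourable densities and of a minimax-robust characteristic directly off the saddle point inequalities $\Delta(h;f_0,g_0)\ge\Delta(h^0;f_0,g_0)\ge\Delta(h^0;f,g)$. Two already-available facts do the work. First, for each fixed admissible pair $(f,g)$, the Hilbert space projection argument of the preceding subsection shows that $h(f,g)$ is the unique minimiser of $\Delta(h;f,g)$ over $L_2^N(f+g)$. Second, once $h^0\in H_D=\bigcap_{(f,g)}L_2^N(f+g)$, the map $(f,g)\mapsto\Delta(h(f_0,g_0);f,g)$ — given explicitly by the display following \eqref{n7} as a sum of integrals of $f$ and of $g$ against fixed nonnegative weights built from $f_0,g_0$, $A_N$ and $C_N^0$ — is a \emph{linear} functional of $(f,g)$ on $L_1\times L_1$, continuous as soon as those weights are integrable.

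First I would establish the left inequality. Since $h^0=h(f_0,g_0)$ minimises $\Delta(\cdot\,;f_0,g_0)$ over $L_2^N(f_0+g_0)$ and $H_D\subseteq L_2^N(f_0+g_0)$, we get $\Delta(h;f_0,g_0)\ge\Delta(h^0;f_0,g_0)$ for every $h\in H_D$. Next I would establish the right inequality. By hypothesis $(f_0,g_0)$ solves the unconstrained problem \eqref{n8}, i.e.\ $0\in\partial\Delta_D(f_0,g_0)$; since $\Delta_D(f,g)=-\Delta(h^0;f,g)+\delta(f,g\mid D_f\times D_g)$, this says $(f_0,g_0)$ minimises $-\Delta(h^0;\cdot,\cdot)$ over the convex set $D=D_f\times D_g$. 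Because $\Delta(h^0;\cdot,\cdot)$ is linear and $D$ is convex, a stationary point is a global extremum, so $(f_0,g_0)$ maximises $\Delta(h^0;f,g)$ over $D$, giving $\Delta(h^0;f_0,g_0)\ge\Delta(h^0;f,g)$ for all $(f,g)\in D$.

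Combining the two inequalities yields the saddle point relation. From the right chain, $\sup_{(f,g)\in D}\Delta(h^0;f,g)=\Delta(h^0;f_0,g_0)$; together with the left chain and $h^0\in H_D$, the standard sandwich $\min_{h\in H_D}\max_{(f,g)\in D}\Delta(h;f,g)\le\max_{(f,g)\in D}\Delta(h^0;f,g)=\Delta(h^0;f_0,g_0)=\min_{h\in H_D}\Delta(h;f_0,g_0)\le\min_{h\in H_D}\max_{(f,g)\in D}\Delta(h;f,g)$ collapses to equality, which is precisely the definition of $h^0$ being minimax-robust. For least favourability, for any admissible $(f,g)$ one has $\Delta(h(f,g);f,g)\le\Delta(h^0;f,g)\le\Delta(h^0;f_0,g_0)=\Delta(f_0,g_0)$, with equality at $(f_0,g_0)$, so $f_0,g_0$ are least favourable in $D$.

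The step I expect to require most care is turning the subdifferential stationarity $0\in\partial\Delta_D(f_0,g_0)$ into the \emph{global} maximisation of $\Delta(h^0;f,g)$ over $D$. This rests on genuinely checking that $(f,g)\mapsto\Delta(h(f_0,g_0);f,g)$ is linear — which it is, being a sum of integrals of $f$ and $g$ against the fixed nonnegative weights $|A_N(e^{i\lambda})g_0(\lambda)+C_N^0(e^{i\lambda})|^2(f_0(\lambda)+g_0(\lambda))^{-2}$ and $|A_N(e^{i\lambda})f_0(\lambda)-C_N^0(e^{i\lambda})|^2(f_0(\lambda)+g_0(\lambda))^{-2}$ — and that it is finite and continuous near $(f_0,g_0)$ in $L_1\times L_1$, so that the subdifferential of the sum with the indicator $\delta(\cdot\mid D)$ splits correctly; both are ensured by $h(f_0,g_0)\in H_D$ together with integrability of those weights. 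Convexity of $D_f$ and $D_g$ then finishes it.
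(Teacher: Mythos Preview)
Your proposal is correct and follows exactly the saddle-point route that the paper sketches in the paragraph immediately preceding this lemma (the paper states the lemma without a separate proof, treating it as a summary of that discussion). One minor simplification: since ``solution to \eqref{n8}'' already means global minimiser of $\Delta_D$, the right inequality $\Delta(h^0;f_0,g_0)\ge\Delta(h^0;f,g)$ follows directly without invoking the subdifferential condition or linearity of $\Delta(h^0;\cdot,\cdot)$---those would only be needed to pass from a local/stationary characterisation to a global one.
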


\subsection{Least favourable spectral densities in the class $D_f^0 \times D_g^0$}

Consider the problem of the optimal estimation of the functional $A_N\xi=\sum_{j=1}^{N}a(j)\xi(j)$ which depends on the unknown values of a stationary stochastic sequence $\xi(j)$
from observations of the sequence $\xi(j)+\eta(j)$ at points of time $j\in\mathbb{Z}\backslash \{0,1,\dots,N\}$
in the case where the spectral densities  $f(\lambda)$, $g(\lambda)$ are from the class $D=D_f^0\times D_g^0$, where
\begin{equation*}
D_f^0 = \left\{f(\lambda)\left|\frac{1}{2\pi}\int\limits_{-\pi}^{\pi} f(\lambda)d\lambda\leq P_1\right.\right\},
\end{equation*}
\begin{equation*}
 D_g^0 = \left\{g(\lambda)\left|\frac{1}{2\pi}\int\limits_{-\pi}^{\pi} g(\lambda)d\lambda\leq P_2\right.\right\}.
\end{equation*}

Let the densities $f_0(\lambda) \in D_f^0$, $g_0(\lambda) \in D_g^0$ and the functions $h_f(f_0,g_0)$, $h_g(f_0,g_0)$, which are determined by the relations
\begin{equation} \label{nhf}
h_f(f_0,g_0)=\frac{\left|A_N(e^{i\lambda})g_0(\lambda)+
C_N^0(e^{i\lambda}) \right|^2}{(f_0(\lambda)+g_0(\lambda))^2},
\end{equation}
\begin{equation} \label{nhg}
h_g(g_0,g_0)=\frac{\left|A_N(e^{i\lambda})f_0(\lambda)-
C_N^0(e^{i\lambda})\right|^2}{(f_0(\lambda)+g_0(\lambda))^2},
\end{equation}
be bounded. In this case the functional
$$
\Delta(h(f_0,g_0);f,g)=\frac{1}{2\pi}\int\limits_{-\pi}^{\pi}h_f(f_0,g_0)f(\lambda)d\lambda + \frac{1}{2\pi}\int\limits_{-\pi}^{\pi}h_g(f_0,g_0)g(\lambda)d\lambda
$$
is linear and continuous  on the space $L_1 \times L_1$ and we can apply the method of Lagrange multipliers to find solution to the optimization problem  (\ref{n8}).
We get the following relations that determine the least favourable spectral densities  $f^0\in D^0_f$, $g^0\in D^0_g$
\begin{equation*} \begin{split}
&-\frac{1}{2\pi}\int\limits_{-\pi}^{\pi}h_f(f_0,g_0)\rho(f(\lambda))d\lambda - \frac{1}{2\pi}\int\limits_{-\pi}^{\pi}h_g(f_0,g_0)\rho(g(\lambda))d\lambda\\
&+\alpha_1\frac{1}{2\pi}\int\limits_{-\pi}^{\pi}\rho(f(\lambda))d\lambda+\alpha_2\frac{1}{2\pi}\int\limits_{-\pi}^{\pi}\rho(g(\lambda))d\lambda=0,
\end{split}\end{equation*}
where $\rho(f(\lambda))$ and $\rho(g(\lambda))$ are variations of the functions $f(\lambda)$ and $g(\lambda)$, the constants $\alpha_1\geq 0$ and $\alpha_2\geq 0$.
From this relation we get that the least favourable spectral densities
 $f_0(\lambda) \in D_f^0$, $g_0(\lambda) \in D_g^0$ satisfy equations
\begin{equation} \label{nhf1}
\left|A_N(e^{i\lambda})g_0(\lambda)+
C_N^0(e^{i\lambda})\right|=\alpha_1(f_0(\lambda)+g_0(\lambda)),
\end{equation}
\begin{equation} \label{nhg1}
\left|A_N(e^{i\lambda})f_0(\lambda)-
C_N^0(e^{i\lambda})\right|=\alpha_2(f_0(\lambda)+g_0(\lambda)).
\end{equation}
Note, that  $\alpha_1\neq 0$ in the case where
\[\frac{1}{2\pi}\int\limits_{-\pi}^{\pi} f_{0}(\lambda)d\lambda=P_1,\]
and $\alpha_2\neq 0$ in the case where
\[\frac{1}{2\pi}\int\limits_{-\pi}^{\pi} g_{0}(\lambda)d\lambda=P_2.
\]

Summing up our reasoning we come to conclusion that the following theorem holds true.
\begin{thm}
Let the spectral densities $f_0(\lambda) \in D_f^0$ and $g_0(\lambda) \in D_g^0$ satisfy the minimality condition (\ref{nminimal}) and let the functions $h_f(f_0,g_0)$ and $h_g(f_0,g_0)$, determined by the formulas (\ref{nhf}), (\ref{nhg}), be bounded. The functions  $f_0(\lambda), g_0(\lambda)$, which give solution to the system of equations (\ref{nhf1}), (\ref{nhg1})
are the least favourable spectral densities in the class $D=D_f^0\times D_g^0$, if they determine a solution to the optimization problem (\ref{nextrem}).
The function $h^0(e^{i\lambda})$, determined by the formula (\ref{nsphar}), is minimax-robust spectral characteristic of the optimal linear estimate of the functional $A_N\xi$.
\end{thm}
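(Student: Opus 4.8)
The plan is to show that the triple consisting of $h^{0}=h(f_{0},g_{0})$ (given by~(\ref{nsphar}) with the coefficients~(\ref{nrivn2})) and the pair $(f_{0},g_{0})$ is a saddle point of the mean square error $\Delta(h;f,g)$ on $H_{D}\times D$, and then to read off the conclusion from the standard zero-sum game argument: once
\[
\Delta(h;f_{0},g_{0})\ \ge\ \Delta(h^{0};f_{0},g_{0})\ \ge\ \Delta(h^{0};f,g),\qquad h\in H_{D},\ (f,g)\in D,
\]
is verified, one gets $\min_{h\in H_{D}}\max_{(f,g)\in D}\Delta(h;f,g)=\max_{(f,g)\in D}\min_{h}\Delta(h;f,g)=\Delta(h^{0};f_{0},g_{0})$, which says precisely that $f_{0},g_{0}$ are least favourable in $D$ and $h^{0}$ is the minimax-robust spectral characteristic.

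The left inequality is a consequence of the Hilbert-space projection theorem established at the beginning of this section. Because $f_{0}$ and $g_{0}$ satisfy the minimality condition~(\ref{nminimal}), the pair $(f_{0},g_{0})$ is admissible for that theorem, so $h(f_{0},g_{0})$ is the spectral characteristic of the orthogonal projection of $A_{N}\xi$ onto $H^{N}(\xi+\eta)$ computed with these densities; hence $h^{0}$ minimises $h\mapsto\Delta(h;f_{0},g_{0})$ over all of $L_{2}^{N}(f_{0}+g_{0})$, and in particular over $H_{D}\subseteq L_{2}^{N}(f_{0}+g_{0})$. This step also records that $h^{0}\in L_{2}^{N}(f_{0}+g_{0})$ and that the Fourier coefficients of $h^{0}$ with indices $0,1,\dots,N$ vanish.

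For the right inequality I would use the explicit form of the error produced by the fixed estimate $h^{0}$,
\[
\Delta\bigl(h(f_{0},g_{0});f,g\bigr)=\frac{1}{2\pi}\int_{-\pi}^{\pi}h_{f}(f_{0},g_{0})\,f(\lambda)\,d\lambda+\frac{1}{2\pi}\int_{-\pi}^{\pi}h_{g}(f_{0},g_{0})\,g(\lambda)\,d\lambda,
\]
with $h_{f},h_{g}$ as in~(\ref{nhf}),~(\ref{nhg}). The key observation is that equations~(\ref{nhf1}) and~(\ref{nhg1}) assert exactly that $h_{f}(f_{0},g_{0})\equiv\alpha_{1}^{2}$ and $h_{g}(f_{0},g_{0})\equiv\alpha_{2}^{2}$ are constants. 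Hence on $D=D_{f}^{0}\times D_{g}^{0}$ this functional equals $\alpha_{1}^{2}\,\frac{1}{2\pi}\int_{-\pi}^{\pi}f\,d\lambda+\alpha_{2}^{2}\,\frac{1}{2\pi}\int_{-\pi}^{\pi}g\,d\lambda\le\alpha_{1}^{2}P_{1}+\alpha_{2}^{2}P_{2}$, and the bound is attained at $(f_{0},g_{0})$: by complementary slackness $\alpha_{1}\ne0$ forces $\frac{1}{2\pi}\int_{-\pi}^{\pi}f_{0}\,d\lambda=P_{1}$, while if $\alpha_{1}=0$ the first term vanishes for every $f$, and symmetrically for $g_{0},\alpha_{2}$. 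Thus $\max_{(f,g)\in D}\Delta(h^{0};f,g)=\Delta(h^{0};f_{0},g_{0})$, which is the right saddle-point inequality. Combined with the left inequality and the identity $\Delta(h^{0};f_{0},g_{0})=\Delta(f_{0},g_{0})$, this makes the common value of $\min\max$ and $\max\min$ equal to $\Delta(f_{0},g_{0})$, so $(f_{0},g_{0})$ indeed realises the maximum in~(\ref{nextrem}) and is least favourable.

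It remains to check the admissibility requirement $h^{0}\in H_{D}=\bigcap_{(f,g)\in D}L_{2}^{N}(f+g)$, and this is where the boundedness of $h_{f}(f_{0},g_{0})$ and $h_{g}(f_{0},g_{0})$ enters. From~(\ref{nsphar}) one has $|A_{N}(e^{i\lambda})-h^{0}(e^{i\lambda})|^{2}=h_{f}(f_{0},g_{0})$, and since $A_{N}(e^{i\lambda})$ is a trigonometric polynomial this forces $h^{0}$ to be a bounded function; a bounded function belongs to $L_{2}$ of every finite measure, and, its Fourier coefficients of indices $0,\dots,N$ being zero, it belongs to $L_{2}^{N}(f+g)$ for each admissible $(f,g)$. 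Combining the three facts yields the saddle-point inequalities and hence the theorem. I expect the only delicate point to be this last bookkeeping: confirming that the formally obtained $f_{0},g_{0}$ genuinely lie in $D$ and satisfy~(\ref{nminimal}), and that $h^{0}$ is a legitimate competitor for every density in $D$ (which is exactly what the boundedness and minimality hypotheses guarantee), rather than any algebraic difficulty.
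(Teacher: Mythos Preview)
Your argument is correct and follows essentially the same route as the paper. The paper derives equations~(\ref{nhf1}),~(\ref{nhg1}) via the Lagrange-multiplier/subdifferential condition $0\in\partial\Delta_{D}(f_{0},g_{0})$ for the constrained problem~(\ref{n8}) and then invokes the general saddle-point framework of Lemmas~7.1--7.2; you instead verify the two saddle-point inequalities directly, using that~(\ref{nhf1}),~(\ref{nhg1}) force $h_{f}$ and $h_{g}$ to be constants so that the right inequality reduces to the linear moment constraints defining $D_{f}^{0}\times D_{g}^{0}$ together with complementary slackness. This is exactly the content of the Lagrange argument unpacked, so the two approaches coincide.
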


\begin{thm}
Let the spectral density $f(\lambda)$ be known and fixed, and let the spectral density $g_0(\lambda) \in D_g^0$. Let the functions $f(\lambda)$ and $g_0(\lambda)$ be such that the function $(f(\lambda)+g_0(\lambda))^{-1}$ is integrable and let the function $h_g(f,g_0)$, determined by the formula (\ref{nhg}), be bounded.
The spectral density $g_0(\lambda)$ is the least favourable spectral densities in the class $D_g^0$ for the optimal linear estimate of the functional $A_N\xi$,
if it is of the form
\begin{equation*}
g_0(\lambda)= \max\left\{0, \alpha_2^{-1}\left|A_N(e^{i\lambda})f(\lambda)-
C_N^0(e^{i\lambda})\right|-f(\lambda)\right\}
\end{equation*}
and the functions $f(\lambda), g_0(\lambda)$ determine a solution to the optimization problem (\ref{nextrem}).
The function $h^0(e^{i\lambda})$, determined by the formula (\ref{nsphar}), is minimax-robust spectral characteristic of the optimal linear estimate of the functional $A_N\xi$.
\end{thm}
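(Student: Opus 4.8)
Since here the density $f(\lambda)$ is fixed and only $g$ ranges over $D_g^0$, the optimization problem (\ref{n8}) reduces to minimizing $\Delta_{D_g^0}(g)=-\Delta\bigl(h(f,g_0);f,g\bigr)+\delta\bigl(g\,\vert\,D_g^0\bigr)$ over $g$, and by the lemmas of this section the density $g_0$ is least favourable precisely when $0\in\partial\Delta_{D_g^0}(g_0)$. The plan is to evaluate this subdifferential condition explicitly. By (\ref{nhf})--(\ref{nhg}) we have $\Delta\bigl(h(f,g_0);f,g\bigr)=\tfrac1{2\pi}\int_{-\pi}^{\pi}h_f(f,g_0)f(\lambda)\,d\lambda+\tfrac1{2\pi}\int_{-\pi}^{\pi}h_g(f,g_0)g(\lambda)\,d\lambda$; because $f$ is fixed the first term is a constant and the functional is affine in $g$, with linear part $g\mapsto\tfrac1{2\pi}\int h_g(f,g_0)g\,d\lambda$, which is continuous on $L_1$ by the assumed boundedness of $h_g(f,g_0)$. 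Hence the whole problem is to minimize a continuous linear functional over the convex set $D_g^0=\{g\ge 0:\tfrac1{2\pi}\int g\,d\lambda\le P_2\}$.

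First I would compute $\partial\delta(g_0\,\vert\,D_g^0)$: its elements are $\mu\cdot\mathbf 1+\nu(\lambda)$ with a scalar $\mu\ge 0$ (the multiplier of the power constraint, nonzero only when $\tfrac1{2\pi}\int g_0\,d\lambda=P_2$, which is the relevant case since $\alpha_2$ appears in the formula) and $\nu(\lambda)\le 0$ vanishing on $\{\,g_0>0\,\}$ (the multiplier of $g\ge 0$). The stationarity condition $0\in-\,\tfrac1{2\pi}h_g(f,g_0)+\partial\delta(g_0\,\vert\,D_g^0)$ then amounts to: there is $\alpha_2>0$ with $h_g(f,g_0)(\lambda)=\alpha_2^{2}$ a.e. on $\{\,g_0>0\,\}$ and $h_g(f,g_0)(\lambda)\le\alpha_2^{2}$ a.e. on $\{\,g_0=0\,\}$. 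Substituting $h_g(f,g_0)=\bigl|A_N(e^{i\lambda})f(\lambda)-C_N^0(e^{i\lambda})\bigr|^2\big/(f(\lambda)+g_0(\lambda))^2$ and taking the nonnegative square root gives $f+g_0=\alpha_2^{-1}\bigl|A_Nf-C_N^0\bigr|$ on $\{\,g_0>0\,\}$ and $\alpha_2^{-1}\bigl|A_Nf-C_N^0\bigr|\le f$ on $\{\,g_0=0\,\}$; the two cases combine into the claimed formula $g_0=\max\bigl\{0,\ \alpha_2^{-1}\bigl|A_N(e^{i\lambda})f(\lambda)-C_N^0(e^{i\lambda})\bigr|-f(\lambda)\bigr\}$.

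Next I would close the loop with a saddle-point argument. The estimate with spectral characteristic $h^0=h(f,g_0)$ given by (\ref{nsphar}) has, by construction, vanishing Fourier coefficients for $j=0,1,\dots,N$, so $h^0\in L_2^N(f+g_0)$; under the integrability of $(f+g_0)^{-1}$ and the boundedness of $h_g(f,g_0)$ one checks $\int|h^0|^2(f+g)\,d\lambda<\infty$ for every $g\in D_g^0$, whence $h^0\in H_D=\bigcap_{g\in D_g^0}L_2^N(f+g)$. The left inequality $\Delta(h;f,g_0)\ge\Delta(h^0;f,g_0)$ for all $h\in H_D$ is just optimality of $h(f,g_0)$ for the fixed density $f+g_0$ (the classical interpolation theorem, formulas (\ref{n4})--(\ref{n55})). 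For the right inequality, using $\Delta(h^0;f,g)=\mathrm{const}+\tfrac1{2\pi}\int h_g(f,g_0)g\,d\lambda$ together with $h_g(f,g_0)=\alpha_2^2$ on $\{g_0>0\}$, $h_g(f,g_0)\le\alpha_2^2$ on $\{g_0=0\}$, and $\tfrac1{2\pi}\int g\,d\lambda\le P_2=\tfrac1{2\pi}\int g_0\,d\lambda$, I get $\tfrac1{2\pi}\int h_g(f,g_0)g\,d\lambda\le\alpha_2^2P_2=\tfrac1{2\pi}\int h_g(f,g_0)g_0\,d\lambda$, i.e.\ $\Delta(h^0;f,g)\le\Delta(h^0;f,g_0)$. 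This yields the saddle point $(h^0,g_0)$ on $H_D\times D_g^0$, so $g_0$ is least favourable and $h^0$ is minimax-robust.

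The main obstacle is that the formula for $g_0$ is implicit: $C_N^0(e^{i\lambda})=\sum_{j=0}^N\bigl((\mathbf B_N^0)^{-1}\mathbf R_N^0\mathbf a_N\bigr)_je^{ij\lambda}$ is built from the operators $\mathbf B_N^0,\mathbf R_N^0$, which themselves depend on $g_0$ through $f+g_0$; so the displayed expression is a fixed-point characterization rather than a closed solution, and one must verify that it is attained consistently with the power constraint and with the extremal problem (\ref{nextrem}) — this is exactly why the hypothesis that $f$ and $g_0$ determine a solution to (\ref{nextrem}) is imposed, and it is the step one must check case by case when computing $g_0$ for a concrete $f$. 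A secondary technical point is the passage from the pointwise (Lagrange) condition to the a.e.\ statement, which requires the generalized Lagrange lemma and the assumed boundedness so that all the integrals involved are finite.
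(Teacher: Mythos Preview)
Your proposal is correct and follows essentially the same route as the paper: the subdifferential condition $0\in\partial\Delta_D(g_0)$ for the constrained problem (\ref{n8}), specialized to the case where only $g$ varies. The paper derives the Lagrange equation (\ref{nhg1}) and then simply states this theorem as its consequence; you make explicit two points the paper leaves implicit --- the role of the nonnegativity constraint $g\ge 0$ (whose multiplier $\nu\le 0$, vanishing on $\{g_0>0\}$, is what produces the $\max\{0,\cdot\}$) and the direct verification of the saddle-point inequality $\Delta(h^0;f,g)\le\Delta(h^0;f,g_0)$ via $h_g=\alpha_2^2$ on $\{g_0>0\}$ and $h_g\le\alpha_2^2$ on $\{g_0=0\}$ --- but the argument is the same in substance.
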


\begin{thm}
Let the spectral density  $f_0(\lambda) \in D_f^0$, let the function $f_0^{-1}(\lambda)$ be integrable, and let the function $h(f_0)$, determined by the formula (\ref{5}), be bounded.
The spectral density  $f_0(\lambda)$ is the least favourable spectral densities in the class $D_f^0$ for the optimal linear estimate of the functional $A_N\xi$, based on observations of the sequence $\xi(j)$ at points of time $j\in\mathbb{Z}\backslash \{0,1,\dots,N\}$,
if it satisfies the relation
\begin{equation*}
f_0(\lambda)=\alpha_1\left|C_N^0(e^{i\lambda})\right|
\end{equation*}
and $f_0(\lambda)$ determine a solution to the optimization problem (\ref{extrem}).
The function $h^0(e^{i\lambda})$, determined by the formula (\ref{5}), is minimax-robust spectral characteristic of the optimal linear estimate of the functional $A_N\xi$.
\end{thm}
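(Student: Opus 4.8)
The plan is to derive the stated form of $f_0(\lambda)$ from the subdifferential condition $0\in\partial\Delta_D(f_0)$ with $D=D_f^0$, and then to check that the pair $(f_0,h^0)$, with $h^0=h(f_0)$ given by \eqref{5}, is a saddle point of the mean square error $\Delta(h;f)$ on $H_D\times D_f^0$. First I would record that for the estimate whose spectral characteristic is $h^0=h(f_0)$ the mean square error on a sequence with an arbitrary density $f\in D_f^0$ equals
\[
\Delta(h^0;f)=\frac{1}{2\pi}\int\limits_{-\pi}^{\pi}\frac{\left|C_N^0(e^{i\lambda})\right|^2}{f_0^2(\lambda)}\,f(\lambda)\,d\lambda ,
\]
as in \eqref{7}. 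Since $C_N^0(e^{i\lambda})f_0^{-1}(\lambda)=A_N(e^{i\lambda})-h(f_0)$ and $A_N(e^{i\lambda})$ is a trigonometric polynomial, the assumed boundedness of $h(f_0)$ makes the weight $\left|C_N^0(e^{i\lambda})\right|^2/f_0^2(\lambda)$ bounded, so $\Delta(h^0;\cdot)$ is a linear continuous functional on $L_1$ and the constrained problem \eqref{7} is equivalent to minimising $\Delta_D(f)=-\Delta(h^0;f)+\delta(f\,|\,D_f^0)$.

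Next I would compute the two subdifferentials at $f_0$: that of the linear functional $-\Delta(h^0;\cdot)$ is the single function $-\left|C_N^0(e^{i\lambda})\right|^2/f_0^2(\lambda)$, and that of $\delta(\cdot\,|\,D_f^0)$ at $f_0$ consists of the nonnegative constants once the constraint $\tfrac{1}{2\pi}\int f_0\,d\lambda=P_1$ is active. Hence $0\in\partial\Delta_D(f_0)$ forces $\left|C_N^0(e^{i\lambda})\right|^2/f_0^2(\lambda)$ to be a nonnegative constant for almost every $\lambda$; the integrability of $f_0^{-1}$ gives $f_0(\lambda)>0$ a.e., so the relation holds everywhere and yields
\[
f_0(\lambda)=\alpha_1\left|C_N^0(e^{i\lambda})\right| ,
\]
with $\alpha_1>0$ pinned down by the normalisation $\tfrac{1}{2\pi}\int_{-\pi}^{\pi}f_0(\lambda)\,d\lambda=P_1$.

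It then remains to verify the saddle point inequalities. The optimality of $h^0=h(f_0)$ for the density $f_0$, already established above via \eqref{5} and \eqref{6}, gives $\Delta(h;f_0)\ge\Delta(h^0;f_0)$ for every admissible $h$. For the opposite inequality I use the identity $\left|C_N^0(e^{i\lambda})\right|^2/f_0^2(\lambda)\equiv\alpha_1^{-2}$, which gives, for every $f\in D_f^0$,
\[
\Delta(h^0;f)=\frac{1}{\alpha_1^{2}}\cdot\frac{1}{2\pi}\int\limits_{-\pi}^{\pi}f(\lambda)\,d\lambda\le\frac{P_1}{\alpha_1^{2}}=\Delta(h^0;f_0).
\]
Together with the hypothesis that $f_0$ solves the optimisation problem \eqref{extrem}, this shows that $f_0$ is least favourable in $D_f^0$ and that $h^0$, computed by \eqref{5}, is the minimax spectral characteristic. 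The step I expect to be the most delicate is checking $h^0\in H_D$, that is, that the bounded function $h(f_0)$ lies in the closed subspace $L_2^N(f)$ for every density $f$ of the class, and not merely in $L_2^N$ of the Lebesgue measure (its Fourier coefficients of indices $0,1,\dots,N$ vanish by construction of $C_N^0$); this is handled exactly as in the noiseless interpolation case treated earlier.
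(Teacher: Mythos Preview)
Your proposal is correct and follows the same approach the paper uses throughout: reduce the constrained problem \eqref{7} to the unconstrained one via the indicator, invoke the subdifferential condition $0\in\partial\Delta_D(f_0)$ (equivalently, Lagrange multipliers), and read off $\left|C_N^0(e^{i\lambda})\right|^2/f_0^2(\lambda)=\text{const}$; the paper states this theorem without a separate proof, obtaining it by specialising the derivation of \eqref{nhf1}--\eqref{nhg1} to $g_0\equiv 0$. Your explicit verification of the second saddle inequality and your flagging of $h(f_0)\in H_D$ as the step requiring the boundedness hypothesis are exactly the points the paper packages into the two Lemmas preceding the theorem.
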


\subsection{Least favourable spectral densities in the class $D_v^u \times D_{\varepsilon}$}

Consider the problem of the optimal estimation of the functional $A_N\xi=\sum_{j=1}^{N}a(j)\xi(j)$ which depends on the unknown values of a stationary stochastic sequence $\xi(j)$
from observations of the sequence $\xi(j)+\eta(j)$ at points of time $j\in\mathbb{Z}\backslash \{0,1,\dots,N\}$
in the case where the spectral densities  $f(\lambda)$, $g(\lambda)$ are from the class $D=D_v^u \times D_{\varepsilon}$, where
\[
D_v^u = \left\{f(\lambda)\left| v(\lambda)\leq f(\lambda)\leq u(\lambda),\,\,\frac{1}{2\pi}\int\limits_{-\pi}^{\pi} f(\lambda)d\lambda\leq P_1\right.\right\},
\]
\[ D_\varepsilon = \left\{g(\lambda)\left| g(\lambda)=(1-\varepsilon)g_1(\lambda)+\varepsilon w(\lambda),\,\,\frac{1}{2\pi}\int\limits_{-\pi}^{\pi} g(\lambda)d\lambda\leq P_2\right.\right\}.
\]
Here the spectral densities $v(\lambda), u(\lambda), g_1(\lambda)$ are known and fixed and the densities $v(\lambda)$ and $u(\lambda)$ are bounded.

Let the densities $f^0(\lambda) \in D_v^u$, $g^0(\lambda) \in D_{\varepsilon}$ determine the bounded functions  $h_f(f_0,g_0)$, $h_g(f_0,g_0)$ with the help of formulas  (\ref{nhf}), (\ref{nhg}).
 Then from the condition $0 \in \partial\Delta_{D_{f,g}}(f_0,g_0)$ we derive the following equations that determine the least favourable spectral densities
\begin{equation} \label{nhf2}
\left|A_N(e^{i\lambda})g_0(\lambda)+
C_N^0(e^{i\lambda})\right|=(f_0(\lambda)+g_0(\lambda))(\gamma_1(\lambda)+\gamma_2(\lambda)+\alpha_1^{-1}),
\end{equation}
\begin{equation} \label{nhg2}
\left|A_N(e^{i\lambda})f_0(\lambda)-
C_N^0(e^{i\lambda})\right|=(f_0(\lambda)+g_0(\lambda))(\varphi(\lambda)+\alpha_2^{-1}).
\end{equation}
Here $\gamma_1\leq 0$ and $\gamma_1= 0$ in the case $f_0(\lambda)\geq v(\lambda)$; $\gamma_2\geq 0$ and $\gamma_2= 0$ in the case $f_0(\lambda)\leq u(\lambda)$;
 $\varphi (\lambda)\leq 0$ and $\varphi(\lambda)= 0$ in the case $g_0(\lambda)\geq (1-\varepsilon) g_1(\lambda)$.

The following theorems hold true.

\begin{thm}
Let the spectral densities $f_0(\lambda) \in D_v^u$, $g_0(\lambda) \in D_{\varepsilon}$ satisfy the minimality condition (\ref{nminimal}) and let the functions $h_f(f_0,g_0)$ and $h_g(f_0,g_0)$, determined by the formulas (\ref{nhf}), (\ref{nhg}), be bounded.
The functions  $f_0(\lambda), g_0(\lambda)$, which give solution to the system of equations (\ref{nhf2}), (\ref{nhg2})
are the least favourable spectral densities in the class $D_v^u \times D_{\varepsilon}$, if they determine a solution to the optimization problem (\ref{nextrem}).
The function $h^0(e^{i\lambda})$, determined by the formula (\ref{nsphar}), is minimax-robust spectral characteristic of the optimal linear estimate of the functional $A_N\xi$.
 \end{thm}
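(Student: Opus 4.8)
The plan is to follow the variational scheme already set up in this section. By the Lemma characterising the least favourable pair through the extremum problem (\ref{nextrem}) and the Lemma asserting that any solution of the unconstrained problem (\ref{n8}) is least favourable, it suffices to show three things: (i) the optimality condition $0\in\partial\Delta_D(f_0,g_0)$ for $D=D_v^u\times D_\varepsilon$ is, up to the choice of multiplier functions and constants, exactly the system (\ref{nhf2}), (\ref{nhg2}); (ii) among the pairs so obtained, the one realising the maximum in (\ref{nextrem}) (equivalently, the maximum of the error (\ref{n55}) over $D$) is genuinely least favourable; and (iii) $h^0$ given by (\ref{nsphar}) at $f=f_0$, $g=g_0$ together with $(f_0,g_0)$ is a saddle point of $\Delta(h;f,g)$ on $H_D\times D$.

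First I would record the structural facts that make the subdifferential calculus legitimate. Under the standing hypothesis that the functions $h_f(f_0,g_0)$ and $h_g(f_0,g_0)$ of (\ref{nhf}), (\ref{nhg}) are bounded, the functional $\Delta(h(f_0,g_0);f,g)$ is linear and continuous on $L_1\times L_1$, its two "coefficient" functions being precisely $h_f(f_0,g_0)$ and $h_g(f_0,g_0)$; hence the convex functional $\Delta_D(f,g)=-\Delta(h(f_0,g_0);f,g)+\delta(f,g\,|\,D_v^u\times D_\varepsilon)$ of (\ref{n8}) is well defined, and since it is convex the relation $0\in\partial\Delta_D(f_0,g_0)$ is both necessary and sufficient for $(f_0,g_0)$ to be a global minimiser.

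Next I would compute $\partial\Delta_D(f_0,g_0)$ componentwise. The smooth part $-\Delta(h(f_0,g_0);\cdot,\cdot)$ contributes $-h_f(f_0,g_0)$ in the $f$-slot and $-h_g(f_0,g_0)$ in the $g$-slot. The indicator of $D_v^u$ contributes, from the band $v\le f\le u$, a function $\gamma_1(\lambda)\le 0$ supported on $\{f_0=v\}$ and a function $\gamma_2(\lambda)\ge 0$ supported on $\{f_0=u\}$, and from the constraint $\tfrac1{2\pi}\int f\,d\lambda\le P_1$ a constant $\alpha_1\ge 0$, nonzero only when that constraint is active; the indicator of $D_\varepsilon$ contributes, from $g\ge(1-\varepsilon)g_1$, a function $\varphi(\lambda)\le 0$ supported on $\{g_0=(1-\varepsilon)g_1\}$, and a constant $\alpha_2\ge 0$ from $\tfrac1{2\pi}\int g\,d\lambda\le P_2$. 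Setting the $f$- and $g$-components of $0\in\partial\Delta_D(f_0,g_0)$ to zero, substituting the explicit numerators of $h_f$, $h_g$ from (\ref{nhf}), (\ref{nhg}), clearing the common denominator $(f_0+g_0)^2$ and extracting square roots gives exactly (\ref{nhf2}), (\ref{nhg2}). Finally, if the pair so determined also realises the maximum in (\ref{nextrem}), then the preceding Lemmas identify it as the least favourable pair; the left saddle inequality $\Delta(h;f_0,g_0)\ge\Delta(h^0;f_0,g_0)$ is just the optimality of $h^0=h(f_0,g_0)$ in $L_2^N(f_0+g_0)$ (the classical projection-method theorem of this section), while the right one $\Delta(h^0;f_0,g_0)\ge\Delta(h^0;f,g)$ is the content of (\ref{n7}) that the variational derivation guarantees.

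I expect two genuine obstacles. The first is the constraint qualification needed to write $\partial\delta(\cdot\,|\,D_v^u\times D_\varepsilon)$ as the sum of the subdifferentials of the band/contamination indicators and those of the two integral constraints; this is where the boundedness of $v,u$ and the minimality condition (\ref{nminimal}) enter. The second, and the true heart of the argument, is verifying $h^0=h(f_0,g_0)\in H_D=\bigcap_{(f,g)\in D}L_2^N(f+g)$ — that the estimator built from the least favourable densities is admissible against every density in the class — which requires controlling $h^0$ in $L_2^N(f+g)$ uniformly over $D$, again using (\ref{nminimal}) together with the assumed boundedness of $h_f(f_0,g_0)$ and $h_g(f_0,g_0)$.
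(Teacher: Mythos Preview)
Your proposal is correct and follows essentially the same route as the paper: the paper does not give a separate proof of this theorem but derives it exactly as you describe, by invoking the boundedness of $h_f(f_0,g_0)$, $h_g(f_0,g_0)$ to make $\Delta(h(f_0,g_0);\cdot,\cdot)$ linear and continuous, then reading off equations (\ref{nhf2}), (\ref{nhg2}) from the condition $0\in\partial\Delta_D(f_0,g_0)$ with the multiplier structure you identify, and appealing to the two Lemmas of \S7.2 for the least-favourable and minimax conclusions. Your explicit attention to the constraint qualification and to $h^0\in H_D$ goes beyond what the paper spells out (it simply cites \cite{Ioffe,Pshenychn,Rockafellar} and assumes $h(f_0,g_0)\in H_D$), but these are the right places to look if one wants a fully rigorous argument.
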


\begin{thm}
Let the spectral density $f(\lambda)$ be known, and let the spectral density $g_0(\lambda) \in D_{\varepsilon}$. Let the function $f(\lambda)+g_0(\lambda)$  satisfy the minimality condition (\ref{nminimal}), and let the function $h_g(f,g_0)$, determined by the formula (\ref{nhg}), be bounded.
The spectral density $g_0(\lambda)$ is the least favourable spectral densities in the class $D_{\varepsilon}$ for the optimal linear estimate of the functional $A_N\xi$,
if it is of the form
\begin{equation*}
g_0(\lambda)= \max\left\{(1-\varepsilon)g_1(\lambda), \alpha_2\left|A_N(e^{i\lambda})f(\lambda)-
C_N^0(e^{i\lambda})\right|-f(\lambda)\right\}
\end{equation*}
and the functions $f(\lambda), g_0(\lambda)$ determine a solution to the optimization problem (\ref{nextrem}).
The function $h^0(e^{i\lambda})$, determined by the formula (\ref{nsphar}), is minimax-robust spectral characteristic of the optimal linear estimate of the functional $A_N\xi$.
\end{thm}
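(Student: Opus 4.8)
The plan is to specialise the general minimax scheme of the two lemmas preceding the theorem to the class $D=\{f\}\times D_{\varepsilon}$, in which the first factor is the single known density $f(\lambda)$ and only $g$ varies over the $\varepsilon$-contamination class $D_{\varepsilon}$. First I would note that for any fixed admissible pair $(f,g_0)$ satisfying the minimality condition \eqref{nminimal}, the function $h^{0}=h(f,g_0)$ defined by \eqref{nsphar} is, by the Hilbert space projection argument that produced formulas \eqref{n4}, \eqref{n55} for $\hat{A}_N\xi$, the unique minimiser of $h\mapsto\Delta(h;f,g_0)$ over $L_2^N(f+g_0)$; hence the left saddle point inequality $\Delta(h;f,g_0)\ge\Delta(h^{0};f,g_0)$ holds automatically for every admissible $h$. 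It then remains to pick $g_0\in D_{\varepsilon}$ so that the right inequality $\Delta(h^{0};f,g)\le\Delta(h^{0};f,g_0)$ holds for all $g\in D_{\varepsilon}$, and to check that $h^{0}\in H_D=\bigcap_{g\in D_{\varepsilon}}L_2^N(f+g)$.

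For the second step I would use that, with $f$ held fixed, $\Delta(h(f,g_0);f,g)=\frac{1}{2\pi}\int_{-\pi}^{\pi}h_f(f,g_0)f(\lambda)\,d\lambda+\frac{1}{2\pi}\int_{-\pi}^{\pi}h_g(f,g_0)g(\lambda)\,d\lambda$, whose first summand does not depend on $g$; since $h_g(f,g_0)$ is assumed bounded, the map $g\mapsto\Delta(h(f,g_0);f,g)$ is linear and continuous on $L_1$, so maximising it over $D_{\varepsilon}$ is the convex problem \eqref{n8} with $D=D_{\varepsilon}$, whose solution is characterised by $0\in\partial\Delta_{D_{\varepsilon}}(g_0)$. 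Writing the subdifferential of the indicator of $D_{\varepsilon}$ as the sum of the term arising from the pointwise restriction $g\ge(1-\varepsilon)g_1$ (a function $\varphi(\lambda)\le0$ that vanishes wherever $g_0(\lambda)>(1-\varepsilon)g_1(\lambda)$) and the term arising from $\frac{1}{2\pi}\int g\le P_2$ (a constant $\alpha_2\ge0$, nonzero only when that constraint is active), the stationarity condition becomes \eqref{nhg2} with $f_0$ replaced by the fixed $f$, namely $|A_N(e^{i\lambda})f(\lambda)-C_N^{0}(e^{i\lambda})|=(f(\lambda)+g_0(\lambda))(\varphi(\lambda)+\alpha_2^{-1})$. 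Solving this: on the set where $\varphi=0$ one obtains $g_0(\lambda)=\alpha_2|A_N(e^{i\lambda})f(\lambda)-C_N^{0}(e^{i\lambda})|-f(\lambda)$, while on the complement necessarily $g_0(\lambda)=(1-\varepsilon)g_1(\lambda)$; combining the two alternatives yields the asserted formula $g_0(\lambda)=\max\{(1-\varepsilon)g_1(\lambda),\,\alpha_2|A_N(e^{i\lambda})f(\lambda)-C_N^{0}(e^{i\lambda})|-f(\lambda)\}$, with $\alpha_2$ (and hence the whole density, since $C_N^{0}$ itself depends on $g_0$) pinned down by $\frac{1}{2\pi}\int g_0\le P_2$ together with the requirement that $(f,g_0)$ solve \eqref{nextrem}.

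To finish I would verify $h^{0}\in H_D$. From \eqref{nsphar} one has $|h^{0}(e^{i\lambda})|=|A_N(e^{i\lambda})f(\lambda)-C_N^{0}(e^{i\lambda})|/(f(\lambda)+g_0(\lambda))$, so $|h^{0}|^{2}=h_g(f,g_0)$, which is bounded by hypothesis; hence $\frac{1}{2\pi}\int|h^{0}|^{2}g\le(\sup h_g)\cdot\frac{1}{2\pi}\int g<\infty$ for every $g\in D_{\varepsilon}$, and with \eqref{nminimal} for $f+g_0$ this puts $h^{0}$ in every $L_2^N(f+g)$. Since $g\mapsto\Delta(h^{0};f,g)$ is linear and $g_0$ maximises it over $D_{\varepsilon}$ by construction, the right saddle point inequality holds; combined with the left one, $(f,g_0,h^{0})$ is a saddle point of $\Delta(h;f,g)$ on $H_D\times(\{f\}\times D_{\varepsilon})$, so $g_0$ is least favourable and $h^{0}=h(f,g_0)$ given by \eqref{nsphar} is the minimax-robust spectral characteristic. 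The main obstacle is the middle step: identifying the subdifferential of the indicator of $D_{\varepsilon}$ correctly and running the case analysis that converts the stationarity equation \eqref{nhg2} into the $\max$-formula while keeping $g_0$ consistent with the integral constraint and with \eqref{nextrem}; the boundedness of $h_g(f,g_0)$ and the minimality of $f+g_0$ are precisely the hypotheses that keep every integral finite and the functional linear, so they should not cause difficulty.
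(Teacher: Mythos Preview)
Your proposal is correct and follows essentially the same approach as the paper: reduce to the constrained optimisation problem via the saddle point lemmas, use the linearity and continuity of $g\mapsto\Delta(h(f,g_0);f,g)$ (granted by the boundedness of $h_g$) to invoke the subdifferential condition $0\in\partial\Delta_{D_{\varepsilon}}(g_0)$, identify the Lagrange multipliers $(\varphi,\alpha_2)$ with the constraints $g\ge(1-\varepsilon)g_1$ and $\tfrac{1}{2\pi}\int g\le P_2$ to obtain the stationarity equation \eqref{nhg2}, and then read off the $\max$-formula by the two-case analysis. The paper in fact states the theorem without a separate proof, leaving it as an immediate consequence of the framework and of equation \eqref{nhg2}; your write-up simply fills in the details (including the verification $h^0\in H_D$ via boundedness of $|h^0|^2=h_g(f,g_0)$) that the paper leaves implicit.
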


\begin{thm}
Let the spectral density  $f_0(\lambda) \in D_v^u$, let the function $f_0^{-1}(\lambda)$ be integrable, and let the function $h(f_0)$, determined by the formula (\ref{5}), be bounded.
The spectral density  $f_0(\lambda)$ is the least favourable spectral densities in the class $D_v^u$ for the optimal linear estimate of the functional $A_N\xi$, based on observations of the sequence $\xi(j)$ at points of time $j\in\mathbb{Z}\backslash \{0,1,\dots,N\}$,
if it satisfies the relation
\begin{equation*}
 f_0(\lambda)=\max\left\{v(\lambda),\min\left\{u(\lambda),\alpha_1\left|C_N^0(e^{i\lambda})\right|\right\}\right\}
\end{equation*}
and $f_0(\lambda)$ determine a solution to the optimization problem (\ref{extrem}).
The function $h^0(e^{i\lambda})$, determined by the formula (\ref{5}), is minimax-robust spectral characteristic of the optimal linear estimate of the functional $A_N\xi$.
\end{thm}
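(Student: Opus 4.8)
The plan is to obtain the displayed form of $f_0$ from the minimax machinery developed above in this section — the saddle-point inequalities and the subdifferential characterization $0\in\partial\Delta_D(f_0)$ — and then to use the extra hypothesis that $f_0$ solves the extremum problem (\ref{extrem}) to conclude that this stationary density actually realizes the saddle point, so that it is least favourable and $h(f_0)$ is minimax.

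First I would verify that $h(f_0)$, defined by (\ref{5}), belongs to $H_D=\bigcap_{f\in D_v^u}L_2^N(f)$. Since $f_0^{-1}(\lambda)$ is integrable, the minimality condition (\ref{min}) holds for $f_0$, so the classical interpolation construction of the preceding subsection applies and $h(f_0)\in L_2^N(f_0)$; moreover every $f\in D_v^u$ satisfies $f(\lambda)\le u(\lambda)$ with $u$ bounded, so the assumed boundedness of $h(f_0)$ gives $\frac{1}{2\pi}\int_{-\pi}^{\pi}|h(f_0)|^2f(\lambda)\,d\lambda<\infty$, hence $h(f_0)\in L_2^N(f)$ for every admissible $f$ and thus $h(f_0)\in H_D$. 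By the saddle-point discussion preceding the lemma on least favourable densities it then suffices to check that $f_0$ solves the constrained problem (\ref{7}), i.e. minimises $\tilde\Delta(f)=-\frac{1}{2\pi}\int_{-\pi}^{\pi}|C_N^0(e^{i\lambda})|^2 f_0^{-2}(\lambda)f(\lambda)\,d\lambda$ over $f\in D_v^u$; the hypothesis that $f_0$ is a solution of (\ref{extrem}) says precisely that $f_0$ maximises $\langle B_N^{-1}\vec{\bold{a}}_N,\vec{\bold{a}}_N\rangle$ over $D_v^u$, and at such a point the two extremal problems coincide.

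Next I would pass to the equivalent unconstrained problem $\Delta_D(f)=\tilde\Delta(f)+\delta(f\,|\,D_v^u)\to\inf$. Here $\tilde\Delta$ is linear, hence convex, and $\delta(\cdot\,|\,D_v^u)$ is convex, so a minimiser is characterised by $0\in\partial\Delta_D(f_0)$. We have $\partial\tilde\Delta(f_0)=\{-|C_N^0(e^{i\lambda})|^2 f_0^{-2}(\lambda)\}$, while the subdifferential of $\delta(\cdot\,|\,D_v^u)$ at $f_0$ consists of functions $\gamma_1(\lambda)+\gamma_2(\lambda)+\alpha_1$, where $\gamma_1\le 0$ and $\gamma_1=0$ on $\{f_0>v\}$ (the constraint $f\ge v$), $\gamma_2\ge 0$ and $\gamma_2=0$ on $\{f_0<u\}$ (the constraint $f\le u$), and $\alpha_1\ge 0$ is the multiplier of the moment restriction $\frac{1}{2\pi}\int_{-\pi}^{\pi}f\,d\lambda\le P_1$, vanishing when this inequality is strict. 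The stationarity condition then reads $|C_N^0(e^{i\lambda})|^2 f_0^{-2}(\lambda)=\gamma_1(\lambda)+\gamma_2(\lambda)+\alpha_1$.

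Finally I would read off the shape of $f_0$. On the set where $v(\lambda)<f_0(\lambda)<u(\lambda)$ both $\gamma_1$ and $\gamma_2$ vanish, so $f_0^2(\lambda)$ is proportional to $|C_N^0(e^{i\lambda})|^2$, i.e. $f_0(\lambda)=\alpha_1|C_N^0(e^{i\lambda})|$ after renaming the Lagrange constant; where $\alpha_1|C_N^0(e^{i\lambda})|$ would fall below $v(\lambda)$ the lower constraint becomes active and $f_0=v$, and symmetrically $f_0=u$ where $\alpha_1|C_N^0(e^{i\lambda})|$ exceeds $u(\lambda)$. Combining the three regimes gives exactly $f_0(\lambda)=\max\{v(\lambda),\min\{u(\lambda),\alpha_1|C_N^0(e^{i\lambda})|\}\}$, and the minimax-robust spectral characteristic is $h^0=h(f_0)$ computed by (\ref{5}), since $h(f_0)\in H_D$. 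The main obstacle, and the reason the statement carries the extra assumption, is that $0\in\partial\Delta_D(f_0)$ is only a necessary stationarity condition: one still has to know that $f_0$ is a genuine global maximiser of the error over $D_v^u$ rather than merely a critical point, which is exactly what ``$f_0$ determines a solution to (\ref{extrem})'' guarantees; matching the signs of the pointwise multipliers $\gamma_1,\gamma_2$ to the prescribed max--min form and the integrability bookkeeping for $h(f_0)$ are the remaining routine verifications.
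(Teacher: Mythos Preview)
Your approach is correct and follows exactly the route the paper takes. The paper does not spell out a separate proof for this theorem; it is stated as a direct consequence of the general minimax machinery of Section~6.2 (the saddle-point inequalities, the reduction to the constrained problem~(\ref{7}), and the subdifferential condition $0\in\partial\Delta_D(f_0)$), applied with $D=D_v^u$ in the noise-free interpolation setting. Your computation of the normal cone of $D_v^u$ and the resulting pointwise case analysis that produces the $\max\{v,\min\{u,\cdot\}\}$ form is precisely what the paper's framework dictates, and your verification that $h(f_0)\in H_D$ via boundedness of $h(f_0)$ and of $u(\lambda)$ is the intended argument.

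One small clarification: your remark that $0\in\partial\Delta_D(f_0)$ is ``only a necessary stationarity condition'' slightly understates matters. For fixed $f_0$ the functional $\tilde\Delta$ is linear, so the unconstrained problem is convex and the subdifferential condition is both necessary and sufficient for $f_0$ to solve~(\ref{7}). The genuine reason the hypothesis ``$f_0$ determines a solution to~(\ref{extrem})'' is needed is that the displayed max--min relation is an \emph{implicit} equation for $f_0$ (since $C_N^0$ is built from $f_0$ itself), and this fixed-point equation may have several solutions; the extremum condition~(\ref{extrem}) singles out the one that actually maximises the interpolation error and hence is least favourable. This is consistent with how the paper phrases the analogous theorems throughout Sections~6 and~7.
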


\subsection{Least favourable spectral densities in the class $D_{2\varepsilon_1}\times D_{1\varepsilon_2}$}

Consider the problem of the optimal estimation of the functional $A_N\xi=\sum_{j=1}^{N}a(j)\xi(j)$ which depends on the unknown values of a stationary stochastic sequence $\xi(j)$
from observations of the sequence $\xi(j)+\eta(j)$ at points of time $j\in\mathbb{Z}\backslash \{0,1,\dots,N\}$
in the case where the spectral densities  $f(\lambda)$, $g(\lambda)$ are from the class $D_{2\varepsilon_1}\times D_{1\varepsilon_2}$, which describe the models of
 ``$\varepsilon$-neighbourhood'' of spectral densities in the space $L_2 \times L_1$. Let
\begin{equation*}
D_{2\varepsilon_1}=\left\{f(\lambda)\left|\frac{1}{2\pi}\int \limits_{-\pi}^{\pi}\left|f(\lambda)-f_1(\lambda)\right|^2d\lambda\leq\varepsilon_1\right.\right\}
\end{equation*}
be  ``$\varepsilon$-neighbourhood'' in the space  $L_2$ of a given bounded spectral density $f_1(\lambda)$, and let
\begin{equation*}
D_{1\varepsilon_2}=\left\{g(\lambda)\left|\frac{1}{2\pi}\int \limits_{-\pi}^{\pi}\left|g(\lambda)-g_1(\lambda)\right|d\lambda\leq\varepsilon_2\right.\right\}
\end{equation*}
be  "$\varepsilon$-neighbourhood" in the space  $L_1$ of a given bounded spectral density  $g_1(\lambda)$.

Let the spectral densities  $f_0(\lambda) \in D_{2\varepsilon_1}$, $g_0(\lambda) \in D_{1\varepsilon_2}$ determine the bounded functions  $h_f(f_0,g_0)$, $h_g(f_0,g_0)$ with the help of formulas  (\ref{nhf}), (\ref{nhg}).
 Then from the condition $0 \in \partial\Delta_{D_{f,g}}(f_0,g_0)$ for $D=D_{2\varepsilon_1}\times D_{1\varepsilon_2}$ we derive the following equations that determine the least favourable spectral densities
\begin{equation} \label{nhf3}
\left|A_N(e^{i\lambda})g_0(\lambda)+
C_N^0(e^{i\lambda})\right|^2=(f_0(\lambda)+g_0(\lambda))^2(f_0(\lambda)-f_1(\lambda))\alpha_1,
\end{equation}
\begin{equation} \label{nhg3}
\left|A_N(e^{i\lambda})f_0(\lambda)-
C_N^0(e^{i\lambda})\right|^2=(f_0(\lambda)+g_0(\lambda))^2\Psi(\lambda)\alpha_2,
\end{equation}
where $\left|\Psi(\lambda)\right|\leq 1$ and $\Psi(\lambda)=sign (g_0(\lambda)-g_1(\lambda))$, in the case $g_0(\lambda)\neq g_1(\lambda)$, $\alpha_1, \alpha_2$ are constants.

Equations (\ref{nhf3}), (\ref{nhg3}) with the optimization problem (\ref{extrem}) and the normalising conditions
\begin{equation} \label{nn1}
\frac{1}{2\pi}\int \limits_{-\pi}^{\pi}\left|f(\lambda)-f_1(\lambda)\right|^2d\lambda=\varepsilon_1
\end{equation}
\begin{equation} \label{nn2}
\frac{1}{2\pi}\int \limits_{-\pi}^{\pi}\left|g(\lambda)-g_1(\lambda)\right|d\lambda=\varepsilon_2
\end{equation}
determine the least favourable spectral densities in the class $D=D_{2\varepsilon_1}\times D_{1\varepsilon_2}$.

The following theorems hold true.

\begin{thm}
Let the spectral densities $f_0(\lambda) \in D_{2\varepsilon_1}$, $g_0(\lambda) \in D_{1\varepsilon_2}$ satisfy the minimality condition (\ref{nminimal}) and let the functions $h_f(f_0,g_0)$ and $h_g(f_0,g_0)$, determined by the formulas (\ref{nhf}), (\ref{nhg}), be bounded.
The spectral densities  $f_0(\lambda), g_0(\lambda)$, which give solution to the system of equations (\ref{nhf3})--(\ref{nn2})
are the least favourable spectral densities in the class $D_{2\varepsilon_1}\times D_{1\varepsilon_2}$, if they determine a solution to the optimization problem (\ref{nextrem}).
The function $h^0(e^{i\lambda})$, determined by the formula (\ref{nsphar}), is minimax-robust spectral characteristic of the optimal linear estimate of the functional $A_N\xi$.
\end{thm}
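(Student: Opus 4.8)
The plan is to deduce the statement from the subdifferential criterion established in the preceding lemma, namely that $(f_0,g_0)$ is least favourable in $D=D_{2\varepsilon_1}\times D_{1\varepsilon_2}$ as soon as it solves the unconstrained problem \eqref{n8}, equivalently as soon as $0\in\partial\Delta_D(f_0,g_0)$, and that the associated $h^0=h(f_0,g_0)$ is then minimax once $h(f_0,g_0)\in H_D$. So I would first check the membership $h(f_0,g_0)\in H_D$: under the minimality condition \eqref{nminimal} for $f_0+g_0$ the operator $\bold B_N^0$ is invertible, the vector of coefficients in \eqref{nrivn2} is well defined, and the function $h(f_0,g_0)$ given by \eqref{nsphar} lies in $L_2^N(f_0+g_0)$; the boundedness of $h_f(f_0,g_0)$ and $h_g(f_0,g_0)$ from \eqref{nhf}, \eqref{nhg} guarantees that $\Delta(h(f_0,g_0);f,g)$ is finite and continuous on all of $D$, so $h(f_0,g_0)$ belongs to every $L_2^N(f+g)$ with $(f,g)\in D$.

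Next I would compute $\partial\Delta_D(f_0,g_0)$ term by term. Since $h_f$ and $h_g$ are bounded, the functional $-\Delta(h(f_0,g_0);f,g)$ is linear and continuous in $(f,g)$ on $L_1\times L_1$, hence its subdifferential at $(f_0,g_0)$ is the single pair $\big(-h_f(f_0,g_0),-h_g(f_0,g_0)\big)$. To this I would add the subdifferential of the indicator function $\delta(f,g\,|\,D_{2\varepsilon_1}\times D_{1\varepsilon_2})$: at the active $L_2$-constraint $\tfrac1{2\pi}\int|f_0-f_1|^2d\lambda=\varepsilon_1$ the $f$-component is a multiple of $f_0(\lambda)-f_1(\lambda)$, while at the active $L_1$-constraint $\tfrac1{2\pi}\int|g_0-g_1|d\lambda=\varepsilon_2$ the $g$-component is $\alpha_2\Psi(\lambda)$ with $|\Psi(\lambda)|\le1$ and $\Psi(\lambda)=\operatorname{sign}\big(g_0(\lambda)-g_1(\lambda)\big)$ wherever $g_0(\lambda)\ne g_1(\lambda)$. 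Imposing $0\in\partial\Delta_D(f_0,g_0)$ and substituting the explicit forms \eqref{nhf}, \eqref{nhg} of $h_f,h_g$ reproduces exactly equations \eqref{nhf3}, \eqref{nhg3}, which, adjoined to the normalising conditions \eqref{nn1}, \eqref{nn2} and the extremum requirement \eqref{nextrem}, characterize the candidate pair $(f_0,g_0)$.

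Finally I would close the saddle-point argument. The inequality $\Delta(h;f_0,g_0)\ge\Delta(h^0;f_0,g_0)$ for all $h\in H_D$ holds because $h^0$ is, by its construction via \eqref{nsphar}, the minimizer of $\Delta(\cdot;f_0,g_0)$ over $L_2^N(f_0+g_0)\supseteq H_D$; the reverse inequality $\Delta(h^0;f_0,g_0)\ge\Delta(h^0;f,g)$ for all $(f,g)\in D$ is precisely the content of the hypothesis that $(f_0,g_0)$ determines a solution to \eqref{nextrem}, i.e. that $(f_0,g_0)$ solves the constrained problem \eqref{n7}. Together with $h^0\in H_D$ these two inequalities yield the saddle point, whence $(f_0,g_0)$ is least favourable and $h^0$ is the minimax-robust spectral characteristic. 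I expect the real difficulty to be not the formal subdifferential bookkeeping but the solvability issue behind the hypothesis: one must ensure that the multipliers $\alpha_1,\alpha_2$ and the function $\Psi$ can be chosen so that \eqref{nhf3}--\eqref{nn2} admit a solution actually lying in $D_{2\varepsilon_1}\times D_{1\varepsilon_2}$ and simultaneously realizing the maximum in \eqref{nextrem}; in the mixed $L_2\times L_1$ model this is a nontrivial existence question — the $L_1$ part in particular forces $g_0$ to be of threshold (truncation) type rather than smooth — which is exactly why the theorem is phrased conditionally on \eqref{nextrem} rather than asserting existence outright.
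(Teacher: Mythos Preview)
Your proposal is correct and follows precisely the approach the paper relies on: the paper does not give a self-contained proof of this theorem but deduces it from the general subdifferential/saddle-point machinery of the two preceding lemmas, computing the subdifferential of the indicator of $D_{2\varepsilon_1}\times D_{1\varepsilon_2}$ to obtain \eqref{nhf3}--\eqref{nhg3} and then invoking the extremum condition \eqref{nextrem}. Your write-up simply makes explicit the steps the paper leaves implicit, including the role of the boundedness of $h_f,h_g$ in ensuring linearity and continuity on $L_1\times L_1$ and the identification of the $L_2$- and $L_1$-subdifferentials; your closing remark about the conditional phrasing of the theorem is also accurate.
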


\begin{thm}
Let the spectral density $f(\lambda)$ be known, and let the spectral density $g_0(\lambda) \in D_{1\varepsilon_2}$. Let the function $f(\lambda)+g_0(\lambda)$  satisfy the minimality condition (\ref{nminimal}), and let the function $h_g(f,g_0)$, determined by the formula (\ref{nhg}), be bounded.
The spectral density $g_0(\lambda)$ is the least favourable spectral densities in the class $D_{1\varepsilon_2}$ for the optimal linear estimate of the functional $A_N\xi$,
if it is of the form
\begin{equation*}
g_0(\lambda)= \max\left\{g_1(\lambda), \alpha_2^{-1}\left|A_N(e^{i\lambda})f(\lambda)-
C_N^0(e^{i\lambda})\right|-f(\lambda)\right\}
\end{equation*}
and the functions $f(\lambda), g_0(\lambda)$ determine a solution to the optimization problem (\ref{nextrem}).
The function $h^0(e^{i\lambda})$, determined by the formula (\ref{nsphar}), is minimax-robust spectral characteristic of the optimal linear estimate of the functional $A_N\xi$.
\end{thm}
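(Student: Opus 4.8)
The plan is to derive the statement from the general minimax scheme of this section, exploiting that for fixed $f(\lambda)$ the error $\Delta(h(f_0,g_0);f,g)$ is \emph{linear} in its last argument. First I would invoke the saddle-point lemma above: it suffices to exhibit $g_0\in D_{1\varepsilon_2}$ such that the pair $(f,g_0)$ solves the unconstrained problem $(\ref{n8})$ (only the $g$-variable being free), i.e. $0\in\partial\Delta_D(f,g_0)$ with $D=D_{1\varepsilon_2}$, and such that the associated projection estimate $h^0=h(f,g_0)$ of $(\ref{nsphar})$ lies in $H_D=\bigcap_{g\in D_{1\varepsilon_2}}L_2^N(f+g)$. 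Then the task splits into: (i) writing out the inclusion $0\in\partial\Delta_D(f,g_0)$ and solving it to obtain the displayed formula for $g_0$; (ii) checking that the boundedness and minimality hypotheses make $h^0$ admissible; (iii) assembling the two saddle-point inequalities.

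For (i): with $f$ fixed, the superdifferential of $g\mapsto-\Delta(h(f,g_0);f,g)$ at $g_0$ is the single function $-h_g(f,g_0)$ of $(\ref{nhg})$, whereas the subdifferential at $g_0$ of the indicator function of the $L_1$-ball $D_{1\varepsilon_2}$ consists of the functions $\alpha_2\Psi(\lambda)$ with $\alpha_2\ge 0$, $|\Psi(\lambda)|\le1$ and $\Psi(\lambda)=\operatorname{sign}(g_0(\lambda)-g_1(\lambda))$ on $\{g_0\ne g_1\}$. Equating, $h_g(f,g_0)=\alpha_2\Psi(\lambda)$; on $\{g_0>g_1\}$ this forces $\left|A_N(e^{i\lambda})f(\lambda)-C_N^0(e^{i\lambda})\right|=\alpha_2^{-1}\bigl(f(\lambda)+g_0(\lambda)\bigr)$ after renaming the multiplier, hence $g_0=\alpha_2^{-1}\left|A_N(e^{i\lambda})f-C_N^0\right|-f$ there, while $g_0=g_1$ on the complement; combining the two branches as a pointwise maximum gives precisely $g_0=\max\{g_1,\;\alpha_2^{-1}\left|A_N(e^{i\lambda})f-C_N^0\right|-f\}$, with $C_N^0$ formed from $(\mathbf B_N^0)^{-1}\mathbf R_N^0\mathbf a_N$ for the operators attached to the pair $(f,g_0)$. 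The multiplier is pinned by the active normalisation $\tfrac1{2\pi}\int|g_0-g_1|\,d\lambda=\varepsilon_2$, and the requirement that $(f,g_0)$ solve $(\ref{nextrem})$ is exactly the extremum condition.

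For (ii) and (iii): the hypotheses that $f+g_0$ satisfies $(\ref{nminimal})$ and that $h_g(f,g_0)$, hence together with $(\ref{nhf})$ also $h^0$, is bounded guarantee both that $h^0$ of $(\ref{nsphar})$ is well defined and that $\int|h^0|^2(f+g)\,d\lambda<\infty$ for every $g\in D_{1\varepsilon_2}$ — a bounded function is square-integrable against any $L_1$ density — so $h^0\in H_D$, its Fourier coefficients vanishing for $j=0,\dots,N$ by construction. The inequality $\Delta(h;f,g_0)\ge\Delta(h^0;f,g_0)$ for all $h\in H_D$ holds because $h^0=h(f,g_0)$ is the orthogonal-projection, hence optimal, estimate for the pair $(f,g_0)$ by the projection theorem of this section. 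The opposite inequality $\Delta(h^0;f,g_0)\ge\Delta(h^0;f,g)$ for all $g\in D_{1\varepsilon_2}$ follows from linearity of $g\mapsto\Delta(h^0;f,g)=\tfrac1{2\pi}\int h_g(f,g_0)\,g\,d\lambda$ in $g$ together with the fact that, by the construction of $g_0$, the coefficient $h_g(f,g_0)$ attains its maximum constant value on $\{g_0>g_1\}$ and does not exceed it elsewhere — precisely the optimality condition for a linear functional over an $L_1$-ball centred at $g_1$. This yields the saddle point and the theorem.

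The main obstacle I anticipate is the internal consistency of the description of $g_0$: the right-hand side of its formula contains $C_N^0$, which in turn is built from $g_0$ through $\mathbf B_N^0$ and $\mathbf R_N^0$, so $g_0$ is defined only implicitly, and one must argue that the coupled system — the Lagrange relation for $g_0$, the linear system $\mathbf B_N^0\mathbf c_N=\mathbf R_N^0\mathbf a_N$, and the normalisation — is solvable with a solution that actually lies in $D_{1\varepsilon_2}$ and keeps $f+g_0$ in the minimality class. This is exactly what the standing hypotheses (boundedness of $h_g(f,g_0)$, $f+g_0$ minimal, and that $(f,g_0)$ solves $(\ref{nextrem})$) are there to supply, so the remaining work is a careful but routine verification rather than a construction from scratch.
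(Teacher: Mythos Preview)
Your proposal is correct and follows essentially the same route as the paper: the paper does not spell out a proof for this particular theorem but simply states it as an instance of the general scheme built around the subdifferential condition $0\in\partial\Delta_D(f_0,g_0)$, the Lagrange relation $(\ref{nhg3})$, and the saddle-point lemma, which is exactly what you have unpacked. One cosmetic point: in your step (iii) you write $\Delta(h^0;f,g)=\tfrac1{2\pi}\int h_g(f,g_0)\,g\,d\lambda$, omitting the $f$-term $\tfrac1{2\pi}\int h_f(f,g_0)\,f\,d\lambda$; since $f$ is fixed this term is a constant and the inequality still follows, but you should keep it in the displayed expression for $\Delta(h^0;f,g)$.
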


\begin{thm}
Let the spectral density  $f_0(\lambda) \in D_{2\varepsilon_1}$, let the function $f_0^{-1}(\lambda)$ be integrable, and let the function $h(f_0)$, determined by the formula (\ref{5}), be bounded.
The spectral density  $f_0(\lambda)$ is the least favourable spectral densities in the class $D_{2\varepsilon_1}$ for the optimal linear estimate of the functional $A_N\xi$, based on observations of the sequence $\xi(j)$ at points of time $j\in\mathbb{Z}\backslash \{0,1,\dots,N\}$,
if it satisfies the relation
\begin{equation*}
\left|C_N^0(e^{i\lambda})\right|^2=(f_0(\lambda))^2(f_0(\lambda)-f_1(\lambda))\alpha_1
\end{equation*}
and $f_0(\lambda)$ determine a solution to the optimization problem (\ref{extrem}).
The function $h^0(e^{i\lambda})$, determined by the formula (\ref{5}), is minimax-robust spectral characteristic of the optimal linear estimate of the functional $A_N\xi$.
\end{thm}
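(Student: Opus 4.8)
The plan is to verify directly that $f_0$ and $h^0=h(f_0)$, with $h^0$ computed by formula (\ref{5}), form a saddle point of the mean-square error functional $\Delta(h;f)$ on $H_D\times D_{2\varepsilon_1}$; as explained in the subsection on the minimax-robust method of interpolation, this is precisely what is needed for $f_0$ to be least favourable and for $h^0$ to be minimax-robust. First I would check admissibility of $h^0$. Since $f_0^{-1}$ is integrable, $f_0$ satisfies the minimality condition (\ref{min}), so the classical Hilbert space projection method applies to it: the optimal spectral characteristic for the density $f_0$ is $h^0$ given by (\ref{5}), whence $A_N(e^{i\lambda})-h^0(e^{i\lambda})=C_N^0(e^{i\lambda})f_0^{-1}(\lambda)$, where $C_N^0$ is formed from the Fourier coefficients of $f_0^{-1}$ and the coefficients $a(0),\dots,a(N)$ of the functional. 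Because $A_N(e^{i\lambda})$ is a trigonometric polynomial and $h^0$ is assumed bounded, the function $|A_N-h^0|^2=|C_N^0|^2 f_0^{-2}$ is bounded; hence for every $f\in D_{2\varepsilon_1}$ the integral $\Delta(h^0;f)=\frac{1}{2\pi}\int_{-\pi}^{\pi}|C_N^0(e^{i\lambda})|^2 f_0^{-2}(\lambda)f(\lambda)\,d\lambda$ is finite, and $h^0$, whose Fourier coefficients for $j=0,\dots,N$ vanish, lies in $L_2^N(f)$, so $h^0\in H_D$.

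I would then establish the two saddle inequalities. The inequality $\Delta(h;f_0)\ge\Delta(h^0;f_0)$ for all $h\in H_D$ is immediate, since $H_D\subseteq L_2^N(f_0)$ and $h^0$ is the orthogonal projection of $A_N$ onto $L_2^N(f_0)$ in $L_2(f_0)$, hence minimizes $\Delta(\,\cdot\,;f_0)$ over that subspace. For the reverse inequality I would insert the hypothesised relation $|C_N^0(e^{i\lambda})|^2=\alpha_1 f_0^2(\lambda)\bigl(f_0(\lambda)-f_1(\lambda)\bigr)$ into the expression for $\Delta(h^0;f)$ just obtained, which collapses it to $\Delta(h^0;f)=\frac{\alpha_1}{2\pi}\int_{-\pi}^{\pi}\bigl(f_0(\lambda)-f_1(\lambda)\bigr)f(\lambda)\,d\lambda$, so that
\[
\Delta(h^0;f_0)-\Delta(h^0;f)=\frac{\alpha_1}{2\pi}\int_{-\pi}^{\pi}\bigl(f_0(\lambda)-f_1(\lambda)\bigr)\bigl(f_0(\lambda)-f(\lambda)\bigr)\,d\lambda .
\]
Writing $f_0-f=(f_0-f_1)-(f-f_1)$, using that the Lagrange multiplier $\alpha_1$ is nonnegative and that the constraint is active at $f_0$, i.e. $\frac{1}{2\pi}\int_{-\pi}^{\pi}|f_0-f_1|^2\,d\lambda=\varepsilon_1$ (forced by complementary slackness whenever $\alpha_1\ne0$), together with $\frac{1}{2\pi}\int_{-\pi}^{\pi}|f-f_1|^2\,d\lambda\le\varepsilon_1$ for $f\in D_{2\varepsilon_1}$, the Cauchy--Schwarz inequality gives $\frac{1}{2\pi}\int_{-\pi}^{\pi}(f_0-f_1)(f-f_1)\,d\lambda\le\varepsilon_1$, so the displayed difference is at least $\alpha_1(\varepsilon_1-\varepsilon_1)=0$. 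Hence $\Delta(h^0;f_0)\ge\Delta(h^0;f)$ for all $f\in D_{2\varepsilon_1}$.

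Combining the two inequalities, $(h^0,f_0)$ is a saddle point of $\Delta$, and the usual min--max argument yields $\Delta(h(f_0);f_0)=\max_{f\in D_{2\varepsilon_1}}\Delta(h(f);f)$ and $\min_{h\in H_D}\max_{f\in D_{2\varepsilon_1}}\Delta(h;f)=\Delta(h^0;f_0)=\sup_{f\in D_{2\varepsilon_1}}\Delta(h^0;f)$, which is exactly the claim; the hypothesis that $f_0$ ``determines a solution to the optimization problem (\ref{extrem})'' serves to pick out, among all densities satisfying the Euler--Lagrange relation (which merely expresses the stationarity condition $0\in\partial\Delta_D(f_0)$ and is not sufficient on its own), the one actually attaining this maximum, so that the saddle point genuinely exists. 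The step I expect to be the main obstacle is the second saddle inequality: one has to recognize that the multiplier relation is equivalent to $|A_N-h^0|^2$ being a nonnegative multiple of $f_0-f_1$ and then exploit the Hilbert-ball geometry of $D_{2\varepsilon_1}$ so that the Cauchy--Schwarz estimate is tight; the subsidiary points --- positivity of $f_0$ a.e. (which follows from integrability of $f_0^{-1}$), the sign of $\alpha_1$, and the degenerate case of an interior $f_0$ (which forces $C_N^0\equiv0$, i.e. a trivial functional) --- also need to be checked.
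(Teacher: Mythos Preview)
Your argument is correct. The paper itself does not give a standalone proof of this theorem; it relies on the general machinery of Section~6.2/7.2: one passes from the constrained problem (\ref{7}) to the unconstrained problem with indicator function, invokes the subdifferential condition $0\in\partial\Delta_D(f_0)$, and then reads off the Euler--Lagrange relation for the class $D_{2\varepsilon_1}$ via Lagrange multipliers, with Lemma~6.1 and the saddle-point paragraph supplying the bridge to ``least favourable'' and ``minimax''. Your route is more explicit and more elementary: instead of appealing to subdifferential calculus you check the two saddle inequalities by hand, the second one via the identity $\Delta(h^0;f)=\frac{\alpha_1}{2\pi}\int(f_0-f_1)f\,d\lambda$ and Cauchy--Schwarz on the $L_2$-ball $D_{2\varepsilon_1}$. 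This buys you a self-contained argument that does not need the convex-analysis lemmas, and in fact shows that once the constraint is active and $\alpha_1\ge0$, the Euler--Lagrange relation alone already forces the saddle point---so the extra hypothesis about (\ref{extrem}) is, in your approach, automatically satisfied rather than an independent assumption. The paper's formulation keeps that hypothesis because in its abstract framework the subdifferential condition is only necessary, and one must separately select the maximizing solution.
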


\subsection{Conclusions}

In this section we propose methods of solution of the problem of the mean-square optimal linear estimation of the functional $A_N\xi=\sum\limits_{j=0}^{N}a(j)\xi(j)$ which depends on the unknown values of a stationary stochastic sequence $\xi(j)$.
Estimates are based on observations of the sequence $\xi(j)+\eta(j)$ at points $j\in\mathbb{Z}\backslash \{0,1,\dots,N\} $, where $\eta(j)$ is an uncorrelated with $\xi(j)$ stationary sequence. We provide formulas for calculating values of the mean square error and the spectral characteristic of the optimal linear estimate of the functional in the case of spectral certainty where the spectral densities $f(\lambda)$ and $g(\lambda)$ of the sequences $\xi(j)$ and  $\eta(j)$ are exactly known.
In the case of spectral uncertainty where the spectral densities $f(\lambda)$ and $g(\lambda)$ are not known, but a set of admissible spectral densities is given, the minimax approach is applied. We obtain formulas that determine the least favourable spectral densities and the minimax spectral characteristics of the optimal linear estimates of the functional $A_N\xi$ for concrete classes of admissible spectral densities.

For the relative results on the mean-square optimal linear interpolation of linear functionals for stationary stochastic sequences and processes based on observations with noise see papers by Moklyachuk~\cite{Moklyachuk:2000} -- \cite{Moklyachuk:2008r}, book by Moklyachuk and Masyutka~\cite{Moklyachuk:2012}.

\section{Conclusion Remarks}

In the proposed paper we describe methods of solution of the problems of the mean-square optimal linear extrapolation and interpolation of linear functionals which depend on the unknown values
of a stationary stochastic sequence $\xi(k)$ based on observations of the sequence $\xi(k)$ as well as observations of the sequence $\xi(k)+\eta(k)$, where $ \eta(k)$ is an  uncorrelated with the sequence $\xi(k)$
stationary stochastic sequence.
The corresponding methods of solution of the problem of the mean-square optimal linear filtering of stationary stochastic sequences are described in the paper by
Luz and Moklyachuk \cite{Luz9}.

Following the Ulf~Grenander~\cite{Grenander} approach to investigation the problem
of optimal linear estimation of the functional which depends on the unknown values of the stationary stochastic continuous parameter process we consider the problem as a two-person zero-sum game in which the first
player chooses a stationary stochastic sequence $ \xi (j) $ from the class $ \Xi $ of
stationary stochastic sequences with $E \xi (j)=0 $ and $ E|\xi (j)|^2=1 $ which
maximizes the value of the mean square error of estimate. The second player is looking for
an estimate of the linear functional which minimizes the value of the
mean square error. It is show that this game has equilibrium point.
The maximum error gives a one-sided moving average stationary sequence which is
least favourable in the given class of stationary sequences.
 The greatest value
of the error and the least favourable sequence are determined by the
largest eigenvalue and the corresponding eigenvector of the
operator determined by coefficients which determine the functionals.
Note, that this approach can be applied to a specific class of estimation problems.

The second approach to the estimation problems we applied is based on the
Kolmogorov \cite{Kolmogorov} Hilbert space projection method which we apply in the case of spectral certainty, where the spectral
densities of the sequences $\xi(n)$ and $\eta(n)$ are exactly known, and the convex optimization method proposed by
Franke~\cite{Franke1984,Franke1985} which is applied in the case of spectral uncertainty, where the spectral densities of
the sequences are not exactly known, but, instead, a set of admissible spectral densities is
given.
Formulas for calculation the mean-square errors and the spectral characteristics of the optimal estimates of functionals
are derived in the case of spectral certainty.
In the case of spectral uncertainty the minimax-robust estimation method is applied. Formulas that determine the least favourable spectral
densities and the minimax-robust spectral characteristics of the optimal linear estimates of the functionals are derived.

In the papers by   Moklyachuk \cite{Moklyachuk:1981a} -- \cite{Moklyachuk:2008} problems of extrapolation, interpolation and filtering
for stationary processes and sequences were studied.
The corresponding problems for vector-valued stationary sequences and processes were investigated by Moklyachuk and Masyutka~\cite{Moklyachuk:2006} -- \cite{Moklyachuk:2012}.  In the articles by Dubovets'ka and Moklyachuk \cite{Dubovetska1} - \cite{Dubovetska8}  and in the book  by Golichenko and Moklyachuk \cite{Golichenko} the minimax estimation problems were investigated for another generalization of stationary processes --  periodically correlated stochastic sequences and stochastic processes.
Luz and Moklyachuk \cite{Luz1} -- \cite{Luz8}, \cite{Luz6} investigated the classical and minimax extrapolation, interpolation and filtering problems for sequences and processes with $n$th stationary increments.
Investigation of the mean-square optimal linear estimation problems for  functionals of stationary stochastic sequences and processes with missing observations is started in the papers by Moklyachuk and Sidei~ \cite{Sidei}, \cite{Sidei2}.
The minimax estimation problems for functionals of a generalization of stationary processes --  harmonizable stable stochastic sequences and processes is started in the papers by
Moklyachuk and Ostapenko~\cite{Ostapenko},~\cite{Ostapenko2}. For the results for functionals of random fields see the book by
Moklyachuk and Shchestyuk~\cite{Shchestyuk}.


\begin{thebibliography}{99}



\bibitem{Akhieser}
\newblock N. I. Akhieser and I. M. Glasmann,
\newblock \emph{Theorie der linearen Operatoren im Hilbert--Raum},
\newblock  Berlin: Akademie-Verlag. XVI, 1968.



\bibitem{Box:Jenkins}
\newblock G. E. P. Box, G. M. Jenkins and G. C. Reinsel,
\newblock \emph{Time series analysis. Forecasting and control. 3rd ed.},
\newblock Englewood Cliffs, NJ: Prentice Hall, 1994.


\bibitem{Dubovetska1}
\newblock I. I. Dubovets'ka, O.Yu. Masyutka and M.P. Moklyachuk,
\newblock \emph{Interpolation of periodically correlated stochastic sequences},
\newblock Theory of Probability and Mathematical Statistics, vol. 84, pp. 43-56, 2012.

\bibitem{Dubovetska4}
\newblock I. I. Dubovets'ka and M. P. Moklyachuk,
\newblock \emph{Filtration of linear functionals of periodically correlated sequences},
\newblock Theory of Probability and Mathematical Statistics, vol. 86, pp. 51-64, 2013.

\bibitem{Dubovetska6}
\newblock I. I. Dubovets'ka and M. P. Moklyachuk,
\newblock \emph{Extrapolation of periodically correlated processes from observations with noise},
\newblock Theory of Probability and Mathematical Statistics, vol. 88, pp. 43-55, 2013.

\bibitem{Dubovetska7}
\newblock I. I. Dubovets'ka and M. P. Moklyachuk,
\newblock \emph{Minimax estimation problem for periodically correlated stochastic
processes},
\newblock Journal of Mathematics and System Science, vol. 3, no. 1, pp. 26-30, 2013.

\bibitem{Dubovetska8}
\newblock I. I. Dubovets'ka and M. P. Moklyachuk,
\newblock \emph{On minimax estimation problems for periodically correlated stochastic
processes},
\newblock Contemporary Mathematics and Statistics, vol.2, no. 1, pp. 123-150, 2014.

\bibitem{Dunford}
\newblock N. Dunford and J. T. Schwartz,
\newblock \emph{Linear Operators. I. General theory},
\newblock New York and London: Interscience Publishers. XIV, 1958.

\bibitem{Franke1984}
\newblock J. Franke,
\newblock \emph{On the robust prediction and interpolation of time series in the presence of correlated noise},
\newblock J. Time Series Analysis, vol. 5,  No. 4, pp. 227--244, 1984.

\bibitem{Franke1985}
\newblock J. Franke,
\newblock \emph{Minimax robust prediction of discrete time series},
\newblock Z. Wahrscheinlichkeitstheor. Verw. Gebiete, vol. 68, pp. 337--364, 1985.

\bibitem{FrankePoor}
\newblock J. Franke and H. V. Poor,
\newblock \emph{Minimax-robust filtering and finite-length robust predictors},
\newblock Robust and Nonlinear Time Series Analysis. Lecture Notes in Statistics, Springer-Verlag,
vol. 26, pp. 87--126, 1984.


\bibitem{Gihman}
\newblock I. I. Gikhman and A. V. Skorokhod,
\newblock \emph{The theory of stochastic processes. I.},
\newblock Berlin: Springer, 2004.

\bibitem{Golichenko}
\newblock I. I. Golichenko and M. P. Moklyachuk,
\newblock \emph{Estimates of functionals of periodically correlated processes},
\newblock Kyiv: NVP ``Interservis", 2014.

\bibitem{Gould}
 \newblock S. H. Gould,
\newblock \emph{Variational methods for eigenvalue problems. An introduction to the
Weinstein method of intermediate problems},
\newblock  2nd ed. Mathematical Expositions. 10. Toronto: University of Toronto Press; London:
Oxford University Press XVI,  1966.

\bibitem{Grenander}
\newblock U. Grenander,
\newblock \emph{ A prediction problem in game theory},
\newblock Arkiv f\"or Matematik, vol. 3, pp. 371--379, 1957.

\bibitem{GrenanderS}
\newblock U. Grenander and G. Szeg\"o,
\newblock \emph{Toeplitz forms and their applications},
\newblock  Berkeley and Los Angeles: University of California Press VIII, 1958.

\bibitem{Hannan}
\newblock  E. J. Hannan,
\newblock \emph{Multiple time series},
\newblock Wiley Series in Probability and Mathematical Statistics. New York etc.: John Wiley \& Sons, Inc. XI, 1970.

\bibitem{Hoffman}
\newblock K. Hoffman,
\newblock \emph{Banach spaces of analytic functions},
\newblock  Prentice--Hall Series in Modern Analysis. Englewood Cliffs, N.J.: Prentice--Hall, Inc. XIII, 1962.

\bibitem{Hosoya}
\newblock Y. Hosoya,
\newblock \emph{Robust linear extrapolations of second-order stationary processes},
 \newblock The Annals of Probability,  vol. 6, pp. 574-584, 1978.


\bibitem{Ioffe}
\newblock A. D. Ioffe and V. M. Tihomirov,
\newblock \emph{Theory of extremal problems},
\newblock  Studies in Mathematics and its Applications, Vol. 6. Amsterdam, New York, Oxford: North-Holland Publishing Company. XII, 1979.

\bibitem{Kailath}
 \newblock T. Kailath,
 \newblock \emph{A view of three decades of linear filtering theory},
 \newblock  IEEE Transactions on Information Theory,  vol. 20, No. 2, pp. 146--181, 1974.

\bibitem{Karhunen}
\newblock K. Karhunen,
\newblock \emph{\"Uber lineare Methoden in der Wahrscheinlichkeitsrechnung},
\newblock Annales Academiae Scientiarum Fennicae. Ser. A I, vol. 37, 1947.


\bibitem{Kassam}
\newblock  S. A. Kassam and H. V. Poor,
\newblock  \emph{Robust techniques for signal processing: A survey},
\newblock Proceedings of the IEEE, vol. 73, no. 3, pp. 433--481, 1985.

\bibitem{Kolmogorov}
\newblock A. N. Kolmogorov,
\newblock \emph{Selected works by A. N. Kolmogorov. Vol. II: Probability theory and mathematical statistics. Ed. by A.
N. Shiryayev},
\newblock Mathematics and Its Applications. Soviet Series. 26. Dordrecht etc. Kluwer Academic Publishers, 1992.

\bibitem{Krein}
\newblock M. G. Krein and A. A. Nudelman,
\newblock \emph{The Markov moment problem and extremal problems},
\newblock Translations of Mathematical Monographs. Vol. 50. Providence, R.I.: American Mathematical Society, 1977.

\bibitem{Luz1}
\newblock M. M. Luz and M. P. Moklyachuk,
\newblock \emph{Interpolation of functionals of stochactic sequanses with stationary increments},
\newblock Theory of Probability and Mathematical Statistics,   vol. 87, pp. 117-133, 2013.

\bibitem{Luz2}
\newblock M. M. Luz and M. P. Moklyachuk,
\newblock \emph{Interpolation of functionals of stochastic sequences with stationary increments for
observations with noise},
\newblock Prykl. Stat., Aktuarna Finans. Mat., no. 2, pp. 131-148, 2012.

\bibitem{Luz3}
\newblock M. M. Luz and M. P. Moklyachuk,
\newblock \emph{Minimax-robust filtering
problem for stochastic sequence with stationary increments},
\newblock Theory of Probability and Mathematical Statistics,  vol. 89, pp. 127 - 142, 2014.

\bibitem{Luz4}
\newblock M. Luz and  M. Moklyachuk,
\newblock  \emph{Robust extrapolation problem for stochastic processes with stationary increments},
\newblock Mathematics and Statistics, vol. 2, no. 2, pp. 78 - 88, 2014.

\bibitem{Luz5}
\newblock M. Luz and M. Moklyachuk,
\newblock \emph{Minimax-robust filtering problem for stochastic sequences with stationary increments and cointegrated sequences},
cStatistics, Optimization \& Information Computing, vol. 2, no. 3, pp. 176 - 199, 2014.

\bibitem{Luz7}
\newblock M. Luz and M. Moklyachuk,
\newblock \emph{Minimax interpolation problem for random processes with stationary increments},
\newblock Statistics, Optimization \& Information Computing, vol. 3, no. 1, pp. 30-41, 2015.

\bibitem{Luz8}
\newblock M. Luz and M. Moklyachuk,
\newblock \emph{Minimax-robust prediction problem for stochastic sequences with stationary increments and cointegrated sequences},
\newblock Statistics, Optimization \& Information Computing, vol. 3, no. 2, pp. 160-188, 2015.

\bibitem{Luz9}
\newblock M. Luz and M. Moklyachuk,
\newblock \emph{Filtering problem for functionals of stationary sequences},
\newblock Statistics, Optimization \& Information Computing (submitted), 2015.


\bibitem{Luz6}
\newblock M. Moklyachuk and M. Luz,
\newblock \emph{Robust extrapolation problem for stochastic sequences with stationary increments},
\newblock Contemporary Mathematics and Statistics, vol. 1, no. 3, pp. 123 - 150, 2013.



\bibitem{Moklyachuk:1981a}
\newblock M. P. Moklyachuk,
\newblock \emph{Estimation of linear functionals of stationary
stochastic processes and a two-person zero-sum game},
\newblock Stanford University Technical Report, No. 169,  1981.



\bibitem{Moklyachuk:1981b}
\newblock M. P. Moklyachuk,
\newblock \emph{On a problem of game theory and the extrapolation of stochastic
processes  with values in a Hilbert space},
\newblock  Theory of Probability and Mathematical Statistics,  vol. 24, pp. 107--114, 1981.


\bibitem{Moklyachuk:1982}
\newblock M. P. Moklyachuk,
\newblock \emph{On an antagonistic game and prediction of stationary random
sequences in a  Hilbert space},
\newblock  Theory of Probability and Mathematical Statistics,  vol. 25, pp. 107--113, 1982.


\bibitem{Moklyachuk:1985}
\newblock M. P. Moklyachuk,
\newblock \emph{A filtration of transformations of random sequences},
\newblock  Ukrainian Mathematical Journal,  vol.37,  no.6, pp. 597--601, 1985.


\bibitem{Moklyachuk:1986}
\newblock M. P. Moklyachuk,
\newblock \emph{On a property of one--sided moving average random sequences}.
\newblock  Theory of Probability and Mathematical Statistics,  vol. 32, pp. 95--102, 1986.


\bibitem{Moklyachuk:1990}
\newblock M. P. Moklyachuk,
\newblock \emph{ Minimax extrapolation and autoregressive-moving average processes},
\newblock  Theory of Probability and Mathematical Statistics,  vol. 41, pp. 77--84, 1990.


\bibitem{Moklyachuk:1991a}
\newblock M. P. Moklyachuk,
\newblock \emph{Minimax extrapolation of random processes for models of $\varepsilon$--pollution},
\newblock  Theory of Probability and Mathematical Statistics,  vol. 42, pp. 113--121, 1991.


\bibitem{Moklyachuk:1991b}
\newblock M. P. Moklyachuk,
\newblock \emph{Minimax filtering of stationary sequences with white noise},
\newblock  Theory of Probability and Mathematical Statistics,  vol. 43, pp. 109--122, 1991.

\bibitem{Moklyachuk:1991c}
\newblock M. P. Moklyachuk,
\newblock \emph{Minimax filtering of linear transformations of stationary sequences},
\newblock  Ukrainian Mathematical Journal, vol. 43, no. 1 pp. 92-99, 1991.


\bibitem{Moklyachuk:1992a}
\newblock M. P. Moklyachuk,
\newblock \emph{Minimax filtering of linear transforms of stationary processes},
\newblock  Theory of Probability and Mathematical Statistics,  vol. 44, pp. 95--102, 1992.


\bibitem{Moklyachuk:1992b}
\newblock M. P. Moklyachuk,
\newblock \emph{On linear prediction of random processes under conditions of uncertainty},
\newblock  Theory of Probability and Mathematical Statistics,  vol. 45, pp. 87--93, 1992.

\bibitem{Moklyachuk:1993a}
\newblock M. P. Moklyachuk,
\newblock \emph{On the problem of minimax extrapolation of vector sequences perturbed by  white noise},
\newblock  Theory of Probability and Mathematical Statistics,  vol. 46, pp. 89--102, 1993.


\bibitem{Moklyachuk:1993b}
\newblock M. P. Moklyachuk,
\newblock \emph{On a filtering problem for vector-valued sequences},
\newblock  Theory of Probability and Mathematical Statistics,  vol. 47, pp. 107--118, 1993.

\bibitem{Moklyachuk:1993r}
\newblock M. P. Moklyachuk,
\newblock \emph{On stochastic equation describing the one--sided moving average
process and  minimax estimation problem},
\newblock  Random Operators and Stochastic Equations,  vol. 1, no.4, pp. 329--343, 1993.

\bibitem{Moklyachuk:1993u}
\newblock M. P. Moklyachuk,
\newblock \emph{On minimax filtration of vector processes},
\newblock  Ukrainian Mathematical Journal,  vol.45,  no.3, pp. 414--423, 1993.


\bibitem{Moklyachuk:1994}
\newblock M. P. Moklyachuk,
\newblock \emph{Stochastic autoregressive sequences and minimax interpolation},
\newblock  Theory of Probability and Mathematical Statistics,  vol. 48, pp. 95-103, 1994.

\bibitem{Moklyachuk:1995}
\newblock M. P. Moklyachuk,
\newblock \emph{On interpolation problem for vector--valued stochastic sequences},
\newblock  Random Operators and Stochastic Equations,  vol. 3, no.1, pp. 63--74, 1995.

\bibitem{Moklyachuk:1997}
\newblock M. P. Moklyachuk,
\newblock \emph{Estimates of stochastic processes from observations with noise},
\newblock Theory of Stochastic Processes,  vol.3(19), no.3-4, pp. 330--338, 1997.

\bibitem{Moklyachuk:1998a}
\newblock M. P. Moklyachuk,
\newblock \emph{Extrapolation of stationary sequences from observations with noise},
\newblock  Theory of Probability and Mathematical Statistics,  vol. 57, pp. 133--141, 1998.


\bibitem{Moklyachuk:1998b}
\newblock M. P. Moklyachuk,
\newblock \emph{Some problems of estimation from noisy data},
\newblock  ZAMM, Z. Angew. Math. Mech.,  vol.78, Suppl.3, pp. 1021--1022, 1998.


\bibitem{Moklyachuk:2000}
\newblock M. P. Moklyachuk,
\newblock \emph{Robust procedures in time series analysis},
\newblock Theory of Stochastic Processes, vol. 6, no. 3-4, pp. 127-147, 2000.

\bibitem{Moklyachuk:2001}
\newblock M. P. Moklyachuk,
\newblock \emph{Game theory and convex optimization methods in robust estimation problems},
\newblock Theory of Stochastic Processes, vol. 7, no. 1-2, pp. 253--264, 2001.

\bibitem{Moklyachuk:2008r}
\newblock M. P. Moklyachuk,
\newblock \emph{Robust estimations of functionals of stochastic processes},
\newblock Kyiv University, Kyiv, 2008.

\bibitem{Moklyachuk:2008}
\newblock M. P. Moklyachuk,
\newblock \emph{ Nonsmooth analysis and optimization},
\newblock Kyiv University, Kyiv, 2008.

\bibitem{Moklyachuk:2006}
\newblock M. Moklyachuk and A. Masyutka,
\newblock \emph{ Extrapolation of
multidimensional stationary processes},
\newblock Random Operators and Stochastic Equations, vol. 14, no. 3, pp. 233-244, 2006.

\bibitem{Moklyachuk:2006b}
\newblock M. Moklyachuk and A. Masyutka,
\newblock \emph{Robust estimation problems for stochastic processes},
\newblock Theory of Stochastic Processes, vol. 12, no. 3-4, pp. 88-113, 2006.

\bibitem{Moklyachuk:2007}
\newblock M. Moklyachuk and A. Masyutka,
\newblock \emph{Robust filtering of stochastic processes},
\newblock Theory of Stochastic Processes, vol. 13, no. 1-2, pp. 166-181, 2007.

\bibitem{Moklyachuk:2008b}
\newblock M. Moklyachuk, and A. Masyutka,
\newblock \emph{Minimax prediction problem for multidimensional stationary stochastic sequences},
\newblock Theory of Stochastic Processes, vol. 14, no. 3-4, pp. 89-103, 2008.

\bibitem{Moklyachuk:2011}
\newblock M. Moklyachuk and A. Masyutka,
\newblock \emph{Minimax prediction problem for multidimensional stationary stochastic processes},
\newblock Communications in Statistics -- Theory and Methods., vol. 40, no. 19-20, pp. 3700-3710, 2001.

\bibitem{Moklyachuk:2012}
\newblock M. Moklyachuk and O. Masyutka,
\newblock \emph{Minimax-robust estimation technique for stationary stochastic processes},
\newblock LAP LAMBERT Academic Publishing, 2012.

\bibitem{Ostapenko}
\newblock M. Moklyachuk and V. Ostapenko,
\newblock \emph{Minimax interpolation of harmonizable sequences},
\newblock Theory of Probability and Mathematical Statistics,  vol. 92, pp. 125--136, 2015.

\bibitem{Ostapenko2}
\newblock M. Moklyachuk and V. Ostapenko,
\newblock \emph{Minimax interpolation problem for harmonizable stable sequences with noise observations},
\newblock Journal of Applied Mathematics and Statistics (submitted), 2015.

\bibitem{Sidei}
\newblock M. Moklyachuk and M. Sidei,
\newblock \emph{Interpolation problem for stationary sequences with missing observations},
\newblock Statistics, Optimization \& Information Computing, vol. 3, no. 3, pp. 259-275, 2015.

\bibitem{Sidei2}
\newblock M. Moklyachuk and M. Sidei,
\newblock \emph{Interpolation of stationary sequences with missing observations observed with noise},
\newblock Theory of Probability and Mathematical Statistics (submitted), 2015.

\bibitem{Shchestyuk}
\newblock M. Moklyachuk and N. Shchestyuk,
\newblock \emph{Estimates of functionals from random fields},
\newblock Uzhgorod: Autdor-Shark, 2013

\bibitem{Pshenychn}
\newblock B. N. Pshenichnyj,
\newblock \emph{Necessary conditions of an extremum},
\newblock Pure and Applied mathematics. 4. New York: Marcel Dekker, 1971.

\bibitem{Riesz}
\newblock  Frigyes Riesz and B\'ela Sz.--Nagy,
\newblock \emph{Functional analysis},
\newblock Dover Books on Advanced Mathematics. New York: Dover Publications, Inc. XII, 1990.


\bibitem{Rockafellar}
\newblock R. T. Rockafellar,
\newblock \emph{Convex Analysis},
\newblock Princeton University Press, 1997.

\bibitem{Rozanov}
\newblock Yu. A. Rozanov,
\newblock \emph{Stationary stochastic processes},
\newblock San Francisco-Cambridge-London-Amsterdam: Holden-Day, 1967.



\bibitem{Salehi}
\newblock H. Salehi,
\newblock \emph{Algorithms for linear interpolator and interpolation error for minimal stationary stochastic processes},
\newblock The Annals of Probability, Vol. 7, No. 5, pp. 840--846, 1979.

\bibitem{Taniguchi1981}
\newblock M. Taniguchi,
\newblock \emph{Robust regresion and interpolation for time series},
\newblock  J. Time Series Analysis, vol.2, pp. 53--62, 1981.


\bibitem{Vastola}
\newblock  K. S. Vastola and H. V. Poor,
\newblock  \emph{An analysis of the effects of spectral uncertainty on Wiener filtering},
\newblock  Automatica, vol. 28, pp. 289--293, 1983.

\bibitem{Wiener}
\newblock  N. Wiener,
\newblock \emph{Extrapolation, interpolation and smoothing of stationary time series. With engineering applications},
\newblock  The M. I. T. Press, Massachusetts Institute of Technology, Cambridge, Mass., 1966.

\bibitem{Wold1938}
\newblock  H. Wold,
\newblock \emph{A study in the analysis of stationary time series},
\newblock Thesis University of Stockholm, 1938.

\bibitem{Wold1948}
\newblock  H. Wold,
 \newblock \emph{On prediction in stationary time series},
 \newblock  Annals of Mathematical Statistics, vol. 19, No. 4, pp. 558--567, 1948.


\bibitem{Yaglom1}
\newblock A. M. Yaglom,
\newblock \emph{Correlation theory of stationary and
related random functions. Vol. 1: Basic results},
\newblock Springer Series in Statistics, Springer-Verlag, New York etc., 1987.

\bibitem{Yaglom2}
\newblock A. M. Yaglom,
\newblock \emph{Correlation theory of stationary and
related random functions. Vol. 2: Supplementary notes and references},
\newblock Springer Series in Statistics, Springer-Verlag, New York etc., 1987.



\end{thebibliography}
\end{document}